\newtheorem{theorem}{Theorem}[section]
\newtheorem{lemma}[theorem]{Lemma}
\newtheorem{corollary}[theorem]{Corollary}
\newtheorem{definition}[theorem]{Definition}
\newtheorem{proposition}[theorem]{Proposition}
\theoremstyle{definition}
\newtheorem{remark}[theorem]{Remark}
\newtheorem{example}[theorem]{Example}
\begin{document}
\begin{spacing}{1.2}
\title[Lusztig sheaves and tensor products]
{Lusztig sheaves and tensor products of integrable highest weight modules}
\author[Fang,Lan]{Jiepeng Fang,Yixin Lan}

\address{Department of Mathematics and New Cornerstone Science Laboratory,The University of Hong Kong, Pokfulam, Hong Kong, Hong Kong SAR, P. R. China}
\email{fangjp@hku.hk (J.Fang)}

\address{Academy of Mathematics and Systems Science, Chinese Academy of Sciences, Beijing 100190, P.R.China}
\email{lanyixin@amss.ac.cn (Y.Lan)}

\subjclass[2020]{16G20, 17B37}

\date{\today}

\bibliographystyle{abbrv}

\keywords{Lusztig's perverse sheaves, tensor products, Yang-Baxter equation}
\maketitle
\begin{abstract}
	By introducing  $N$-framed quivers, we  define the  localization of Lusztig's sheaves for $N$-framed quivers and functors $E^{(n)}_{i},F^{(n)}_{i},K^{\pm}_{i}$ for localizations. This gives a categorical realization of tensor products of integrable highest weight modules of the quantized enveloping algebra. The simple perverse sheaves in the localization provide the canonical basis of  tensor products. Moreover, we give a categorical interpretation of the $\mathcal{R}$-matrix and Yang-Baxter equations.
\end{abstract}

\setcounter{tocdepth}{1}\tableofcontents

\section{Introduction}
\subsection{}  A symmetric Cartan datum $(I,(-,-))$ is given by a finite set $I$ and a symmetric bilinear form $(-,-):\mathbb{Z}[I]\times \mathbb{Z}[I] \rightarrow \mathbb{Z}$. The associated generalized Cartan matrix  $(a_{i,j})_{i,j \in I}$ satisfies $a_{i,i}=2$ and $a_{i,j}=(i,j)$ for $i \neq j$. Let  $\mathfrak{g}$ be the Kac-Moody Lie algebra associated to the generalized Cartan matrix $(a_{i,j})_{i,j \in I}$, let  $\mathbf{U}=\mathbf{U}_{v}(\mathfrak{g})$ be the quantized enveloping algebra. The quantized enveloping algebra admits triangular decomposition.

Given a symmetric Cartan datum, we can associate a finite graph $\mathbf{\Gamma}$ without loops such that the set of vertices is $I$ and there are exactly $|a_{i,j}|$ edges connecting  $i$ and $j$ for $i \neq j$. Choose an orientation $\Omega$, we get a quiver $Q=(I,H,\Omega)$. Lusztig considered the moduli space $\mathbf{E}_{\mathbf{V},\Omega}$ of representations of the quiver $Q$ and categorified the integral form $_{\mathcal{A}}\mathbf{U}^{+}={_{\mathcal{A}}\mathbf{U}}^{+}_{v}(\mathfrak{g})$ of positive (or negative) part  of the quantized enveloping algebra via perverse sheaves on $\mathbf{E}_{\mathbf{V},\Omega}$ in \cite{MR1088333}, where $\mathcal{A}=\mathbb{Z}[v,v^{-1}]$.

It is interesting to give a geometric realization of representations of $\mathbf{U}$. Basing on Lusztig's theory and Zheng's work \cite{zheng2007geometric}, for a given  dominating weight $\Lambda$, one can define a certain Lusztig's semisimple category $\mathcal{Q}_{\mathbf{V},\mathbf{W}}$ on framed quivers and defined its localization $\mathcal{Q}_{\mathbf{V},\mathbf{W}}/\mathcal{N}_{\mathbf{V}}$. After we define functors $E^{(n)}_{i},F^{(n)}_{i}$ and $K_{i}^{\pm}$ for $i \in I,n \in \mathbb{N}$,  the Grothendieck group $\mathcal{K}_{0}(\Lambda)$ of $\coprod\limits_{\mathbf{V} }\mathcal{Q}_{\mathbf{V},\mathbf{W}}/\mathcal{N}_{\mathbf{V}}$ becomes a $_{\mathcal{A}}\mathbf{U}$-module under the action of $E^{(n)}_{i},F^{(n)}_{i}$ and $K_{i}^{\pm}$, which is canonically isomorphic to the irreducible integrable highest weight module $_{\mathcal{A}}L(\Lambda)$.  Moreover, simple objects in localization $\mathcal{Q}_{\mathbf{V},\mathbf{W}}/\mathcal{N}_{\mathbf{V}}$ form the canonical basis of $_{\mathcal{A}}L(\Lambda)_{|\mathbf{V}|}$. See \cite{fang2023lusztig}. 

A natural problem is to generalize this categorical construction to tensor products. In the framework of Nakajima's quiver varieties, \cite{maulik2019quantum}, \cite{MR1865400} and \cite{MR3077693} provide a geometric (but not categorical) construction via equivariant $K$-theory.  A. Sartori and C. Stroppel in \cite{MR3314294}  give a categorical construction of tensor products of finite dimensional representations of $\mathfrak{sl}_{n}$ via BGG category $\mathcal{O}$. H.Zheng categorically realizes tensor products of integrable modules of $\mathbf{U}_{v}(\mathfrak{sl}_{2})$ in \cite{zheng2007geometric} and generalizes it to the other cases in \cite{MR3200442}. B. Webster also uses  diagrammatic approaches to categorify tensor products in \cite{webster2015canonical}.

In \cite{lusztig1992canonical}, Lusztig uses the quasi $\mathcal{R}$-matrix to define a linear map $\Psi$ of certain tensor products and provides a construction of canonical bases of tensor products by algebraic methods. Later, H.Bao and W.Wang generalize Lusztig's construction to general cases in \cite{bao2016canonical}.  Another natural problem is to provide a geometric construction of canonical bases by Lusztig's perverse sheaf theory. Indeed,  by studying the singular supports of Lusztig's sheaves, Y. Li proved in \cite{MR3177922} that Zheng's constructon provides a realization of the (signed) canonical basis of the tensor product.

\subsection{} The purpose of this paper is to give a geometric realization of tensor products of the integrable highest weight modules and their canonical bases via Lusztig's semisimple perverse sheaves on quivers' moduli spaces. We introduce the $N$-framed quiver $Q^{(N)}$ of a given quiver $Q$ and consider the Lusztig's theory of perverse sheaves for $Q^{(N)}$. For given a sequence of dominant  weights $\Lambda^{\bullet}=(\Lambda_{1},\Lambda_{2},\cdots \Lambda_{N})$, we choose a collection of graded spaces $\mathbf{W}^{\bullet}=(\mathbf{W}^{1},\cdots, \mathbf{W}^{N}) $. We consider a certain full subcategory $\mathcal{Q}_{\mathbf{V},\mathbf{W}^{\bullet}}$ of semisimple perverse sheaves on $Q^{(N)}$. We also consider  certain localizations $\mathcal{Q}_{\mathbf{V},\mathbf{W}^{\bullet}}/ \mathcal{N}_{\mathbf{V},i}$ and $\mathcal{Q}_{\mathbf{V},\mathbf{W}^{\bullet}}/ \mathcal{N}_{\mathbf{V}}$ of $\mathcal{Q}_{\mathbf{V},\mathbf{W}^{\bullet}}$ and define functors $E^{(n)}_{i}$ and $F^{(n)}_{i}$ of the localizations, which satisfy the following relations
\begin{equation*}
	E_{i}^{(n)}F_{j}\cong F_{j}E_{i}^{(n)}, i\neq j,
\end{equation*}
\begin{equation*}
	E^{(n)}_{i}F_{i} \oplus \bigoplus\limits_{0\leqslant m \leqslant M-1} E^{(n-1)}_{i} [M-1-2m] \cong F_{i}E^{(n)}_{i} \oplus \bigoplus\limits_{0\leqslant m \leqslant -M-1} E^{(n-1)}_{i} [-2m-M-1].
\end{equation*}
Hence with the functors $E_{i}$ and $F_{i}$, the Grothendieck group $\mathcal{K}_{0}(\Lambda^{\bullet})$ of $\coprod \limits_{\mathbf{V}}\mathcal{Q}_{\mathbf{V},\mathbf{W}^{\bullet}}/ \mathcal{N}_{\mathbf{V}}$ becomes an integrable $_{\mathcal{A}}\mathbf{U}$-module. 

The main reason for multi-framing is that one can  use Lusztig's restriction functors to construct the following isomorphisms of $_{\mathcal{A}}\mathbf{U}$-modules:
\begin{equation*}
	\Delta_{N}:\mathcal{K}_{0}(\Lambda^{\bullet}) \rightarrow \mathcal{K}_{0}(\Lambda^{\bullet-1},0) \otimes \mathcal{K}_{0}(0,\Lambda_{N}),
\end{equation*}
\begin{equation*}
	\Delta'_{N}:\mathcal{K}_{0}(\Lambda^{\bullet}) \rightarrow  \mathcal{K}_{0}(0,\Lambda_{N})   \otimes \mathcal{K}_{0}(\Lambda^{\bullet-1},0) .
\end{equation*}
then by induction on $N$, we can prove that $\mathcal{K}_{0}(\Lambda^{\bullet})$ is isomorphic to the tensor products of integrable highest weight modules $_{\mathcal{A}}L(\Lambda_{k}), 1\leqslant k \leqslant N$. The images of simple peverse sheaves in $\mathcal{K}_{0}(\Lambda^{\bullet})$ form an $\mathcal{A}$-basis of the tensor product, which is invariant under Verdier duality. Moreover, we confirm that the linear map induced by Verdier duality is exactly equal to the linear map $\Psi$ considered in \cite{lusztig1992canonical} and \cite{bao2016canonical}. By the positivity of restriction functor, we can determine the sign appearing in  \cite[Theorem 7.16]{MR3177922} and prove that the basis consisting of simple peverse sheaves in $\mathcal{K}_{0}(\Lambda^{\bullet})$ coincides with the canonical basis of tensor products of integrable highest weight modules in the sense of \cite{bao2016canonical} without  sign. (See Proposition \ref{CBT}.) As a corollary, we obtain the positivity of the canonical basis of tensor products. Furthermore, we also deduce from the positivity of restriction functor that the transition matrix between the canonical basis of tensor product and the tensor product of canonical bases is positive, which is not included in \cite{zheng2007geometric} and  \cite{MR3177922}. (See Corollary \ref{tmp}.)

As a corollary, if we compose $\Delta'_{2}=\Delta'$ with the inverse of $\Delta_{2}=\Delta$ , we obtain an isomorphism of $_{\mathcal{A}}\mathbf{U}$-modules
\begin{equation*}
	R_{12}= \Delta' \circ (\Delta )^{-1}:L(\Lambda_{1}) \otimes L(\Lambda_{2}) \rightarrow L(\Lambda_{2}) \otimes L(\Lambda_{1}).
\end{equation*}
We prove that this isomorphism can be obtained from the quasi $\mathcal{R}$-matrix $\Theta$. Moreover, by the coassociativity of Lusztig's restriction functor, we discover  the following Yang-Baxter equation of isomorphisms between $_{\mathcal{A}}\mathbf{U}$-modules in the framework of Lusztig's semisimple perverse sheaves 
\begin{equation*}
	R_{23}R_{13}R_{12}=R_{12}R_{13}R_{23}: L(\Lambda_{1}) \otimes L(\Lambda_{2}) \otimes L(\Lambda_{3}) \xrightarrow{\cong} L(\Lambda_{3}) \otimes L(\Lambda_{2}) \otimes L(\Lambda_{1}).
\end{equation*}

 We remark that the construction above can be generalized to symmetrizable cases by using the periodic functor of quivers with automorphisms, which has been done in \cite{Lan2024LusztigSA}.  

\subsection*{Acknowledgements.}
J. Fang is partially supported by the New Cornerstone Science Foundation through the New Cornerstone Investigator Program awarded to Professor Xuhua He, and Y. Lan is supported by the National Natural Science Foundation of China [Grant No. 1288201]. This paper is a continuation of a collaborative work \cite{fang2023lusztig} with Prof. Jie Xiao, we are very grateful to his detailed discussion and constant encouragement. We are also  grateful to G. Lusztig and W. Wang. After the first version of this article appeared on arxiv, they asked us if the bases provided by simple perverse sheaves coincide with the canonical bases of tensor products defined in \cite{lusztig1992canonical} and \cite{bao2016canonical} for some certain cases, then we confirm it in the present paper.

 \section{Lusztig sheaves for $N$-framed quivers}
 In this section, we recall and generalize Lusztig's theory of semisimple perverse sheaves to $N$-framed quivers and refer \cite{MR1227098} for details.

 Given a symmetric Cartan datum $(I,(-,-))$, let $\mathbf{\Gamma}$ be the finite graph without loops associated to $(I,(-,-))$, where $I$ is the set of vertices and $H$ is the set of pairs consisting of edges with an orientation. More precisely, to give an edge with an orientation is equivalent to give $h',h'' \in I$ and we adapt the notation $h' \xrightarrow{h} h''$. Let $-:h \mapsto \bar{h}$ be the involution of $H$ such that $\bar{h}'=h'',\bar{h}''=h'$ and $\bar{h} \neq h$. An orientation of the graph $\Gamma$ is a subset $\Omega \subseteq H$ such that $\Omega \cap \bar{\Omega} =\emptyset$ and $\Omega \cup \bar{\Omega} = H$.

 \begin{definition}
 	For the quiver $Q=(I,H,\Omega)$ and a given positive integer $N$,  define the $N$-framed quiver of $Q$ to be the quiver $Q^{(N)}=(I^{(N)},H^{(N)},\Omega^{N} )$. The set of vertices is $I^{(N)}= I \cup \bigcup\limits_{1 \leqslant k \leqslant N}I^{k}$, where each $I^{k}= \{ i^{k}| i \in I \}$ is a copy of $I$. The set $H^{(N)}$ of edges is $H \cup \{i \rightarrow i^{k},i^{k} \rightarrow i| i \in I,1\leqslant  k \leqslant N \} $, and the orientation  $\Omega^{N}$ is $\Omega \cup \{i \rightarrow i^{k}| i \in I,1\leqslant  k \leqslant N \} $. 
 \end{definition}
 
 Here we give an example of the $2$-framed quiver for type $A_{2}$:
 \[
 \xymatrix{
 	1^{1}        &  2^{1}  \\
 	1 \ar[r] \ar[u] \ar[d] & 2  \ar[u] \ar[d] \\
 	1^{2} & 2^{2}
 }
 \]
 
 Given a sequence of dominating weights $\Lambda^{\bullet}=(\Lambda_{1},\Lambda_{2},\cdots \Lambda_{N})$, 
 we fix a sequence of graded spaces $\mathbf{W}^{\bullet}=(\mathbf{W}^{1},\mathbf{W}^{2},\cdots \mathbf{W}^{N})$  such that each $\mathbf{W}^{k}$ is $I^{k}$-graded spaces and $\langle \Lambda_{k}, \alpha_{i}^{\vee} \rangle =\omega_{i^{k}} ={{\rm{dim}}} \mathbf{W}_{i^{k}}$. From now on, we denote the $i^{k}$ component $\mathbf{W}^{k}_{i^{k}}$ by $\mathbf{W}_{i^{k}}$ for simplicity. Let $\mathbf{k}=\overline{\mathbb{F}}_q$ be the algebraic closure of the finite field $\mathbb{F}_q$. Given $\nu \in \mathbb{N}[I]$ and an $I$-graded $\mathbf{k}$-vector space $\mathbf{V}$ of dimension vector $|\mathbf{V}|=\nu$, define the moduli space of $N$-framed quivers for $\mathbf{V}\oplus\mathbf{W}^{\bullet}=\mathbf{V}\oplus \bigoplus\limits_{1\leqslant k \leqslant N} \mathbf{W}^{k}$ as the following:
 \begin{equation*}
 	\begin{split}
 		\mathbf{E}_{\mathbf{V},\mathbf{W}^{\bullet},\Omega^{N}}
 		= \bigoplus\limits_{h \in \Omega} \mathbf{Hom}(\mathbf{V}_{h'},\mathbf{V}_{h''}) \oplus \bigoplus\limits_{i \in I, 1\leqslant k \leqslant N } \mathbf{Hom}(\mathbf{V}_{i},\mathbf{W}_{i^{k}}).
 	\end{split}
 \end{equation*}

 The algebraic group $G_{\mathbf{V}}= \prod \limits_{ i\in I} \mathbf{GL}(\mathbf{V}_{i})$ acts on $\mathbf{E}_{\mathbf{V},\mathbf{W}^{\bullet},\Omega^{N}}$ by
 \begin{equation*}
 	g \cdot ((x_{h})_{h\in \Omega}, (x^{k}_{i})_{i\in I,1\leqslant k \leqslant N}) =((g_{h''}x_{h}g_{h'}^{-1})_{h\in \Omega}, ( x^{k}_{i}g_{i}^{-1})_{i \in I,1\leqslant k \leqslant N}),
 \end{equation*}
 where $x_{h}$ corresponds to the component $\mathbf{Hom}(\mathbf{V}_{h'},\mathbf{V}_{h''})$ and $x_{i}^{k}$ corresponds to the component $\mathbf{Hom}(\mathbf{V}_{i},\mathbf{W}_{i^{k}})$, and the  group $G_{\mathbf{W}^{\bullet}}=\prod \limits_{ i\in I,1 \leqslant k \leqslant N}  \mathbf{GL}(\mathbf{W}_{i^{k}})$ acts on $\mathbf{E}_{\mathbf{V},\mathbf{W}^{\bullet},\Omega^{N}}$ by
 \begin{equation*}
 	g \cdot ((x_{h})_{h\in \Omega}, (x^{k}_{i})_{i\in I,1\leqslant k \leqslant N}) =((x_{h})_{h\in \Omega}, ( g_{i^{k}}x^{k}_{i})_{i \in I,1\leqslant k \leqslant N}).
 \end{equation*}

   Denote the algebraic group $G_{\mathbf{V}}\times G_{\mathbf{W}^{\bullet}}$ by $G_{\mathbf{V}\oplus \mathbf{W}^{\bullet}}$. The direct sum  $\bigoplus\limits_{1 \leqslant k \leqslant N}\mathbf{W}^{k}$ is an $I^{(N)}$-graded space, still denoted by $\mathbf{W}^{\bullet}$. Let $\mathcal{D}^{b}_{G_{\mathbf{V}\oplus \mathbf{W}^{\bullet}}}(\mathbf{E}_{\mathbf{V},\mathbf{W}^{\bullet},\Omega^{N}})$ be the  $G_{\mathbf{V}\oplus \mathbf{W}^{\bullet}}$-equivariant derived category of $\overline{\mathbb{Q}}_{l}$ constructible sheaves on $\mathbf{E}_{\mathbf{V},\mathbf{W}^{\bullet},\Omega^{N}}$. For any $n \in \mathbb{Z}$, we denote by $[n]$ the shift functor. (Following \cite{MR1088333}, the shift functor  $[n]$ always appear together with the Tate twist $(\frac{n}{2})$, and we always omit the Tate twist for simplicity.) We say complexes $A$ and $B$ are isomorphic up to shifts, if  $A$ and $B[n]$ are isomorphic for some $n \in \mathbb{Z}$. 
 
 \subsection{Lusztig's sheaves for $N$-framed quiver}
 For an $I^{(N)}$-graded space $\mathbf{U}=\mathbf{V} \oplus \mathbf{W}^{\bullet}$, we denote by $\mathcal{S}_{|\mathbf{U}|}$ the set of sequences $\boldsymbol{\mu}=(\mu^{1},\mu^{2},\cdots, \mu^{m})$ such that each $\mu^{l}$ is of the form $(i_{l})^{a_{l}}$ for some $i_{l}\in I^{(N)},a_{l}\in \mathbb{N}$ and $\sum_{l=1}^m\mu^l=|\mathbf{U}|$ and call an element $\boldsymbol{\mu} \in \mathcal{S}_{|\mathbf{U}|}$ a flag type of $\mathbf{U}$.  If $\boldsymbol{\mu}'=((i'_{1})^{a'_{1}} , \cdots ,(i'_{m})^{a'_{m}}) \in \mathcal{S}_{|\mathbf{U}'|}$ and $\boldsymbol{\mu}''=((i''_{1})^{a''_{1}},\cdots,(i''_{n})^{a''_{n}} )\in \mathcal{S}_{|\mathbf{U}''|}$, then let the sequence 
 $\boldsymbol{\mu}' \boldsymbol{\mu}''=((i'_{1})^{a'_{1}} , \cdots ,(i'_{m})^{a'_{m}},(i''_{1})^{a''_{1}},\cdots,(i''_{n})^{a''_{n}} ) \in \mathcal{S}_{|\mathbf{U}'\oplus\mathbf{U}''|}$  be the flag type obtained by connecting $\boldsymbol{\mu}'$ and $\boldsymbol{\mu}''$.
 
 For a fixed order $i_{1},i_{2},\cdots,i_{n}$ of $I$, let $\boldsymbol{d}^{k}=((i^{k}_{1} )^{\omega_{i^{k}_{1}}},(i^{k}_{2} )^{\omega_{i^{k}_{2}}},\cdots, (i^{k}_{n} )^{\omega_{i^{k}_{n}}})$ be the associated flag type of $\mathbf{W}^{k}$. Then for any flag types $\boldsymbol{\nu}^{k}$ of $I$-graded spaces $\mathbf{V}^{k}, 1 \leqslant k \leqslant N$, the sequence $\boldsymbol{\nu}^{1}\boldsymbol{d}^{1}\boldsymbol{\nu}^{2}\boldsymbol{d}^{2}\cdots \boldsymbol{\nu}^{N}\boldsymbol{d}^{N}$ is a flag type of $\mathbf{V}\oplus\mathbf{W}^{\bullet}$ for some $\mathbf{V}$ such that $|\mathbf{V}|= \sum\limits_{1\leqslant k \leqslant N} |\mathbf{V}^{k}|$.

 For any $\boldsymbol{\mu}=(\mu^{1},\mu^{2},\cdots, \mu^{m}) \in \mathcal{S}_{|\mathbf{V}\oplus \mathbf{W}^{\bullet}|}$, the flag variety $\mathcal{F}_{\boldsymbol{\mu},\Omega^{N}}$ is defined to be the variety consisting of $(x,f)$, where $x \in \mathbf{E}_{\mathbf{V},\mathbf{W}^{\bullet},\Omega^{N}}$ and $f=(0=\mathbf{U}^{m} \subseteq \mathbf{U}^{m-1} \subseteq \cdots \subseteq \mathbf{U}^{0}=\mathbf{U})$  is a flag of $\mathbf{U}=\mathbf{V}\oplus \mathbf{W}^{\bullet}$ such that  $x(\mathbf{U}^{k}) \subseteq \mathbf{U}^{k}$ and $|\mathbf{U}^{k-1}/\mathbf{U}^{k}|=\mu^{k}$ for every $1\leqslant k \leqslant m$. Following \cite{MR1088333}, the flag variety $\mathcal{F}_{\boldsymbol{\mu},\Omega^{N}}$ is smooth and there is a natural proper map $\pi_{\boldsymbol{\mu}}: \mathcal{F}_{\boldsymbol{\mu},\Omega^{N}} \rightarrow \mathbf{E}_{\mathbf{V},\mathbf{W}^{\bullet},\Omega^{N}}$ sending $(x,f)$ to $x$, then by the decomposition theorem in \cite{MR751966}, the complex $L_{\boldsymbol{\mu}}= (\pi_{\boldsymbol{\mu}})_{!} \bar{\mathbb{Q}}_{l}[{\rm{dim}} \mathcal{F}_{\boldsymbol{\mu},\Omega^{N}}]$ is a $G_{\mathbf{V}\oplus\mathbf{W}^{\bullet}}$-equivariant semisimple complex on $\mathbf{E}_{\mathbf{V},\mathbf{W}^{\bullet},\Omega^{N}}$.

\begin{definition}
(1)	Let $\mathcal{P}_{\mathbf{V}\oplus\mathbf{W}^{\bullet},\Omega^{N}}$ be the set of isomorphism classes of simple perverse sheaves $L$ on $\mathbf{E}_{\mathbf{V},\mathbf{W}^{\bullet},\Omega^{N}}$ such that $L[d]$ appears as a direct summand  of  $L_{\boldsymbol{\mu}}$ for some $d \in \mathbb{Z}$ and $\boldsymbol{\mu} \in \mathcal{S}_{|\mathbf{V}\oplus \mathbf{W}^{\bullet}|}$. Define $\mathcal{Q}_{\mathbf{V}\oplus\mathbf{W}^{\bullet},\Omega^{N}}$ and $\mathcal{Q}'_{\mathbf{V}\oplus\mathbf{W}^{\bullet},\Omega^{N}}$ to be the full subcategory of $\mathcal{D}^{b}_{G_{\mathbf{V}}}(\mathbf{E}_{\mathbf{V},\mathbf{W}^{\bullet},\Omega^{N}})$ and $\mathcal{D}^{b}_{G_{\mathbf{V}\oplus \mathbf{W}^{\bullet}}}(\mathbf{E}_{\mathbf{V},\mathbf{W}^{\bullet},\Omega^{N}})$ consisting of finite direct sums of complexes of the form $L[d']$ for various $L$ in $\mathcal{P}_{\mathbf{V}\oplus\mathbf{W}^{\bullet},\Omega^{N}}$ and various $d' \in \mathbb{Z}$ respectively.
	
(2)	Let $\mathcal{P}_{\mathbf{V},\mathbf{W}^{\bullet},\Omega^{N}}$  be the set of isomorphism classes of simple perverse sheaves $L$ on $\mathbf{E}_{\mathbf{V},\mathbf{W}^{\bullet},\Omega^{N}}$ such that $L[d]$ appears as a direct summand  of   $L_{\boldsymbol{\nu}^{1}\boldsymbol{d}^{1}\boldsymbol{\nu}^{2}\boldsymbol{d}^{2}\cdots \boldsymbol{\nu}^{N}\boldsymbol{d}^{N}}$, where $d$ is some integer and each $\boldsymbol{\nu}^{k}$ is some flag type for some $I$-graded space $\mathbf{V}^{k}$ such that $\sum\limits_{1\leqslant k \leqslant N} |\mathbf{V}^{k}|=|\mathbf{V}|$. Define $\mathcal{Q}_{\mathbf{V},\mathbf{W}^{\bullet},\Omega^{N}}$ and $\mathcal{Q}'_{\mathbf{V},\mathbf{W}^{\bullet},\Omega^{N}}$ to be the full subcategory of $\mathcal{D}^{b}_{G_{\mathbf{V}}}(\mathbf{E}_{\mathbf{V},\mathbf{W}^{\bullet},\Omega^{N}})$ and $\mathcal{D}^{b}_{G_{\mathbf{V}\oplus\mathbf{W}^{\bullet}}}(\mathbf{E}_{\mathbf{V},\mathbf{W}^{\bullet},\Omega^{N}})$ consisting of finite direct sums of complexes of the form $L[d']$ for various $L$ in $\mathcal{P}_{\mathbf{V},\mathbf{W}^{\bullet},\Omega^{N}}$ and various $d' \in \mathbb{Z}$ respectively.
\end{definition}

\subsection{Induction functor}

 Fix two sequences of graded spaces ${^{'}\mathbf{W}}^{\bullet}=({^{'}\mathbf{W}}^{1},\cdots,{^{'}\mathbf{W}}^{N}) $ and ${^{''}\mathbf{W}}^{\bullet}=({^{''}\mathbf{W}}^{1},\cdots,{^{''}\mathbf{W}}^{N})$ such that $|\mathbf{W}^{k}|=|{^{'}\mathbf{W}}^{k}|+|{^{''}\mathbf{W}}^{k}|$ for each $1 \leqslant k \leqslant N$ and fix $I$-graded vector spaces $\mathbf{V},\mathbf{V}',\mathbf{V}''$ such that $|\mathbf{V}|=|\mathbf{V}'|+|\mathbf{V}''|$. Let $\mathbf{E}'_{\Omega}$ be the variety consisting of $(x,\tilde{\mathbf{U}}, \rho_{1}, \rho_{2})$, where $x \in \mathbf{E}_{\mathbf{V},\mathbf{W}^{\bullet},\Omega^{N}}$, $\tilde{\mathbf{U}}$ is an $I^{(N)}$-graded $x$-stable subspace of $\mathbf{V}\oplus \mathbf{W}^{\bullet}$ of dimension vector $|\mathbf{V}''\oplus {^{''}\mathbf{W}}^{\bullet}|$ and $ \rho_{1}: \mathbf{V}\oplus \mathbf{W}^{\bullet}/\tilde{\mathbf{U}} \simeq \mathbf{V}'\oplus {^{'}\mathbf{W}}^{\bullet},\rho_{2}:\tilde{\mathbf{U}} \simeq \mathbf{V}''\oplus {^{''}\mathbf{W}}^{\bullet}$ are linear isomorphisms. Here we say $\tilde{\mathbf{U}}$ is $x$-stable if and only if $x_{h}(\tilde{\mathbf{U}}_{h'}) \subseteq \tilde{\mathbf{U}}_{h''}$ for any $h \in \Omega^{N}$.  Let $\mathbf{E}''_{\Omega}$ be the variety consisting of $(x,\tilde{\mathbf{U}})$ as above. Consider the following diagram
 \begin{center}
 	$\mathbf{E}_{\mathbf{V}',{^{'}\mathbf{W}}^{\bullet},\Omega^{N}} \times \mathbf{E}_{\mathbf{V}'',{^{''}\mathbf{W}}^{\bullet},\Omega^{N}} \xleftarrow{p_{1}} \mathbf{E}'_{\Omega} \xrightarrow{p_{2}} \mathbf{E}''_{\Omega} \xrightarrow{p_{3}} \mathbf{E}_{\mathbf{V},\mathbf{W}^{\bullet},\Omega^{N}}$
 \end{center}
 where $p_{1}(x,\tilde{\mathbf{U}},\rho_{1},\rho_{2})=(\rho_{1,\ast}(\bar{x}|_{\mathbf{V}\oplus \mathbf{W}^{\bullet}/\tilde{\mathbf{U}}}),\rho_{2,\ast}(x|_{\tilde{\mathbf{U}}})  )$, $p_{2}(x,\tilde{\mathbf{U}},\rho_{1},\rho_{2}) =(x, \tilde{\mathbf{U}}) $ and $p_{3}(x,\tilde{\mathbf{U}})=x$,  where $\bar{x}|_{\mathbf{V}\oplus \mathbf{W}^{\bullet}/\tilde{\mathbf{U}}}$ is the natural linear map induced by $x$ on the quotient space $\mathbf{V}\oplus \mathbf{W}^{\bullet}/\tilde{\mathbf{U}}$ and $x|_{\tilde{\mathbf{U}}}$ is the restriction of $x$ on the subspace $\tilde{\mathbf{U}}$, then $ \rho_{1,\ast}(\bar{x}|_{\mathbf{V}/\tilde{\mathbf{U}}})= \rho_{1} (\bar{x}|_{\mathbf{V}/\tilde{\mathbf{U}}}) \rho_{1}^{-1}\in \mathbf{E}_{\mathbf{V}',{^{'}\mathbf{W}}^{\bullet},\Omega^{N}}$ and $\rho_{2,\ast}(x|_{\tilde{\mathbf{U}}})=\rho_{2}(x|_{\tilde{\mathbf{U}}}) \rho_{2}^{-1}\in \mathbf{E}_{\mathbf{V}'',{^{''}\mathbf{W}}^{\bullet},\Omega^{N}}$.   Lusztig's induction functor for $N$-framed quiver is defined by
 \begin{center}
 $\mathbf{Ind}^{\mathbf{V}}_{\mathbf{V}',\mathbf{V}''}:\mathcal{D}^{b}_{G_{\mathbf{V}'\oplus{^{'}\mathbf{W}}^{\bullet}}}(\mathbf{E}_{\mathbf{V}',{^{'}\mathbf{W}}^{\bullet},\Omega^{N}}) \boxtimes \mathcal{D}^{b}_{G_{\mathbf{V}''\oplus{^{''}\mathbf{W}}^{\bullet}}}(\mathbf{E}_{\mathbf{V}'',{^{''}\mathbf{W}}^{\bullet},\Omega^{N}}) \rightarrow \mathcal{D}^{b}_{G_{\mathbf{V}\oplus \mathbf{W}^{\bullet}}}(\mathbf{E}_{\mathbf{V},\mathbf{W}^{\bullet},\Omega^{N}}),$\\
  	$ \mathbf{Ind}^{\mathbf{V}}_{\mathbf{V}',\mathbf{V}''}(A\boxtimes B)=\mathbf{Ind}^{\mathbf{V}\oplus \mathbf{W}^{\bullet}}_{\mathbf{V}'\oplus{^{'}\mathbf{W}}^{\bullet},\mathbf{V}''\oplus{^{''}\mathbf{W}}^{\bullet}} (A\boxtimes B)= (p_{3})_{!}(p_{2})_{\flat}(p_{1})^{\ast}(A\boxtimes B)[d_{1}-d_{2}].$
 \end{center}
 Here $p_{1}$ is smooth with connected fibers, $p_{2}$ is a principle $G_{\mathbf{V}'} \times G_{ {^{'}\mathbf{W}}^{\bullet}} \times G_{\mathbf{V}''}\times G_{ {^{''}\mathbf{W}}^{\bullet}}$-bundle, $p_{3}$ is proper and $d_{1},d_{2}$ are the dimensions of the fibers of $p_{1}$ and $p_{2}$ respectively. For principle $G$-bundle  $f:Y \rightarrow X$, the functor $f_{\flat}$ is defined to be the inverse of $f^{\ast}$, which gives equivalence $\mathcal{D}^{b}(X) \cong \mathcal{D}^{b}_{G}(Y)$. (See details in \cite[Theorem 6.5.9]{MR4337423}.)

 \begin{proposition}\cite[Lemma 3.2]{MR1088333} \label{indformula}
 	For any flag types $\boldsymbol{\mu}'\in \mathcal{S}_{|\mathbf{V}'\oplus{^{'}\mathbf{W}}^{\bullet} |}$ and $\boldsymbol{\mu}''\in \mathcal{S}_{|\mathbf{V}''\oplus{^{''}\mathbf{W}}^{\bullet}|}$,
 	\begin{equation*}
 		\mathbf{Ind}^{\mathbf{V}}_{\mathbf{V}',\mathbf{V}''}(L_{\boldsymbol{\mu}'} \boxtimes L_{\boldsymbol{\mu}''})= L_{\boldsymbol{\mu}' \boldsymbol{\mu}''}.
 	\end{equation*}
 In particular, $\mathbf{Ind}^{\mathbf{V}}_{\mathbf{V}',\mathbf{V}''}$ restricts to a functor $ \mathcal{Q}'_{\mathbf{V}'\oplus{^{'}\mathbf{W}}^{\bullet},\Omega^{N} } \boxtimes \mathcal{Q}'_{\mathbf{V}''\oplus{^{''}\mathbf{W}}^{\bullet},\Omega^{N}} \rightarrow \mathcal{Q}'_{\mathbf{V}\oplus \mathbf{W}^{\bullet},\Omega^{N}}$.
\end{proposition}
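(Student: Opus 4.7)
The plan is to adapt Lusztig's original argument for unframed quivers (\cite[Lemma 3.2]{MR1088333}) to the $N$-framed setting. The geometric content of the identity is that a flag of type $\boldsymbol{\mu}'\boldsymbol{\mu}''$ in $\mathbf{V}\oplus\mathbf{W}^{\bullet}$ is the same datum as the subspace $\tilde{\mathbf{U}}$ sitting at the join position of $\boldsymbol{\mu}'$ and $\boldsymbol{\mu}''$, a flag of type $\boldsymbol{\mu}''$ on $\tilde{\mathbf{U}}$, and a flag of type $\boldsymbol{\mu}'$ on the quotient $\mathbf{V}\oplus\mathbf{W}^{\bullet}/\tilde{\mathbf{U}}$. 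The $N$-framing contributes nothing essentially new because the induction diagram is set up uniformly in terms of the $I^{(N)}$-graded structure.

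First I would introduce two auxiliary smooth varieties lying over $\mathbf{E}'_{\Omega}$ and $\mathbf{E}''_{\Omega}$ by adjoining compatible flag data. Let $\tilde{\mathbf{E}}'$ parametrize $(x,\tilde{\mathbf{U}},\rho_{1},\rho_{2},f',f'')$ where $(x,\tilde{\mathbf{U}},\rho_{1},\rho_{2}) \in \mathbf{E}'_{\Omega}$ and $f'$, $f''$ are flags of types $\boldsymbol{\mu}'$, $\boldsymbol{\mu}''$ compatible with $\rho_{1,\ast}(\bar{x}|_{\mathbf{V}\oplus\mathbf{W}^{\bullet}/\tilde{\mathbf{U}}})$ and $\rho_{2,\ast}(x|_{\tilde{\mathbf{U}}})$ respectively; similarly, let $\tilde{\mathbf{E}}''$ parametrize $(x,\tilde{\mathbf{U}},\tilde{f}',\tilde{f}'')$ with $\tilde{f}'$ a compatible flag of type $\boldsymbol{\mu}'$ on $\mathbf{V}\oplus\mathbf{W}^{\bullet}/\tilde{\mathbf{U}}$ and $\tilde{f}''$ a compatible flag of type $\boldsymbol{\mu}''$ on $\tilde{\mathbf{U}}$. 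These assemble into the commutative diagram
\[
\xymatrix{
\mathcal{F}_{\boldsymbol{\mu}',\Omega^{N}} \times \mathcal{F}_{\boldsymbol{\mu}'',\Omega^{N}} \ar[d]_{\pi_{\boldsymbol{\mu}'}\times\pi_{\boldsymbol{\mu}''}} & \tilde{\mathbf{E}}' \ar[l]_-{q_{1}} \ar[r]^-{q_{2}} \ar[d] & \tilde{\mathbf{E}}'' \ar[d] \ar[r]^-{q_{3}} & \mathcal{F}_{\boldsymbol{\mu}'\boldsymbol{\mu}'',\Omega^{N}} \ar[d]^{\pi_{\boldsymbol{\mu}'\boldsymbol{\mu}''}} \\
\mathbf{E}_{\mathbf{V}',{^{'}\mathbf{W}}^{\bullet},\Omega^{N}} \times \mathbf{E}_{\mathbf{V}'',{^{''}\mathbf{W}}^{\bullet},\Omega^{N}} & \mathbf{E}'_{\Omega} \ar[l]_-{p_{1}} \ar[r]^-{p_{2}} & \mathbf{E}''_{\Omega} \ar[r]^-{p_{3}} & \mathbf{E}_{\mathbf{V},\mathbf{W}^{\bullet},\Omega^{N}}
}
\]
in which the three squares are cartesian: the leftmost by direct inspection, making smooth base change available along $p_{1}$; the middle because $p_{2}$ is a principal bundle and both $\tilde{\mathbf{E}}'$ and $\tilde{\mathbf{E}}''$ are compatible torsors for the same structure group $G_{\mathbf{V}'}\times G_{{^{'}\mathbf{W}}^{\bullet}}\times G_{\mathbf{V}''}\times G_{{^{''}\mathbf{W}}^{\bullet}}$; and the rightmost because $q_{3}$ is in fact an isomorphism, since a flag of type $\boldsymbol{\mu}'\boldsymbol{\mu}''$ is uniquely reconstructed from $(\tilde{\mathbf{U}},\tilde{f}',\tilde{f}'')$ by inserting $\tilde{\mathbf{U}}$ at the join position.

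With the diagram in place, I would chase $L_{\boldsymbol{\mu}'}\boxtimes L_{\boldsymbol{\mu}''}=(\pi_{\boldsymbol{\mu}'}\times\pi_{\boldsymbol{\mu}''})_{!}\bar{\mathbb{Q}}_{l}[\dim\mathcal{F}_{\boldsymbol{\mu}',\Omega^{N}}+\dim\mathcal{F}_{\boldsymbol{\mu}'',\Omega^{N}}]$ through each stage: smooth base change turns $(p_{1})^{\ast}$ into pullback along $q_{1}$, which carries $\bar{\mathbb{Q}}_{l}$ to $\bar{\mathbb{Q}}_{l}$; the $\flat$-equivalence for $p_{2}$ transports the resulting equivariant sheaf on $\tilde{\mathbf{E}}'$ to the corresponding one on $\tilde{\mathbf{E}}''$; and proper base change along the rightmost square, together with the fact that $q_{3}$ is an isomorphism, identifies the further pushforward by $(p_{3})_{!}$ with $(\pi_{\boldsymbol{\mu}'\boldsymbol{\mu}''})_{!}\bar{\mathbb{Q}}_{l}$. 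What remains is the shift accounting, which reduces to verifying
\[
\dim\mathcal{F}_{\boldsymbol{\mu}',\Omega^{N}}+\dim\mathcal{F}_{\boldsymbol{\mu}'',\Omega^{N}}+d_{1}-d_{2}=\dim\mathcal{F}_{\boldsymbol{\mu}'\boldsymbol{\mu}'',\Omega^{N}},
\]
a routine count comparing the Grassmannian-of-$\tilde{\mathbf{U}}$ fibres of $p_{1}$ and $p_{2}$ against the dimension of the extra choices in $\mathcal{F}_{\boldsymbol{\mu}'\boldsymbol{\mu}''}$ compared to $\mathcal{F}_{\boldsymbol{\mu}'}\times\mathcal{F}_{\boldsymbol{\mu}''}$.

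The main obstacle, such as it is, lies in the bookkeeping around equivariance: the categories used to define $\mathbf{Ind}^{\mathbf{V}}_{\mathbf{V}',\mathbf{V}''}$ involve products of smaller general linear groups, and one must verify that each of the auxiliary varieties carries the appropriate equivariant structure and that the framed vertices $i^{k}$ in $I^{(N)}$ behave exactly as ordinary vertices of $I$. Once the $I^{(N)}$-graded structure is used uniformly, the framing introduces no new difficulty. The ``in particular'' clause then follows immediately: any simple summand of $L_{\boldsymbol{\mu}'}\boxtimes L_{\boldsymbol{\mu}''}$ is carried by induction into a direct summand of $L_{\boldsymbol{\mu}'\boldsymbol{\mu}''}$, which by construction lies in $\mathcal{Q}'_{\mathbf{V}\oplus\mathbf{W}^{\bullet},\Omega^{N}}$.
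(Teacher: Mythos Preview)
Your proposal is correct and is precisely Lusztig's original argument from \cite[Lemma~3.2]{MR1088333}, carried over verbatim to the $N$-framed situation by treating $I^{(N)}$ as an ordinary vertex set. The paper does not supply its own proof of this proposition: it is stated as a direct citation of Lusztig's lemma, so there is nothing further to compare.
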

 
 \subsection{Restriction functor}
 Fix a decomposition $(\mathbf{T} \oplus {^{1}\mathbf{W}}^{\bullet} )\oplus (\mathbf{W}\oplus {^{2}\mathbf{W}}^{\bullet} )=\mathbf{V} \oplus \mathbf{W}^{\bullet}$ such that $|\mathbf{T}|=\nu'$ and $|\mathbf{W}|=\nu''$, let $F_{\Omega}$ be the closed subvariety of $\mathbf{E}_{\mathbf{V},\mathbf{W}^{\bullet},\Omega^{N}}$ consisting of $x$ such that the subspace $\mathbf{W}\oplus {^{2}\mathbf{W}}^{\bullet} $ is $x$-stable. Consider the following diagram
 \begin{center}
 	$\mathbf{E}_{\mathbf{T},{^{1}\mathbf{W}}^{\bullet},\Omega^{N}} \times \mathbf{E}_{\mathbf{W},{^{2}\mathbf{W}}^{\bullet},\Omega^{N}} \xleftarrow{\kappa_{\Omega} } F_{\Omega} \xrightarrow{\iota_{\Omega}} \mathbf{E}_{\mathbf{V},\mathbf{W}^{\bullet},\Omega^{N}}$
 \end{center} 
 where $\iota_{\Omega}$ is the natural embedding and $\kappa_{\Omega}(x)=(\overline{x}|_{\mathbf{T}\oplus {^{1}\mathbf{W}}^{\bullet}},x|_{\mathbf{W}\oplus {^{2}\mathbf{W}}^{\bullet}}) \in \mathbf{E}_{\mathbf{T},{^{1}\mathbf{W}}^{\bullet},\Omega^{N}} \times \mathbf{E}_{\mathbf{W},{^{2}\mathbf{W}}^{\bullet},\Omega^{N}} $ for any $x \in F_{\Omega}$. Notice that $\kappa_{\Omega}$ is a vector bundle. Lusztig's restriction functor for $N$-framed quiver is defined by
 \begin{center}
 	$\mathbf{Res}^{\mathbf{V}}_{\mathbf{T},\mathbf{W}}: \mathcal{D}^{b}_{G_{\mathbf{V}\oplus\mathbf{W}^{\bullet}}}(\mathbf{E}_{\mathbf{V},\mathbf{W}^{\bullet},\Omega^{N}}) \rightarrow \mathcal{D}^{b}_{G_{\mathbf{T}\oplus{^{1}\mathbf{W}}^{\bullet}} \times G_{\mathbf{W}\oplus{^{2}\mathbf{W}}^{\bullet}}}(\mathbf{E}_{\mathbf{T},{^{1}\mathbf{W}}^{\bullet},\Omega^{N}}\times \mathbf{E}_{\mathbf{W},{^{2}\mathbf{W}}^{\bullet},\Omega^{N}}),$\\
 	$\mathbf{Res}^{\mathbf{V}}_{\mathbf{T},\mathbf{W}}(C)=\mathbf{Res}^{\mathbf{V}\oplus \mathbf{W}^{\bullet}}_{\mathbf{T}\oplus {^{1}\mathbf{W}}^{\bullet},\mathbf{W}\oplus {^{2}\mathbf{W}}^{\bullet}}(C)=(\kappa_{\Omega})_{!} (\iota_{\Omega})^{\ast}(C)[-\langle |\mathbf{T} \oplus {^{1}\mathbf{W}}^{\bullet}| ,|\mathbf{W}\oplus {^{2}\mathbf{W}}^{\bullet} | \rangle],$
 \end{center}
 where $\langle-,-\rangle $is the  Euler form associated to $Q^{(N)}$, see the definition of Euler form in \cite[Equation (1.2)]{MR3202708}.

 \begin{proposition}\cite[Proposition 4.2]{MR1088333} \label{res formula}	
 	For $\boldsymbol{\mu}\in \mathcal{S}_{|\mathbf{V}\oplus \mathbf{W}^{\bullet}|}$,
 	\begin{equation*}
 		\mathbf{Res}^{\mathbf{V}}_{\mathbf{T},\mathbf{W}}( L_{\boldsymbol{\nu}}) =\bigoplus \limits_{\boldsymbol{\tau}+\boldsymbol{\omega}=\boldsymbol{\nu} } L_{\boldsymbol{\tau}} \boxtimes L_{\boldsymbol{\omega}}[M(\boldsymbol{\tau},\boldsymbol{\omega})],
 	\end{equation*}
 	where $\boldsymbol{\tau}$ runs over $\mathcal{S}_{|\mathbf{T}\oplus {^{1}\mathbf{W}}^{\bullet}|}$, $\boldsymbol{\omega}$ runs over $ \mathcal{S}_{|\mathbf{W}\oplus {^{2}\mathbf{W}}^{\bullet}|} $,  and the integer $	M(\boldsymbol{\tau},\boldsymbol{\omega})$ is a certain interger given by the formula in \cite[Section 9.2.11]{MR1227098}. In particular, the restriction functor $\mathbf{Res}^{\mathbf{V}}_{\mathbf{T},\mathbf{W}}$ restricts to a functor $\mathcal{Q}'_{\mathbf{V}\oplus \mathbf{W}^{\bullet},\Omega^{N}}\rightarrow \mathcal{Q}'_{\mathbf{T}\oplus {^{1}\mathbf{W}}^{\bullet},\Omega^{N}}\boxtimes \mathcal{Q}'_{\mathbf{W}\oplus {^{2}\mathbf{W}}^{\bullet},\Omega^{N}}. $
 \end{proposition}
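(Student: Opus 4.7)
The plan is to treat $Q^{(N)}=(I^{(N)},H^{(N)},\Omega^{N})$ as an ordinary loop-free finite quiver (one that simply happens to have many vertices) and deduce the statement directly from Lusztig's original restriction formula \cite[Proposition 4.2]{MR1088333}. Indeed, $\mathbf{E}_{\mathbf{V},\mathbf{W}^\bullet,\Omega^{N}}$ is the representation variety of $Q^{(N)}$ at the $I^{(N)}$-graded dimension vector $|\mathbf{V}\oplus\mathbf{W}^\bullet|$, the group $G_{\mathbf{V}\oplus\mathbf{W}^\bullet}$ is the full base-change group for $Q^{(N)}$, and the diagram $\mathbf{E}_{\mathbf{T},{^{1}\mathbf{W}}^\bullet,\Omega^{N}}\times\mathbf{E}_{\mathbf{W},{^{2}\mathbf{W}}^\bullet,\Omega^{N}}\xleftarrow{\kappa_{\Omega}}F_{\Omega}\xrightarrow{\iota_{\Omega}}\mathbf{E}_{\mathbf{V},\mathbf{W}^\bullet,\Omega^{N}}$ is Lusztig's restriction diagram for the subspace $\mathbf{W}\oplus{^{2}\mathbf{W}}^\bullet$ of $\mathbf{V}\oplus\mathbf{W}^\bullet$, with Euler form taken for $Q^{(N)}$.

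To make the argument self-contained I would unpack the core geometric computation. First form the fibered product $F_{\Omega}\times_{\mathbf{E}_{\mathbf{V},\mathbf{W}^\bullet,\Omega^{N}}}\mathcal{F}_{\boldsymbol{\nu},\Omega^{N}}$ and stratify it by intersection data: for a pair $(x,f)$ with $f=(0=\mathbf{U}^m\subseteq\cdots\subseteq\mathbf{U}^0=\mathbf{V}\oplus\mathbf{W}^\bullet)$, set $\omega^k=|\mathbf{U}^{k-1}\cap(\mathbf{W}\oplus{^{2}\mathbf{W}}^\bullet)/\mathbf{U}^{k}\cap(\mathbf{W}\oplus{^{2}\mathbf{W}}^\bullet)|$ and $\tau^k=\nu^k-\omega^k$, which gives the required decomposition $\boldsymbol{\tau}+\boldsymbol{\omega}=\boldsymbol{\nu}$. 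On the stratum labelled by $(\boldsymbol{\tau},\boldsymbol{\omega})$, the map to $\mathbf{E}_{\mathbf{T},{^{1}\mathbf{W}}^\bullet,\Omega^{N}}\times\mathbf{E}_{\mathbf{W},{^{2}\mathbf{W}}^\bullet,\Omega^{N}}$ factors as an iterated affine bundle (parametrising splittings of the successive quotient short exact sequences) over $\mathcal{F}_{\boldsymbol{\tau},\Omega^{N}}\times\mathcal{F}_{\boldsymbol{\omega},\Omega^{N}}$. Pushing the constant sheaf forward along this tower and then along the proper map to the base yields $L_{\boldsymbol{\tau}}\boxtimes L_{\boldsymbol{\omega}}$ up to a shift controlled by the affine-bundle dimensions.

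Summing over strata and absorbing the Euler-form shift $-\langle|\mathbf{T}\oplus{^{1}\mathbf{W}}^\bullet|,|\mathbf{W}\oplus{^{2}\mathbf{W}}^\bullet|\rangle$ produces the stated formula with $M(\boldsymbol{\tau},\boldsymbol{\omega})$ the explicit integer from \cite[Section 9.2.11]{MR1227098}. The second assertion, that $\mathbf{Res}^{\mathbf{V}}_{\mathbf{T},\mathbf{W}}$ restricts to a functor $\mathcal{Q}'_{\mathbf{V}\oplus\mathbf{W}^\bullet,\Omega^{N}}\to\mathcal{Q}'_{\mathbf{T}\oplus{^{1}\mathbf{W}}^\bullet,\Omega^{N}}\boxtimes\mathcal{Q}'_{\mathbf{W}\oplus{^{2}\mathbf{W}}^\bullet,\Omega^{N}}$, is then tautological: an object of $\mathcal{Q}'_{\mathbf{V}\oplus\mathbf{W}^\bullet,\Omega^{N}}$ is a shift of a summand of some $L_{\boldsymbol{\nu}}$, and the right-hand side is manifestly a direct sum of shifts of $L_{\boldsymbol{\tau}}\boxtimes L_{\boldsymbol{\omega}}$, which lie in the target by definition.

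The main obstacle is matching the shift $M(\boldsymbol{\tau},\boldsymbol{\omega})$ produced by this bookkeeping with Lusztig's combinatorial formula. Since the Euler form of $Q^{(N)}$ splits as a sum of contributions from edges of $Q$ and from the framing edges $i\to i^{k}$, this is a routine, if tedious, verification in which one checks term by term that contributions internal to $\mathbf{T}\oplus{^{1}\mathbf{W}}^\bullet$, internal to $\mathbf{W}\oplus{^{2}\mathbf{W}}^\bullet$, and across the subspace are correctly accounted for; no new phenomenon arises because $Q^{(N)}$ carries no edges among framing vertices.
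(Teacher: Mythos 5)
Your proposal is correct and matches the paper's (implicit) argument exactly: the paper proves this proposition simply by viewing $Q^{(N)}$ as an ordinary loop-free quiver and invoking Lusztig's restriction formula \cite[Proposition 4.2]{MR1088333} with the shift from \cite[Section 9.2.11]{MR1227098}, which is precisely your first paragraph. Your further unpacking of the stratification and affine-bundle bookkeeping is just a sketch of Lusztig's own proof, so no genuinely different route is involved.
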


 \begin{remark}
 	Let $\mathcal{D}^{b,ss}_{G_{\mathbf{V}\oplus\mathbf{W}^{\bullet}}}(\mathbf{E}_{\mathbf{V},\mathbf{W}^{\bullet},\Omega^{N}})$ be the full subcatgory of $\mathcal{D}^{b}_{G_{\mathbf{V}\oplus\mathbf{W}^{\bullet}}}(\mathbf{E}_{\mathbf{V},\mathbf{W}^{\bullet},\Omega^{N}})$ consisting of semisimple complexes, then by \cite{MR4524567} the induction functor and restrcition functor also restrict to 
 	\begin{align*}
 		&\mathbf{Ind}^{\mathbf{V}}_{\mathbf{V}',\mathbf{V}''}:\mathcal{D}^{b,ss}_{G_{\mathbf{V}'\oplus{^{'}\mathbf{W}}^{\bullet}}}(\mathbf{E}_{\mathbf{V}',{^{'}\mathbf{W}}^{\bullet},\Omega^{N}})\times \mathcal{D}^{b,ss}_{G_{\mathbf{V}''\oplus{^{''}\mathbf{W}}^{\bullet}}}(\mathbf{E}_{\mathbf{V}'',{^{''}\mathbf{W}}^{\bullet},\Omega^{N}})\rightarrow \mathcal{D}^{b,ss}_{G_{\mathbf{V}\oplus\mathbf{W}^{\bullet}}}(\mathbf{E}_{\mathbf{V},\mathbf{W}^{\bullet},\Omega^{N}}),\\
 		&\mathbf{Res}^{\mathbf{V}}_{\mathbf{T},\mathbf{W}}:\mathcal{D}^{b,ss}_{G_{\mathbf{V}\oplus\mathbf{W}^{\bullet}}}(\mathbf{E}_{\mathbf{V},\mathbf{W}^{\bullet},\Omega^{N}})\rightarrow \mathcal{D}^{b,ss}_{G_{\mathbf{T}\oplus{^{1}\mathbf{W}}^{\bullet}}\times G_{\mathbf{W}\oplus{^{2}\mathbf{W}}^{\bullet}}}(\mathbf{E}_{\mathbf{T},{^{1}\mathbf{W}}^{\bullet},\Omega^{N}} \times \mathbf{E}_{\mathbf{W},{^{2}\mathbf{W}}^{\bullet},\Omega^{N}}).
 	\end{align*}
 \end{remark}
 
 \subsection{Fourier-Deligne transform}

 If $\tilde{\Omega}$ is another orientation of $Q$, define
 \begin{equation*}
 	\begin{split}
 		\mathbf{E}_{\mathbf{V},\mathbf{W}^{\bullet},\Omega^{N}\cup \tilde{\Omega}^{N}}=& \bigoplus\limits_{h \in \Omega^{N}\cup \tilde{\Omega}^{N}} \mathbf{Hom}(\mathbf{V}_{h'},\mathbf{V}_{h''})\\
 		=&   \bigoplus\limits_{h \in \Omega\cup \tilde{\Omega}} \mathbf{Hom}(\mathbf{V}_{h'},\mathbf{V}_{h''}) \oplus \bigoplus\limits_{i \in I, 1\leqslant k \leqslant N } \mathbf{Hom}(\mathbf{V}_{i},\mathbf{W}_{i^{k}}).
 	\end{split}
 \end{equation*}
 and  define $T: \mathbf{E}_{\mathbf{V},\mathbf{W}^{\bullet},\Omega^{N}\cup \tilde{\Omega}^{N}} \longrightarrow \mathbf{k}$ by $$T(x)=\sum \limits_{h \in \Omega \backslash \tilde{\Omega}}tr(x_{h}x_{\bar{h}}).$$

  Fix a nontrivial character $\mathbb{F}_{q} \rightarrow \bar{\mathbb{Q}}_{l}^{\ast}$, then this character defines an Artin-Schreier local system of rank $1$ on $\mathbf{k}$ and the pull back $\mathcal{L}_{T}$ of the Artin-Schreier local system under $T$ is a well-defined $G_{\mathbf{V}}$-equivariant local system of rank $1$ on $\mathbf{E}_{\mathbf{V},\mathbf{W}^{\bullet},\Omega^{N}\cup \tilde{\Omega}^{N}}$. The Fourier-Deligne transform for $N$-framed quivers is defined by
 \begin{align*}
 	&\mathcal{F}_{\Omega,\tilde{\Omega}}:\mathcal{D}^{b}_{G_{\mathbf{V}\oplus\mathbf{W}^{\bullet}}}(\mathbf{E}_{\mathbf{V},\mathbf{W}^{\bullet},\Omega^{N}}) \rightarrow \mathcal{D}^{b}_{G_{\mathbf{V}\oplus\mathbf{W}^{\bullet}}}(\mathbf{E}_{\mathbf{V},\mathbf{W}^{\bullet},
 		\tilde{\Omega}^{N}})\\ &\mathcal{F}_{\Omega,\tilde{\Omega}}(L)=\delta'_{!}(\delta^{\ast}(L)\otimes \mathcal{L}_{T})[\sum\limits_{h \in \Omega \backslash \tilde{\Omega}}{\rm{dim}} \mathbf{V}_{h'}{\rm{dim}}\mathbf{V}_{h''}],
 \end{align*}  
where $\delta,\delta'$ are the natural projections defined by	$$\delta((x_{h})_{h \in \Omega^{N}\cup \tilde{\Omega}^{N}} )= ((x_{h})_{h \in \Omega^{N}}),$$  $$\delta'((x_{h})_{h \in \Omega^{N}\cup \tilde{\Omega}^{N}} )= ((x_{h})_{h \in \tilde{\Omega}^{N}}).$$ Regard $N$-framed quivers as quivers and apply \cite[Theorem 5.4]{MR1088333}, we get the following proposition and corollary.
 \begin{proposition}  \label{FD0}
 	With the notations above, for any semisimple perverse sheaves $L_{1}$ and $L_{2}$,
 	\begin{center}
 		$\mathcal{F}_{\Omega,\tilde{\Omega}}(\mathbf{Ind}^{\mathbf{V}} _{\mathbf{V}',\mathbf{V}''}(L_{1} \boxtimes L_{2})) \cong  \mathbf{Ind}^{\mathbf{V}} _{\mathbf{V}',\mathbf{V}''}(\mathcal{F}_{\Omega,\tilde{\Omega}}(L_{1}) \boxtimes \mathcal{F}_{\Omega,\tilde{\Omega}}(L_{2})).$
 	\end{center}
 In particular, $\mathcal{F}_{\Omega,\tilde{\Omega}}(L_{\boldsymbol{\nu}})=L_{\boldsymbol{\nu}}$ for any $\boldsymbol{\nu} \in \mathcal{S}_{|\mathbf{V}\oplus \mathbf{W}^{\bullet}|}.$
 \end{proposition}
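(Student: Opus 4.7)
The plan is to reduce everything to Lusztig's original Fourier transform theorem \cite[Theorem 5.4]{MR1088333} by treating the $N$-framed quiver $Q^{(N)}=(I^{(N)},H^{(N)},\Omega^{N})$ as an ordinary loopless quiver. The key observation is that each piece of data in the statement---the moduli space $\mathbf{E}_{\mathbf{V},\mathbf{W}^{\bullet},\Omega^{N}}$, the group $G_{\mathbf{V}\oplus\mathbf{W}^{\bullet}}$ acting on it, Lusztig's induction functor $\mathbf{Ind}^{\mathbf{V}}_{\mathbf{V}',\mathbf{V}''}$, and the Fourier--Deligne transform $\mathcal{F}_{\Omega,\tilde\Omega}$---is literally the corresponding construction for $Q^{(N)}$ viewed as an ordinary quiver, with dimension vector $|\mathbf{V}\oplus\mathbf{W}^{\bullet}|\in\mathbb{N}[I^{(N)}]$ and with the subset $\Omega\setminus\tilde\Omega\subseteq H\subseteq H^{(N)}$ playing the role of the arrows whose orientation is reversed. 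Note in particular that the framing arrows $i\to i^{k}$ all lie in both $\Omega^{N}$ and $\tilde\Omega^{N}$, so they contribute neither to the bilinear function $T$ nor to the shift $\sum_{h\in\Omega\setminus\tilde\Omega}\dim\mathbf{V}_{h'}\dim\mathbf{V}_{h''}$.

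With these identifications in place, the first displayed isomorphism
\begin{equation*}
\mathcal{F}_{\Omega,\tilde\Omega}\bigl(\mathbf{Ind}^{\mathbf{V}}_{\mathbf{V}',\mathbf{V}''}(L_{1}\boxtimes L_{2})\bigr)\cong\mathbf{Ind}^{\mathbf{V}}_{\mathbf{V}',\mathbf{V}''}\bigl(\mathcal{F}_{\Omega,\tilde\Omega}(L_{1})\boxtimes\mathcal{F}_{\Omega,\tilde\Omega}(L_{2})\bigr)
\end{equation*}
follows by directly quoting \cite[Theorem 5.4]{MR1088333} applied to $Q^{(N)}$. The only verification required is that the defining diagrams of $\mathbf{Ind}$ and of $\mathcal{F}_{\Omega,\tilde\Omega}$ for the $N$-framed quiver coincide with Lusztig's, which is immediate from how $\mathbf{E}'_{\Omega}$, $\mathbf{E}''_{\Omega}$ and the maps $\delta,\delta'$ were set up in the preceding subsections.

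For the particular statement $\mathcal{F}_{\Omega,\tilde\Omega}(L_{\boldsymbol\nu})=L_{\boldsymbol\nu}$, I would argue by induction on the length of the flag type $\boldsymbol\nu=(\mu^{1},\dots,\mu^{m})\in\mathcal{S}_{|\mathbf{V}\oplus\mathbf{W}^{\bullet}|}$. By iterating Proposition \ref{indformula}, one can write
\begin{equation*}
L_{\boldsymbol\nu}\cong\mathbf{Ind}\bigl(L_{\mu^{1}}\boxtimes L_{\mu^{2}}\boxtimes\cdots\boxtimes L_{\mu^{m}}\bigr),
\end{equation*}
so by the first part of the proposition the problem reduces to the base case of a single-vertex flag type $\mu=(j)^{a}$ for some $j\in I^{(N)}$ and $a\in\mathbb{N}$. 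In this base case the underlying graded space is concentrated at a single vertex of $I^{(N)}$; since $Q^{(N)}$ has no loops, no arrow $h\in\Omega\setminus\tilde\Omega$ has both endpoints at $j$, so the function $T$ vanishes identically, the Artin--Schreier pullback $\mathcal{L}_{T}$ is the trivial local system, the maps $\delta,\delta'$ are isomorphisms of moduli spaces, and the dimensional shift is zero. Hence $\mathcal{F}_{\Omega,\tilde\Omega}(L_{(j)^{a}})\cong L_{(j)^{a}}$, completing the induction.

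I do not expect a serious obstacle here: once the bookkeeping identifying $Q^{(N)}$-data with ordinary-quiver-data is carried out carefully, both claims are formal consequences of Lusztig's original theorem and the induction formula for $L_{\boldsymbol\mu'\boldsymbol\mu''}$. The mildly subtle point---if any---is checking that the normalizing shifts agree on the nose (not just up to shift), which is handled by separating the Fourier-active arrows (inside $H$) from the framing arrows (which always have $\dim\mathbf{W}_{i^{k}}$ or $\dim\mathbf{V}_{i}$ vanishing in the single-vertex base case) so that the shift contribution is controlled.
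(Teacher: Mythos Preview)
Your proposal is correct and takes essentially the same approach as the paper: the paper simply states ``Regard $N$-framed quivers as quivers and apply \cite[Theorem 5.4]{MR1088333}, we get the following proposition and corollary,'' which is exactly your reduction. Your inductive derivation of $\mathcal{F}_{\Omega,\tilde\Omega}(L_{\boldsymbol\nu})\cong L_{\boldsymbol\nu}$ from the compatibility with induction (via Proposition~\ref{indformula} and the single-vertex base case) spells out what the paper leaves implicit, and is the standard way to obtain this consequence.
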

 \begin{corollary}\label{FD2}
 	The functor $\mathcal{F}_{\Omega,\tilde{\Omega}}$ induces an equivalence of categories $\mathcal{Q}_{\mathbf{V}\oplus\mathbf{W}^{\bullet},\Omega^{N}} \cong \mathcal{Q}_{\mathbf{V}\oplus\mathbf{W}^{\bullet},\tilde{\Omega}^{N}}$ and a bijection $\eta_{\Omega,\tilde{\Omega}}:\mathcal{P}_{\mathbf{V}\oplus\mathbf{W}^{\bullet},\Omega^{N}}\rightarrow \mathcal{P}_{\mathbf{V}\oplus\mathbf{W}^{\bullet},\tilde{\Omega}^{N}}$. 	The functor $\mathcal{F}_{\Omega,\tilde{\Omega}}$ also restricts to an equivalence of categories $\mathcal{Q}_{\mathbf{V},\mathbf{W}^{\bullet},\Omega^{N}} \cong \mathcal{Q}_{\mathbf{V},\mathbf{W}^{\bullet},\tilde{\Omega}^{N}}$ and a bijection $\eta_{\Omega,\tilde{\Omega}}:\mathcal{P}_{\mathbf{V},\mathbf{W}^{\bullet},\Omega^{N}}\rightarrow \mathcal{P}_{\mathbf{V},\mathbf{W}^{\bullet},\tilde{\Omega}^{N}}$.  Moreover, for three orientations $\Omega,\Omega',\Omega''$ of $Q$, we have $\eta_{\Omega',\Omega''}\eta_{\Omega,\Omega'}=\eta_{\Omega,\Omega''}$. 
 \end{corollary}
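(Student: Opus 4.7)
The plan is to deduce everything from two facts: $\mathcal{F}_{\Omega,\tilde{\Omega}}$ is a shift-compatible equivalence of equivariant derived categories with quasi-inverse (isomorphic to) $\mathcal{F}_{\tilde{\Omega},\Omega}$ that is t-exact for the perverse t-structure with the normalization chosen in the definition, and Proposition \ref{FD0}, which says $\mathcal{F}_{\Omega,\tilde{\Omega}}(L_{\boldsymbol{\nu}})\cong L_{\boldsymbol{\nu}}$ for every flag type $\boldsymbol{\nu}$. The first fact is standard for the Fourier-Deligne transform on dual vector bundles and passes to the $G_{\mathbf{V}\oplus \mathbf{W}^{\bullet}}$-equivariant setting because the defining maps $\delta,\delta'$ and the function $T$ are $G_{\mathbf{V}\oplus \mathbf{W}^{\bullet}}$-invariant; the shift in the definition is exactly the one that makes the functor t-exact. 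I will only need these two inputs.

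First I would define $\eta_{\Omega,\tilde{\Omega}}$ on an element $L\in\mathcal{P}_{\mathbf{V}\oplus \mathbf{W}^{\bullet},\Omega^N}$. By definition of $\mathcal{P}_{\mathbf{V}\oplus \mathbf{W}^{\bullet},\Omega^N}$, there exist $\boldsymbol{\mu}\in\mathcal{S}_{|\mathbf{V}\oplus \mathbf{W}^{\bullet}|}$ and $d\in\mathbb{Z}$ so that $L[d]$ is a direct summand of $L_{\boldsymbol{\mu}}$. Since $\mathcal{F}_{\Omega,\tilde{\Omega}}$ is an additive equivalence and sends $L_{\boldsymbol{\mu}}$ to $L_{\boldsymbol{\mu}}$ (in the orientation $\tilde{\Omega}^N$), the object $\mathcal{F}_{\Omega,\tilde{\Omega}}(L)[d]$ is a direct summand of this $L_{\boldsymbol{\mu}}$. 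Because $\mathcal{F}_{\Omega,\tilde{\Omega}}$ is t-exact and preserves simplicity (up to the built-in shift), $\mathcal{F}_{\Omega,\tilde{\Omega}}(L)$ is a shift of a simple perverse sheaf $L'\in\mathcal{P}_{\mathbf{V}\oplus \mathbf{W}^{\bullet},\tilde{\Omega}^N}$. Define $\eta_{\Omega,\tilde{\Omega}}(L)=L'$; this is independent of the chosen $\boldsymbol{\mu}$. Running the same argument with $\mathcal{F}_{\tilde{\Omega},\Omega}$ gives a two-sided inverse to $\eta_{\Omega,\tilde{\Omega}}$, hence the bijection of the unframed statement. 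The category equivalence $\mathcal{Q}_{\mathbf{V}\oplus \mathbf{W}^{\bullet},\Omega^N}\cong\mathcal{Q}_{\mathbf{V}\oplus \mathbf{W}^{\bullet},\tilde{\Omega}^N}$ is now immediate: both categories are the full subcategories of finite direct sums of shifts of simples in $\mathcal{P}$, and $\mathcal{F}_{\Omega,\tilde{\Omega}}$ bijects the indexing sets by $\eta_{\Omega,\tilde{\Omega}}$ while commuting with arbitrary shifts.

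For the restricted equivalence $\mathcal{Q}_{\mathbf{V},\mathbf{W}^{\bullet},\Omega^N}\cong\mathcal{Q}_{\mathbf{V},\mathbf{W}^{\bullet},\tilde{\Omega}^N}$, I would repeat the same argument but use only the flag types of the special form $\boldsymbol{\nu}^{1}\boldsymbol{d}^{1}\boldsymbol{\nu}^{2}\boldsymbol{d}^{2}\cdots \boldsymbol{\nu}^{N}\boldsymbol{d}^{N}$ that are allowed in the definition of $\mathcal{P}_{\mathbf{V},\mathbf{W}^{\bullet},\Omega^N}$. Since Proposition \ref{FD0} applies to every flag type, the class of $L_{\boldsymbol{\mu}}$'s used here is still preserved by $\mathcal{F}_{\Omega,\tilde{\Omega}}$, so the previous argument restricts verbatim and gives the bijection $\eta_{\Omega,\tilde{\Omega}}:\mathcal{P}_{\mathbf{V},\mathbf{W}^{\bullet},\Omega^N}\to\mathcal{P}_{\mathbf{V},\mathbf{W}^{\bullet},\tilde{\Omega}^N}$ compatible with the one on $\mathcal{P}_{\mathbf{V}\oplus \mathbf{W}^{\bullet},\cdot}$.

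The step I expect to be the genuine point is the cocycle $\eta_{\Omega',\Omega''}\eta_{\Omega,\Omega'}=\eta_{\Omega,\Omega''}$. Two approaches are available. The direct one is to establish an isomorphism of functors $\mathcal{F}_{\Omega',\Omega''}\circ\mathcal{F}_{\Omega,\Omega'}\cong\mathcal{F}_{\Omega,\Omega''}$ on $\mathcal{Q}_{\mathbf{V}\oplus \mathbf{W}^{\bullet},\Omega^N}$; since any change of orientation factors as a sequence of single-edge flips and Fourier-Deligne on each edge is an involution (up to the appropriate shift), this reduces to checking the identity for a single edge, which is a standard base change computation with the Artin-Schreier local system. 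The slicker approach, which is the one I would actually write, bypasses this: given $L$ and a witness $L_{\boldsymbol{\mu}}$ of which $L[d]$ is a summand, Proposition \ref{FD0} and the additivity of $\mathcal{F}$ identify $\eta_{\Omega,\Omega'}(L)[d]$ and $\eta_{\Omega,\Omega''}(L)[d]$ with specific simple summands of $L_{\boldsymbol{\mu}}$ in the orientations $\Omega'{}^N$ and $\Omega''{}^N$; applying $\mathcal{F}_{\Omega',\Omega''}$ to this summand decomposition of $L_{\boldsymbol{\mu}}$ in orientation $\Omega'{}^N$ and invoking Proposition \ref{FD0} once more shows that $\eta_{\Omega',\Omega''}(\eta_{\Omega,\Omega'}(L))[d]$ picks out the same summand as $\eta_{\Omega,\Omega''}(L)[d]$ inside $L_{\boldsymbol{\mu}}$ in orientation $\Omega''{}^N$, giving the desired equality.
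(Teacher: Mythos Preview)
Your overall strategy matches the paper's: both parts of the corollary are immediate consequences of Proposition~\ref{FD0} together with the standard fact that the Fourier--Deligne transform, with the given shift, is a t-exact equivalence of equivariant derived categories. The paper in fact gives no proof at all beyond the sentence ``Regard $N$-framed quivers as quivers and apply \cite[Theorem 5.4]{MR1088333}'', so your write-up is already more detailed than theirs. One small imprecision: since $\mathcal{F}_{\Omega,\tilde{\Omega}}$ is genuinely t-exact with the chosen normalization, $\mathcal{F}_{\Omega,\tilde{\Omega}}(L)$ is itself a simple perverse sheaf, not merely a shift of one; you can drop the hedging.

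However, your ``slicker'' argument for the cocycle identity has a real gap. You claim that, after applying $\mathcal{F}_{\Omega',\Omega''}$ to the decomposition of $L_{\boldsymbol{\mu}}$ in orientation $\Omega'{}^N$, the summand $\eta_{\Omega',\Omega''}(\eta_{\Omega,\Omega'}(L))$ ``picks out the same summand'' of $L_{\boldsymbol{\mu}}$ in orientation $\Omega''{}^N$ as $\eta_{\Omega,\Omega''}(L)$. But $L_{\boldsymbol{\mu}}$ typically has many nonisomorphic simple summands, and nothing you have said rules out that the two-step route and the one-step route land on different ones. Knowing that both are summands of the same $L_{\boldsymbol{\mu}}$ is not enough; you need to know that $\mathcal{F}_{\Omega',\Omega''}\circ\mathcal{F}_{\Omega,\Omega'}(L)\cong\mathcal{F}_{\Omega,\Omega''}(L)$, which is precisely the content of your first approach. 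That first approach is the correct one and is exactly what Lusztig proves (the composition of Fourier--Deligne transforms along two changes of orientation is, up to a Tate twist that is suppressed here, the transform for the composite change). So keep your first approach and discard the second.
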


With  Corollary \ref{FD2}, we can denote the categories $\mathcal{Q}_{\mathbf{V}\oplus\mathbf{W}^{\bullet},\Omega^{N}}$ and $\mathcal{Q}_{\mathbf{V},\mathbf{W}^{\bullet},\Omega^{N}}$ by $\mathcal{Q}_{\mathbf{V}\oplus\mathbf{W}^{\bullet}}$ and $\mathcal{Q}_{\mathbf{V},\mathbf{W}^{\bullet}}$ respectively. We can also denote the sets $\mathcal{P}_{\mathbf{V}\oplus\mathbf{W}^{\bullet},\Omega^{N}}$ and $\mathcal{P}_{\mathbf{V},\mathbf{W}^{\bullet},\Omega^{N}}$ by $\mathcal{P}_{\mathbf{V}\oplus\mathbf{W}^{\bullet}}$ and $\mathcal{P}_{\mathbf{V},\mathbf{W}^{\bullet}}$ respectively.

 \begin{remark}\label{remarkFD}
 (1) Indeed, the Fourier-Deligne transform gives an equivalence of the bounded derived categories $\mathcal{D}^{b}_{G_{\mathbf{V}\oplus\mathbf{W}^{\bullet}}}(\mathbf{E}_{\mathbf{V},\mathbf{W}^{\bullet},\Omega^{N}}) \rightarrow \mathcal{D}^{b}_{G_{\mathbf{V}\oplus\mathbf{W}^{\bullet}}}(\mathbf{E}_{\mathbf{V},\mathbf{W}^{\bullet},
 		\tilde{\Omega}^{N}})$ and also restricts to an equivalence of  $\mathcal{D}^{b,ss}_{G_{\mathbf{V}\oplus\mathbf{W}^{\bullet}}}(\mathbf{E}_{\mathbf{V},\mathbf{W}^{\bullet},\Omega^{N}}) \rightarrow \mathcal{D}^{b,ss}_{G_{\mathbf{V}\oplus\mathbf{W}^{\bullet}}}(\mathbf{E}_{\mathbf{V},\mathbf{W}^{\bullet},
 		\tilde{\Omega}^{N}})$, see details in the proof of \cite[Theorem 3.13]{MR3202708}. 	
 (2) If we regard all vertices in $I^{(N)}$ as unframed vertices, we can see that the Fourier-Deligne transforms can also transform arrows $i \rightarrow i^{k}$ to $i^{k} \rightarrow i$. These Fourier-Deligne transforms will be only used in Section 4.	
 \end{remark}

 \section{Localizations and functors}
 \subsection{Localization at $i$}
 In this and the next subsection, we always forget the $G_{\mathbf{W}^{\bullet}}$-equivariant structure and fix an $i \in I$ and an orientation $\Omega_{i}$ such that $i$ is a source in $Q=(I,H,\Omega)$. Then there is a partition $\bigcup \limits_{p \geq 0} \mathbf{E}^{p}_{\mathbf{V},\mathbf{W}^{\bullet},i}=\mathbf{E}_{\mathbf{V},\mathbf{W}^{\bullet},\Omega_{i}^{N}} $, where $\mathbf{E}^{p}_{\mathbf{V},\mathbf{W}^{\bullet},i}$ is the locally closed subset defined by
 \begin{equation*}
 	\mathbf{E}^{p}_{\mathbf{V},\mathbf{W}^{\bullet},i}=\{ x\in \mathbf{E}_{\mathbf{V},\mathbf{W}^{\bullet},\Omega^{N}_{i}}| {{\rm{dim}}}{\rm{Ker}}((\bigoplus \limits_{h \in \Omega^{N}_{i}, h'=i} x_{h}): \mathbf{V}_{i} \rightarrow \bigoplus\limits_{h'=i}\mathbf{V}_{h''}\oplus\bigoplus\limits_{1\leqslant k \leqslant N} \mathbf{W}_{i^{k}})=p  \}.
 \end{equation*}
In particular, $\mathbf{E}^{0}_{\mathbf{V},\mathbf{W}^{\bullet},i}$ is an open subset and its complement $\mathbf{E}^{\geqslant 1}_{\mathbf{V},\mathbf{W}^{\bullet},i}$ is closed.
 
 Let $\mathcal{N}_{\mathbf{V},i}$ be the full subcategory of $\mathcal{D}^{b}_{G_{\mathbf{V}}}(\mathbf{E}_{\mathbf{V},\mathbf{W}^{\bullet},\Omega^{N}_{i}})$ consisting of those objects whose supports contained in the closed subset $\mathbf{E}^{\geqslant 1}_{\mathbf{V},\mathbf{W}^{\bullet},i}$, then $\mathcal{N}_{\mathbf{V},i}$ is a thick subcategory. Hence we can define the localization  $\mathcal{D}^{b}_{G_{\mathbf{V}}}(\mathbf{E}_{\mathbf{V},\mathbf{W}^{\bullet},\Omega^{N}_{i}})/ \mathcal{N}_{\mathbf{V},i}$  to be the Verdier quotient of $\mathcal{D}^{b}_{G_{\mathbf{V}}}(\mathbf{E}_{\mathbf{V},\mathbf{W}^{\bullet},\Omega^{N}_{i}})$ by the thick subcategory $\mathcal{N}_{\mathbf{V},i}$. The Verdier quotient $\mathcal{D}^{b}_{G_{\mathbf{V}}}(\mathbf{E}_{\mathbf{V},\mathbf{W}^{\bullet},\Omega^{N}_{i}})/ \mathcal{N}_{\mathbf{V},i}$ is a triangulated category and admits a $t$-structure induced by the perverse $t$-structure of  $\mathcal{D}^{b}_{G_{\mathbf{V}}}(\mathbf{E}_{\mathbf{V},\mathbf{W}^{\bullet},\Omega^{N}_{i}})$, see  \cite{MR1074006}. 
 
 \begin{definition}
 	(1) Define the localization $\mathcal{Q}_{\mathbf{V},\mathbf{W}^{\bullet}}/\mathcal{N}_{\mathbf{V},i}$ of  $\mathcal{Q}_{\mathbf{V},\mathbf{W}^{\bullet}}$  at $i$ to be the full subcategory of the localization $\mathcal{D}^{b}_{G_{\mathbf{V}}}(\mathbf{E}_{\mathbf{V},\mathbf{W}^{\bullet},\Omega^{N}_{i}})/ \mathcal{N}_{\mathbf{V},i}$ consisting of objects which are isomorphic to some objects of $\mathcal{Q}_{\mathbf{V},\mathbf{W}^{\bullet}}$ in $\mathcal{D}^{b}_{G_{\mathbf{V}}}(\mathbf{E}_{\mathbf{V},\mathbf{W}^{\bullet},\Omega^{N}_{i}})/ \mathcal{N}_{\mathbf{V},i}$.
 	
 	(2) Similarly, define the localization $\mathcal{D}^{b,ss}_{G_{\mathbf{V}}}(\mathbf{E}_{\mathbf{V},\mathbf{W}^{\bullet},\Omega^{N}_{i}})/ \mathcal{N}_{\mathbf{V},i}$ of $\mathcal{D}^{b,ss}_{G_{\mathbf{V}}}(\mathbf{E}_{\mathbf{V},\mathbf{W}^{\bullet},\Omega^{N}_{i}})$ at $i$ to be the  full subcategory of the localization $\mathcal{D}^{b}_{G_{\mathbf{V}}}(\mathbf{E}_{\mathbf{V},\mathbf{W}^{\bullet},\Omega^{N}_{i}})/ \mathcal{N}_{\mathbf{V},i}$ consisting of objects which are isomorphic to some semisimple complexes in $\mathcal{D}^{b}_{G_{\mathbf{V}}}(\mathbf{E}_{\mathbf{V},\mathbf{W}^{\bullet},\Omega^{N}_{i}})/ \mathcal{N}_{\mathbf{V},i}$.  
 \end{definition}

 Consider the open embedding $j_{\mathbf{V},i}: \mathbf{E}_{\mathbf{V},\mathbf{W}^{\bullet},i}^{0} \rightarrow \mathbf{E}_{\mathbf{V},\mathbf{W}^{\bullet},\Omega^{N}_{i}}$. We recall that the middle extension functor for an open embedding $j: U \rightarrow X$ 
 \begin{equation*}
 	j_{!\ast}:Perv(U) \rightarrow Perv(X)
 \end{equation*}
 can be naturally extended to semisimple complexes. (The middle extension is not a functor for the triangulated category, since it can not be extended to  morphisms between semisimple complexes.) More precisely, if $L=\bigoplus\limits_{K \in Perv(U)} K[n]$, we can set
 \begin{equation*}
 	j_{!\ast}(L)=\bigoplus\limits_{K \in Perv(U)}j_{!\ast}(K)[n].
 \end{equation*}

 Note that $(j_{\mathbf{V},i})_{!\ast} (L) \cong (j_{\mathbf{V},i})_{!} (L)$ holds in $\mathcal{D}^{b}_{G_{\mathbf{V}}}(\mathbf{E}_{\mathbf{V},\mathbf{W}^{\bullet},\Omega^{N}_{i}})/\mathcal{N}_{\mathbf{V},i}$ for any semisimple complex $L$, so by \cite[Lemma 3.4]{fang2023lusztig} the functor  $(j_{\mathbf{V},i})_{!}$  and $(j_{\mathbf{V},i})^{\ast}$ define quasi-inverse equivalences 
 \[
 \xymatrix{
 	(j_{\mathbf{V},i})^{\ast} (\mathcal{Q}_{\mathbf{V},\mathbf{W}^{\bullet}}) \ar@<0.5ex>[r]^{(j_{\mathbf{V},i})_{!}} & \mathcal{Q}_{\mathbf{V},\mathbf{W}^{\bullet}}/\mathcal{N}_{\mathbf{V},i} \ar@<0.5ex>[l]^{(j_{\mathbf{V},i})^{\ast}}
 },
 \]
 \[
 \xymatrix{
 		(j_{\mathbf{V},i})^{\ast}(\mathcal{D}^{b,ss}_{G_{\mathbf{V}}}(\mathbf{E}^{0}_{\mathbf{V},\mathbf{W}^{\bullet},i})) \ar@<0.5ex>[r]^{(j_{\mathbf{V},i})_{!}} & \mathcal{D}^{b,ss}_{G_{\mathbf{V}}}(\mathbf{E}_{\mathbf{V},\mathbf{W}^{\bullet},\Omega^{N}_{i}})/ \mathcal{N}_{\mathbf{V},i} \ar@<0.5ex>[l]^{(j_{\mathbf{V},i})^{\ast}}
 }.
 \]

 \subsection{Functors for localization at $i$}
 
 For any $n\in \mathbb{N}$, take graded spaces $\mathbf{V}, \mathbf{V}'$ of dimension vector $\nu,\nu'$ respectively, such that $\nu'+ni=\nu$, we will define varieties and morphisms appearing in following diagram, and then define a functor $\mathcal{E}_{i}^{(n)}$.
 
 \[
 \xymatrix{
 	\mathbf{E}_{\mathbf{V},\mathbf{W}^{\bullet},\Omega^{N}_{i}} 
 	&
 	& \mathbf{E}_{\mathbf{V}',\mathbf{W}^{\bullet},\Omega^{N}_{i}} \\	
 	\mathbf{E}^{0}_{\mathbf{V},\mathbf{W}^{\bullet},i} \ar[d]_{\phi_{\mathbf{V},i}} \ar[u]^{j_{\mathbf{V},i}}
 	&
 	& \mathbf{E}^{0}_{\mathbf{V}',\mathbf{W}^{\bullet},i} \ar[d]^{\phi_{\mathbf{V}',i}} \ar[u]_{j_{\mathbf{V}',i}} \\
 	\dot{\mathbf{E}}_{\mathbf{V},\mathbf{W}^{\bullet},i} \times \mathbf{Grass}(\nu_{i}, \tilde{\nu}_{i})
 	& \dot{\mathbf{E}}_{\mathbf{V},\mathbf{W}^{\bullet},i} \times \mathbf{Flag}(\nu_{i}-n,\nu_{i},\tilde{\nu}_{i}) \ar[r]^{q_{2}} \ar[l]_{q_{1}}
 	& \dot{\mathbf{E}}_{\mathbf{V},\mathbf{W}^{\bullet},i} \times \mathbf{Grass}(\nu_{i}-n, \tilde{\nu}_{i}).
 }
 \]
 
 Define the affine space $\dot{\mathbf{E}}_{\mathbf{V},\mathbf{W}^{\bullet},i}$ by
 \begin{equation*}
 	\dot{\mathbf{E}}_{\mathbf{V},\mathbf{W}^{\bullet},i} =\bigoplus\limits_{h \in \Omega_{i}, h'\neq i} \mathbf{Hom}(\mathbf{V}_{h'},\mathbf{V}_{h''})\oplus\bigoplus\limits_{j \in I,j \neq i, 1\leqslant k \leqslant N} \mathbf{Hom}(\mathbf{V}_{j},\mathbf{W}_{j^{k}}) .
 \end{equation*}
 For given $x \in \mathbf{E}_{\mathbf{V},\mathbf{W}^{\bullet},\Omega^{N}_{i}} $, we denote $(x_{h})_{h\in \Omega^{N}_{i},h' \neq i}$ by $\dot{x}$. For any graded space $\mathbf{V}$, we denote $\bigoplus\limits_{j\in I,j\neq i} \mathbf{V}_{j}$ by $\dot{\mathbf{V}}$. Then the morphism $\phi_{\mathbf{V},i}:\mathbf{E}^{0}_{\mathbf{V},\mathbf{W}^{\bullet},i} \rightarrow  \dot{\mathbf{E}}_{\mathbf{V},\mathbf{W}^{\bullet},i} \times \mathbf{Grass}(\nu_{i}, \tilde{\nu}_{i})$ is defined by:
 \begin{equation*}
 	\phi_{\mathbf{V},i}(x)=(\dot{x}, {\rm{Im}}  (\bigoplus \limits_{h \in \Omega^{N}_{i}, h'=i} x_{h}) )
 \end{equation*}
 where $\nu_{i}={{\rm{dim}}}\mathbf{V}_{i}$, $\tilde{\nu}_{i}=\sum \limits_{h'=i,h \in \Omega}{{\rm{dim}}}\mathbf{V}_{h''}+\sum\limits_{1\leqslant k \leqslant N}{{\rm{dim}}}\mathbf{W}_{i^{k}}$, and $\mathbf{Grass}(\nu_{i}, \tilde{\nu}_{i})$ is the Grassmannian consisting of $\nu_{i}$-dimensional subspaces of the $\tilde{\nu}_{i}$-dimensional space $(\bigoplus\limits_{h'=i}\mathbf{V}_{h''})\oplus \bigoplus\limits_{1\leqslant k \leqslant N}\mathbf{W}_{i^{k}}$. Note that $\phi_{\mathbf{V},i}$ is a principal $\mathbf{GL}(\mathbf{V}_{i})$-bundle. The morphisms $q_{1},q_{2}$ are projections defined by
 \begin{equation*}
 	q_{1}(\dot{x}, \mathbf{U}_{1},\mathbf{U}_{2})=(\dot{x},\mathbf{U}_{2}),
 \end{equation*}
 \begin{equation*}
 	q_{2}(\dot{x},\mathbf{U}_{1},\mathbf{U}_{2})=(\dot{x},\mathbf{U}_{1}).
 \end{equation*}

 \begin{definition}
 	Take graded spaces $\mathbf{V}$ and $\mathbf{V}'$ such that $|\mathbf{V}'|+ni=|\mathbf{V}|$, we define the functor $\mathcal{E}^{(n)}_{i}: \mathcal{D}^{b}_{G_{\mathbf{V}}}(\mathbf{E}_{\mathbf{V},\mathbf{W}^{\bullet},\Omega^{N}_{i}})/ \mathcal{N}_{\mathbf{V},i} \rightarrow  \mathcal{D}^{b}_{G_{\mathbf{V}'}}(\mathbf{E}_{\mathbf{V}',\mathbf{W}^{\bullet},\Omega^{N}_{i}})/ \mathcal{N}_{\mathbf{V}',i} $ by
 	\begin{equation*}
 		\mathcal{E}^{(n)}_{i}= (j_{\mathbf{V}',i})_{!} (\phi_{\mathbf{V}',i})^{\ast} (q_{2})_{!}(q_{1})^{\ast} (\phi_{\mathbf{V},i})_{\flat}(j_{\mathbf{V},i})^{\ast}[-n\nu_{i}].
 	\end{equation*}
 	By definition,  $\mathcal{E}^{(n)}_{i}$ sends semisimple complexes to semisimple complexes up to isomorphisms in localization. We also denote $\mathcal{E}^{(1)}_{i}$ by $\mathcal{E}_{i}$.
 \end{definition} 
 
 \begin{remark}
 	It's easy to see that if $\tilde{\Omega}_{i}$ is an another orientation such that $i$ is a source, then the Fourier-Deligne transform $F_{\tilde{\Omega}_{i},\Omega_{i} }$ only involves those arrows in $\dot{\mathbf{E}}_{\mathbf{V},\mathbf{W}^{\bullet},i}$. Hence we can  prove that $\mathcal{E}^{(n)}_{i}$ is independent of the choice of $\Omega_{i}$ by base change.
 \end{remark}

 \begin{definition}
 (1)	For $j \in I $ and graded spaces $\mathbf{V},\mathbf{V'}$ and $\mathbf{V''}$ such that $|
 	\mathbf{V}|=|\mathbf{V}''|+nj$ and $|\mathbf{V'}|=nj$, we define the functor  $\mathcal{F}^{(n)}_{j}: \mathcal{D}^{b}_{G_{\mathbf{V}''}}(\mathbf{E}_{\mathbf{V}'',\mathbf{W}^{\bullet},\Omega^{N}_{i}})\rightarrow \mathcal{D}^{b}_{G_{\mathbf{V}}}(\mathbf{E}_{\mathbf{V},\mathbf{W}^{\bullet},\Omega^{N}_{i}})$ by
 	\begin{equation*}
 		\mathcal{F}^{(n)}_{j}(-)=  \mathbf{Ind}^{\mathbf{V}\oplus\mathbf{W}^{\bullet}}_{\mathbf{V'},\mathbf{V}''\oplus\mathbf{W}^{\bullet}} (\overline{\mathbb{Q}}_{l} \boxtimes -),
 	\end{equation*}
 	where $\overline{\mathbb{Q}}_{l} $ is the constant sheaf on $\mathbf{E}_{\mathbf{V}',0,\Omega^{N}_{i}}$. In particular, we denote $\mathcal{F}_{j}^{(1)}$ by $\mathcal{F}_{j}$.

(2)	For $j \in I $ and $1\leqslant k \leqslant N$, take $\mathbf{W}^{\bullet}$,${^{'}\mathbf{W}}^{\bullet}$ and ${^{''}\mathbf{W}}^{\bullet}$ such that  $|{^{'}\mathbf{W}}^{\bullet}|=n j^{k}=\dim \mathbf{W}_{j^{k}} j^{k}$. In particular, ${^{''}\mathbf{W}}^{\bullet}$ is supported on $I^{(N)}\backslash \{ j^{k}\}$.  Define the functor  $\mathcal{F}^{(n)}_{j^{k}}: \mathcal{D}^{b}_{G_{\mathbf{V}}}(\mathbf{E}_{\mathbf{V},{^{''}\mathbf{W}}^{\bullet},\Omega^{N}_{i}})\rightarrow \mathcal{D}^{b}_{G_{\mathbf{V}}}(\mathbf{E}_{\mathbf{V},\mathbf{W}^{\bullet},\Omega^{N}_{i}})$ by
	\begin{equation*}
		\mathcal{F}^{(n)}_{j^{k}}(-)=  \mathbf{Ind}^{\mathbf{V}\oplus\mathbf{W}^{\bullet}}_{{^{'}\mathbf{W}}^{\bullet},\mathbf{V}\oplus{^{''}\mathbf{W}}^{\bullet}} (\overline{\mathbb{Q}}_{l} \boxtimes -),
	\end{equation*}
	where $\overline{\mathbb{Q}}_{l} $ is the constant sheaf on $\mathbf{E}_{0,{^{'}\mathbf{W}}^{\bullet},\Omega^{N}_{i}}$. 
\end{definition}

One may ask why the induction functors are well defined for $G_{\mathbf{V}}$-equivariant complexes. We explain it for (1) and the other one can be proved similarly. Recall that  when we define Lusztig's induction functor $\mathbf{Ind}^{\mathbf{V}\oplus\mathbf{W}^{\bullet}}_{\mathbf{V'},\mathbf{V}''\oplus\mathbf{W}^{\bullet}} (\overline{\mathbb{Q}}_{l} \boxtimes -)$ for $j \in I $ and garded spaces $\mathbf{V},\mathbf{V'}$ and $\mathbf{V''}$ such that $|
\mathbf{V}|=|\mathbf{V}''|+nj$ and $|\mathbf{V'}|=nj$, we need to consider the following diagram
\begin{center}
	$ \mathbf{E}_{\mathbf{V}'',{\mathbf{W}}^{\bullet},\Omega^{N}} \xleftarrow{p_{1}} \mathbf{E}'_{\Omega} \xrightarrow{p_{2}} \mathbf{E}''_{\Omega} \xrightarrow{p_{3}} \mathbf{E}_{\mathbf{V},\mathbf{W}^{\bullet},\Omega^{N}}$
\end{center}
where $\mathbf{E}'_{\Omega}$ is the variety consisting of $(x,\tilde{\mathbf{U}}, \rho_{1}, \rho_{2})$, where $x \in \mathbf{E}_{\mathbf{V},\mathbf{W}^{\bullet},\Omega^{N}}$, $\tilde{\mathbf{U}}$ is an $I^{(N)}$-graded $x$-stable subspace of $\mathbf{V}\oplus \mathbf{W}^{\bullet}$ of dimension vector $|\mathbf{V}''\oplus {\mathbf{W}}^{\bullet}|$ and $ \rho_{1}: \mathbf{V}\oplus \mathbf{W}^{\bullet}/\tilde{\mathbf{U}} \simeq \mathbf{V}',\rho_{2}:\tilde{\mathbf{U}} \simeq \mathbf{V}''\oplus {\mathbf{W}}^{\bullet}$ are linear isomorphisms. 
Now we consider the subspace $\mathbf{E}^{\diamond}_{\Omega}$ of $\mathbf{E}'_{\Omega}$  consisting of those $(x,\tilde{\mathbf{U}}, \rho_{1}, \rho_{2})$ such that $\rho_{2}$ restricts to identity on $\mathbf{W}^{\bullet}$, then there are commutative diagrams

\[
\xymatrix{
	&  \mathbf{E}'_{\Omega}  \ar[d]^{p_{2}} \ar[ld]_{\rho}
	\\
	\mathbf{E}^{\diamond}_{\Omega}  \ar[r]^{p^{\diamond}_{2}}
	&  \mathbf{E}''_{\Omega}  ,
}
\]

\[
\xymatrix{
	\mathbf{E}_{\mathbf{V}'',{\mathbf{W}}^{\bullet},\Omega^{N}}  
	&  \mathbf{E}'_{\Omega} \ar[l]_-{p_{1}}  
	\\
	\mathbf{E}^{\diamond}_{\Omega} \ar[u]^{p^{\diamond}_{1}} \ar[ru]_{e}
	&   ,
}
\]
where $p^{\diamond}_{1}$ and $p^{\diamond}_{2}$ are restrictions of $p_{1}$ and $p_{2}$, $e$ is the inclusion and $\rho$ is the obvious $G_{\mathbf{W}^{\bullet}}$-principal bundle. Then $ (p^{\diamond}_{2})_{\flat}e^{\ast}(p^{\diamond}_{1})^{\ast}\cong (p^{\diamond}_{2})_{\flat}(p^{\diamond}_{1})^{\ast}$ and $(p_{2})_{\flat} (p_{1})^{\ast}= (p^{\diamond}_{2})_{\flat}{\rho}_{\flat}(p^{\diamond}_{1})^{\ast}$. However, $\rho e =id$ implies that $e^{\ast} \cong  {\rho}_{\flat}$. Since $(p_{3})_{!}(p^{\diamond}_{2})_{\flat}(p^{\diamond}_{1})^{\ast}$ defines a functor from $\mathcal{D}^{b}_{G_{\mathbf{V}''}}(\mathbf{E}_{\mathbf{V}'',\mathbf{W}^{\bullet},\Omega^{N}_{i}})$ to $ \mathcal{D}^{b}_{G_{\mathbf{V}}}(\mathbf{E}_{\mathbf{V},\mathbf{W}^{\bullet},\Omega^{N}_{i}})$, so does $\mathcal{F}^{(n)}_{j}$. Notice that objects of $\mathcal{Q}'_{\mathbf{V},\mathbf{W}^{\bullet}}$ coincide with objects of $\mathcal{Q}_{\mathbf{V},\mathbf{W}^{\bullet}}$, $\mathcal{F}^{(n)}_{j}$ also sends $\mathcal{Q}_{\mathbf{V}'',\mathbf{W}^{\bullet}} $ to $ \mathcal{Q}_{\mathbf{V},\mathbf{W}^{\bullet}}$.

\begin{lemma}\label{lemmaF}
	For any $j \in I$, the functor $\mathcal{F}^{(n)}_{j}$ (or $\mathcal{F}^{(n)}_{j^{m}}$ for some $1\leqslant m \leqslant N$ and $n=\dim \mathbf{W}_{j^{m}}$) sends objects of $ \mathcal{N}_{\mathbf{V}'',k}$ (or $ \mathcal{N}_{\mathbf{V},k}$ respectively) to objects of $ \mathcal{N}_{\mathbf{V},k}$ for any $k \in I$.
\end{lemma}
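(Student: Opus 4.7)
The plan is to unwind $\mathcal{F}^{(n)}_{j}$ and $\mathcal{F}^{(n)}_{j^{m}}$ via their induction diagrams and perform a support analysis. Both functors are of the form $(p_{3})_{!}(p_{2})_{\flat}(p_{1})^{\ast}(\overline{\mathbb{Q}}_{l}\boxtimes -)$ (up to shift), with $p_{3}$ proper and $p_{1}$ smooth surjective. Hence for any complex $C$, the support of the image lies inside $p_{3}\circ p_{2}(p_{1}^{-1}(\mathrm{supp}(\overline{\mathbb{Q}}_{l}\boxtimes C)))$, which is precisely the set of $x\in \mathbf{E}_{\mathbf{V},\mathbf{W}^{\bullet},\Omega^{N}_{i}}$ admitting an $x$-stable $I^{(N)}$-graded subspace $\tilde{\mathbf{U}}\subseteq \mathbf{V}\oplus \mathbf{W}^{\bullet}$ of the appropriate dimension vector ($|\mathbf{V}''\oplus \mathbf{W}^{\bullet}|$ for $\mathcal{F}^{(n)}_{j}$, resp.\ $|\mathbf{V}\oplus {^{''}\mathbf{W}}^{\bullet}|$ for $\mathcal{F}^{(n)}_{j^{m}}$) such that, after a chosen identification, $x|_{\tilde{\mathbf{U}}}\in \mathrm{supp}(C)$. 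If $C$ lies in $\mathcal{N}_{\cdot,k}$ this means the evaluation map of $x|_{\tilde{\mathbf{U}}}$ at $k$ has nonzero kernel, so I can pick $0\neq v\in \tilde{\mathbf{U}}_{k}$ with $x_{h}(v)=0$ in $\tilde{\mathbf{U}}_{h''}$ for every $h\in \Omega^{N}_{i}$ with $h'=k$. The goal is to show this same $v$ witnesses $x\in \mathbf{E}^{\geqslant 1}_{\mathbf{V},\mathbf{W}^{\bullet},k}$.

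The crux is a short kernel-inheritance argument using $x$-stability. For $\mathcal{F}^{(n)}_{j}$, the inducted summand is concentrated at $j\in I$, so $\tilde{\mathbf{U}}_{l}=\mathbf{V}_{l}$ for $l\neq j$ and $\tilde{\mathbf{U}}_{j}\subseteq \mathbf{V}_{j}$, while $\tilde{\mathbf{U}}_{l^{q}}=\mathbf{W}_{l^{q}}$ for every framed vertex. In all cases $v\in \tilde{\mathbf{U}}_{k}\subseteq \mathbf{V}_{k}$, and $x$-stability gives $x_{h}(v)\in \tilde{\mathbf{U}}_{h''}\subseteq \mathbf{V}_{h''}$ (or $\subseteq \mathbf{W}_{k^{q}}$ for framing arrows), so vanishing of $x_{h}(v)$ in $\tilde{\mathbf{U}}_{h''}$ is the same as vanishing in the ambient target. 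Thus $v$ is a nonzero kernel element for the evaluation of $x$ at $k$, proving $x\in \mathbf{E}^{\geqslant 1}_{\mathbf{V},\mathbf{W}^{\bullet},k}$. For $\mathcal{F}^{(n)}_{j^{m}}$ with $n=\dim \mathbf{W}_{j^{m}}$ the situation is even simpler on $I$: one has $\tilde{\mathbf{U}}_{l}=\mathbf{V}_{l}$ for every $l\in I$, and only $\tilde{\mathbf{U}}_{j^{m}}=0$ is deficient; the same inheritance argument then applies.

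The one edge case requiring care is $k=j$ for $\mathcal{F}^{(n)}_{j^{m}}$: the evaluation at $k$ for $x$ now contains the framing arrow $k\to j^{m}$ whose target $\mathbf{W}_{j^{m}}$ is strictly larger than $\tilde{\mathbf{U}}_{j^{m}}=0$, so this component is not immediately controlled by $x|_{\tilde{\mathbf{U}}}$. But $x$-stability itself forces $x_{h}(\mathbf{V}_{k})=x_{h}(\tilde{\mathbf{U}}_{k})\subseteq \tilde{\mathbf{U}}_{j^{m}}=0$, so this extra component is identically zero on $\mathbf{V}_{k}$ and imposes no further constraint. The principal obstacle is thus purely bookkeeping: tracking which graded components of $\tilde{\mathbf{U}}$ coincide with those of $\mathbf{V}\oplus \mathbf{W}^{\bullet}$ and which are proper, and matching the definition of $\mathbf{E}^{\geqslant 1}_{\mathbf{V},\mathbf{W}^{\bullet},k}$ (which presumes $k$ is a source) to the orientation in use. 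If $k$ is not a source in the ambient $\Omega_{i}$, one first applies the Fourier--Deligne equivalence of Corollary~\ref{FD2}, which commutes with induction by Proposition~\ref{FD0}, to reduce to an orientation in which $k$ is a source; beyond that, nothing is needed beyond $x$-stability and the support bound for $(p_{3})_{!}$.
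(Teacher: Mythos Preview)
Your proposal is correct and follows essentially the same approach as the paper: both reduce via Fourier--Deligne to an orientation where $k$ is a source, then argue that a point $x$ in the support of the induced complex must itself lie in $\mathbf{E}^{\geqslant 1}_{\mathbf{V},\mathbf{W}^{\bullet},k}$ because the subrepresentation $x|_{\tilde{\mathbf{U}}}$ already has nonzero kernel at $k$. The only difference is packaging: the paper invokes that $S_k$ is injective when $k$ is a source (so any copy of $S_k$ inside $x|_{\tilde{\mathbf{U}}}\subseteq x$ splits off $x$ as a direct summand), whereas you track an explicit kernel vector $v\in\tilde{\mathbf{U}}_k\subseteq\mathbf{V}_k$ and verify it remains a kernel vector for $x$ by comparing graded pieces of $\tilde{\mathbf{U}}$ with those of $\mathbf{V}\oplus\mathbf{W}^{\bullet}$. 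Your explicit treatment of the edge case $k=j$ for $\mathcal{F}^{(n)}_{j^{m}}$ (using $x$-stability to force $x_{j\to j^m}\equiv 0$) is exactly what the injectivity argument encodes abstractly, so the two proofs are interchangeable.
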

\begin{proof}
	Notice that the induction functor commutes with Fourier-Deligne transforms, we can assume that $k$ is a source, then  the simple representation $S_{k}$ of $Q^{(N)}$ at $k$ is injective. In particular,  any exact sequence of representations of $Q^{(N)}$ of the form
	$$ 0 \rightarrow S_{k}^{\oplus m} \rightarrow Y \rightarrow Z\rightarrow 0$$
	must be split, then by definition of $\mathcal{F}^{(n)}_{j}$, we can see that $\text{supp} (\mathcal{F}^{(n)}_{j}(L)) \subseteq \mathbf{E}^{\geqslant 1}_{\mathbf{V},\mathbf{W}^{\bullet},k}$ if  $L$ in $\mathcal{N}_{\mathbf{V}'',k}$. Hence the functor $\mathcal{F}^{(n)}_{j}$ sends objects of $ \mathcal{N}_{\mathbf{V}'',k}$ to objects of $ \mathcal{N}_{\mathbf{V},k}$. 
\end{proof}
\begin{corollary}
	For $j\in I$, the functor $\mathcal{F}^{(n)}_{j}$ induces a functor  $\mathcal{F}^{(n)}_{j}:\mathcal{D}^{b}_{G_{\mathbf{V}''}}(\mathbf{E}_{\mathbf{V}'',\mathbf{W}^{\bullet},\Omega^{N}_{i}})/\mathcal{N}_{\mathbf{V}'',i}\rightarrow \mathcal{D}^{b}_{G_{\mathbf{V}}}(\mathbf{E}_{\mathbf{V},\mathbf{W}^{\bullet},\Omega^{N}_{i}})/\mathcal{N}_{\mathbf{V},i}.$ For $j \in I $ and $1\leqslant k \leqslant N$ and $n=\dim \mathbf{W}_{j^{k}}$,  the functor $\mathcal{F}^{(n)}_{j^{k}}$ induces a functor $\mathcal{F}^{(n)}_{j^{k}}: \mathcal{D}^{b}_{G_{\mathbf{V}}}(\mathbf{E}_{\mathbf{V},{^{''}\mathbf{W}}^{\bullet},\Omega^{N}_{i}})/\mathcal{N}_{\mathbf{V},i}\rightarrow \mathcal{D}^{b}_{G_{\mathbf{V}}}(\mathbf{E}_{\mathbf{V},\mathbf{W}^{\bullet},\Omega^{N}_{i}})/\mathcal{N}_{\mathbf{V},i}.$
\end{corollary}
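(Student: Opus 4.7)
The plan is to deduce the corollary as an immediate formal consequence of the preceding Lemma, using only the universal property of the Verdier quotient. The key observation is that both $\mathcal{N}_{\mathbf{V}'',i}$ and $\mathcal{N}_{\mathbf{V},i}$ are thick subcategories (this is asserted in the paper immediately after their definition), and the functors $\mathcal{F}^{(n)}_{j}$ and $\mathcal{F}^{(n)}_{j^{k}}$ are triangulated, being built from proper push-forward, pull-back, and the inverse of a principal-bundle pull-back.

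The first step is to compose $\mathcal{F}^{(n)}_{j}: \mathcal{D}^{b}_{G_{\mathbf{V}''}}(\mathbf{E}_{\mathbf{V}'',\mathbf{W}^{\bullet},\Omega^{N}_{i}}) \to \mathcal{D}^{b}_{G_{\mathbf{V}}}(\mathbf{E}_{\mathbf{V},\mathbf{W}^{\bullet},\Omega^{N}_{i}})$ with the localization functor $\pi:\mathcal{D}^{b}_{G_{\mathbf{V}}}(\mathbf{E}_{\mathbf{V},\mathbf{W}^{\bullet},\Omega^{N}_{i}}) \to \mathcal{D}^{b}_{G_{\mathbf{V}}}(\mathbf{E}_{\mathbf{V},\mathbf{W}^{\bullet},\Omega^{N}_{i}})/\mathcal{N}_{\mathbf{V},i}$. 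By Lemma \ref{lemmaF} with $k=i$, one has $\mathcal{F}^{(n)}_{j}(\mathcal{N}_{\mathbf{V}'',i}) \subseteq \mathcal{N}_{\mathbf{V},i}$, and therefore the composition $\pi \circ \mathcal{F}^{(n)}_{j}$ annihilates every object of $\mathcal{N}_{\mathbf{V}'',i}$. The universal property of the Verdier quotient (see e.g.\ \cite{MR1074006}) then produces a unique triangulated functor from $\mathcal{D}^{b}_{G_{\mathbf{V}''}}(\mathbf{E}_{\mathbf{V}'',\mathbf{W}^{\bullet},\Omega^{N}_{i}})/\mathcal{N}_{\mathbf{V}'',i}$ to $\mathcal{D}^{b}_{G_{\mathbf{V}}}(\mathbf{E}_{\mathbf{V},\mathbf{W}^{\bullet},\Omega^{N}_{i}})/\mathcal{N}_{\mathbf{V},i}$ making the obvious square commute, which is by definition the induced functor $\mathcal{F}^{(n)}_{j}$ on the localizations.

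The argument for $\mathcal{F}^{(n)}_{j^{k}}$ is identical in structure: the Lemma (in its parenthetical form, applied with $k=i$) again guarantees that $\mathcal{F}^{(n)}_{j^{k}}$ carries $\mathcal{N}_{\mathbf{V},i}$ into $\mathcal{N}_{\mathbf{V},i}$, and the same universal property yields the desired induced functor between Verdier quotients.

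I do not expect any genuine obstacle here: all the real work has already been absorbed into Lemma \ref{lemmaF}, whose proof relies on the injectivity of the simple representation $S_{k}$ (after reducing to the case $k$ a source via the Fourier–Deligne transform) and the resulting splitting of short exact sequences, which forces the support condition defining $\mathcal{N}_{\mathbf{V},i}$ to be preserved. The only thing worth being careful about in writing up the corollary is to point out that $\mathcal{F}^{(n)}_{j}$ and $\mathcal{F}^{(n)}_{j^{k}}$ are indeed well-defined triangulated functors between the ambient equivariant derived categories, which was already addressed in the discussion preceding Lemma \ref{lemmaF} using the auxiliary variety $\mathbf{E}^{\diamond}_{\Omega}$ and the identity $e^{\ast}\cong\rho_{\flat}$.
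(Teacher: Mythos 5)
Your proposal is correct and matches the paper's (implicit) argument: the corollary is stated without proof precisely because it follows formally from Lemma \ref{lemmaF} via the universal property of the Verdier quotient, exactly as you write. Your extra remarks on well-definedness of the ambient functors just repeat the discussion already given before the lemma, so there is nothing missing or different.
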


Now we can state the commutative relation of the functors $\mathcal{E}^{(n)}_{i}$ and $\mathcal{F}^{(m)}_{j},j \in I$ between localizations at $i$.
\begin{lemma} \label{lemma c1}
	For any graded spaces $\mathbf{V},\mathbf{V}'$ such that $|\mathbf{V}'|+(n-1)i=|\mathbf{V}| $, there is an isomorphism as functors between localizations $\mathcal{D}^{b}_{G_{\mathbf{V}}}(\mathbf{E}_{\mathbf{V},\mathbf{W}^{\bullet},\Omega_{i}^{N}})/\mathcal{N}_{\mathbf{V},i} \rightarrow \mathcal{D}^{b}_{G_{\mathbf{V}'}}(\mathbf{E}_{\mathbf{V}',\mathbf{W}^{\bullet},\Omega_{i}^{N}})/\mathcal{N}_{\mathbf{V}',i}$,
	\begin{equation*}
		\mathcal{E}^{(n)}_{i}\mathcal{F}_{i} \oplus \bigoplus\limits_{0\leqslant m \leqslant M-1} \mathcal{E}^{(n-1)}_{i}[M-1-2m] \cong \mathcal{F}_{i}\mathcal{E}^{(n)}_{i} \oplus \bigoplus\limits_{0\leqslant m \leqslant -M-1} \mathcal{E}^{(n-1)}_{i}[-2m-M-1],
	\end{equation*}
	where $M=2\nu_{i}-\tilde{\nu}_{i}-n+1$. More precisely,
	\begin{align*}
		&\mathcal{E}^{(n)}_{i}\mathcal{F}_{i}\cong \mathcal{F}_{i}\mathcal{E}^{(n)}_{i}, &\textrm{if}\ M=0,\\
		&\mathcal{E}^{(n)}_{i}\mathcal{F}_{i}\oplus\bigoplus_{0\leqslant m\leqslant M-1}\mathcal{E}^{(n-1)}_{i}[M-1-2m]\cong \mathcal{F}_{i}\mathcal{E}^{(n)}_{i}, &\textrm{if}\  M\geqslant 1,\\
		&\mathcal{E}^{(n)}_{i}\mathcal{F}_{i}\cong \mathcal{F}_{i}\mathcal{E}^{(n)}_{i} \oplus \bigoplus\limits_{0\leqslant m \leqslant -M-1} \mathcal{E}^{(n-1)}_{i}[-2m-M-1], &\textrm{if}\ M\leqslant -1.
	\end{align*}
\end{lemma}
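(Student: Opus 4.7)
The plan is to prove the three cases simultaneously by computing both $\mathcal{E}^{(n)}_{i}\mathcal{F}_{i}$ and $\mathcal{F}_{i}\mathcal{E}^{(n)}_{i}$ as pull-push functors through a fibre product of Hecke correspondences on Grassmannians, and then to stratify this fibre product to isolate the common term from the discrepancy. First I would reduce everything to the open stratum. By definition $\mathcal{E}^{(n)}_{i}$ factors as $(j_{\mathbf{V}',i})_{!}$ composed with a correspondence on the principal $\mathbf{GL}(\mathbf{V}_{i})$-bundle $\phi_{\mathbf{V},i}$. Using that $j_{!*}\cong j_{!}$ holds in the localization and that $\mathcal{F}_{j}$ preserves the $\mathcal{N}_{\ast,i}$ subcategories (Lemma~\ref{lemmaF}), I would first show that, on the open set $\mathbf{E}^{0}_{\mathbf{V},\mathbf{W}^{\bullet},i}$, the induction functor $\mathcal{F}_{i}$ is given by a Hecke-type correspondence: since $i$ is a source, an extension by $S_{i}$ corresponds to choosing a codimension-one subspace $\mathbf{V}'_{i}\subset\mathbf{V}_{i}$, which under $\phi_{\mathbf{V},i}$ translates into choosing, inside the $\tilde{\nu}_{i}$-dimensional ambient space $(\bigoplus_{h'=i}\mathbf{V}_{h''})\oplus\bigoplus_{k}\mathbf{W}_{i^{k}}$, a pair of subspaces $\mathbf{U}_{2}'\subset\mathbf{U}_{2}$ of dimensions $\nu_{i}-1\subset\nu_{i}$ (obtained as images of $\mathbf{V}'_{i}$ and $\mathbf{V}_{i}$ respectively). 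This realizes $\mathcal{F}_{i}$, after localization, as pull-push through $\dot{\mathbf{E}}\times\mathbf{Flag}(\nu_{i}-1,\nu_{i},\tilde{\nu}_{i})$ with an appropriate shift.

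Next I would compose correspondences. The functor $\mathcal{E}^{(n)}_{i}\mathcal{F}_{i}$ becomes pull-push through the fibre product
\begin{equation*}
Z_{1}=\dot{\mathbf{E}}\times\bigl\{(\mathbf{U}_{1},\mathbf{U}_{1}',\mathbf{U}_{2})\mid \mathbf{U}_{1}\subset\mathbf{U}_{1}',\ \mathbf{U}_{2}\subset\mathbf{U}_{1}',\ \dim\mathbf{U}_{1}=\nu_{i}-n,\ \dim\mathbf{U}_{1}'=\nu_{i},\ \dim\mathbf{U}_{2}=\nu_{i}-1\bigr\},
\end{equation*}
while $\mathcal{F}_{i}\mathcal{E}^{(n)}_{i}$ becomes pull-push through
\begin{equation*}
Z_{2}=\dot{\mathbf{E}}\times\bigl\{(\mathbf{U}_{1},\mathbf{U}_{2}',\mathbf{U}_{2})\mid \mathbf{U}_{1}\subset\mathbf{U}_{2}',\ \mathbf{U}_{2}'\subset\mathbf{U}_{2},\ \dim\mathbf{U}_{1}=\nu_{i}-n,\ \dim\mathbf{U}_{2}'=\nu_{i}-n-1,\ \dim\mathbf{U}_{2}=\nu_{i}-1\bigr\},
\end{equation*}
both living over $\mathbf{Grass}(\nu_{i},\tilde{\nu}_{i})\times\mathbf{Grass}(\nu_{i}-n,\tilde{\nu}_{i})$ via the projections remembering $(\mathbf{U}_{1}',\mathbf{U}_{1})$ on one side. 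I would now stratify $Z_{1}$ by the relative position $\dim(\mathbf{U}_{1}+\mathbf{U}_{2})$: the open stratum $Z_{1}^{\circ}$ where $\mathbf{U}_{1}\not\subset\mathbf{U}_{2}$, i.e. $\dim(\mathbf{U}_{1}+\mathbf{U}_{2})=\nu_{i}$, and the closed stratum $Z_{1}^{c}$ where $\mathbf{U}_{1}\subset\mathbf{U}_{2}$. On the open stratum the space $\mathbf{U}_{1}'$ is forced to equal $\mathbf{U}_{1}+\mathbf{U}_{2}$, which identifies $Z_{1}^{\circ}$ (up to the necessary base change) with the corresponding open stratum of $Z_{2}$; this produces the common summand. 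The closed stratum is a projective bundle: once $\mathbf{U}_{1}\subset\mathbf{U}_{2}$ is fixed, $\mathbf{U}_{1}'$ ranges over hyperplanes through $\mathbf{U}_{2}$, which is a projective space of dimension depending on $M$.

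To extract the shifted summands, I would use the standard decomposition theorem for a projective-bundle fibration of relative dimension $d$, which contributes a sum $\bigoplus_{0\leqslant m\leqslant d}[d-2m]$. A careful dimension count of the two closed strata (one in $Z_{1}$, one in $Z_{2}$) — using $\dim\mathbf{Grass}(\nu_{i},\tilde{\nu}_{i})=\nu_{i}(\tilde{\nu}_{i}-\nu_{i})$ and the constraints $|\mathbf{V}'|+ni=|\mathbf{V}|$ — will yield relative dimensions $\max(M,0)-1$ and $\max(-M,0)-1$ respectively. Combining this with the shift $[-n\nu_{i}]$ entering the definition of $\mathcal{E}^{(n)}_{i}$ and the shift arising in the induction functor for $\mathcal{F}_{i}$, and applying proper base change and the projection formula to the pushforward, should yield the asymmetry
\begin{equation*}
\mathcal{E}^{(n)}_{i}\mathcal{F}_{i}\oplus\bigoplus_{0\leqslant m\leqslant M-1}\mathcal{E}^{(n-1)}_{i}[M-1-2m]\cong\mathcal{F}_{i}\mathcal{E}^{(n)}_{i}\oplus\bigoplus_{0\leqslant m\leqslant -M-1}\mathcal{E}^{(n-1)}_{i}[-2m-M-1],
\end{equation*}
with exactly one of the two extra sums being nonzero, recovering the three cases.

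The main obstacle is bookkeeping: identifying the Hecke-correspondence realization of $\mathcal{F}_{i}$ \emph{after localization}, since $\mathcal{F}_{i}$ is defined globally via induction, and then tracking precisely the shifts from the definitions of $\mathcal{E}^{(n)}_{i}$, from the induction, from the relative dimensions of the Grassmannian projections, and from the Lefschetz decomposition of the projective bundle so that the final shifts assemble into the quantum integer $[M]_{v}=\sum_{0\leqslant m\leqslant M-1}[M-1-2m]$. Independence from the choice of source-orientation $\Omega_{i}$ follows from the Fourier–Deligne remark after the definition of $\mathcal{E}^{(n)}_{i}$, so it is legitimate to choose a convenient $\Omega_{i}$ to carry out this computation.
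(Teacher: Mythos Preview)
Your overall strategy is the standard one and almost certainly coincides with what is carried out in \cite[Lemma~3.13]{fang2023lusztig}, to which the paper simply refers: reduce to the open stratum via $\phi_{\mathbf{V},i}$, realise $\mathcal{F}_i$ in the localization as a Hecke correspondence on Grassmannians, and then compare the two convolutions via a Schubert-type stratification. So there is nothing to object to at the level of the plan.

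However, your explicit setup contains several slips that would derail the computation if left uncorrected. First, the composite functors go from $\mathbf{Grass}(\nu_i,\tilde\nu_i)$ (the source, where $\nu_i=\dim\mathbf V_i$) to $\mathbf{Grass}(\nu_i-n+1,\tilde\nu_i)$ (the target), with intermediate Grassmannian $\mathbf{Grass}(\nu_i+1,\tilde\nu_i)$ for $\mathcal E_i^{(n)}\mathcal F_i$ and $\mathbf{Grass}(\nu_i-n,\tilde\nu_i)$ for $\mathcal F_i\mathcal E_i^{(n)}$; your indices are uniformly shifted (you appear to use $\nu_i$ for the middle rather than the source), which will make the match with $M=2\nu_i-\tilde\nu_i-n+1$ fail. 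Second, in your $Z_2$ the containment $\mathbf U_1\subset\mathbf U_2'$ is impossible since $\dim\mathbf U_1>\dim\mathbf U_2'$; it must read $\mathbf U_2'\subset\mathbf U_1$. Third, the projection you want is the one forgetting the \emph{middle} subspace, landing in the product of the source and target Grassmannians; you instead project onto $(\mathbf U_1',\mathbf U_1)$, which loses the source variable and makes $Z_1$ and $Z_2$ live over different bases.

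Finally, a word on the stratification step: simply decomposing each $Z_k$ into open and closed pieces and adding up contributions only gives an equality in the Grothendieck group, not a functorial isomorphism, since the open--closed triangle need not split on all of $\mathcal D^b/\mathcal N$. The clean way is to note that each $Z_k$ is smooth and the projection $Z_k\to\mathbf{Grass}(\nu_i)\times\mathbf{Grass}(\nu_i-n+1)$ is proper, apply the decomposition theorem to the pushforward of the shifted constant sheaf on $Z_k$ (this is the convolution \emph{kernel}), and identify the summands: one copy of the IC sheaf of the common image plus shifts of the IC sheaf of the closed Schubert cell $\{U_t\subset U_s\}$, the latter being exactly the kernel for $\mathcal E_i^{(n-1)}$. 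The two projective-bundle fibres over this closed cell have dimensions $\tilde\nu_i-\nu_i-1$ and $\nu_i-n$, and cancelling the common shifted summands leaves $|M|$ extra copies on the appropriate side.
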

\begin{proof}
	The proof is similar to \cite[Lemma 3.13]{fang2023lusztig}.
\end{proof}

\begin{lemma} \label{lemma c2}
	For any graded spaces $\mathbf{V},\mathbf{V}'$ such that $|\mathbf{V}'|+ni=|\mathbf{V}|+mj $ and $i \neq j \in I$, there is an isomorphism as functors between localizations $$\mathcal{D}^{b}_{G_{\mathbf{V}}}(\mathbf{E}_{\mathbf{V},\mathbf{W}^{\bullet},\Omega_{i}^{N}})/\mathcal{N}_{\mathbf{V},i} \rightarrow \mathcal{D}^{b}_{G_{\mathbf{V}'}}(\mathbf{E}_{\mathbf{V}',\mathbf{W}^{\bullet},\Omega_{i}^{N}})/\mathcal{N}_{\mathbf{V}',i},$$
	\begin{equation*}
		\mathcal{E}^{(n)}_{i}\mathcal{F}^{(m)}_{j} \cong \mathcal{F}^{(m)}_{j}\mathcal{E}^{(n)}_{i}. 
	\end{equation*}
\end{lemma}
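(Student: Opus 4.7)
The approach is a base change argument that exploits the disjoint supports of the two operations when $i\neq j$: $\mathcal{E}^{(n)}_i$ only manipulates the subspace structure at vertex $i$ (the maps out of $\mathbf{V}_i$ and flags in $\mathbf{V}_i$), while $\mathcal{F}^{(m)}_j$ only adds a summand $\mathbf{V}'=mj$ supported at vertex $j$. Since $\mathcal{E}^{(n)}_i$ is independent of the choice of orientation with $i$ a source (as noted in the remark after its definition), and since induction commutes with Fourier--Deligne transforms (Proposition~\ref{FD0}), I would fix one orientation $\Omega_i$ throughout.

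I would then expand both compositions using the defining diagrams. For $\mathcal{F}^{(m)}_j$, I use the presentation $\mathcal{F}^{(m)}_j(-)=(p_3)_!(p^\diamond_2)_\flat(p^\diamond_1)^*(-)[d_1-d_2]$ described after its definition. The key observation is that because the quotient $\mathbf{V}\oplus\mathbf{W}^\bullet/\tilde{\mathbf{U}}$ has dimension vector $mj$ with $j\neq i$, one has $\tilde{\mathbf{U}}_i=(\mathbf{V}\oplus\mathbf{W}^\bullet)_i$; hence the open condition defining $\mathbf{E}^0_{-,\mathbf{W}^\bullet,i}$ pulls back consistently through the induction diagram, and the Grassmannians $\mathbf{Grass}(\nu_i,\tilde\nu_i)$ and $\mathbf{Grass}(\nu_i-n,\tilde\nu_i)$ are literally the same before and after inducing at $j$.

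The main step is then to assemble both compositions as push-pulls on a single correspondence $Z$ parametrizing quadruples $(x,\tilde{\mathbf{U}},\rho_1,\mathbf{U}_i)$ with $x$ in the open locus, $\tilde{\mathbf{U}}$ an $x$-stable subspace with quotient of dimension vector $mj$, $\rho_1$ the trivializing isomorphism, and $\mathbf{U}_i\subseteq\mathbf{V}_i$ an $(\nu_i-n)$-dimensional subspace lying in the image of the map out of $\mathbf{V}_i$. Because the flag datum at $i$ is transparent to the induction (as $\tilde{\mathbf{U}}_i=\mathbf{V}_i$), the relevant squares are Cartesian. Proper base change for $p_3$ and the open immersions $j_{\mathbf{V},i}$, $j_{\mathbf{V}',i}$, smooth base change for the principal bundles $\phi_{\mathbf{V},i}$, $\phi_{\mathbf{V}',i}$ and the smooth morphism $p^\diamond_1$, together with the identity $(p^\diamond_2)_\flat\cong e^*$ from the paragraph after the definition of $\mathcal{F}^{(m)}_j$, allow one to slide the Grassmannian correspondence $(q_2)_!(q_1)^*$ past the induction correspondence. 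The shifts $[-n\nu_i]$ and $[d_1-d_2]$ agree on both sides because $\nu_i$ is unchanged by $\mathcal{F}^{(m)}_j$ and the $j$-component data is unchanged by $\mathcal{E}^{(n)}_i$.

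The main obstacle will be the organizational bookkeeping of the combined diagram and the verification that all the squares in sight are genuinely Cartesian, particularly around the principal bundles $\phi_{\mathbf{V},i}$, $\phi_{\mathbf{V}',i}$. Descent to the Verdier localizations $\mathcal{N}_{\mathbf{V},i}$, $\mathcal{N}_{\mathbf{V}',i}$ is automatic by Lemma~\ref{lemmaF} together with the fact that $\mathcal{E}^{(n)}_i$ is already defined on the quotients. The argument for $\mathcal{F}^{(m)}_{j^k}$ with $j^k$ a framing vertex is identical after minor notational changes, since $i\in I$ is distinct from any such $j^k$.
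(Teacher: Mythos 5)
Your overall strategy (realize both compositions as push--pull along one combined correspondence and commute them by proper/smooth base change, then descend to the localizations) is the intended one: the paper itself only says the proof is similar to \cite[Lemma 3.14]{fang2023lusztig}, and that proof is exactly such a diagram chase. The part of your outline that is sound is the compatibility of the open locus: since the quotient has dimension vector $mj$ with $j\neq i$, any $x$-stable $\tilde{\mathbf{U}}$ has $\tilde{\mathbf{U}}_i=\mathbf{V}_i$, and by stability the image of $\bigoplus_{h'=i}x_h$ lies in $\bigoplus_{h'=i}\tilde{\mathbf{U}}_{h''}\oplus\bigoplus_{k}\mathbf{W}_{i^k}$, so $x\in\mathbf{E}^{0}_{\mathbf{V},\mathbf{W}^{\bullet},i}$ if and only if $x|_{\tilde{\mathbf{U}}}$ lies in the corresponding open locus.

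However, your pivotal claim that ``the Grassmannians $\mathbf{Grass}(\nu_i,\tilde\nu_i)$ and $\mathbf{Grass}(\nu_i-n,\tilde\nu_i)$ are literally the same before and after inducing at $j$'' is false precisely in the essential case, namely when $i$ and $j$ are adjacent. In $\Omega_i$ every edge between $i$ and $j$ is oriented $i\to j$, so $\tilde\nu_i=\sum_{h'=i}\dim\mathbf{V}_{h''}+\sum_k\dim\mathbf{W}_{i^k}$ contains $\dim\mathbf{V}_j$ with multiplicity $|a_{ij}|$, and applying $\mathcal{F}^{(m)}_j$ raises it by $m|a_{ij}|$. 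Consequently, in $\mathcal{E}^{(n)}_i\mathcal{F}^{(m)}_j$ the Grassmannian/flag correspondence lives over $\mathbf{Grass}(\nu_i,\tilde\nu_i+m|a_{ij}|)$, while in $\mathcal{F}^{(m)}_j\mathcal{E}^{(n)}_i$ it lives over $\mathbf{Grass}(\nu_i,\tilde\nu_i)$; these are genuinely different varieties, so ``the flag datum at $i$ is transparent to the induction'' cannot be invoked, and the Cartesian squares you need are not there for free. The actual content of the proof is to compare the image point computed from $x|_{\tilde{\mathbf{U}}}$ (transported by $\rho_2$) inside $\bigoplus_{h'=i}\tilde{\mathbf{U}}_{h''}\oplus\bigoplus_k\mathbf{W}_{i^k}$ with the image point computed from $x$ inside the larger target, i.e.\ to interleave the two Grassmannian bundles over different bases through the inclusion of targets (whose cokernel has dimension $m|a_{ij}|$), and to verify that the resulting squares are Cartesian compatibly with the $\mathbf{GL}(\mathbf{V}_i)$-descent along $\phi_{\mathbf{V},i},\phi_{\mathbf{V}',i}$ and the descent along $p^{\diamond}_2$; this is what \cite[Lemma 3.14]{fang2023lusztig} carries out and what your outline skips. (A smaller caveat: the agreement of the shift $[d_1-d_2]$ on the two sides is not because ``the $j$-component is unchanged by $\mathcal{E}^{(n)}_i$'' but because $d_1-d_2$ involves $\dim\mathbf{V}''_{h''}$ only for arrows $h$ with $h'=j$, and none of these has head $i$ since $i$ is a source in $\Omega_i$.) In the non-adjacent case $a_{ij}=0$ your argument is complete, but that is the degenerate case of the lemma.
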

\begin{proof}
	The proof is similar to \cite[Lemma 3.14]{fang2023lusztig}.
\end{proof}

Viewing $j^{k}$ as an unframed vertex, we also have the following result by a similar argument as  Lemma \ref{lemma c2}.
\begin{lemma} \label{lemma c3}
	For any graded spaces $\mathbf{V},\mathbf{V}'$ such that $|\mathbf{V}'|+ni=|\mathbf{V}| $ and fixed graded spaces $\mathbf{W}^{\bullet}$,${^{'}\mathbf{W}}^{\bullet}$ and ${^{''}\mathbf{W}}^{\bullet}$ such that  $|{^{'}\mathbf{W}}^{\bullet}|=n j^{k}=\dim \mathbf{W}_{j^{k}}j^{k}$ and $|{^{''}\mathbf{W}}^{\bullet}|+|{^{'}\mathbf{W}}^{\bullet}|=|\mathbf{W}^{\bullet}| $, 
	 there is an isomorphism as functors between localizations $\mathcal{D}^{b}_{G_{\mathbf{V}}}(\mathbf{E}_{\mathbf{V},{^{''}\mathbf{W}}^{\bullet},\Omega_{i}^{N}})/\mathcal{N}_{\mathbf{V},i} \rightarrow \mathcal{D}^{b}_{G_{\mathbf{V}'}}(\mathbf{E}_{\mathbf{V}',\mathbf{W}^{\bullet},\Omega_{i}^{N}})/\mathcal{N}_{\mathbf{V}',i}$,
	\begin{equation*}
		\mathcal{E}^{(n)}_{i}\mathcal{F}^{(m)}_{j^{k}} \cong \mathcal{F}^{(m)}_{j^{k}}\mathcal{E}^{(n)}_{i}. 
	\end{equation*}
\end{lemma}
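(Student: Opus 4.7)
My plan is to mimic the argument of Lemma \ref{lemma c2} essentially verbatim, treating $j^{k}$ as a vertex of $I^{(N)}$ distinct from $i$. The crucial geometric input is that the functor $\mathcal{E}^{(n)}_{i}$ only sees the component of $x \in \mathbf{E}_{\mathbf{V},\mathbf{W}^{\bullet},\Omega^{N}_{i}}$ consisting of arrows with source $i$ (through the Grassmannian projection $\phi_{\mathbf{V},i}$) together with the auxiliary factor $\dot{\mathbf{E}}_{\mathbf{V},\mathbf{W}^{\bullet},i}$, whereas the induction functor $\mathcal{F}^{(m)}_{j^{k}}$ modifies the $j^{k}$-component of the framing and the arrow $j\to j^{k}$ only; since $i\neq j$ and since $j^{k}\notin I$, these two pieces of data are supported on disjoint parts of the quiver $Q^{(N)}$.

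Concretely, I would first unwind both compositions into geometric correspondences. For $\mathcal{F}^{(m)}_{j^{k}}\mathcal{E}^{(n)}_{i}$ I form the composite diagram obtained by stacking the $\phi_{\mathbf{V},i}$–$(q_{2})_{!}(q_{1})^{\ast}$–$(\phi_{\mathbf{V}',i})^{\ast}$ diagram on top of the induction diagram $\mathbf{E}'_{\Omega}\xrightarrow{p_{2}}\mathbf{E}''_{\Omega}\xrightarrow{p_{3}}\mathbf{E}_{\mathbf{V}',\mathbf{W}^{\bullet},\Omega^{N}_{i}}$ with $\tilde{\mathbf{U}}$ concentrated at $j^{k}$; for $\mathcal{E}^{(n)}_{i}\mathcal{F}^{(m)}_{j^{k}}$ I form the analogous diagram in the opposite order. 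Both can be identified with a single correspondence parameterising tuples $(x,\tilde{\mathbf{U}},L)$ where $\tilde{\mathbf{U}}$ is the $x$-stable subspace of $\mathbf{V}\oplus\mathbf{W}^{\bullet}$ of dimension $|\mathbf{V}\oplus{^{''}\mathbf{W}}^{\bullet}|$ providing the induction at $j^{k}$, and $L$ is the appropriate subspace in the Grassmannian at $i$. Because $i\neq j$, the condition that $\tilde{\mathbf{U}}$ be $x$-stable (along the single arrow $j\to j^{k}$) is completely independent of the kernel of $\bigoplus_{h'=i}x_{h}$; this yields cartesian base-change squares that let me swap the two geometric operations. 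Smooth base change, projection formula, and proper base change then provide the required natural isomorphism up to matching shifts, which cancel in the same way as in Lemma \ref{lemma c2} because $\dim G_{{^{'}\mathbf{W}}^{\bullet}}$ and the relevant fiber dimensions are independent of the $i$-th component of $\mathbf{V}$.

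The main nontrivial point, which requires a bit of care but no new idea, is compatibility with the localization at $i$. Lemma \ref{lemmaF} shows that $\mathcal{F}^{(m)}_{j^{k}}$ preserves the thick subcategories $\mathcal{N}_{\mathbf{V},i}$, so it passes to the localizations; moreover $\mathcal{F}^{(m)}_{j^{k}}$ commutes up to natural isomorphism with the open restriction $(j_{\mathbf{V},i})^{\ast}$ and with its inverse $(j_{\mathbf{V},i})_{!}$ on semisimple objects, because the locus $\mathbf{E}^{0}_{\mathbf{V},\mathbf{W}^{\bullet},i}$ is cut out by a condition on the $i$-arrows only and is therefore preserved by the induction correspondence at $j^{k}$. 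Together these reductions identify the two composites after localization, proving the isomorphism.
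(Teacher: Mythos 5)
Your overall strategy (view $j^{k}$ as a vertex of $I^{(N)}$ distinct from $i$ and rerun the argument of Lemma \ref{lemma c2}) is exactly the route the paper takes, but the way you justify it contains a genuine gap: twice you invoke the hypothesis $i\neq j$, and your entire base-change mechanism rests on the claim that the data seen by $\mathcal{E}^{(n)}_{i}$ (the arrows with source $i$) and the data modified by $\mathcal{F}^{(m)}_{j^{k}}$ live on disjoint parts of $Q^{(N)}$. The lemma, however, does not assume $i\neq j$; it only guarantees $j^{k}\neq i$, and the case $j=i$ is precisely the one needed later, when one commutes $\mathcal{E}^{(n)}_{i}$ past the framing inductions $\mathcal{F}^{(\omega_{j^{k}})}_{j^{k}}$ for \emph{all} $j\in I$ (including $j=i$) in the proof that $\mathcal{E}^{(n)}_{i}$ restricts to $\mathcal{Q}_{\mathbf{V},\mathbf{W}^{\bullet}}/\mathcal{N}_{\mathbf{V}}\rightarrow\mathcal{Q}_{\mathbf{V}',\mathbf{W}^{\bullet}}/\mathcal{N}_{\mathbf{V}'}$.

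When $j=i$ the disjointness claim is false. The framing arrow $i\to i^{k}$ has source $i$, so the component $x^{k}_{i}:\mathbf{V}_{i}\to\mathbf{W}_{i^{k}}$ enters both the open condition defining $\mathbf{E}^{0}_{\mathbf{V},\mathbf{W}^{\bullet},i}$ and the bundle $\phi_{\mathbf{V},i}$ (indeed $\tilde{\nu}_{i}$ contains $\dim\mathbf{W}_{i^{k}}$), while $\mathcal{F}^{(m)}_{i^{k}}$ is, up to shift, extension by zero along the closed locus $\{x^{k}_{i}=0\}$: the $x$-stable subspace of dimension vector $|\mathbf{V}\oplus{^{''}\mathbf{W}}^{\bullet}|$ is zero at $i^{k}$, so stability along $i\to i^{k}$ forces $x^{k}_{i}=0$. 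Thus the two operations interact, and the ambient space of the Grassmannian changes from dimension $\tilde{\nu}_{i}-m$ to $\tilde{\nu}_{i}$; your cartesian squares are not the ones you describe. The commutation is still true, but it requires the ``adjacent vertex'' analysis behind Lemma \ref{lemma c2} (cf.\ \cite{fang2023lusztig}): under $\phi_{\mathbf{V},i}$ the locus $\{x^{k}_{i}=0\}\cap\mathbf{E}^{0}_{\mathbf{V},\mathbf{W}^{\bullet},i}$ is identified with $\dot{\mathbf{E}}_{\mathbf{V},\mathbf{W}^{\bullet},i}\times\mathbf{Grass}(\nu_{i},\tilde{\nu}_{i}-m)$, embedded in $\dot{\mathbf{E}}_{\mathbf{V},\mathbf{W}^{\bullet},i}\times\mathbf{Grass}(\nu_{i},\tilde{\nu}_{i})$ as the subspaces contained in the smaller ambient space, and one then checks by proper base change that $(q_{2})_{!}(q_{1})^{\ast}$ commutes with pushforward along this closed embedding (the fibre product of $\mathbf{Flag}(\nu_{i}-n,\nu_{i},\tilde{\nu}_{i})$ with the small Grassmannian over $\mathbf{Grass}(\nu_{i},\tilde{\nu}_{i})$ is the small flag variety), keeping track of shifts; compatibility with $j_{\mathbf{V},i}^{\ast}$ and $(j_{\mathbf{V}',i})_{!}$ also needs this closed-embedding base change rather than the ``$\mathbf{E}^{0}$ is preserved'' shortcut. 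So either add the case $j=i$ along these lines, or argue uniformly for the pair of distinct (possibly adjacent) vertices $(i,j^{k})$ of $Q^{(N)}$ as in Lemma \ref{lemma c2}, without the disjoint-support reduction.
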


The following lemma shows that $\mathcal{E}^{(n)}_{i}$ and $\mathcal{F}^{(n)}_{j}$ satisfy relations of divided powers, and can be proved in a similar way as \cite[Lemma 3.15]{fang2023lusztig}.
\begin{lemma}\label{lemma c4}
	As endofunctors of the localization $\coprod\limits_{\mathbf{V}}\mathcal{D}^{b}_{G_{\mathbf{V}}}(\mathbf{E}_{\mathbf{V},\mathbf{W}^{\bullet},\Omega^{N}_{i}} )/\mathcal{N}_{\mathbf{V},i}$, the functors $\mathcal{E}^{(n)}_{i}$ and $ \mathcal{F}^{(n)}_{j}$ with $j \in I, n \geqslant 1$ satisfy the following relation
	\begin{equation*}
		\bigoplus \limits_{0 \leqslant m < n } \mathcal{E}^{(n)}_{i}[n-1-2m] \cong \mathcal{E}^{(n-1)}_{i}\mathcal{E}_{i}, n \geqslant 2,
	\end{equation*}
	\begin{equation*}
		\bigoplus \limits_{0 \leqslant m < n } \mathcal{F}^{(n)}_{j}[n-1-2m] \cong \mathcal{F}^{(n-1)}_{j}\mathcal{F}_{j}, n \geqslant 2.
	\end{equation*}
\end{lemma}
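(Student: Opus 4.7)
\medskip

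\noindent\textbf{Proof proposal.} The plan is to reduce both isomorphisms to the same elementary geometric input: the fact that the constant sheaf on $\mathbb{P}^{n-1}$ pushes down to $\bigoplus_{m=0}^{n-1}\bar{\mathbb{Q}}_{l}[-2m]$, so that a dimensional shift by $[n-1]$ converts this into the symmetric quantum integer $[n]_{v}=\sum_{m=0}^{n-1}[n-1-2m]$. The difference between the two statements is only how the projective bundle $\mathbb{P}^{n-1}$ arises: via the Grassmannian $\mathbf{Grass}(1,n)$ viewed as a flag variety for $\mathcal{F}_{j}^{(n-1)}\mathcal{F}_{j}$, and via a fibered product of partial flag varieties for $\mathcal{E}_{i}^{(n-1)}\mathcal{E}_{i}$.

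For the $\mathcal{F}$-relation I would first apply associativity of Lusztig's induction (which holds for $N$-framed quivers exactly as in \cite{MR1088333}) to rewrite
\begin{equation*}
\mathcal{F}^{(n-1)}_{j}\mathcal{F}_{j}(C)\cong \mathbf{Ind}^{\mathbf{V}\oplus\mathbf{W}^{\bullet}}_{\mathbf{V}_{nj},\mathbf{V}''\oplus\mathbf{W}^{\bullet}}\bigl(\mathbf{Ind}^{\mathbf{V}_{nj}}_{\mathbf{V}_{(n-1)j},\mathbf{V}_{j}}(\bar{\mathbb{Q}}_{l}\boxtimes\bar{\mathbb{Q}}_{l})\boxtimes C\bigr),
\end{equation*}
where $\mathbf{V}_{kj}$ denotes a graded space supported at $j$ of total dimension $k$. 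Because $Q$ has no loops, $\mathbf{E}_{\mathbf{V}_{kj},0,\Omega^{N}_{i}}$ is a point and the constant sheaf there equals $L_{(j^{k})}$. Proposition \ref{indformula} then gives $\mathbf{Ind}(\bar{\mathbb{Q}}_{l}\boxtimes\bar{\mathbb{Q}}_{l})=L_{(j^{n-1},j)}$; its flag variety is $\mathbf{Grass}(1,n)\cong \mathbb{P}^{n-1}$ of dimension $n-1$, so the push-forward to a point yields $\bigoplus_{m=0}^{n-1}\bar{\mathbb{Q}}_{l}[n-1-2m]$. Distributing induction over this direct sum produces the desired decomposition.

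For the $\mathcal{E}$-relation I would unravel the two defining correspondences: $\mathcal{E}_{i}$ is the transform through $\mathbf{Flag}(\nu_{i}-1,\nu_{i},\tilde{\nu}_{i})$ and $\mathcal{E}^{(n-1)}_{i}$ through $\mathbf{Flag}(\nu_{i}-n,\nu_{i}-1,\tilde{\nu}_{i})$. Smooth base change identifies the composite $\mathcal{E}^{(n-1)}_{i}\mathcal{E}_{i}$ with the integral transform through the three-step flag variety $\mathbf{Flag}(\nu_{i}-n,\nu_{i}-1,\nu_{i},\tilde{\nu}_{i})$. Forgetting the middle $(\nu_{i}-1)$-step gives a smooth proper map onto $\mathbf{Flag}(\nu_{i}-n,\nu_{i},\tilde{\nu}_{i})$ (the flag variety used by $\mathcal{E}^{(n)}_{i}$), whose fibers are the Grassmannians of $(n-1)$-planes in an $n$-dimensional quotient, that is, $\mathbb{P}^{n-1}$. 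The projective bundle formula computes the push-forward as $\bigoplus_{m=0}^{n-1}\bar{\mathbb{Q}}_{l}[-2m]$, and one combines this with the dimension shifts $[-n\nu_{i}]$, $[-(n-1)(\nu_{i}-1)]$, $[-\nu_{i}]$ from the three definitions.

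The main obstacle is bookkeeping rather than geometry: the shifts of $\mathcal{E}^{(n-1)}_{i}\mathcal{E}_{i}$ combine via the identity $-\nu_{i}-(n-1)(\nu_{i}-1)=-n\nu_{i}+(n-1)$, which must cancel precisely against the $[-2m]$ contributions from $\mathbb{P}^{n-1}$ to yield $[n-1-2m]$; any off-by-one error here will spoil the match with $\mathcal{E}^{(n)}_{i}$. Secondary checks are that the localization functors $(j_{\mathbf{V},i})_{!}(j_{\mathbf{V},i})^{\ast}$ and the $G_{\mathbf{V}}$-equivariance commute with the base change on flag varieties — both are formal since the Grassmannian/flag correspondences are $G_{\mathbf{V}}$-equivariant and preserve the closed stratum $\mathbf{E}^{\geqslant 1}_{\mathbf{V},\mathbf{W}^{\bullet},i}$ used to define $\mathcal{N}_{\mathbf{V},i}$. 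The entire argument mirrors \cite[Lemma 3.15]{fang2023lusztig}, the only new ingredient being that the framed spaces $\mathbf{W}^{\bullet}$ sit inside $\dot{\mathbf{E}}_{\mathbf{V},\mathbf{W}^{\bullet},i}$ and do not interact with the $i$-vertex operations.
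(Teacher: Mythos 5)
Your argument is correct and is essentially the argument the paper has in mind: the paper's proof of this lemma is simply a citation of the analogous statement \cite[Lemma 3.15]{fang2023lusztig}, whose content is exactly the composition-of-correspondences computation you carry out (associativity of $\mathbf{Ind}$ plus $L_{(j^{n-1},j)}\cong\bigoplus_{m=0}^{n-1}\bar{\mathbb{Q}}_{l}[n-1-2m]$ on a point for the $\mathcal{F}$-relation, and the $\mathbb{P}^{n-1}$-bundle obtained by forgetting the middle step of the three-step flag, with the shift bookkeeping $-\nu_{i}-(n-1)(\nu_{i}-1)=-n\nu_{i}+(n-1)$, for the $\mathcal{E}$-relation). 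Your version just supplies the details the paper leaves to the reference, so no gap.
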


The following lemma will be used in the next section to define functors between global localizations.
\begin{lemma}\label{lemmaE}
	The functor $\mathcal{E}^{(n)}_{i}:\mathcal{D}^{b}_{G_{\mathbf{V}}}(\mathbf{E}_{\mathbf{V},\mathbf{W}^{\bullet},\Omega_{i}^{N}})/\mathcal{N}_{\mathbf{V},i} \rightarrow \mathcal{D}^{b}_{G_{\mathbf{V}'}}(\mathbf{E}_{\mathbf{V}',\mathbf{W}^{\bullet},\Omega_{i}^{N}})/\mathcal{N}_{\mathbf{V}',i}$ sends objects of $ \mathcal{N}_{\mathbf{V},k}$ to objects isomorphic to those of $ \mathcal{N}_{\mathbf{V}',k}$ for any $k \in I$.
\end{lemma}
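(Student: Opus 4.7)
The plan is to verify the support condition directly, splitting into the easy case $k=i$ and the main case $k\neq i$. First, for $k=i$: if $L\in\mathcal{N}_{\mathbf{V},i}$ has support contained in $\mathbf{E}^{\geqslant 1}_{\mathbf{V},\mathbf{W}^{\bullet},i}$, then $(j_{\mathbf{V},i})^{\ast}L$ vanishes, since $\mathbf{E}^{0}_{\mathbf{V},\mathbf{W}^{\bullet},i}$ is by definition the open complement of $\mathbf{E}^{\geqslant 1}_{\mathbf{V},\mathbf{W}^{\bullet},i}$. Consequently $\mathcal{E}^{(n)}_{i}(L)=0$, which trivially lies in $\mathcal{N}_{\mathbf{V}',i}$.

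For $k\neq i$, the key structural observation is that, since $i$ is a source of $Q$ and $k\neq i$, the condition defining $\mathbf{E}^{\geqslant 1}_{\mathbf{V},\mathbf{W}^{\bullet},k}$ (that the map out of $\mathbf{V}_{k}$ has kernel of dimension $\geqslant 1$) only involves those components of $x$ lying in $\dot{\mathbf{E}}_{\mathbf{V},\mathbf{W}^{\bullet},i}$. Indeed, every arrow of $\Omega^{N}_{i}$ emanating from $k$ either ends at some vertex $\neq i$ in $Q$ (forced because $i$ is a source) or ends at a framing vertex $k^{m}$, and both types are kept in $\dot{\mathbf{E}}_{\mathbf{V},\mathbf{W}^{\bullet},i}$. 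Thus the natural projection $\pi_{\mathbf{V},i}:\mathbf{E}_{\mathbf{V},\mathbf{W}^{\bullet},\Omega^{N}_{i}}\to\dot{\mathbf{E}}_{\mathbf{V},\mathbf{W}^{\bullet},i}$ satisfies $\mathbf{E}^{\geqslant 1}_{\mathbf{V},\mathbf{W}^{\bullet},k}=\pi_{\mathbf{V},i}^{-1}(\dot{\mathbf{E}}^{\geqslant 1}_{\mathbf{V},\mathbf{W}^{\bullet},k})$ for the analogous closed subset $\dot{\mathbf{E}}^{\geqslant 1}_{\mathbf{V},\mathbf{W}^{\bullet},k}\subseteq\dot{\mathbf{E}}_{\mathbf{V},\mathbf{W}^{\bullet},i}$; the same holds with $\mathbf{V}'$ in place of $\mathbf{V}$, and moreover $\dot{\mathbf{E}}_{\mathbf{V},\mathbf{W}^{\bullet},i}=\dot{\mathbf{E}}_{\mathbf{V}',\mathbf{W}^{\bullet},i}$ since neither depends on the $i$-component.

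I would then chase supports through the six operations composing $\mathcal{E}^{(n)}_{i}$. Starting from $\mathrm{supp}(L)\subseteq\mathbf{E}^{\geqslant 1}_{\mathbf{V},\mathbf{W}^{\bullet},k}$, the restriction $(j_{\mathbf{V},i})^{\ast}$ produces support in $\mathbf{E}^{0}_{\mathbf{V},\mathbf{W}^{\bullet},i}\cap\mathbf{E}^{\geqslant 1}_{\mathbf{V},\mathbf{W}^{\bullet},k}$; because $\phi_{\mathbf{V},i}$ factors through $\pi_{\mathbf{V},i}$ on the first factor, the descent $(\phi_{\mathbf{V},i})_{\flat}$ yields a complex with support inside $\dot{\mathbf{E}}^{\geqslant 1}_{\mathbf{V},\mathbf{W}^{\bullet},k}\times\mathbf{Grass}(\nu_{i},\tilde{\nu}_{i})$; the maps $q_{1},q_{2}$ act only on the Grass/Flag factor, so $(q_{2})_{!}(q_{1})^{\ast}$ preserves containment in $\dot{\mathbf{E}}^{\geqslant 1}_{\mathbf{V},\mathbf{W}^{\bullet},k}\times\mathbf{Grass}(\nu_{i}-n,\tilde{\nu}_{i})$; the pullback $(\phi_{\mathbf{V}',i})^{\ast}$ places the support inside $\mathbf{E}^{0}_{\mathbf{V}',\mathbf{W}^{\bullet},i}\cap\mathbf{E}^{\geqslant 1}_{\mathbf{V}',\mathbf{W}^{\bullet},k}$; finally $(j_{\mathbf{V}',i})_{!}$ extends by zero, and since $\mathbf{E}^{\geqslant 1}_{\mathbf{V}',\mathbf{W}^{\bullet},k}$ is closed in $\mathbf{E}_{\mathbf{V}',\mathbf{W}^{\bullet},\Omega^{N}_{i}}$ (by the same ``preimage of closed'' argument applied to $\mathbf{V}'$), the closure of the support still lies in $\mathbf{E}^{\geqslant 1}_{\mathbf{V}',\mathbf{W}^{\bullet},k}$. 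Hence $\mathcal{E}^{(n)}_{i}(L)\in\mathcal{N}_{\mathbf{V}',k}$ as required.

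The only conceptually delicate point is recognizing the preimage description of $\mathbf{E}^{\geqslant 1}_{\mathbf{V},\mathbf{W}^{\bullet},k}$, which is precisely where the assumption that $i$ is a source enters; once this is in place, the remaining support-tracking is routine. No serious obstacle is anticipated beyond being careful that $(j_{\mathbf{V}',i})_{!}$ does not inflate support outside the closed set in question, which is automatic from closedness.
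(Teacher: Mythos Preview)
Your support-tracking argument is clean and correct when one can arrange that $k$ is also a source in the orientation $\Omega_i$ used to define $\mathcal{E}^{(n)}_i$. By the remark following the definition of $\mathcal{E}^{(n)}_i$, one is free to vary $\Omega_i$ among orientations with $i$ a source, so this reduction is available precisely when $i$ and $k$ are not adjacent in $Q$; in that case your preimage description of $\mathbf{E}^{\geqslant 1}_{\mathbf{V},\mathbf{W}^\bullet,k}$ and the subsequent chase through $\phi$, $q_1$, $q_2$ go through as written.

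The gap is the adjacent case. If there is an edge between $i$ and $k$, then in every $\Omega_i$ that edge is oriented $i\to k$, so $k$ can never be made a source in $\Omega_i$. In this situation the thick subcategory $\mathcal{N}_{\mathbf{V},k}$ inside $\mathcal{D}^b_{G_{\mathbf{V}}}(\mathbf{E}_{\mathbf{V},\mathbf{W}^\bullet,\Omega_i^N})$ is \emph{not} the category of complexes supported where the map out of $\mathbf{V}_k$ in $\Omega_i$ has nontrivial kernel: by definition (see Section~3.3) it is the Fourier--Deligne transport $\mathcal{F}_{\Omega_k^N,\Omega_i^N}$ of that support condition computed in an orientation where $k$ is a source, and the transform along the $i$--$k$ arrows does not take support conditions to support conditions. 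Concretely, membership in $\mathcal{N}_{\mathbf{V},k}$ then genuinely depends on the components $x_h$ with $h'=i$, $h''=k$, which are exactly the ones discarded by your projection $\pi_{\mathbf{V},i}$; so the identity $\mathbf{E}^{\geqslant 1}_{\mathbf{V},\mathbf{W}^\bullet,k}=\pi_{\mathbf{V},i}^{-1}(\dot{\mathbf{E}}^{\geqslant 1}_{\mathbf{V},\mathbf{W}^\bullet,k})$ fails and the argument breaks at that step. The paper defers to \cite[Proposition~3.18]{fang2023lusztig} for the full proof; that reference supplies the additional work needed in the adjacent case, which is not a naive support computation.
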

\begin{proof}
	The proof is similar to \cite[Proposition 3.18]{fang2023lusztig}.
\end{proof}

 \subsection{Global localizations and their functors}
 
 Fix an orientation $\Omega^{N}_{i}$ for each $i\in I$ such that $i$ is a source in $\Omega^{N}_{i}$. For a given orientation $\Omega^{N}$, let $\mathcal{N}_{\mathbf{V}}$ be the thick subcategory generated by $\mathcal{F}_{\Omega^{N}_{i},\Omega^{N}}(\mathcal{N}_{\mathbf{V},i}), i\in I$, then one can consider the Verdier quotient $\mathcal{D}^{b}_{G_{\mathbf{V}}}(\mathbf{E}_{\mathbf{V},\mathbf{W}^{\bullet}},\Omega^{N})/\mathcal{N}_{\mathbf{V}}$. It is also a triangulated category and admits a $t$-structure induced from the perverse $t$-structure of $\mathcal{D}^{b}_{G_{\mathbf{V}}}(\mathbf{E}_{\mathbf{V},\mathbf{W}^{\bullet}},\Omega^{N})$. In this subsection, we omit the Fourier-Deligne tarnsformation and simply denote $\mathcal{F}_{\Omega^{N}_{i},\Omega^{N}}(\mathcal{N}_{\mathbf{V},i})$ by $\mathcal{N}_{\mathbf{V},i} $.
 
 \begin{definition}	
 	(1) Define the global localization $\mathcal{L}_{\mathbf{V}}(\Lambda^{\bullet}) =\mathcal{Q}_{\mathbf{V},\mathbf{W}^{\bullet}}/\mathcal{N}_{\mathbf{V}}$ of  $\mathcal{Q}_{\mathbf{V},\mathbf{W}^{\bullet}}$  to be the full subcategory of the Verdier quotient $\mathcal{D}^{b}_{G_{\mathbf{V}}}(\mathbf{E}_{\mathbf{V},\mathbf{W}^{\bullet},\Omega^{N}})/ \mathcal{N}_{\mathbf{V}}$ consisting of objects which are isomorphic to some objects of $\mathcal{Q}_{\mathbf{V},\mathbf{W}^{\bullet}}$ in $\mathcal{D}^{b}_{G_{\mathbf{V}}}(\mathbf{E}_{\mathbf{V},\mathbf{W}^{\bullet},\Omega^{N}})/ \mathcal{N}_{\mathbf{V}}$.
 	
 	(2) Similarly, define the global localization $\mathcal{D}^{b,ss}_{G_{\mathbf{V}}}(\mathbf{E}_{\mathbf{V},\mathbf{W}^{\bullet},\Omega^{N}})/ \mathcal{N}_{\mathbf{V}}$ of $\mathcal{D}^{b,ss}_{G_{\mathbf{V}}}(\mathbf{E}_{\mathbf{V},\mathbf{W}^{\bullet},\Omega^{N}})$ to be the  full subcategory consisting of objects which are isomorphic to some semisimple complexes in $\mathcal{D}^{b}_{G_{\mathbf{V}}}(\mathbf{E}_{\mathbf{V},\mathbf{W}^{\bullet},\Omega^{N}})/ \mathcal{N}_{\mathbf{V}}$. 
 \end{definition}

 By Lemma \ref{lemmaF} and \ref{lemmaE}, if $i$ is a source, $\mathcal{E}^{(n)}_{i}$ and $\mathcal{F}^{(n)}_{j},j \in I$ induces functors between global localizations
 $$ \mathcal{E}^{(n)}_{i}: \mathcal{D}^{b}_{G_{\mathbf{V}}}(\mathbf{E}_{\mathbf{V},\mathbf{W}^{\bullet},\Omega^{N}})/\mathcal{N}_{\mathbf{V}} \rightarrow \mathcal{D}^{b}_{G_{\mathbf{V}'}}(\mathbf{E}_{\mathbf{V}',\mathbf{W}^{\bullet},\Omega^{N}})/\mathcal{N}_{\mathbf{V}'}, $$
 $$ \mathcal{F}^{(n)}_{j}: \mathcal{D}^{b}_{G_{\mathbf{V}''}}(\mathbf{E}_{\mathbf{V}'',\mathbf{W}^{\bullet},\Omega^{N}})/\mathcal{N}_{\mathbf{V}''} \rightarrow \mathcal{D}^{b}_{G_{\mathbf{V}}}(\mathbf{E}_{\mathbf{V},\mathbf{W}^{\bullet},\Omega^{N}})/\mathcal{N}_{\mathbf{V}}. $$

By Proposition \ref{indformula}, the functor $\mathcal{F}^{(n)}_{j}: \mathcal{D}^{b}_{G_{\mathbf{V}''}}(\mathbf{E}_{\mathbf{V}'',\mathbf{W}^{\bullet},\Omega^{N}})/\mathcal{N}_{\mathbf{V}''} \rightarrow \mathcal{D}^{b}_{G_{\mathbf{V}}}(\mathbf{E}_{\mathbf{V},\mathbf{W}^{\bullet},\Omega^{N}})/\mathcal{N}_{\mathbf{V}}$ also sends objects of $\mathcal{Q}_{\mathbf{V}'',\mathbf{W}^{\bullet}}$ to those of $\mathcal{Q}_{\mathbf{V},\mathbf{W}^{\bullet}}$, hence restricts to a functor  $\mathcal{F}^{(n)}_{j}:\mathcal{Q}_{\mathbf{V}'',\mathbf{W}^{\bullet}}/ \mathcal{N}_{\mathbf{V}''} \rightarrow \mathcal{Q}_{\mathbf{V},\mathbf{W}^{\bullet}}/ \mathcal{N}_{\mathbf{V}} $. The following proposition shows that the functor $\mathcal{E}^{(n)}_{i}$ has the same property.

\begin{proposition}
	The functor $\mathcal{E}^{(n)}_{i}$   restricts to a functor  $\mathcal{E}^{(n)}_{i}:\mathcal{Q}_{\mathbf{V},\mathbf{W}^{\bullet}}/ \mathcal{N}_{\mathbf{V}} \rightarrow \mathcal{Q}_{\mathbf{V}',\mathbf{W}^{\bullet}}/ \mathcal{N}_{\mathbf{V}'} $.
\end{proposition}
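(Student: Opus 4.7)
My plan is a primary induction on $|\mathbf{V}|$ with a secondary induction on $n$. By Corollary \ref{FD2} we may freely apply Fourier--Deligne transforms, so assume $\Omega^{N}=\Omega^{N}_{i}$, i.e.\ $i$ is a source. By the definition of $\mathcal{Q}_{\mathbf{V},\mathbf{W}^{\bullet}}$, every object there is a direct summand, up to shifts, of some $L_{\boldsymbol{\mu}}$ with $\boldsymbol{\mu}=\boldsymbol{\nu}^{1}\boldsymbol{d}^{1}\cdots\boldsymbol{\nu}^{N}\boldsymbol{d}^{N}$, and semisimple complexes in the localization inherit Krull--Schmidt from the abelian category of perverse sheaves on the open stratum via the open-restriction equivalence $(j_{\mathbf{V}',i})^{*}\dashv(j_{\mathbf{V}',i})_{!}$. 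So it suffices to verify that $\mathcal{E}^{(n)}_{i}(L_{\boldsymbol{\mu}})$ is isomorphic in the localization to an object of $\mathcal{Q}_{\mathbf{V}',\mathbf{W}^{\bullet}}$.

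Proposition \ref{indformula} lets me peel off the leftmost block of $\boldsymbol{\mu}$ and write $L_{\boldsymbol{\mu}}\cong \mathcal{F}^{(a)}_{j_{0}}(M)$, where $j_{0}\in I$ is the leading vertex of the first nonempty $\boldsymbol{\nu}^{r}$, or $j_{0}=j^{k}$ is the leading framed vertex otherwise, and $M$ is of the same form for strictly smaller graded data. The base case $\mathbf{V}=0$ is trivial, since the only admissible $n$ is then $0$ and $\mathcal{E}^{(0)}_{i}=\mathrm{id}$. When $j_{0}\neq i$ or $j_{0}$ is a framed vertex, Lemmas \ref{lemma c2} and \ref{lemma c3} give $\mathcal{E}^{(n)}_{i}\mathcal{F}^{(a)}_{j_{0}}(M)\cong\mathcal{F}^{(a)}_{j_{0}}\mathcal{E}^{(n)}_{i}(M)$; by the primary induction $\mathcal{E}^{(n)}_{i}(M)\in\mathcal{Q}$, and by Proposition \ref{indformula} combined with Lemma \ref{lemmaF} the functor $\mathcal{F}^{(a)}_{j_{0}}$ preserves $\mathcal{Q}$, completing this case.

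The remaining case $j_{0}=i$ is the delicate one. Iterating Lemma \ref{lemma c4}, $\mathcal{F}^{(a)}_{i}(M)$ is a direct summand with shifts of $\mathcal{F}_{i}^{a}(M)$, so by Krull--Schmidt it is enough to analyze $\mathcal{E}^{(n)}_{i}\mathcal{F}_{i}(N)$ for arbitrary $N\in\mathcal{Q}$. Lemma \ref{lemma c1} then yields the mixed identity
\begin{equation*}
\mathcal{E}^{(n)}_{i}\mathcal{F}_{i}(N)\oplus X \cong \mathcal{F}_{i}\mathcal{E}^{(n)}_{i}(N)\oplus Y,
\end{equation*}
where $X,Y$ are direct sums of shifts of $\mathcal{E}^{(n-1)}_{i}(N)$. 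The secondary induction on $n$ places $\mathcal{E}^{(n-1)}_{i}(N)$ in $\mathcal{Q}$, and by the primary induction $\mathcal{E}^{(n)}_{i}(N)\in\mathcal{Q}$, so $\mathcal{F}_{i}\mathcal{E}^{(n)}_{i}(N)\in\mathcal{Q}$ as well.

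The main obstacle is the final extraction, since Lemma \ref{lemma c1} is a mixed isomorphism rather than an explicit decomposition of $\mathcal{E}^{(n)}_{i}\mathcal{F}_{i}(N)$. Invoking Krull--Schmidt in the localization, every indecomposable summand of $\mathcal{E}^{(n)}_{i}\mathcal{F}_{i}(N)$ occurs as an indecomposable summand of $\mathcal{F}_{i}\mathcal{E}^{(n)}_{i}(N)\oplus Y$ after cancelling the common copies of $X$; each such summand therefore lies in $\mathcal{Q}$, forcing $\mathcal{E}^{(n)}_{i}\mathcal{F}_{i}(N)\in\mathcal{Q}_{\mathbf{V}',\mathbf{W}^{\bullet}}/\mathcal{N}_{\mathbf{V}',i}$. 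Finally, as $\mathcal{N}_{\mathbf{V}',i}\subseteq\mathcal{N}_{\mathbf{V}'}$ by the definition of the thick subcategory generating the global localization, the result persists in $\mathcal{Q}_{\mathbf{V}',\mathbf{W}^{\bullet}}/\mathcal{N}_{\mathbf{V}'}$.
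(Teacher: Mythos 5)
Your proposal is in substance the paper's own argument: the same double induction, the same reduction to the complexes $L_{\boldsymbol{\nu}^{1}\boldsymbol{d}^{1}\cdots\boldsymbol{\nu}^{N}\boldsymbol{d}^{N}}$, and the same use of Lemmas \ref{lemma c1}, \ref{lemma c2}, \ref{lemma c3} (with the divided powers handled by Lemma \ref{lemma c4} where the paper instead expands the flag type, and with the Krull--Schmidt extraction from the mixed isomorphism of Lemma \ref{lemma c1}, which the paper leaves implicit, spelled out -- a welcome addition).

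There is, however, one defect in your induction bookkeeping. When the leftmost block of $\boldsymbol{\mu}$ is a framed block and you write $L_{\boldsymbol{\mu}}\cong\mathcal{F}^{(a)}_{j^{k}}(M)$, the complex $M$ lives over a smaller framing but over the \emph{same} $\mathbf{V}$, so your appeal to ``the primary induction on $|\mathbf{V}|$'' to place $\mathcal{E}^{(n)}_{i}(M)$ in $\mathcal{Q}$ is not justified as stated: neither $|\mathbf{V}|$ nor $n$ has decreased. The fix is easy and does not change the strategy -- either add a third induction parameter (e.g.\ the total framing dimension, or the length of $\boldsymbol{\mu}$, noting that the statement must then be quantified over all framing data $\mathbf{W}^{\bullet}$), or do what the paper does: locate the first nonempty $\boldsymbol{\nu}^{l}$, pass at once to the reduced framing $\mathbf{W}^{',\bullet}$ with $\mathbf{W}^{',k}=0$ for $k<l$, peel only the unframed vertex $i_{1}$ (which strictly decreases $|\mathbf{V}|$, so the induction applies), and only at the very end reinsert $\boldsymbol{d}^{1},\dots,\boldsymbol{d}^{l-1}$ using Lemma \ref{lemma c3} together with the fact that the framed induction functors carry $\mathcal{Q}_{\mathbf{V}',\mathbf{W}^{',\bullet}}$ into $\mathcal{Q}_{\mathbf{V}',\mathbf{W}^{\bullet}}$ by Proposition \ref{indformula}. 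With that repair your argument is complete and coincides with the paper's proof.
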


 \begin{proof}
 	We prove that $\mathcal{E}^{(n)}_{i}$ sends objects of $\mathcal{Q}_{\mathbf{V},\mathbf{W}^{\bullet}}$ to those of $\mathcal{Q}_{\mathbf{V}',\mathbf{W}^{\bullet}}$ up to isomorphisms in global localizations by induction on $|\mathbf{V}|$ and $n$.
 	It suffices to show that the functor $\mathcal{E}^{(n)}_{i}$ sends semisimple complex $L_{\boldsymbol{\nu}^{1}\boldsymbol{d}^{1}\cdots\boldsymbol{\nu}^{N}\boldsymbol{d}^{N}}$ to a direct summand of direct sums of similar semisimple complexes  up to isomorphisms in global localizations.
 	
 	(1)If $|\mathbf{V}|=0$, then every $\boldsymbol{\nu}^{k}$ is empty and  $\mathcal{E}^{(n)}_{i}(L_{\boldsymbol{d}^{1}\cdots\boldsymbol{d}^{N}})=0$, the statement holds trivially.
 	
 	(2)Now we assume that for any $|\tilde{\mathbf{V}}|<|\mathbf{V}|$, we have proved that $\mathcal{E}^{(n)}_{i}$ sends objects of $\mathcal{Q}_{\tilde{\mathbf{V}},\mathbf{W}^{\bullet}}$ to those of $\mathcal{Q}_{\tilde{\mathbf{V}}',\mathbf{W}^{\bullet}}$  up to isomorphisms in global localizations.
 	Now we take $L_{\boldsymbol{\nu}^{1}\boldsymbol{d}^{1}\cdots\boldsymbol{\nu}^{N}\boldsymbol{d}^{N}}$ in $\mathcal{Q}_{\mathbf{V},\mathbf{W}^{\bullet}}$,  we need to show that $\mathcal{E}^{(n)}_{i}(L_{\boldsymbol{\nu}^{1}\boldsymbol{d}^{1}\cdots\boldsymbol{\nu}^{N}\boldsymbol{d}^{N}})$ still belongs to $\mathcal{Q}_{\mathbf{V}',\mathbf{W}^{\bullet}}$  up to isomorphisms in global localizations.
 	
 	Assume $\boldsymbol{\nu}^{k}$ is empty for $k < l$ and  $\boldsymbol{\nu}^{l}=((i_{1})^{a_{1}},(i_{2})^{a_{2}},\cdots (i_{p})^{a_{p}})$ is a nonempty sequence,  let  $\boldsymbol{\nu}'= (i_{1},\cdots,i_{1},i_{2},\cdots,i_{2},\cdots,i_{p},\cdots i_{p})$ be the flag type such that $i_{q}$ appears for $a_{q}$ times, $1\leqslant q \leqslant p$, then $L_{\boldsymbol{d}^{1}\boldsymbol{d}^{2}\cdots\boldsymbol{d}^{l-1}\boldsymbol{\nu}'\boldsymbol{d}^{l} \cdots  \boldsymbol{\nu}^{N}\boldsymbol{d}^{N}}$ is a direct sum of shifts of $L_{\boldsymbol{d}^{1}\boldsymbol{d}^{2}\cdots\boldsymbol{d}^{l-1}\boldsymbol{\nu}^{l}\boldsymbol{d}^{l}\cdots\boldsymbol{\nu}^{N}\boldsymbol{d}^{N}}$. Hence it suffices to show that the statement holds for $L_{\boldsymbol{d}^{1}\boldsymbol{d}^{2}\cdots\boldsymbol{d}^{l-1}\boldsymbol{\nu}'\boldsymbol{d}^{l} \cdots  \boldsymbol{\nu}^{N}\boldsymbol{d}^{N}}$.
 	
 	Set $\mathbf{W}^{',\bullet}=(\mathbf{W}^{',1},\cdots, \mathbf{W}^{',N})$ such that $\mathbf{W}^{',k}=0 $ for $1\leqslant k < l$ and $\mathbf{W}^{',k}=\mathbf{W}^{k}$ for $k \geqslant l$.  We consider $\boldsymbol{\nu}''= (i_{1}\cdots,i_{1},i_{2},\cdots,i_{2},\cdots,i_{p},\cdots i_{p})$, where $i_{1}$ apears for $ a_{1}-1$ times and the other $i_{q}$ apears for $a_{q}$ times, then we have $\boldsymbol{\nu}'=(i_{1},\boldsymbol{\nu}'')$ and $L_{\boldsymbol{\nu}'\boldsymbol{d}^{l} \cdots  \boldsymbol{\nu}^{N}\boldsymbol{d}^{N}}=\mathcal{F}_{i_{1}}L_{\boldsymbol{\nu}''\boldsymbol{d}^{l} \cdots  \boldsymbol{\nu}^{N}\boldsymbol{d}^{N}}$ belongs to $\mathcal{Q}_{\mathbf{V},\mathbf{W}^{',\bullet}}$.

 	If $i \neq i_{1}$, then by Lemma \ref{lemma c2}, we have the following equation in localization at $i$ (up to shifts)
 	\begin{equation*}
 		\begin{split}
 			\mathcal{E}^{(n)}_{i} L_{\boldsymbol{\nu}'\boldsymbol{d}^{l} \cdots  \boldsymbol{\nu}^{N}\boldsymbol{d}^{N}} \cong & \mathcal{E}^{(n)}_{i}\mathcal{F}_{i_{1}}L_{\boldsymbol{\nu}''\boldsymbol{d}^{l} \cdots \boldsymbol{\nu}^{N}\boldsymbol{d}^{N}}\\
 			\cong &  \mathcal{F}_{i_{1}}\mathcal{E}^{(n)}_{i}L_{\boldsymbol{\nu}''\boldsymbol{d}^{l} \cdots \boldsymbol{\nu}^{N}\boldsymbol{d}^{N}}.
 		\end{split}
 	\end{equation*}
 	
 	By the inductive assumption, we have $\mathcal{E}^{(n)}_{i}L_{\boldsymbol{\nu}''\boldsymbol{d}^{l} \cdots  \boldsymbol{\nu}^{N}\boldsymbol{d}^{N}} =K$ belongs to $\mathcal{Q}_{\mathbf{V}'',\mathbf{W}^{',\bullet}}$ for some $\mathbf{V}''$. Since $\mathcal{F}_{i_{1}}$  preserves objects of  $\coprod\limits_{\mathbf{V}'}\mathcal{Q}_{\mathbf{V}',\mathbf{W}^{',\bullet}}$, by Lemma \ref{lemma c3} we can see that $\mathcal{E}^{(n)}_{i} L_{\boldsymbol{d}^{1}\boldsymbol{d}^{2}\cdots\boldsymbol{d}^{l-1}\boldsymbol{\nu}'\boldsymbol{d}^{l} \cdots  \boldsymbol{\nu}^{N}\boldsymbol{d}^{N}}$ belongs to $\mathcal{Q}_{\mathbf{V}',\mathbf{W}^{\bullet}}$.
 	
 	Otherwise $i=i_{1}$, by Lemma \ref{lemma c1} and \ref{lemma c3} we can see that  $\mathcal{E}^{(n)}_{i}\mathcal{F}_{i_{1}}L_{\boldsymbol{\nu}''\boldsymbol{d}^{l} \cdots  \boldsymbol{\nu}^{N}\boldsymbol{d}^{N}}$ and  $ \mathcal{F}_{i_{1}}\mathcal{E}^{(n)}_{i}L_{\boldsymbol{\nu}''\boldsymbol{d}^{l} \cdots  \boldsymbol{\nu}^{N}\boldsymbol{d}^{N}}$ differ by direct sums of shifts of $\mathcal{E}^{(n-1)}_{i}L_{\boldsymbol{\nu}''\boldsymbol{d}^{l} \cdots  \boldsymbol{\nu}^{N}\boldsymbol{d}^{N}}$, which  belongs to $\mathcal{Q}_{\mathbf{V}'',\mathbf{W}^{',\bullet}}$ for some $\mathbf{V}''$ by inductive assumption. Hence similarly, $\mathcal{E}^{(n)}_{i}(L_{\boldsymbol{d}^{1}\boldsymbol{d}^{2}\cdots\boldsymbol{d}^{l-1}\boldsymbol{\nu}'\boldsymbol{d}^{l} \cdots  \boldsymbol{\nu}^{N}\boldsymbol{d}^{N}})$ belongs to $\mathcal{Q}_{\mathbf{V}',\mathbf{W}^{\bullet}}$  up to isomorphisms in global localizations.
 \end{proof}

 Now we can define the functors between global localizations.
 \begin{definition}
 	For graded spaces $\mathbf{V},\mathbf{V}'$ such that $|\mathbf{V}|=|\mathbf{V}'|+ni$, the functor $E^{(n)}_{i}:\mathcal{L}_{\mathbf{V}}(\Lambda^{\bullet}) \rightarrow \mathcal{L}_{\mathbf{V}'}(\Lambda^{\bullet})$ is defined by
 	\begin{equation*}
 		E^{(n)}_{i}=\mathcal{F}_{\Omega^{N}_{i},\Omega^{N}}\mathcal{E}^{(n)}_{i}\mathcal{F}_{\Omega^{N},\Omega^{N}_{i}}.
 	\end{equation*}
 	For graded spaces $\mathbf{V},\mathbf{V}''$ such that $|\mathbf{V}|=|\mathbf{V}''|+ni$, the functor $F^{(n)}_{i}:\mathcal{L}_{\mathbf{V}''}(\Lambda^{\bullet}) \rightarrow \mathcal{L}_{\mathbf{V}}(\Lambda^{\bullet})$ is defined by
 	\begin{equation*}
 		F^{(n)}_{i}= \mathcal{F}^{(n)}_{i}. 
 	\end{equation*}
 	The invertible functor $K_{i}: \mathcal{L}_{\mathbf{V}}(\Lambda^{\bullet}) \rightarrow \mathcal{L}_{\mathbf{V}}(\Lambda^{\bullet})$ is defiend by
 	\begin{equation*}
 		K_{i}=Id[\tilde{\nu}_{i}-2\nu_{i}],
 	\end{equation*}
 	and its inverse is defined by
 	\begin{equation*}
 		K^{-}_{i}=Id[2\nu_{i}-\tilde{\nu}_{i}],
 	\end{equation*}
 	where $\tilde{\nu}_{i}=\sum \limits_{h'=i,h \in \Omega_{i}}{{\rm{dim}}}\mathbf{V}_{h''}+\sum\limits_{1\leqslant k \leqslant N}{{\rm{dim}}}\mathbf{W}_{i^{k}}.$
 \end{definition}
 
 \begin{remark}
 	From the definition and  Corollary \ref{FD2}, we can see that the definitions of $E_{i},F_{i},K^{\pm}_{i}$ does not depend on the choices of $\Omega^{N},\Omega^{N}_{i},i\in I$.
 \end{remark}

 \begin{definition}
 	Let $\mathcal{K}_{0}(\Lambda^{\bullet})=\mathcal{K}_{0}(\mathcal{L}(\Lambda^{\bullet}))$ be the Grothendieck group of $\mathcal{L}(\Lambda^{\bullet})= \coprod\limits_{ \mathbf{V}} \mathcal{L}_{\mathbf{V}}(\Lambda^{\bullet})$. More precisely, $\mathcal{K}_{0}(\Lambda^{\bullet})$ is the $\mathbb{Z}[v,v^{-1}]$-module spanned by objects $[L]$ in $\mathcal{L}(\Lambda^{\bullet})$, modulo relations:
 	\begin{equation*}
 		[X \oplus Y]=[X]+[Y],
 	\end{equation*}
 	\begin{equation*}
 		[X[1]]=v[X].
 	\end{equation*}
 \end{definition}

 We also denote by $\mathcal{V}_{0}(\Lambda^{\bullet})_{\mathbf{V}}$  the Grothendieck group of $\mathcal{Q}_{\mathbf{V},\mathbf{W}^{\bullet}}$, and set $\mathcal{V}_{0}(\Lambda^{\bullet})= \bigoplus \limits_{\mathbf{V}} \mathcal{V}_{0}(\Lambda^{\bullet})_{\mathbf{V}}$. Then $\mathcal{K}_{0}(\Lambda^{\bullet})$ is exactly the $\mathbb{Z}[v,v^{-1}]$-submodule of  $\mathcal{V}_{0}(\Lambda^{\bullet})$ spanned by those $[L], L \in \bigcup\limits_{\mathbf{V}}\mathcal{P}_{\mathbf{V},\mathbf{W}^{\bullet}}$ but $L \notin  \coprod\limits_{\mathbf{V}}\mathcal{N}_{\mathbf{V}}$. Now we can state our first main theorm.
 \begin{theorem}\label{commutative relation}
(1)	As endofunctors of $\mathcal{L}(\Lambda^{\bullet})= \coprod \limits_{\mathbf{V}} \mathcal{L}_{\mathbf{V}}(\Lambda^{\bullet})$, the functors $E^{(n)}_{i},F^{(n)}_{i},K^{\pm}_{i},i \in I, n\in \mathbb{N}$ satisfy the following relations:
 	\begin{equation*} 
 		K_{i}K_{j}=K_{j}K_{i},
 	\end{equation*}
 	\begin{equation*}
 		E_{i}K_{j}=K_{j}E_{i}[-a_{j,i}],
 	\end{equation*}
 	\begin{equation*}
 		F_{i}K_{j}=K_{j}F_{i}[a_{i,j}],
 	\end{equation*}
 	\begin{equation*}
 		\bigoplus \limits_{0 \leqslant m < n } E^{(n)}_{i}[n-1-2m] \cong E^{(n-1)}_{i}E_{i}, n \geq 2,
 	\end{equation*}
 	\begin{equation*}
 		\bigoplus \limits_{0 \leqslant m < n } F^{(n)}_{j}[n-1-2m] \cong F^{(n-1)}_{j}F_{j}, n \geq 2.
 	\end{equation*}
 	\begin{equation*}
 		E_{i}F_{j}=F_{j}E_{i},i \neq j,
 	\end{equation*}
 	\begin{equation*}
 		E_{i}F_{i} \oplus \bigoplus\limits_{0\leqslant m \leqslant M-1} Id[M-1-2m] \cong F_{i}E_{i} \oplus \bigoplus\limits_{0\leqslant m \leqslant -M-1} Id[-2m-M-1]: \mathcal{L}_{\mathbf{V}}(\Lambda^{\bullet}) \rightarrow \mathcal{L}_{\mathbf{V}}(\Lambda^{\bullet})
 	\end{equation*}
 	where $M=2\nu_{i}-\tilde{\nu}_{i}$. 
 	
 	(2) We also have the following Serre relations
 	\begin{equation*}
 		\bigoplus\limits_{0\leqslant m \leqslant 1- a_{i,j},m~odd}F^{(m)}_{i}F_{j}F^{(1-a_{i,j}-m)}_{i} \cong 	\bigoplus\limits_{0\leqslant m \leqslant 1- a_{i,j},m~even}F^{(m)}_{i}F_{j}F^{(1-a_{i,j}-m)}_{i} ,
 	\end{equation*}
 	\begin{equation*}
 		\bigoplus\limits_{0\leqslant m \leqslant 1- a_{i,j},m~odd}E^{(m)}_{i}E_{j}E^{(1-a_{i,j}-m)}_{i}\cong 	\bigoplus\limits_{0\leqslant m \leqslant 1- a_{i,j},m~even}E^{(m)}_{i}E_{j}E^{(1-a_{i,j}-m)}_{i} .
 	\end{equation*}
 	
 	In particular, with the functors $E^{(n)}_{i},F^{(n)}_{i},K^{\pm}_{i},i \in I, n\in \mathbb{N}$, the Grothendieck group $\mathcal{K}_{0}(\Lambda^{\bullet})$ becomes an integrable ${_{\mathcal{A}}\mathbf{U}}$-module.
 \end{theorem}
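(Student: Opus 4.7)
My plan is to reduce every isomorphism in part (1) to its counterpart in a single-vertex localization, where Lemmas \ref{lemma c1}--\ref{lemma c4} have already done the heavy lifting, and then transport the identity through the Fourier-Deligne conjugation built into the definition of $E^{(n)}_i$. The key structural fact that makes this work is Proposition \ref{FD0}: $\mathcal{F}_{\Omega^N,\tilde\Omega^N}$ commutes with every induction functor, so in particular with every $F^{(n)}_j=\mathcal{F}^{(n)}_j$. Hence for any orientation $\Omega^N_i$ making $i$ a source,
\begin{equation*}
E^{(n)}_i F^{(m)}_j \;\cong\; \mathcal{F}_{\Omega^N_i,\Omega^N}\,\mathcal{E}^{(n)}_i\,\mathcal{F}^{(m)}_j\,\mathcal{F}_{\Omega^N,\Omega^N_i}, \qquad F^{(m)}_j E^{(n)}_i \;\cong\; \mathcal{F}_{\Omega^N_i,\Omega^N}\,\mathcal{F}^{(m)}_j\,\mathcal{E}^{(n)}_i\,\mathcal{F}_{\Omega^N,\Omega^N_i}.
\end{equation*}
The $K$-relations are immediate: $K_i=\mathrm{Id}[\tilde\nu_i-2\nu_i]$ commutes with every shift, while $E_i$ (resp.\ $F_j$) changes $\tilde\nu_i-2\nu_i$ by $-a_{j,i}$ (resp.\ $a_{i,j}$), which gives the graded commutation.

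The divided-power identities $E^{(n-1)}_iE_i\cong\bigoplus_{0\leq m<n}E^{(n)}_i[n-1-2m]$ and its $F$-analogue are exactly Lemma \ref{lemma c4} transported through the Fourier-Deligne conjugation above. The mixed commutation $E_iF_j\cong F_jE_i$ for $i\neq j$ reduces, by the same conjugation, to Lemma \ref{lemma c2} (and Lemma \ref{lemma c3} for the framed arrows, although $F_j$ only involves $j\in I$ here). The crucial quantum Serre-like identity for $[E_i,F_i]$ is the special case $n=1$ of Lemma \ref{lemma c1}: specializing $n=1$ in $M=2\nu_i-\tilde\nu_i-n+1$ yields $M=2\nu_i-\tilde\nu_i$, and $\mathcal{E}^{(0)}_i=\mathrm{Id}$, so the formula in the lemma becomes, verbatim, the claimed relation in the theorem. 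Again the conjugation by $\mathcal{F}_{\Omega^N,\Omega^N_i}$ moves the identity from localizations at $i$ to the global localization.

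The main obstacle is the Serre relation in part (2), which is not a direct consequence of Lemmas \ref{lemma c1}--\ref{lemma c4}. My plan is to handle the $F$-side first by exploiting the associativity of Lusztig's induction: iterating $F_i,F_j$ amounts to computing $\mathbf{Ind}^{\mathbf{V}\oplus\mathbf{W}^\bullet}_{\mathbf{V}_0,\,\mathbf{V}''\oplus\mathbf{W}^\bullet}(\bar{\mathbb Q}_l\boxtimes -)$ where $\mathbf{V}_0$ is an $\{i,j\}$-graded space, and the $\bar{\mathbb Q}_l$-factor carries no framing information. Consequently the Serre identity on the framed side will follow, via associativity of induction, from the classical Serre isomorphism for Lusztig sheaves on the rank-$2$ unframed quiver, which is \cite[Section~7]{MR1227098}. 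For the $E$-side I would either run the same argument after conjugating by $\mathcal{F}_{\Omega^N_i,\Omega^N}$ (which by Proposition \ref{FD0} transports induction to induction and hence the $F$-Serre into the $E$-Serre), or derive it formally from the $F$-Serre together with the already-established $[E,F]$ and divided-power relations by a rank-$2$ induction argument analogous to \cite[Theorem~1.4.3]{MR1227098}. The subtle point is that the isomorphism initially lives in $\mathcal{Q}_{\mathbf{V},\mathbf{W}^\bullet}$ itself, and one must verify that the Serre relation survives descent to the quotient by $\mathcal{N}_{\mathbf{V}}$; but this is automatic once the isomorphism holds in $\mathcal{Q}_{\mathbf{V},\mathbf{W}^\bullet}$.

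Passing to the Grothendieck group, the shift $[1]$ becomes multiplication by $v$, and the displayed isomorphisms become precisely the defining relations of $_{\mathcal{A}}\mathbf{U}$. Integrability follows because for fixed $\mathbf{V}$ both $E^{(n)}_i$ (lowering $\nu_i$) and $F^{(n)}_i$ (raising $\nu_i$ while $\tilde\nu_i$ stays bounded) vanish on $\mathcal{K}_0(\Lambda^\bullet)_{\mathbf{V}}$ for sufficiently large $n$, giving an integrable $_{\mathcal{A}}\mathbf{U}$-action.
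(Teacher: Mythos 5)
Your treatment of part (1) and of integrability is essentially the paper's own argument: the $K$-relations are definitional, the divided-power relations come from Lemma \ref{lemma c4}, the relations $E_iF_j\cong F_jE_i$ ($i\neq j$) and the $[E_i,F_i]$-relation come from Lemmas \ref{lemma c2} and \ref{lemma c1} transported through the Fourier--Deligne conjugation built into the definition of $E^{(n)}_i$, the $F$-side Serre relation reduces via Proposition \ref{indformula} (associativity of induction) to Lusztig's categorification theorem for the unframed quiver, which is exactly what the paper cites (\cite[Theorem 10.17]{MR1088333}), and integrability is argued as in the paper; for the $F$-direction you should state explicitly that once $\nu_i>\tilde\nu_i$ one has $\mathbf{E}^{\geqslant 1}_{\mathbf{V},\mathbf{W}^{\bullet},i}=\mathbf{E}_{\mathbf{V},\mathbf{W}^{\bullet},\Omega^N_i}$, so $F^{(M)}_i[L]$ dies in the localization.

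The genuine gap is the $E$-side Serre relation. Your option (a) cannot work: Proposition \ref{FD0} says the Fourier--Deligne transform commutes with the induction functors, so conjugating the $F$-Serre isomorphism by $\mathcal{F}_{\Omega^N_i,\Omega^N}$ yields again a statement about induction ($F$-type) functors; it never produces the functors $E^{(n)}_i$, which are defined through the Grassmannian correspondence on the open stratum $\mathbf{E}^0_{\mathbf{V},\mathbf{W}^{\bullet},i}$ and are not Fourier--Deligne conjugates of induction functors in this framework (the transforms in the definition of $E^{(n)}_i$ only change the orientation). Your option (b) is insufficient for the statement as written: a formal manipulation using the $[E,F]$ and divided-power relations can at best prove the Serre identity in the Grothendieck group (where it is indeed automatic for an integrable module), whereas part (2) asserts an isomorphism of functors, and such isomorphisms do not follow formally from identities among classes. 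The paper's route, which is the one you need, is adjunction: up to shift $E_i$ is adjoint to $K_iF_i$ (equivalently, $F_i$ is left adjoint to $K^{-}_iE_i[-1]$), so taking adjoints of the already established $F$-Serre isomorphism, with the shifts coming from the $K_i$'s cancelling, gives the $E$-Serre isomorphism at the level of functors; see \cite[Proposition 3.25]{fang2023lusztig} for the details.
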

 \begin{proof}
 	The first three relations follow by definition. The fourth and the fifth follow by Lemma \ref{lemma c4}. The last two relations follow by Lemma \ref{lemma c1} and Lemma \ref{lemma c2}. The Serre relation between $F_{i}$ and $F_{j}$ follows by Lusztig's categorification theorem \cite[Theorem 10.17]{MR1088333}. The Serre relation between $E_{i}$ and $E_{j}$ follows by the adjointness of $E_{i}$ and $K_{i}F_{i}$ (up to shifts). One can see details in \cite[Propostion 3.25]{fang2023lusztig} .
 	
 	Assume $L$ is a semisimple complex in $\mathcal{Q}_{\mathbf{V},\mathbf{W}^{\bullet}}$, $E_{i}^{(M)}([L])=0$ holds for $M> \nu_{i}$. For $M> \tilde{\nu}_{i}$, we assume $F_{i}^{(M)}([L])$ belongs to $\mathcal{Q}_{\mathbf{V}'',\mathbf{W}^{\bullet}}/\mathcal{N}_{\mathbf{V}''}$, then $\nu''_{i}-\tilde{\nu}''_{i}\geqslant M -\tilde{\nu}''_{i} \geqslant 1$. In particular, $\mathbf{E}^{\geqslant 1}_{\mathbf{V}'',\mathbf{W}^{\bullet},i}= \mathbf{E}_{\mathbf{V}'',\mathbf{W}^{\bullet},\Omega^{N}_{i}}$ and $F_{i}^{(M)}([L])=0$ holds.  Hence $\mathcal{K}_{0}(\Lambda^{\bullet})$ is integrable. 
 \end{proof}

 One can easily check by definition as in \cite[Proposition 3.24]{fang2023lusztig} to obtain the following results.
 \begin{proposition}
 	The functors $F_{i},E_{i},K_{i},i \in I$ of $\mathcal{L}(\Lambda^{\bullet})$ and Verdier duality $\mathbf{D}$ have the following commutative relations,
 	\begin{align*}
 		F_{i}\mathbf{D}=&\mathbf{D}F_{i},\\
 		E_{i}\mathbf{D}=&\mathbf{D}E_{i},\\
 		K_{i}\mathbf{D}=&\mathbf{D}(K_{i})^{-1}.
 	\end{align*}
 In particular, Verdier duality $\mathbf{D}$ induces the bar-involution on $\mathcal{K}_{0}(\Lambda^{\bullet})$.
 \end{proposition}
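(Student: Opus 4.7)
The plan is to reduce each of the three relations to the basic behavior of Verdier duality under the elementary six-functor operations, exactly paralleling the argument given for the single-framed case in \cite[Proposition 3.24]{fang2023lusztig}; the $N$-framed generalization is formal once one checks that all the geometric maps appearing in the definitions of $\mathcal{E}^{(n)}_i$ and $\mathcal{F}_i$ have the same local nature (smooth, proper, principal bundle, open embedding) as in the $N=1$ setting.

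First, the identity $K_i\mathbf{D}=\mathbf{D}K_i^{-1}$ is immediate from $K_i=\mathrm{Id}[\tilde\nu_i-2\nu_i]$ together with the elementary rule $\mathbf{D}\circ[n]=[-n]\circ\mathbf{D}$ (the Tate twist accompanying the shift is self-dual under our conventions). Next, for $F_i=\mathcal{F}_i=\mathbf{Ind}^{\mathbf{V}\oplus\mathbf{W}^{\bullet}}_{\mathbf{V}'\oplus{^{'}\mathbf{W}}^{\bullet},\mathbf{V}''\oplus{^{''}\mathbf{W}}^{\bullet}}(\overline{\mathbb{Q}}_{l}\boxtimes-)$, unfold the formula $(p_3)_!(p_2)_\flat(p_1)^\ast[d_1-d_2]$ and apply Verdier duality termwise: since $p_3$ is proper one has $\mathbf{D}(p_3)_!=(p_3)_!\mathbf{D}$; since $p_1$ is smooth with connected fibers of dimension $d_1$ one has $\mathbf{D}(p_1)^\ast=(p_1)^\ast\mathbf{D}[2d_1]$; and since $p_2$ is a principal $G$-bundle of relative dimension $d_2$ the equivalence $(p_2)_\flat$ intertwines the dualities on $G$-equivariant and non-equivariant derived categories up to a shift of $[-2d_2]$. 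The built-in shift $[d_1-d_2]$ combined with $\mathbf{D}\circ[d_1-d_2]=[d_2-d_1]\circ\mathbf{D}$ is precisely what is needed for everything to cancel, and $\mathbf{D}(\overline{\mathbb{Q}}_{l}\boxtimes A)\cong\overline{\mathbb{Q}}_{l}\boxtimes\mathbf{D}(A)$ since $\mathbf{E}_{\mathbf{V}',0,\Omega^{N}_{i}}$ contributes only self-dual constant sheaves (the quiver has no loops at $i$). This gives $F_i\mathbf{D}\cong\mathbf{D}F_i$.

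For $E_i$, the same accounting applies to $\mathcal{E}^{(n)}_i=(j_{\mathbf{V}',i})_!(\phi_{\mathbf{V}',i})^\ast(q_2)_!(q_1)^\ast(\phi_{\mathbf{V},i})_\flat(j_{\mathbf{V},i})^\ast[-n\nu_i]$. The principal bundles $\phi_{\mathbf{V},i},\phi_{\mathbf{V}',i}$ and the smooth/proper pair $q_1,q_2$ between flag/Grassmannian varieties behave under $\mathbf{D}$ as above; the shift $[-n\nu_i]$ is exactly the combinatorial shift needed to match the dimension counts. The subtle point is the open embeddings $j_{\mathbf{V},i},j_{\mathbf{V}',i}$: the functors $j_!$ and $j_\ast$ are exchanged by $\mathbf{D}$, but in the localizations $\mathcal{D}^{b}_{G_{\mathbf{V}}}(\mathbf{E}_{\mathbf{V},\mathbf{W}^{\bullet},\Omega^{N}_{i}})/\mathcal{N}_{\mathbf{V},i}$ the equivalences recalled after Corollary \ref{FD2} identify $(j_{\mathbf{V},i})_!$ with the middle extension $(j_{\mathbf{V},i})_{!\ast}$ on semisimple complexes, and the middle extension intertwines with $\mathbf{D}$ on the nose. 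This is the step I expect to be the main obstacle to state cleanly, and it is essentially the reason one localizes at the closed stratum in the first place. After passing to the global localization via Fourier--Deligne (which also commutes with $\mathbf{D}$ by Proposition \ref{FD0} and its construction), the same conclusion holds for $E_i=\mathcal{F}_{\Omega^N_i,\Omega^N}\mathcal{E}_i\mathcal{F}_{\Omega^N,\Omega^N_i}$.

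Finally, the bar-involution claim is a direct consequence of the three relations combined with the defining relations $[X[1]]=v[X]$ of $\mathcal{K}_0(\Lambda^{\bullet})$: the former forces the induced $\mathbb{Z}$-linear map $[\mathbf{D}]$ to be $v^{-1}$-semilinear, and the latter exhibit $[\mathbf{D}]$ as a compatible involution on the $_{\mathcal{A}}\mathbf{U}$-module structure, which is exactly the bar-involution on an integrable highest weight module.
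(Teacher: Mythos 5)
Your proposal is correct and follows essentially the same route as the paper, which simply says the relations are checked by definition as in \cite[Proposition 3.24]{fang2023lusztig}: termwise compatibility of $\mathbf{D}$ with the proper/smooth/principal-bundle maps in the definitions of $\mathcal{F}_i$ and $\mathcal{E}^{(n)}_i$ (the built-in shifts indeed cancel exactly), with the only delicate point being that $(j_{\mathbf{V},i})_!$ and $(j_{\mathbf{V},i})_\ast$ agree in the localization, which you correctly identify as the reason the argument works there. The only nitpicks are cosmetic: the identification of $(j_{\mathbf{V},i})_!$ with the middle extension is stated in Section 3.1 rather than after Corollary \ref{FD2}, and for $N\geqslant 2$ the module $\mathcal{K}_0(\Lambda^{\bullet})$ is not a highest weight module, so the final claim should be read as saying $\mathbf{D}$ induces a $v\mapsto v^{-1}$-semilinear involution compatible with the $_{\mathcal{A}}\mathbf{U}$-action, exactly as in the paper.
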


\section{Tensor products of the integrable highest weight modules}

In this section, we  determine the structure of $\mathcal{K}_{0}(\Lambda^{\bullet})$ by induction on $N$. At the beginning, we need to state two propositions deduced from Lusztig's key lemma \cite[Lemma 6.4]{MR1604167} in this subsection. 

If $|\mathbf{V}'|=ri,|\mathbf{V}''|=|\mathbf{V}|-ri$, we denote $\mathbf{V'}$ by $\mathbf{V}'_{ri}$ and $\mathbf{V}''$ by $\mathbf{V}''_{\nu-ri}$. For an orientation $\tilde{\Omega}^{N}$ and a simple perverse sheaf $L$, we define $s_{i}(L)$ to be the largest integer $r$ satisfying that there exists a semisimple complex $L'$ such that $L$ is isomorphic to a shift of a direct summand of $\mathbf{Ind}^{\mathbf{V}\oplus \mathbf{W}^{\bullet}}_{\mathbf{V}'_{ri},\mathbf{V}''_{\nu-ri}\oplus \mathbf{W}^{\bullet}}(\bar{\mathbb{Q}}_{l} \boxtimes L')$. Notice that the definition of $s_{i}(L)$ does not depend on the choice of $\tilde{\Omega}^{N}$ by Proposition \ref{FD0} and Remark \ref{remarkFD}. Then we have the following result.

\begin{proposition}\label{l0}
	Let $L \in \mathcal{P}_{\mathbf{V}\oplus\mathbf{W}^{\bullet}}$ be a simple perverse sheaf such that $s_{i}(L)=t$ for some $0 \leqslant t \leqslant \nu_{i}$ and assume $|\mathbf{V}'|=ti$,  then there exists a unique simple perverse sheaf $K \in \mathcal{P}_{\mathbf{V}''\oplus\mathbf{W}^{\bullet}}$ with $s_{i}(K)=0$ such that $$\mathbf{Ind}^{\mathbf{V}\oplus\mathbf{W}^{\bullet}}_{\mathbf{V}',\mathbf{V}''\oplus\mathbf{W}^{\bullet}}(\bar{\mathbb{Q}}_{l} \boxtimes K) \cong L \oplus \bigoplus \limits_{L',g'} L'[g'], $$ 
	where  $L'[g']$ are various simple perverse sheaves in $ \mathcal{P}_{\mathbf{V}\oplus\mathbf{W}^{\bullet}}$ with $s_{i}(L')>t$ and $g' \in \mathbb{Z}$. Moreover, if $L \in \mathcal{P}_{\mathbf{V},\mathbf{W}^{\bullet}}$, then  $K$ also belongs to $\mathcal{P}_{\mathbf{V}'',\mathbf{W}^{\bullet}}$.
\end{proposition}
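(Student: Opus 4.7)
My plan is to treat this as a mild generalization of Lusztig's key lemma \cite[Lemma 6.4]{MR1604167} (or equivalently \cite[Section 6.2]{MR1227098}). The crucial point is that an $N$-framed quiver $Q^{(N)}$ is still a quiver, so Lusztig's original key lemma, applied to $Q^{(N)}$ at the vertex $i \in I \subset I^{(N)}$, directly yields a unique simple perverse sheaf $K \in \mathcal{P}_{\mathbf{V}''\oplus\mathbf{W}^{\bullet}}$ with $s_i(K)=0$ such that
\begin{equation*}
\mathbf{Ind}^{\mathbf{V}\oplus\mathbf{W}^{\bullet}}_{\mathbf{V}',\mathbf{V}''\oplus\mathbf{W}^{\bullet}}(\bar{\mathbb{Q}}_{l} \boxtimes K) \cong L \oplus \bigoplus_{L',g'} L'[g'],
\end{equation*}
with $L'$ simple perverse sheaves of larger $s_i$. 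This takes care of the existence, uniqueness, and the description of the other summands in the statement.

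The content that requires work is the \emph{moreover} clause: if $L \in \mathcal{P}_{\mathbf{V},\mathbf{W}^{\bullet}}$ then $K$ actually lies in the smaller set $\mathcal{P}_{\mathbf{V}'',\mathbf{W}^{\bullet}}$. My plan is to use the induction formula of Proposition \ref{indformula} together with the defining property of $\mathcal{P}_{\mathbf{V},\mathbf{W}^{\bullet}}$. By definition, $L$ is a direct summand (up to a shift) of some $L_{\boldsymbol{\nu}^{1}\boldsymbol{d}^{1}\boldsymbol{\nu}^{2}\boldsymbol{d}^{2}\cdots \boldsymbol{\nu}^{N}\boldsymbol{d}^{N}}$, where each $\boldsymbol{\nu}^{k}$ is a flag type of some $I$-graded space. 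Since $s_i(L) = t$, Lusztig's key lemma guarantees that, after possibly reshuffling the leading block $\boldsymbol{\nu}^1$ (which only changes the indexing flag type, not the sheaf $L_{\boldsymbol{\nu}^1 \cdots}$ up to shifts), we may in fact write $\boldsymbol{\nu}^1 = (i)^t \boldsymbol{\mu}^1$ for a flag type $\boldsymbol{\mu}^1$ of an $I$-graded space; equivalently $L$ is a summand of $L_{(i)^{t}\boldsymbol{\mu}^{1}\boldsymbol{d}^{1}\boldsymbol{\mu}^{2}\boldsymbol{d}^{2}\cdots\boldsymbol{\mu}^{N}\boldsymbol{d}^{N}}$ with $\boldsymbol{\mu}^{k}=\boldsymbol{\nu}^{k}$ for $k \geqslant 2$.

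Combining this with the induction formula, the sheaf $L_{(i)^{t}\boldsymbol{\mu}^{1}\boldsymbol{d}^{1}\cdots \boldsymbol{\mu}^{N}\boldsymbol{d}^{N}}$ equals $\mathbf{Ind}^{\mathbf{V}\oplus\mathbf{W}^{\bullet}}_{\mathbf{V}',\mathbf{V}''\oplus\mathbf{W}^{\bullet}}(\bar{\mathbb{Q}}_{l}\boxtimes L_{\boldsymbol{\mu}^{1}\boldsymbol{d}^{1}\cdots \boldsymbol{\mu}^{N}\boldsymbol{d}^{N}})$ (up to a shift), where $L_{\boldsymbol{\mu}^{1}\boldsymbol{d}^{1}\cdots\boldsymbol{\mu}^{N}\boldsymbol{d}^{N}}$ is a semisimple complex on $\mathbf{E}_{\mathbf{V}'',\mathbf{W}^{\bullet},\Omega^{N}}$ whose simple summands all lie in $\mathcal{P}_{\mathbf{V}'',\mathbf{W}^{\bullet}}$ by definition. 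So every simple direct summand of $\mathbf{Ind}^{\mathbf{V}\oplus\mathbf{W}^{\bullet}}_{\mathbf{V}',\mathbf{V}''\oplus\mathbf{W}^{\bullet}}(\bar{\mathbb{Q}}_{l}\boxtimes K')$ with $K'$ a simple summand of $L_{\boldsymbol{\mu}^{1}\boldsymbol{d}^{1}\cdots \boldsymbol{\mu}^{N}\boldsymbol{d}^{N}}$ is a simple summand of $L_{(i)^t\boldsymbol{\mu}^{1}\boldsymbol{d}^{1}\cdots}$. In particular $L$ arises this way. Choosing $K'$ to be the simple summand yielding $L$ on induction, the uniqueness part of the first assertion forces $K' \cong K$ (as a simple perverse sheaf in $\mathcal{P}_{\mathbf{V}''\oplus\mathbf{W}^{\bullet}}$), and so $K \in \mathcal{P}_{\mathbf{V}'',\mathbf{W}^{\bullet}}$ as required.

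The main obstacle I anticipate is the reshuffling step: verifying that one can arrange the leading block $\boldsymbol{\nu}^1$ to begin with $(i)^t$ while staying within flag types associated to $I$-graded pieces (i.e.\ not touching the $\boldsymbol{d}^k$'s). This ultimately reduces to the $s_i$-characterization in Lusztig's key lemma together with the explicit form of $L_{\boldsymbol{\nu}}$ under rearrangements of the leading entries (Proposition \ref{indformula} applied inductively), but must be phrased carefully so that the isolated $(i)^t$ can be split off via the induction functor. Once that is in place, the rest is a direct consequence of uniqueness in Lusztig's lemma.
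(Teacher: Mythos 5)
The first half of your plan is exactly the paper's: view $Q^{(N)}$ as an ordinary quiver with $i$ a source and quote Lusztig's key lemma for it, which gives existence, uniqueness and the description of the extra summands (the paper then transfers back to $\Omega^{N}$ via Proposition \ref{FD0}). The gap is in the \emph{moreover} clause, precisely at the step you yourself flag as the main obstacle. The reshuffling claim --- that $s_{i}(L)=t$ allows you to realize $L$ as a summand of some $L_{(i)^{t}\boldsymbol{\mu}^{1}\boldsymbol{d}^{1}\cdots\boldsymbol{\mu}^{N}\boldsymbol{d}^{N}}$ --- is never proved, and the parenthetical justification offered is false: permuting the entries of a flag type does change the Lusztig sheaf, not merely by a shift (already for the $A_{2}$ quiver with dimension vector $(1,1)$, one of $L_{(1,2)},L_{(2,1)}$ is a shifted constant sheaf and the other a shifted skyscraper), so you cannot simply ``reshuffle the leading block''. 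Worse, the claim is essentially equivalent to what you are trying to prove: $L$ is a summand of some $L_{(i)^{t}\boldsymbol{\mu}^{1}\boldsymbol{d}^{1}\cdots}=\mathbf{Ind}^{\mathbf{V}\oplus\mathbf{W}^{\bullet}}_{\mathbf{V}',\mathbf{V}''\oplus\mathbf{W}^{\bullet}}(\bar{\mathbb{Q}}_{l}\boxtimes L_{\boldsymbol{\mu}^{1}\boldsymbol{d}^{1}\cdots})$ if and only if $K\in\mathcal{P}_{\mathbf{V}'',\mathbf{W}^{\bullet}}$ (your last paragraph gives one direction; the other is immediate since $L$ is a summand of $\mathbf{Ind}(\bar{\mathbb{Q}}_{l}\boxtimes K)$). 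So the argument is circular at the only point that needed an argument. A smaller omission: before invoking uniqueness you must rule out that the summand $K'$ producing $L$ has $s_{i}(K')>0$; this does hold, because inducting $(i)^{t}$ against a simple with $s_{i}>0$ only yields summands with $s_{i}>t$, but it has to be said.

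The paper closes the gap using the half of Lusztig's key lemma you did not use: $K$ is also characterized as the unique unshifted, multiplicity-one summand with $s_{i}=0$ of $\mathbf{Res}^{\mathbf{V}\oplus\mathbf{W}^{\bullet}}_{\mathbf{T},\mathbf{W}\oplus\mathbf{W}^{\bullet}}(L)$, where $|\mathbf{T}|=ti$. Since $L\in\mathcal{P}_{\mathbf{V},\mathbf{W}^{\bullet}}$ is a summand of some $L_{\boldsymbol{\nu}^{1}\boldsymbol{d}^{1}\cdots\boldsymbol{\nu}^{N}\boldsymbol{d}^{N}}$, Proposition \ref{res formula} expresses the restriction of this complex as a sum of $L_{\boldsymbol{\tau}}\boxtimes L_{\boldsymbol{\omega}}$ with $\boldsymbol{\tau}+\boldsymbol{\omega}$ the given flag type; because $\mathbf{T}$ is concentrated at the unframed vertex $i$, every block $\boldsymbol{d}^{k}$ (supported at the framed vertices $i^{k}$) is forced entirely into $\boldsymbol{\omega}$, so each $\boldsymbol{\omega}$ is again of the interleaved form $\boldsymbol{\omega}^{1}\boldsymbol{d}^{1}\cdots\boldsymbol{\omega}^{N}\boldsymbol{d}^{N}$. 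Hence every simple summand of $\mathbf{Res}(L)$, in particular $K$, lies in $\mathcal{P}_{\mathbf{V}'',\mathbf{W}^{\bullet}}$. If you want to keep a purely induction-functor argument you would have to prove your reshuffling claim independently, and I do not see how to do that without the restriction functor (or an equivalent coproduct computation); the cleanest repair is to replace that step by the restriction argument above.
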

\begin{proof}
	View $Q^{(N)}$ as a large unframed quiver,	take an orientation such that $i$ is a source. Then apply \cite[Lemma 6.4,Proposition 6.6]{MR1088333} for $Q^{(N)}$, we can see that there exists a unique simple perverse sheaf $K \in \mathcal{P}_{\mathbf{V}''\oplus\mathbf{W}^{\bullet}}$ with $s_{i}(K)=0$ such that $$\mathbf{Ind}^{\mathbf{V}\oplus\mathbf{W}^{\bullet}}_{\mathbf{V}',\mathbf{V}''\oplus\mathbf{W}^{\bullet}}(\bar{\mathbb{Q}}_{l} \boxtimes K) \cong L \oplus \bigoplus \limits_{L',g'} L'[g'], $$ 
	where  $L'[g']$ are various simple perverse sheaves in $ \mathcal{P}_{\mathbf{V}\oplus\mathbf{W}^{\bullet}}$ with $s_{i}(L')>t$ and $g' \in \mathbb{Z}$, and 
	$$\mathbf{Res}^{\mathbf{V}\oplus\mathbf{W}^{\bullet}}_{\mathbf{T},\mathbf{W}\oplus \mathbf{W}^{\bullet}}(L)\cong K \oplus \bigoplus \limits_{K',f'} K'[f']$$ is a direct sum of finitely many summands of the form $K'[f']$ for various simple perverse sheaves $K'$ in  $ \mathcal{P}_{\mathbf{V}''\oplus\mathbf{W}^{\bullet}}$ with $s_{i}(K')>0$ and $f' \in \mathbb{Z}$.  By Proposition \ref{indformula} and \ref{res formula}, we can see that if $L \in \mathcal{P}_{\mathbf{V},\mathbf{W}^{\bullet}}$, then  $K$ also belongs to $\mathcal{P}_{\mathbf{V}'',\mathbf{W}^{\bullet}}$. Since Lusztig's induction functor commutes with the Fourier-Deligne transforms by Proposition \ref{FD0}, we get the proof for orientation $\Omega^{N}$.
\end{proof}

Dually, if $|\mathbf{V}'|=|\mathbf{V}|-ri$ and $|\mathbf{V}''|=ri$, we denote $\mathbf{V}'$ by $\mathbf{V}'_{\nu-ri}$ and $\mathbf{V}''$ by $\mathbf{V}''_{ri}$. For an orientation $\tilde{\Omega}^{N}$ and a simple perverse sheaf $L$, we define $s_{i}^{\ast}(L)$ to be the largest integer $r$ satisfying that there exists $L'$ such that $L$ is isomorphic to a direct summand of $\mathbf{Ind}^{\mathbf{V}\oplus \mathbf{W}^{\bullet}}_{\mathbf{V}'_{\nu-ri}\oplus \mathbf{W}^{\bullet},\mathbf{V}''_{ri}}(L' \boxtimes \bar{\mathbb{Q}}_{l})$. By a similar argument, we have the following result dual to Proposition \ref{l0}.

\begin{proposition}\label{r0}
	Let $L\in\mathcal{P}_{\mathbf{V}\oplus\mathbf{W}^{\bullet}}$ be a simple perverse sheaf such that $s^{\ast}_{i}(L)=t$ for some $0 \leqslant t \leqslant \nu_{i}$ and assume $|\mathbf{V}''|=ti$,  then there exists a unique simple perverse sheaf $K \in \mathcal{P}_{\mathbf{V}'\oplus\mathbf{W}^{\bullet}}$ with $s^{\ast}_{i}(K)=0$ such that $$\mathbf{Ind}^{\mathbf{V}\oplus\mathbf{W}^{\bullet}}_{\mathbf{V}'\oplus\mathbf{W}^{\bullet},\mathbf{V}''}(K\boxtimes\bar{\mathbb{Q}}_{l}) \cong L \oplus \bigoplus \limits_{L',g'} L'[g'], $$ 
	where  $L'[g']$ are various simple perverse sheaves in $ \mathcal{P}_{\mathbf{V}\oplus\mathbf{W}^{\bullet}}$ with $s^{\ast}_{i}(L')>t$ and $g' \in \mathbb{Z}$. In particular, $L\in\mathcal{P}_{\mathbf{V}\oplus\mathbf{W}^{\bullet}}$ belongs to $\mathcal{N}_{\mathbf{V},i}$ if and only if $s^{\ast}_{i}(L)>0$ if and only if $L$ is a direct summand of some $L_{\boldsymbol{\mu}}$ such that $\boldsymbol{\mu}$ ends by  $i^{a}$ for some $a>0$.  
\end{proposition}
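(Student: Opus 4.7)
The plan is to follow the same strategy as the proof of Proposition \ref{l0}, with the roles of ``source'' and ``sink'' at the vertex $i$ interchanged, since $s^{\ast}_i$ is the ``bottom'' analogue of $s_i$. I would first view $Q^{(N)}$ as a large unframed quiver and pick an orientation $\tilde{\Omega}^N$ making $i$ a sink, then invoke Lusztig's key lemma \cite[Lemma 6.4, Proposition 6.6]{MR1088333} in its dual form to produce the unique simple perverse sheaf $K$ with $s^{\ast}_i(K)=0$ together with the asserted decomposition of $\mathbf{Ind}^{\mathbf{V}\oplus\mathbf{W}^{\bullet}}_{\mathbf{V}'\oplus\mathbf{W}^{\bullet}, \mathbf{V}''}(K\boxtimes\bar{\mathbb{Q}}_l)$. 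To transport the statement to the given orientation $\Omega^N$, I would apply Proposition \ref{FD0}: the Fourier--Deligne transform commutes with induction and fixes the standard sheaves $L_{\boldsymbol{\mu}}$, so both the decomposition and the definition of $s^{\ast}_i$ are orientation-independent.

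For the ``in particular'' three-part equivalence, I would split it into $(2)\Leftrightarrow (3)$ and $(1)\Leftrightarrow (2)$. The first is immediate from Proposition \ref{indformula}: since $L_{(i^r)}=\bar{\mathbb{Q}}_l$ on the one-point variety, $\mathbf{Ind}(L_{\boldsymbol{\nu}'}\boxtimes\bar{\mathbb{Q}}_l)=L_{\boldsymbol{\nu}'(i^r)}$, and every semisimple $L'$ is a direct sum of shifts of summands of various $L_{\boldsymbol{\nu}'}$; hence $L$ is a summand of some $\mathbf{Ind}(L'\boxtimes\bar{\mathbb{Q}}_l)$ with $|\mathbf{V}''|=ri$, $r>0$, if and only if $L$ is a summand of some $L_{\boldsymbol{\mu}}$ ending in $i^a$, $a>0$.

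For $(1)\Leftrightarrow (2)$ I would work in the orientation $\Omega^N_i$ in which $i$ is a source. The direction $(2)\Rightarrow (1)$ is a support calculation: the image of the proper map $p_3$ in the induction diagram consists of $x$ admitting an $x$-stable subspace concentrated at $i$ of dimension $r$, which, because $i$ is a source in $Q^{(N)}$, forces $\dim\ker(\bigoplus_{h'=i}x_h)\geqslant r\geqslant 1$, so the support of $L$ lies in $\mathbf{E}^{\geqslant 1}_{\mathbf{V},\mathbf{W}^{\bullet},i}$ and $L\in\mathcal{N}_{\mathbf{V},i}$. The converse $(1)\Rightarrow (2)$ uses the ``cleanness'' content of Lusztig's key lemma already invoked above: if $s^{\ast}_i(L)=0$, applying the main statement with $t=0$ forces $L=K$, which is then the middle extension of a simple perverse sheaf on $\mathbf{E}^0_{\mathbf{V},\mathbf{W}^{\bullet},i}$, so its support meets $\mathbf{E}^0$ and $L\notin\mathcal{N}_{\mathbf{V},i}$.

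The principal obstacle will be the $(1)\Rightarrow (2)$ direction, which rests on the ``middle-extension'' characterization of simple perverse sheaves with $s^{\ast}_i=0$; this is part of Lusztig's key lemma but is not made fully explicit in the excerpt, and invoking it requires checking that the framing arrows at $i$ behave as ordinary arrows when $Q^{(N)}$ is regarded as a plain quiver. Once this characterization is secured, the remainder of the argument reduces to the standard support and induction-formula bookkeeping already used in the proof of Proposition \ref{l0}.
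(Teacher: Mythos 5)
Your overall strategy is the same as the paper's: Proposition \ref{r0} is obtained by dualizing the proof of Proposition \ref{l0} (regard $Q^{(N)}$ as a plain unframed quiver, invoke Lusztig's key lemma \cite[Lemma 6.4, Proposition 6.6]{MR1088333} in its dual form, and use Proposition \ref{FD0} together with Remark \ref{remarkFD} to make everything independent of the chosen orientation), and your first paragraph reproduces exactly this. Your treatment of the ``in particular'' clause, which the paper leaves implicit, is also essentially right: $s^{\ast}_{i}(L)>0\Rightarrow L\in\mathcal{N}_{\mathbf{V},i}$ follows from the support of the proper map $p_{3}$ in the orientation $\Omega^{N}_{i}$ where $i$ is a source (an $x$-stable subspace concentrated at $i$ must lie in the kernel of all arrows out of $i$, including the framing arrows $i\rightarrow i^{k}$), and the converse rests on the geometric form of Lusztig's lemma (a simple perverse sheaf in $\mathcal{P}_{\mathbf{V}\oplus\mathbf{W}^{\bullet}}$ with invariant $0$ has support meeting $\mathbf{E}^{0}_{\mathbf{V},\mathbf{W}^{\bullet},i}$), which is precisely the same implicit input the paper uses; your choice of a sink orientation for the main decomposition is immaterial thanks to the Fourier--Deligne transforms, as long as the support argument is run, as you do, in the source orientation defining $\mathcal{N}_{\mathbf{V},i}$.

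One step is misjustified as written: in your ``immediate'' equivalence of $s^{\ast}_{i}(L)>0$ with ``$L$ is a summand of some $L_{\boldsymbol{\mu}}$ ending in $i^{a}$'', you assert that every semisimple complex $L'$ is a direct sum of shifts of summands of various $L_{\boldsymbol{\nu}'}$. This is false: the $L'$ occurring in the definition of $s^{\ast}_{i}$ is an arbitrary (semisimple) complex on $\mathbf{E}_{\mathbf{V}'\oplus\mathbf{W}^{\bullet}}$, and only objects of $\mathcal{Q}'_{\mathbf{V}'\oplus\mathbf{W}^{\bullet}}$ have that shape, so the forward direction does not follow from Proposition \ref{indformula} alone. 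The repair is available inside your own argument: for $L\in\mathcal{P}_{\mathbf{V}\oplus\mathbf{W}^{\bullet}}$ with $s^{\ast}_{i}(L)=t>0$, the main statement you have already established produces $K\in\mathcal{P}_{\mathbf{V}'\oplus\mathbf{W}^{\bullet}}$ with $L$ a summand of $\mathbf{Ind}^{\mathbf{V}\oplus\mathbf{W}^{\bullet}}_{\mathbf{V}'\oplus\mathbf{W}^{\bullet},\mathbf{V}''}(K\boxtimes\bar{\mathbb{Q}}_{l})$; since $K[d]$ is a summand of some $L_{\boldsymbol{\nu}'}$, Proposition \ref{indformula} shows $L$ is, up to shift, a summand of $L_{\boldsymbol{\nu}'(i^{t})}$, which ends in $i^{t}$. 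With this one fix your proposal is complete and coincides with the paper's (dual) argument.
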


\subsection{Irreducible highest weight modules for $N=1$}
Firstly, we assume that $N=1$ and simply denote $\mathcal{Q}_{\mathbf{V},\mathbf{W}^{\bullet}}$ by $\mathcal{Q}_{\mathbf{V},\mathbf{W}}$, $\mathcal{L}(\Lambda^{\bullet})$ by $\mathcal{L}(\Lambda)$,  $\mathcal{V}_{0}(\Lambda^{\bullet})$ by $\mathcal{V}_{0}(\Lambda)$ and $\mathcal{K}_{0}(\Lambda^{\bullet})$ by $\mathcal{K}_{0}(\Lambda)$. We assume that $\boldsymbol{d}$ is the flag type choosed for $\mathbf{W}=\mathbf{W}^{1}$, then $\mathcal{Q}_{\mathbf{V},\mathbf{W}}$ is the semisimple category consisting of direct sums of shifted summands of those $L_{\boldsymbol{\nu}\boldsymbol{d}}$. We denote the set of simple objects in $\mathcal{Q}_{\mathbf{V},\mathbf{W}}$ by $\mathcal{P}_{\mathbf{V},\mathbf{W}}$. We denote by $\mathcal{S}=\bigcup\limits_{\mathbf{V}} \mathcal{S}_{|\mathbf{V}|}$ all flag types of $I$-graded spaces $\mathbf{V}$. For any subset $X$ of an $\mathcal{A}$-module, we denote the $\mathcal{A}$-linear span of $X$ by $\mathcal{A}X$.

\begin{lemma}\label{highest}
	The $\mathcal{A}$-module $\mathcal{V}_{0}(\Lambda)$ is $\mathcal{A}$-spanned by those semisimple complexes $[L_{\boldsymbol{\nu}\boldsymbol{d}}],\boldsymbol{\nu} \in \mathcal{S}$.
\end{lemma}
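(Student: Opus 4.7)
I want to show that every simple class $[L]$ with $L \in \mathcal{P}_{\mathbf{V},\mathbf{W}}$ lies in the $\mathcal{A}$-span $\mathcal{A}\{[L_{\boldsymbol{\nu}\boldsymbol{d}}] : \boldsymbol{\nu} \in \mathcal{S}\}$; the opposite inclusion is immediate because each $L_{\boldsymbol{\nu}\boldsymbol{d}}$ is, by the decomposition theorem, a direct sum of shifts of simples in $\mathcal{P}_{\mathbf{V},\mathbf{W}}$. Since $\mathcal{V}_{0}(\Lambda)$ is tautologically $\mathcal{A}$-spanned by those $[L]$, these two inclusions together prove the lemma.

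The plan is a double induction: an outer induction on $|\mathbf{V}|$, and for fixed $\mathbf{V}$ a reverse inner induction on $t = s_{i}(L)$ for a chosen $i \in I$ with $\nu_{i} > 0$. The outer base case $|\mathbf{V}| = 0$ is immediate, since $L_{\boldsymbol{d}}$ is the only simple object in $\mathcal{P}_{0,\mathbf{W}}$ up to shift. For $|\mathbf{V}| > 0$, any $L \in \mathcal{P}_{\mathbf{V},\mathbf{W}}$ is by definition a shifted direct summand of some $L_{\boldsymbol{\nu}\boldsymbol{d}}$ whose $\boldsymbol{\nu}$ is a nonempty flag type starting with some $i_{1}^{a_{1}}$, $a_{1} \geqslant 1$; this forces $s_{i_{1}}(L) \geqslant a_{1} \geqslant 1$, so one may fix such an $i_{1}$ as the $i$ for the inner induction.

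For the inner step, apply Proposition \ref{l0} with this $i$ and $t = s_{i}(L)$ to produce a unique $K \in \mathcal{P}_{\mathbf{V}'',\mathbf{W}}$ with $|\mathbf{V}''| = |\mathbf{V}| - ti$ and $s_{i}(K) = 0$ such that, in the Grothendieck group,
\begin{equation*}
[\mathbf{Ind}^{\mathbf{V}\oplus\mathbf{W}}_{\mathbf{V}',\mathbf{V}''\oplus\mathbf{W}}(\bar{\mathbb{Q}}_{l} \boxtimes K)] = [L] + \sum_{L',g'} v^{g'} [L'],
\end{equation*}
where each $L' \in \mathcal{P}_{\mathbf{V},\mathbf{W}}$ satisfies $s_{i}(L') > t$. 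By the reverse inner induction, each such $[L']$ already lies in the required span; the base case is $t = \nu_{i}$, where the right-hand sum is empty because $s_{i}$ is bounded above by $\nu_{i}$. By the outer induction, $[K]$ is an $\mathcal{A}$-combination of the $[L_{\boldsymbol{\mu}\boldsymbol{d}}]$. Because $\mathbf{E}_{\mathbf{V}',0,\Omega^{N}}$ is a point and $\bar{\mathbb{Q}}_{l}$ is a shift of $L_{(i^{t})}$ on it, Proposition \ref{indformula} yields $\mathbf{Ind}(\bar{\mathbb{Q}}_{l} \boxtimes L_{\boldsymbol{\mu}\boldsymbol{d}}) \cong L_{(i^{t})\boldsymbol{\mu}\boldsymbol{d}}$ up to shift, so $[\mathbf{Ind}(\bar{\mathbb{Q}}_{l} \boxtimes K)]$ also lies in $\mathcal{A}\{[L_{\boldsymbol{\nu}\boldsymbol{d}}]\}$. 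Solving for $[L]$ completes the induction. The main potential obstacle, which is really just bookkeeping, is verifying that both inductions are well-founded and that the orientation ambiguity in Propositions \ref{indformula} and \ref{l0} is harmless; the latter follows from the Fourier--Deligne invariance established in Proposition \ref{FD0} and Corollary \ref{FD2}.
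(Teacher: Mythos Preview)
Your proof is correct and follows essentially the same approach as the paper: an increasing induction on $|\mathbf{V}|$ combined with a decreasing induction on $s_{i}(L)$, using Proposition \ref{l0} at the inductive step. Your version is slightly more explicit in invoking Proposition \ref{indformula} to see that $[\mathbf{Ind}(\bar{\mathbb{Q}}_{l}\boxtimes K)]$ lies in the span once $[K]$ does, but the argument is otherwise identical.
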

\begin{proof}
	We apply  Proposition \ref{l0} for $Q^{(1)}$.
	Suppose $|\mathbf{V}| \neq 0$, then any $L \in \mathcal{P}_{\mathbf{V},\mathbf{W}}$ is a direct summand of a semisimple complex $[L_{\boldsymbol{\nu}\boldsymbol{d}}]$ and $\boldsymbol{\nu}$ is a nonempty sequence. Hence there exists some $i$ such that $s_{i}(L)=r >0$ by definition of $s_{i}$. We will prove by an increasing induction on $|\mathbf{V}|$ and a decreasing induction on $r$.
	
	We assume that for any  $L' \in \mathcal{P}_{\mathbf{V}',\mathbf{W}}$ with $|\mathbf{V}'|<|\mathbf{V}|$, $[L']$ can be linearly expressed by those $[L_{\boldsymbol{\nu}\boldsymbol{d}}]$. We also assume that for any $L' \in \mathcal{P}_{\mathbf{V},\mathbf{W}}$ with $s_{i}(L') >r$, $[L']$ can be linearly expressed by those $[L_{\boldsymbol{\nu}\boldsymbol{d}}]$.
	
	By Proposition \ref{l0},  we can find a simple perverse sheaf $L'$ such that $s_{i}(L')=0$ and the following equation holds:
	\begin{equation*}
		[\mathbf{Ind}( \overline{\mathbb{Q}}_{l}\boxtimes L' )]=[L]+\sum\limits_{s_{i}(L'')>r} c_{L''}[L''] 
	\end{equation*}
	
	By the inductive assumption, 
	\begin{equation*}
		[\mathbf{Ind}( \overline{\mathbb{Q}}_{l}  \boxtimes L')] \in \mathcal{A}\{[L_{\boldsymbol{\nu}\boldsymbol{d}}|\boldsymbol{\nu} \in \mathcal{S} \},
	\end{equation*}
	\begin{equation*}
		[L''] \in  \mathcal{A}\{[L_{\boldsymbol{\nu}\boldsymbol{d}}]|\boldsymbol{\nu}\in \mathcal{S} \}.
	\end{equation*}
	Hence  $[L] \in \mathcal{A}\{[L_{\boldsymbol{\nu}\boldsymbol{d}}]|\boldsymbol{\nu}\in \mathcal{S} \}$ and we complete the induction.
\end{proof}

It follows from the previous lemma and Theorem \ref{commutative relation} that $\mathcal{K}_{0}(\Lambda)$ is an integrable highest weight module, hence it is isomorphic to ${_{\mathcal{A}}L(\Lambda)}$. We have the following theorem.
\begin{theorem}\label{thm1} \cite[Theorem 3.26]{fang2023lusztig}
	For graded space $\mathbf{W}$ and dominant weight $\Lambda$  such that $\langle \Lambda, \alpha^{\vee}_{i} \rangle ={\rm{dim}} \mathbf{W}_{i}$, the Grothendieck group $\mathcal{K}_{0}(\Lambda)$ of  $\mathcal{L}(\Lambda)$ together with the functors $E^{(n)}_{i},F^{(n)}_{i}$ and  $K^{\pm}_{i}, i \in I, n\in \mathbb{N}$ becomes a $_{\mathcal{A}}\mathbf{U}$-module. And there exists an isomorphism of $_{\mathcal{A}}\mathbf{U}$-modules
	\begin{equation*}
		\varsigma^{\Lambda}:\mathcal{K}_{0}(\Lambda) \rightarrow {_{\mathcal{A}}L(\Lambda)}
	\end{equation*}
	sending the constant sheaf $[\overline{\mathbb{Q}}_{l}]=[L_{\boldsymbol{d}}]$ on
	$\mathbf{E}_{0,\mathbf{W},\Omega^{1}}$
	to the highest weight vector $v_{\Lambda}$ in  ${_{\mathcal{A}}L(\Lambda)}$. Moreover, the set
	$\{\varsigma^{\Lambda}([L])|L$ is a nonzero simple perverse sheaf in $\mathcal{L}_{\mathbf{V}}(\Lambda)\}$ form an $\mathcal{A}$-basis of the weight space ${_{\mathcal{A}}L(\Lambda)_{|\mathbf{V}|}}$, which is exactly the canonical basis.
\end{theorem}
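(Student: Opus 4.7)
The plan is to identify $\mathcal{K}_{0}(\Lambda)$ with $_{\mathcal{A}}L(\Lambda)$ by showing it is a highest weight module generated by $[L_{\boldsymbol{d}}]$ of weight $\Lambda$, then pinning down the canonical basis using Lusztig's result for $U^{+}$.

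First I would verify that $[L_{\boldsymbol{d}}]=[\overline{\mathbb{Q}}_{l}]$ on $\mathbf{E}_{0,\mathbf{W},\Omega^{1}}$ is a highest weight vector of weight $\Lambda$. Since $\mathbf{V}=0$ here, the variety is a point, so $K_{i}[L_{\boldsymbol{d}}]=v^{\tilde{\nu}_{i}-2\nu_{i}}[L_{\boldsymbol{d}}]=v^{\omega_{i^{1}}}[L_{\boldsymbol{d}}]=v^{\langle\Lambda,\alpha_{i}^{\vee}\rangle}[L_{\boldsymbol{d}}]$, while $E_{i}[L_{\boldsymbol{d}}]=0$ by the formula for $\mathcal{E}_{i}$ applied to the trivial sheaf on a point (there is no $\mathbf{V}'$ of negative dimension). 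So the assignment $v_{\Lambda}\mapsto[L_{\boldsymbol{d}}]$ extends to a $_{\mathcal{A}}\mathbf{U}$-module map from the Verma module $M(\Lambda)$.

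Next I would show surjectivity. By Lemma \ref{highest}, $\mathcal{V}_{0}(\Lambda)$, and therefore $\mathcal{K}_{0}(\Lambda)$, is $\mathcal{A}$-spanned by the classes $[L_{\boldsymbol{\nu}\boldsymbol{d}}]$ for $\boldsymbol{\nu}\in\mathcal{S}$. By Proposition \ref{indformula}, each such $L_{\boldsymbol{\nu}\boldsymbol{d}}$ is $\mathbf{Ind}^{\mathbf{V}\oplus\mathbf{W}}_{\mathbf{V},\mathbf{W}}(L_{\boldsymbol{\nu}}\boxtimes L_{\boldsymbol{d}})$, i.e., obtained from $L_{\boldsymbol{d}}$ by successive applications of the functors $F_{i}$. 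Hence $\mathcal{K}_{0}(\Lambda)$ is generated over $_{\mathcal{A}}\mathbf{U}^{-}$ by $[L_{\boldsymbol{d}}]$, and the map $\varsigma^{\Lambda}:M(\Lambda)\to \mathcal{K}_{0}(\Lambda)$ is surjective. Since $\mathcal{K}_{0}(\Lambda)$ is integrable by the last paragraph of the proof of Theorem \ref{commutative relation}, the map factors through the maximal integrable quotient $_{\mathcal{A}}L(\Lambda)$, giving a surjection $_{\mathcal{A}}L(\Lambda)\twoheadrightarrow\mathcal{K}_{0}(\Lambda)$.

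For injectivity, and simultaneously for identifying the basis, I would exploit Lusztig's result. The category $\mathcal{Q}_{\mathbf{V}\oplus\mathbf{W}}$ (viewing $Q^{(1)}$ as an ordinary quiver) categorifies the integral form $_{\mathcal{A}}\mathbf{U}^{-}$, with $\mathcal{P}_{\mathbf{V}\oplus\mathbf{W}}$ giving the canonical basis of $_{\mathcal{A}}\mathbf{U}^{-}_{|\mathbf{V}\oplus\mathbf{W}|}$. By Proposition \ref{r0}, the simple perverse sheaves $L\in\mathcal{N}_{\mathbf{V}}$ are precisely those with $s^{\ast}_{i}(L)>0$ for some $i$, i.e., those lying in $\mathbf{U}^{-}\mathbf{U}^{-}_{>0}\cdot\mathbf{W}$-direction — more precisely, those in the kernel of $\mathbf{U}^{-}\to{_{\mathcal{A}}L(\Lambda)}$, $u\mapsto u\cdot v_{\Lambda}$. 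Combined with the dual statement for $\mathcal{P}_{\mathbf{V},\mathbf{W}}$ (which excludes the framing part from the flag type), this shows that the images in $\mathcal{K}_{0}(\Lambda)$ of those simple perverse sheaves not in $\bigcup_{i}\mathcal{N}_{\mathbf{V},i}$ form an $\mathcal{A}$-basis, in bijection with the canonical basis of $_{\mathcal{A}}L(\Lambda)_{|\mathbf{V}|}$ via $\varsigma^{\Lambda}$.

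The main obstacle is the last step: rigorously matching the nonzero images $[L]$ in $\mathcal{K}_{0}(\Lambda)$ with the canonical basis elements of $_{\mathcal{A}}L(\Lambda)$. The key input is Kashiwara–Lusztig's theorem that the canonical basis of $_{\mathcal{A}}L(\Lambda)$ is obtained from the canonical basis of $_{\mathcal{A}}\mathbf{U}^{-}$ by discarding exactly those basis vectors acting as $0$ on $v_{\Lambda}$. Using Proposition \ref{r0} to identify $\mathcal{N}_{\mathbf{V}}$ as the thick subcategory generated by simple perverse sheaves $L$ with $s^{\ast}_{i}(L)>0$, and comparing with Lusztig's characterization of the vanishing canonical basis elements on $v_{\Lambda}$, yields the required bijection and completes the proof that $\varsigma^{\Lambda}$ is an isomorphism sending the basis of simples to the canonical basis.
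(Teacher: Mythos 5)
The first part of your proposal (highest weight vector check, spanning via Lemma \ref{highest} and the induction formula, integrability from Theorem \ref{commutative relation}, and the Verma/irreducibility argument giving $\mathcal{K}_{0}(\Lambda)\cong{_{\mathcal{A}}L(\Lambda)}$) is correct and is essentially the paper's own route to the module identification. The genuine gap is in the last step, the identification of $\{\varsigma^{\Lambda}([L])\}$ with the canonical basis. You invoke the Kashiwara--Lusztig theorem that the canonical basis of $_{\mathcal{A}}L(\Lambda)$ is $\{b\,v_{\Lambda}\neq 0\mid b\in\mathbf{B}\}$, but you never construct the bridge this requires: a bijection between $\mathcal{P}_{\mathbf{V},\mathbf{W}}$ (simples on the \emph{framed} moduli space, with flag types constrained to end in $\boldsymbol{d}$) and the canonical basis $\mathbf{B}_{\nu}$ of $\mathbf{U}^{-}$ \emph{for the unframed quiver} $Q$, compatible with $\varsigma^{\Lambda}$. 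Your appeal to ``$\mathcal{Q}_{\mathbf{V}\oplus\mathbf{W}}$ categorifies $_{\mathcal{A}}\mathbf{U}^{-}$'' concerns the negative part of the Kac--Moody algebra attached to the larger quiver $Q^{(1)}$, which is not the algebra acting on $L(\Lambda)$, so the phrase ``simples in $\mathcal{N}_{\mathbf{V}}$ are those in the kernel of $\mathbf{U}^{-}\to{_{\mathcal{A}}L(\Lambda)}$'' does not typecheck: a simple on $\mathbf{E}_{\mathbf{V},\mathbf{W},\Omega^{1}}$ is not an element of $\mathbf{U}^{-}_{\nu}$, and Proposition \ref{r0} only characterizes membership in $\mathcal{N}_{\mathbf{V},i}$ geometrically (flag types ending in $i^{a}$), not in terms of vanishing of $b\,v_{\Lambda}$. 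Even granting a bijection of index sets, nothing in your sketch shows that $\varsigma^{\Lambda}([L])$ equals a canonical basis element on the nose, rather than some bar-invariant element agreeing with it only up to sign or up to an upper-triangular change of basis; also, passing from ``$L\in\mathcal{N}_{\mathbf{V},i}$ for some $i$'' to ``$L\in\mathcal{N}_{\mathbf{V}}$'' (the thick subcategory generated by all the $\mathcal{N}_{\mathbf{V},i}$) needs a separate argument.

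For comparison, the paper (following \cite[Theorem 3.26]{fang2023lusztig}, and exactly as it does for the tensor-product analogue in Proposition \ref{CBT}) closes this gap by a different mechanism: the images of the surviving simples are bar-invariant (Verdier duality), almost orthonormal for the contravariant geometric form $(-,-)^{\Lambda}$ defined by Ext-groups in the localization, and lie in the integral form; Lusztig's \cite[Lemma 14.2.2]{MR1227098} then forces them to be \emph{signed} canonical basis elements, and the sign is removed by positivity, since classes of semisimple complexes have coefficients in $\mathbb{N}[v,v^{-1}]$ when expanded in the standard generators. If you want to keep your Kashiwara--Lusztig route, you would have to first prove the bijection between surviving simples and $\{b\in\mathbf{B}_{\nu}:b\,v_{\Lambda}\neq0\}$ together with the exact equality $\varsigma^{\Lambda}([L])=b\,v_{\Lambda}$, which is essentially the content of the theorem itself (and is what Y.~Li's singular-support analysis achieves only up to sign); as written, that step is asserted rather than proved.
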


\begin{remark}\label{lowest}
	If we swap the definition of $\mathcal{E}^{(n)}_{i}$ and $\mathcal{F}^{(n)}_{i}$, and swap the definition of $K_{i}$ and $K^{-1}_{i}$, then the Grothendieck group of  $\mathcal{L}(\Lambda)$ will become the irreducible integrable lowest weight representation with the lowest weight $-\Lambda$.
\end{remark}

\subsection{Tensor products for general $N$}

We denote the sequences of graded spaces $$(\mathbf{W}^{1},\mathbf{W}^{2},\cdots \mathbf{W}^{N-1})$$ and $$(\mathbf{W}^{2},\mathbf{W}^{3},\cdots \mathbf{W}^{N})$$ by $\mathbf{W}^{\bullet -1}$ and $\mathbf{W}^{-1\bullet}$ respectively. We also denote the sequences of weights $(\Lambda_{1},\Lambda_{2},\cdots,\Lambda_{N-1})$ and $(\Lambda_{2},\Lambda_{3},\cdots,\Lambda_{N})$ by $\Lambda^{\bullet -1}$ and $\Lambda^{-1\bullet }$ respectively. We can also consider the moduli space $	\mathbf{E}_{\mathbf{V},\mathbf{W}^{\bullet-1},\Omega^{N-1}}$ of $N-1$-framed quivers and its  category $\mathcal{Q}_{\mathbf{V},\mathbf{W}^{\bullet-1}}$ and the global localization $\mathcal{L}_{\mathbf{V}}(\Lambda^{\bullet-1})= \mathcal{Q}_{\mathbf{V},\mathbf{W}^{\bullet-1}}/ \mathcal{N}_{\mathbf{V}}$. 

Let $\mathcal{V}_{0}(\Lambda^{\bullet})_{\mathbf{V}}$ be the Grothendieck group of $\mathcal{Q}_{\mathbf{V},\mathbf{W}^{\bullet}}$, and let $\mathcal{V}_{0}(\Lambda^{\bullet-1})_{\mathbf{V}}$ be the Grothendieck group of $\mathcal{Q}_{\mathbf{V},\mathbf{W}^{\bullet-1}}$. In this subsection, we only give the proof for the case $N=2$, the statements for general cases can be proved similarly.

\begin{proposition}
	If $\mathbf{W}^{2}=0$, $\mathcal{V}_{0}(\Lambda_{1},0)$ is $\mathcal{A}$-spanned by those complexes $[L_{\boldsymbol{\nu}^{1}\boldsymbol{d}^{1}\boldsymbol{\nu}^{2}}],\boldsymbol{\nu}^{1},\boldsymbol{\nu}^{2} \in \mathcal{S}$.
	
\end{proposition}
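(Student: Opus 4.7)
The plan is to mirror the proof of Lemma \ref{highest}, replacing the single extraction via Proposition \ref{l0} by extractions from both sides of the fixed middle block $\boldsymbol{d}^{1}$. We proceed by induction on $|\mathbf{V}|$. The base case $|\mathbf{V}|=0$ is immediate, since $\mathbf{E}_{0,\mathbf{W}^{\bullet},\Omega^{N}}$ is a point and every object of $\mathcal{Q}_{0,\mathbf{W}^{\bullet}}$ is a sum of shifts of $L_{\boldsymbol{d}^{1}}=\bar{\mathbb{Q}}_{l}$.

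For the inductive step, fix a simple $L\in\mathcal{P}_{\mathbf{V},\mathbf{W}^{\bullet}}$ with $|\mathbf{V}|>0$. By definition, $L$ is a shifted summand of some $L_{\boldsymbol{\nu}^{1}\boldsymbol{d}^{1}\boldsymbol{\nu}^{2}}$; since $|\boldsymbol{\nu}^{1}|+|\boldsymbol{\nu}^{2}|=|\mathbf{V}|>0$, either some such presentation has $\boldsymbol{\nu}^{1}\neq\emptyset$ (Case A), or every such presentation forces $\boldsymbol{\nu}^{1}=\emptyset$, in which case $\boldsymbol{\nu}^{2}\neq\emptyset$ (Case B). In Case A, letting $i^{a}$ be the first entry of $\boldsymbol{\nu}^{1}$ with $i\in I$, we have $s_{i}(L)\geqslant a>0$; set $t=s_{i}(L)$ and apply Proposition \ref{l0} to obtain $K\in\mathcal{P}_{\mathbf{V}'',\mathbf{W}^{\bullet}}$ (by the ``moreover'' clause) with $s_{i}(K)=0$ and
\[
[\mathbf{Ind}^{\mathbf{V}\oplus\mathbf{W}^{\bullet}}_{\mathbf{V}',\mathbf{V}''\oplus\mathbf{W}^{\bullet}}(\bar{\mathbb{Q}}_{l}\boxtimes K)]=[L]+\sum_{L',g'}v^{g'}[L'],
\]
with each $L'\in\mathcal{P}_{\mathbf{V},\mathbf{W}^{\bullet}}$ satisfying $s_{i}(L')>t$. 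Since $|\mathbf{V}''|<|\mathbf{V}|$, the outer induction gives $[K]\in\mathcal{A}\{[L_{\tilde{\boldsymbol{\nu}}^{1}\boldsymbol{d}^{1}\tilde{\boldsymbol{\nu}}^{2}}]\}$, and Proposition \ref{indformula} identifies $\mathbf{Ind}(L_{(i^{t})}\boxtimes L_{\tilde{\boldsymbol{\nu}}^{1}\boldsymbol{d}^{1}\tilde{\boldsymbol{\nu}}^{2}})=L_{(i^{t})\tilde{\boldsymbol{\nu}}^{1}\boldsymbol{d}^{1}\tilde{\boldsymbol{\nu}}^{2}}$, already of the prescribed shape. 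The residual terms $[L']$ are disposed of by a secondary decreasing induction on $s_{i}$, whose forced upper bound $s_{i}=\nu_{i}$ makes the residual sum empty (so only the $K$-term remains and the outer induction closes it). Case B is treated symmetrically by Proposition \ref{r0}: letting $j^{b}$ be the last entry of $\boldsymbol{\nu}^{2}$, one extracts $j^{t}$ with $t=s^{\ast}_{j}(L)>0$ on the right, obtains a $K\in\mathcal{P}_{\mathbf{V}',\mathbf{W}^{\bullet}}$ with $|\mathbf{V}'|<|\mathbf{V}|$, and extends the new flag type as $\tilde{\boldsymbol{\nu}}^{1}\boldsymbol{d}^{1}\tilde{\boldsymbol{\nu}}^{2}(j^{t})$.

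The main obstacle is that Proposition \ref{r0} as stated does not include the analog of the ``moreover'' clause of Proposition \ref{l0}, which is what guarantees that the $K$ produced in Case B actually lies in the smaller class $\mathcal{P}_{\mathbf{V}',\mathbf{W}^{\bullet}}$. I expect this to be repairable by the dual of the argument sketched in Proposition \ref{l0}: the uniqueness of $K$ characterizes it as the unique simple summand of $\mathbf{Res}^{\mathbf{V}\oplus\mathbf{W}^{\bullet}}_{\mathbf{V}'\oplus\mathbf{W}^{\bullet},\mathbf{V}''}(L)$ with $s^{\ast}_{j}=0$, and applying Proposition \ref{res formula} to a presentation exhibiting $L$ as a shifted summand of $L_{\boldsymbol{\nu}^{1}\boldsymbol{d}^{1}\tilde{\boldsymbol{\nu}}^{2}(j^{t})}$ then exhibits $K$ as a shifted summand of $L_{\boldsymbol{\nu}^{1}\boldsymbol{d}^{1}\tilde{\boldsymbol{\nu}}^{2}}$, placing $K$ in $\mathcal{P}_{\mathbf{V}',\mathbf{W}^{\bullet}}$ as required.
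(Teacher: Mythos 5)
Your proposal is correct and follows essentially the same route as the paper's own proof: increasing induction on $|\mathbf{V}|$ combined with a decreasing induction on $s_{i}$ resp. $s_{i}^{\ast}$, extraction of the leftmost/rightmost $i$-block via Propositions \ref{l0} and \ref{r0}, and recognition of the induced complexes via Proposition \ref{indformula} (the paper simply treats the $s_{i}^{\ast}$-case ``without loss of generality'' instead of writing both cases). The missing ``moreover'' clause you flag for Proposition \ref{r0} is indeed also used implicitly in the paper's argument, and your repair is the right one: by Proposition \ref{res formula}, restricting any presentation of $L$ as a summand of a framed-interleaved $L_{\boldsymbol{\nu}^{1}\boldsymbol{d}^{1}\boldsymbol{\nu}^{2}}$ (not necessarily one ending in $j^{t}$) yields summands $L_{\boldsymbol{\tau}}\boxtimes L_{\boldsymbol{\omega}}$ with $\boldsymbol{\tau}$ again framed-interleaved and the second factor supported on a point, so $K$, being a summand of $\mathbf{Res}(L)$, lies in $\mathcal{P}_{\mathbf{V}',\mathbf{W}^{\bullet}}$.
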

\begin{proof}
	We only need to show that for any $L \in \mathcal{P}_{\mathbf{V},\mathbf{W}^{\bullet}}$, we have $[L] \in \mathcal{A}\{[L_{\boldsymbol{\nu}^{1}\boldsymbol{d}^{1}\boldsymbol{\nu}^{2}}]|\boldsymbol{\nu}^{1},\boldsymbol{\nu}^{2} \in \mathcal{S} \}$. We argue by increasing induction on $|\mathbf{V}|$ and descending induction on $s_{i}(L)$ or $s_{i}^{\ast}(L)$.

	Suppose $|\mathbf{V}| \neq 0$, then any $L \in \mathcal{P}_{\mathbf{V},\mathbf{W}^{\bullet}}$ is a direct summand of some $L_{\boldsymbol{\nu}^{1}\boldsymbol{d}^{1}\boldsymbol{\nu}^{2}}$, where either $\boldsymbol{\nu}^{1} $ or $\boldsymbol{\nu}^{2} $ is a nonempty sequence. By definition of $s_{i}$ and $s^{\ast}_{i}$, there exists $i \in I$ such that $s_{i}(L)=r >0$ or $s_{i}^{\ast}(L)=r>0$.
	
	Without loss of generality, we assume $s_{i}^{\ast}(L)=r>0$.
	We assume that for any $L' \in \mathcal{P}_{\mathbf{V}',\mathbf{W}^{\bullet}}$ with $|\mathbf{V}'|<|\mathbf{V}|$, we have $[L'] \in \mathcal{A}\{[L_{\boldsymbol{\nu}^{1}\boldsymbol{d}^{1}\boldsymbol{\nu}^{2}}]|\boldsymbol{\nu}^{1},\boldsymbol{\nu}^{2} \in \mathcal{S} \}$. 
	
	We also assume that for any $L' \in \mathcal{P}_{\mathbf{V},\mathbf{W}^{\bullet}}$ with $s_{i}^{\ast}(L')>r$, we have   $[L'] \in \mathcal{A}\{[L_{\boldsymbol{\nu}^{1}\boldsymbol{d}^{1}\boldsymbol{\nu}^{2}}]|\boldsymbol{\nu}^{1},\boldsymbol{\nu}^{2} \in \mathcal{S} \}$.
	
	Then by Proposition \ref{r0}, we can find a simple perverse sheaf $L'$ such that $s_{i}^{\ast}(L')=0$ and the following equation holds
	\begin{equation*}
		[\mathbf{Ind}(L' \boxtimes \overline{\mathbb{Q}}_{l} )]=[L]+\sum\limits_{s_{i}^{\ast}(L'')>r} c_{L''}[L''] .
	\end{equation*}
	By inductive assumption, 
	\begin{equation*}
		[\mathbf{Ind}(L' \boxtimes \overline{\mathbb{Q}}_{l} )] \in \mathcal{A}\{[L_{\boldsymbol{\nu}^{1}\boldsymbol{d}^{1}\boldsymbol{\nu}^{2}}]|\boldsymbol{\nu}^{1},\boldsymbol{\nu}^{2} \in \mathcal{S} \},
	\end{equation*}
	\begin{equation*}
		[L''] \in  \mathcal{A}\{[L_{\boldsymbol{\nu}^{1}\boldsymbol{d}^{1}\boldsymbol{\nu}^{2}}]|\boldsymbol{\nu}^{1},\boldsymbol{\nu}^{2} \in \mathcal{S} \}.
	\end{equation*}
	Hence $[L] \in \mathcal{A}\{[L_{\boldsymbol{\nu}^{1}\boldsymbol{d}^{1}\boldsymbol{\nu}^{2}}]|\boldsymbol{\nu}^{1},\boldsymbol{\nu}^{2} \in \mathcal{S} \}$ and we complete the induction.
\end{proof}

By similar argument, we can prove the following inductive proposition for general $N$.
\begin{proposition}
	If   we have   $\mathcal{V}(\Lambda^{\bullet-1})= \mathcal{A}\{[L_{\boldsymbol{\nu}^{1}\boldsymbol{d}^{1}\cdots  \boldsymbol{\nu}^{N-1}\boldsymbol{d}^{N-1}}]|\boldsymbol{\nu}^{k} \in \mathcal{S},1 \leqslant k \leqslant N-1  \} $ for  $	\mathbf{E}_{\mathbf{V},\mathbf{W}^{\bullet-1},\Omega^{N-1}}$, then for $\mathbf{W}^{N}=0$, $\mathcal{V}(\Lambda^{\bullet-1},0)$ is $\mathcal{A}$-spanned by elements of the form $[L_{\boldsymbol{\nu}^{1}\boldsymbol{d}^{1}\cdots  \boldsymbol{\nu}^{N}}], \boldsymbol{\nu}^{k} \in \mathcal{S}$ for $1 \leqslant k \leqslant N $ .
\end{proposition}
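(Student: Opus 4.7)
The plan is to mirror the proof of the preceding $N=2$ proposition: double induction, primary on $|\mathbf{V}|$ (increasing) and secondary on $s^{\ast}_{i}(L)$ for a suitable $i \in I$ (decreasing, bounded above by $\nu_{i}$). The base case $|\mathbf{V}|=0$ is immediate, since the only simple object in $\mathcal{P}_{\mathbf{V},\mathbf{W}^{\bullet}}$ is the skyscraper at the origin, equal to $L_{\boldsymbol{d}^{1}\boldsymbol{d}^{2}\cdots\boldsymbol{d}^{N-1}}$ — using that $\boldsymbol{d}^{N}$ is empty because $\mathbf{W}^{N}=0$ — and this is already of the target form with every $\boldsymbol{\nu}^{k}$ empty.

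For the inductive step, take $L \in \mathcal{P}_{\mathbf{V},\mathbf{W}^{\bullet}}$ with $|\mathbf{V}|>0$. If $s^{\ast}_{i}(L)=0$ for every $i \in I$, the last clause of Proposition \ref{r0} says $L$ is not a summand of any $L_{\boldsymbol{\mu}}$ ending in $i^{a}$ with $i \in I$ and $a>0$. Since $L$ is by definition a summand of some $L_{\boldsymbol{\nu}^{1}\boldsymbol{d}^{1}\cdots\boldsymbol{\nu}^{N-1}\boldsymbol{d}^{N-1}\boldsymbol{\nu}^{N}}$ (the trailing $\boldsymbol{d}^{N}$ being empty), a nonempty $\boldsymbol{\nu}^{N}$ would make this sequence end in such a vertex; so $\boldsymbol{\nu}^{N}$ is forced to be empty, whence $L \in \mathcal{P}_{\mathbf{V},\mathbf{W}^{\bullet-1}}$, and the proposition's hypothesis gives $[L] \in \mathcal{A}\{[L_{\boldsymbol{\nu}^{1}\boldsymbol{d}^{1}\cdots\boldsymbol{\nu}^{N-1}\boldsymbol{d}^{N-1}}]\}$, which sits inside the target span. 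Otherwise, pick $i \in I$ with $s^{\ast}_{i}(L)=r>0$; Proposition \ref{r0} furnishes a simple $K$ on some $I$-graded $\mathbf{V}''$ with $|\mathbf{V}''|=|\mathbf{V}|-ri$ and $s^{\ast}_{i}(K)=0$ satisfying
\begin{equation*}
[\mathbf{Ind}(K \boxtimes \bar{\mathbb{Q}}_{l})] = [L] + \sum_{s^{\ast}_{i}(L'')>r} c_{L''}[L''].
\end{equation*}
The primary induction applied to $[K]$ (with $|\mathbf{V}''|<|\mathbf{V}|$), combined with Proposition \ref{indformula} which gives $\mathbf{Ind}(L_{\boldsymbol{\mu}'}\boxtimes L_{i^{r}})=L_{\boldsymbol{\mu}'i^{r}}$, places $[\mathbf{Ind}(K\boxtimes\bar{\mathbb{Q}}_{l})]$ in the target span — the concatenation $\boldsymbol{\mu}'i^{r}$ still has the required shape, with $\boldsymbol{\nu}^{N}i^{r}$ as its new final block. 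The secondary descending induction on $s^{\ast}_{i}$ handles each $[L'']$, and rearranging places $[L]$ in the target span.

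The only point requiring genuine care is closing the $s^{\ast}_{i}(L)=0$ case: the precise characterization in the last clause of Proposition \ref{r0} — that $s^{\ast}_{i}(L)>0$ for $i \in I$ is equivalent to $L$ being a summand of some $L_{\boldsymbol{\mu}}$ ending in $i^{a}$ — combined with the vanishing of $\boldsymbol{d}^{N}$ under $\mathbf{W}^{N}=0$, is exactly what forces $\boldsymbol{\nu}^{N}=\emptyset$ and enables the reduction to the $(N-1)$-framed hypothesis. Beyond this, the argument is a routine bookkeeping lift of the $N=2$ case, so I do not expect any further obstacle.
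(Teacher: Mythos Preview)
Your proposal is correct and follows essentially the same approach as the paper: the double induction on $|\mathbf{V}|$ and $s^{\ast}_{i}(L)$, the case split on whether $s^{\ast}_{i}(L)=0$ for all $i\in I$, and the use of Proposition~\ref{r0} together with the induction formula are exactly what the paper does. You are in fact more explicit than the paper in justifying why $s^{\ast}_{i}(L)=0$ for all $i$ forces $\boldsymbol{\nu}^{N}$ to be empty via the last clause of Proposition~\ref{r0}; the paper simply asserts this and then invokes the $(N-1)$-case hypothesis.
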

\begin{proof}
	If $L \in \mathcal{P}_{\mathbf{V},\mathbf{W}^{\bullet}}$ satisfies $s^{\ast}_{i}(L)=0 $ for any $i$, then it can only appear as a direct summand of $L_{\boldsymbol{\nu}^{1}\boldsymbol{d}^{1}\cdots  \boldsymbol{\nu}^{N}}$ with $\boldsymbol{\nu}^{N}$ empty. Then by $N-1$ case, it lies in $\mathcal{A}\{[L_{\boldsymbol{\nu}^{1}\boldsymbol{d}^{1}\cdots  \boldsymbol{\nu}^{N-1}\boldsymbol{d}^{N-1}}]|\boldsymbol{\nu}^{k} \in \mathcal{S},1 \leqslant k \leqslant N-1  \} \subseteq  \mathcal{A}\{[L_{\boldsymbol{\nu}^{1}\boldsymbol{d}^{1}\cdots  \boldsymbol{\nu}^{N}}]| \boldsymbol{\nu}^{k} \in \mathcal{S},1 \leqslant k \leqslant N \} $.
	
	Otherwise, $s^{\ast}_{i}(L)=r>0 $ for some $i$, then we can argue by induction on $r$ and $|\mathbf{V}|$ as what we have done in the $N=2$ case.
\end{proof}

\begin{corollary}
	The  Grothendieck group $\mathcal{V}_{0}(\Lambda_{1},\Lambda_{2})$ is $\mathcal{A}$-spanned by $[L_{\boldsymbol{\nu}^{1}\boldsymbol{d}^{1}\boldsymbol{\nu}^{2}\boldsymbol{d}^{2}}],\boldsymbol{\nu}^{1},\boldsymbol{\nu}^{2} \in \mathcal{S}$.
\end{corollary}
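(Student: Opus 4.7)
The plan is to reduce to the first preceding proposition (which treats the case $\mathbf{W}^{2}=0$) by iteratively peeling off the fixed right-hand tail $\boldsymbol{d}^{2}$ from any given simple perverse sheaf. Fix $L\in\mathcal{P}_{\mathbf{V},\mathbf{W}^{\bullet}}$; by definition $L$ is a shifted direct summand of some $L_{\boldsymbol{\nu}^{1}\boldsymbol{d}^{1}\boldsymbol{\nu}^{2}\boldsymbol{d}^{2}}$, and by Proposition \ref{indformula} this factors on the right as $\mathbf{Ind}(-\boxtimes L_{((i_{n}^{2})^{\omega_{i_{n}^{2}}})})$, where $i_{n}^{2}$ denotes the last entry of $\boldsymbol{d}^{2}$. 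Hence $s^{\ast}_{i_{n}^{2}}(L)\geqslant\omega_{i_{n}^{2}}$; and since $\omega_{i_{n}^{2}}=\dim\mathbf{W}_{i_{n}^{2}}$ is the total $i_{n}^{2}$-graded dimension of the ambient space $\mathbf{V}\oplus\mathbf{W}^{\bullet}$, this inequality is forced to be an equality, so $s^{\ast}_{i_{n}^{2}}(L)$ is already at its maximum.

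First I would apply Proposition \ref{r0} to $L$ with $i=i_{n}^{2}$ and $t=\omega_{i_{n}^{2}}$. Any ``correction term'' $L'$ appearing on the right-hand side of \ref{r0} would need $s^{\ast}_{i_{n}^{2}}(L')>\omega_{i_{n}^{2}}$, which is impossible, so the correction sum is empty. This produces a unique simple perverse sheaf $K_{1}$ with $s^{\ast}_{i_{n}^{2}}(K_{1})=0$ and $L\cong\mathbf{Ind}(K_{1}\boxtimes L_{((i_{n}^{2})^{\omega_{i_{n}^{2}}})})$; as in the ``moreover'' part of Proposition \ref{l0}, Proposition \ref{res formula} together with adjointness ensures that $K_{1}$ remains in the framed class $\mathcal{P}_{\mathbf{V},(\mathbf{W}^{1},\mathbf{W}^{2}-\omega_{i_{n}^{2}}i_{n}^{2})}$. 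The same argument then gives $s^{\ast}_{i_{n-1}^{2}}(K_{1})=\omega_{i_{n-1}^{2}}$, allowing another application of Proposition \ref{r0}. Iterating through the framed vertices $i_{n}^{2},i_{n-1}^{2},\ldots,i_{1}^{2}$ and reassembling the successive one-block factors by Proposition \ref{indformula} produces, after $n$ steps, a simple $K\in\mathcal{P}_{\mathbf{V},(\mathbf{W}^{1},0)}$ together with an isomorphism $L\cong\mathbf{Ind}(K\boxtimes L_{\boldsymbol{d}^{2}})[d]$ for some integer $d$.

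The first preceding proposition then applies to $K$ and yields $[K]=\sum c_{\boldsymbol{\nu}^{1},\boldsymbol{\nu}^{2}}[L_{\boldsymbol{\nu}^{1}\boldsymbol{d}^{1}\boldsymbol{\nu}^{2}}]$ in $\mathcal{V}_{0}(\Lambda_{1},0)$ with coefficients in $\mathcal{A}$. Applying $\mathbf{Ind}(-\boxtimes L_{\boldsymbol{d}^{2}})$ to both sides and invoking Proposition \ref{indformula} once more gives $[L]=v^{d}\sum c_{\boldsymbol{\nu}^{1},\boldsymbol{\nu}^{2}}[L_{\boldsymbol{\nu}^{1}\boldsymbol{d}^{1}\boldsymbol{\nu}^{2}\boldsymbol{d}^{2}}]$, which is exactly the required $\mathcal{A}$-linear expression. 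The main obstacle is the verification that the peeled-off $K_{l}$ at every stage actually remains in the framed class $\mathcal{P}_{\mathbf{V},\mathbf{W}^{\prime\bullet}}$ rather than merely in the larger unframed class $\mathcal{P}_{\mathbf{V}\oplus\mathbf{W}^{\prime\bullet}}$; this is a restriction-functor calculation parallel to the unframed ``moreover'' of Proposition \ref{l0}, relying on the fact that the flag types $\boldsymbol{\tau}$ appearing in the decomposition of $\mathbf{Res}(L_{\boldsymbol{\mu}})$ in Proposition \ref{res formula} inherit the framed form from $\boldsymbol{\mu}$.
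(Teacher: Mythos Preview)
Your argument is correct in outline but takes a genuinely different route from the paper. The paper's proof is a one-step geometric observation: the functor $\mathbf{Ind}(-\boxtimes L_{\boldsymbol{d}^{2}})$ is identified (up to shift) with pullback $\pi_{\mathbf{V},\mathbf{W}^{2}}^{\ast}$ along the trivial vector bundle $\pi_{\mathbf{V},\mathbf{W}^{2}}:\mathbf{E}_{\mathbf{V},\mathbf{W}^{\bullet},\Omega^{2}}\to\mathbf{E}_{\mathbf{V},\mathbf{W}^{1},\Omega^{1}}$ that forgets the $\mathbf{W}^{2}$-components. Since $\pi^{\ast}$ and $(\pi)_{!}$ (shifted) are quasi-inverse, this immediately yields an equivalence matching $L_{\boldsymbol{\nu}^{1}\boldsymbol{d}^{1}\boldsymbol{\nu}^{2}}\mapsto L_{\boldsymbol{\nu}^{1}\boldsymbol{d}^{1}\boldsymbol{\nu}^{2}\boldsymbol{d}^{2}}$, hence an isomorphism $\mathcal{V}_{0}(\Lambda_{1},0)\cong\mathcal{V}_{0}(\Lambda_{1},\Lambda_{2})$ carrying the spanning set of the former (known by the preceding proposition) onto the desired spanning set; a rank comparison then finishes.

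Your approach peels off $\boldsymbol{d}^{2}$ one block at a time via Proposition~\ref{r0}. Two comments. First, Proposition~\ref{r0} as written in the paper is for $i\in I$ (the bound there is $0\leqslant t\leqslant\nu_{i}$, and $\nu_{i_{n}^{2}}=0$ since $\mathbf{V}$ carries no $I^{2}$-grading); what you are really invoking is Lusztig's key lemma applied to $Q^{(2)}$ viewed as an unframed quiver, which is how those propositions are proved, so this is fine but should be stated explicitly. Second, your ``main obstacle'' is indeed benign: since the framed vertex $i_{l}^{2}$ occurs in the flag type $\boldsymbol{\nu}^{1}\boldsymbol{d}^{1}\boldsymbol{\nu}^{2}\boldsymbol{d}^{2}$ only in its designated block, the sum in Proposition~\ref{res formula} collapses to a single term, and $K_{l}$ lands in the correct framed class. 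The paper's vector-bundle argument is quicker and more conceptual because it performs all $n$ of your peelings in a single stroke and makes the equivalence $\mathcal{Q}_{\mathbf{V},(\mathbf{W}^{1},0)}\simeq\mathcal{Q}_{\mathbf{V},\mathbf{W}^{\bullet}}$ manifest; your approach buys a proof that stays entirely within the algebraic framework of Propositions~\ref{l0}--\ref{r0} without invoking any new geometry.
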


\begin{proof}
	Consider the projection $\pi_{\mathbf{V},\mathbf{W}^{2}}: \mathbf{E}_{\mathbf{V},\mathbf{W}^{\bullet},\Omega^{2}} \rightarrow \mathbf{E}_{\mathbf{V},\mathbf{W}^{1},\Omega^{1}}$
	\begin{equation*}
		\pi_{\mathbf{V},\mathbf{W}^{2}}((x_{h})_{h\in \Omega^{2}} )=(x_{h})_{h\in \Omega^{1}}
	\end{equation*}
	Since $\pi_{\mathbf{V},\mathbf{W}^{2}}$ is a trivial vector bundle, the composition of functors $(\pi_{\mathbf{V},\mathbf{W}^{2}})_{!}[\sum\limits_{i \in I}{{\rm{dim}}}\mathbf{V}_{i}{{\rm{dim}}}\mathbf{W}_{i^{2}}  ] $ and $\pi_{\mathbf{V},\mathbf{W}^{2}}^{\ast}[\sum\limits_{i \in I}{{\rm{dim}}}\mathbf{V}_{i}{{\rm{dim}}}\mathbf{W}_{i^{2}}  ]$ is isomorphic to $Id$. The functors $(\pi_{\mathbf{V},\mathbf{W}^{2}})_{!}[\sum\limits_{i \in I}{{\rm{dim}}}\mathbf{V}_{i}{{\rm{dim}}}\mathbf{W}_{i^{2}}  ] $ and $\pi_{\mathbf{V},\mathbf{W}^{2}}^{\ast}[\sum\limits_{i \in I}{{\rm{dim}}}\mathbf{V}_{i}{{\rm{dim}}}\mathbf{W}_{i^{2}}  ]$ induce quasi-inverse linear isomorphisms.
	
	On the other hand, notice that by definition there is an isomorphism
	\begin{equation*}
		\mathbf{Ind}^{\mathbf{V}\oplus\mathbf{W}^{\bullet}}_{\mathbf{V}\oplus\mathbf{W}^{1},\mathbf{W}^{2} } ( - \boxtimes L_{\boldsymbol{d}^{2}}) \cong \pi_{\mathbf{V},\mathbf{W}^{2}}^{\ast}[\sum\limits_{i \in I}{{\rm{dim}}}\mathbf{V}_{i}{{\rm{dim}}}\mathbf{W}_{i^{2}}].
	\end{equation*}
	
	Hence
	\begin{equation*}
		\begin{split}
			\mathcal{V}_{0}(\Lambda_{1},\Lambda_{2})
			\cong & \mathcal{V}_{0}(\Lambda_{1},0)\\
			= &\mathcal{A}\{ [L_{\boldsymbol{\nu}^{1}\boldsymbol{d}^{1}\boldsymbol{\nu}^{2}}]|\boldsymbol{\nu}^{1},\boldsymbol{\nu}^{2} \in \mathcal{S}\}\\		
			\cong &\mathcal{A}\{[L_{\boldsymbol{\nu}^{1}\boldsymbol{d}^{1}\boldsymbol{\nu}^{2}\boldsymbol{d}^{2}}]|\boldsymbol{\nu}^{1},\boldsymbol{\nu}^{2} \in \mathcal{S}\}
		\end{split}
	\end{equation*}
	
	Since $\mathcal{A}\{[L_{\boldsymbol{\nu}^{1}\boldsymbol{d}^{1}\boldsymbol{\nu}^{2}\boldsymbol{d}^{2}}]|\boldsymbol{\nu}^{1},\boldsymbol{\nu}^{2} \in \mathcal{S}\} \subseteq \mathcal{V}_{0}(\Lambda_{1},\Lambda_{2})$, and the above argument tells us that they have the same rank for each $\mathbf{V}$, hence $\mathcal{A}\{[L_{\boldsymbol{\nu}^{1}\boldsymbol{d}^{1}\boldsymbol{\nu}^{2}\boldsymbol{d}^{2}}]|\boldsymbol{\nu}^{1},\boldsymbol{\nu}^{2} \in \mathcal{S}\}= \mathcal{V}_{0}(\Lambda_{1},\Lambda_{2})$.
\end{proof}

\begin{corollary}
	If we have 
	\begin{equation*}
		\mathcal{V}(\Lambda^{\bullet-1}) =\mathcal{A}\{[L_{\boldsymbol{\nu}^{1}\boldsymbol{d}^{1}\cdots  \boldsymbol{\nu}^{N-1}\boldsymbol{d}^{N-1}}]| \boldsymbol{\nu}^{k} \in \mathcal{S},1 \leqslant k \leqslant N-1 \} 
	\end{equation*}
	for $	\mathbf{E}_{\mathbf{V},\mathbf{W}^{\bullet-1},\Omega^{N-1}}$, then $\mathcal{V}(\Lambda^{\bullet})$ is $\mathcal{A}$-spanned by elements of the form $[L_{\boldsymbol{\nu}^{1}\boldsymbol{d}^{1}\cdots  \boldsymbol{\nu}^{N}\boldsymbol{d}^{N}}], \boldsymbol{\nu}^{k} \in \mathcal{S}$ for $1 \leqslant k \leqslant N$.
\end{corollary}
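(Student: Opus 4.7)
The plan is to mimic the proof of the preceding corollary (the $N=2$ case) with only notational changes, using the preceding proposition to reduce to the case $\mathbf{W}^{N}=0$. First I would apply the preceding proposition to the hypothesis $\mathcal{V}(\Lambda^{\bullet-1})=\mathcal{A}\{[L_{\boldsymbol{\nu}^{1}\boldsymbol{d}^{1}\cdots\boldsymbol{\nu}^{N-1}\boldsymbol{d}^{N-1}}]\}$ to obtain
\begin{equation*}
\mathcal{V}(\Lambda^{\bullet-1},0)=\mathcal{A}\{[L_{\boldsymbol{\nu}^{1}\boldsymbol{d}^{1}\cdots\boldsymbol{\nu}^{N-1}\boldsymbol{d}^{N-1}\boldsymbol{\nu}^{N}}]\mid \boldsymbol{\nu}^{k}\in\mathcal{S},\ 1\leqslant k\leqslant N\}.
\end{equation*}
Note that when $\mathbf{W}^{N}=0$, the moduli space $\mathbf{E}_{\mathbf{V},(\mathbf{W}^{1},\dots,\mathbf{W}^{N-1},0),\Omega^{N}}$ coincides with $\mathbf{E}_{\mathbf{V},\mathbf{W}^{\bullet-1},\Omega^{N-1}}$, so this step is just a reindexing.

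Next I would introduce the forgetful projection
\begin{equation*}
\pi_{\mathbf{V},\mathbf{W}^{N}}:\mathbf{E}_{\mathbf{V},\mathbf{W}^{\bullet},\Omega^{N}}\longrightarrow \mathbf{E}_{\mathbf{V},(\mathbf{W}^{1},\dots,\mathbf{W}^{N-1},0),\Omega^{N}}
\end{equation*}
that discards the linear maps $\mathbf{V}_{i}\to\mathbf{W}_{i^{N}}$. This morphism is a trivial vector bundle of rank $d_{N}=\sum_{i\in I}{\rm{dim}}\mathbf{V}_{i}\,{\rm{dim}}\mathbf{W}_{i^{N}}$, so the composition $(\pi_{\mathbf{V},\mathbf{W}^{N}})_{!}[d_{N}]\circ\pi_{\mathbf{V},\mathbf{W}^{N}}^{\ast}[d_{N}]\cong\mathrm{Id}$, and these functors induce mutually inverse $\mathcal{A}$-linear isomorphisms between $\mathcal{V}(\Lambda^{\bullet-1},0)$ and its image in $\mathcal{V}(\Lambda^{\bullet})$. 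Moreover, by unwinding Lusztig's induction diagram, one sees directly that
\begin{equation*}
\mathbf{Ind}^{\mathbf{V}\oplus\mathbf{W}^{\bullet}}_{\mathbf{V}\oplus(\mathbf{W}^{1},\dots,\mathbf{W}^{N-1},0),\mathbf{W}^{N}}(-\boxtimes L_{\boldsymbol{d}^{N}})\cong \pi_{\mathbf{V},\mathbf{W}^{N}}^{\ast}[d_{N}],
\end{equation*}
exactly as in the $N=2$ case, because the subspace $\mathbf{W}^{N}\subseteq \mathbf{V}\oplus\mathbf{W}^{\bullet}$ is automatically stable under every arrow in $\Omega^{N}$ and the associated flag carries no moduli beyond a full flag on $\mathbf{W}^{N}$.

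Combining these ingredients and Proposition \ref{indformula}, the image of $[L_{\boldsymbol{\nu}^{1}\boldsymbol{d}^{1}\cdots\boldsymbol{\nu}^{N-1}\boldsymbol{d}^{N-1}\boldsymbol{\nu}^{N}}]$ under $\pi_{\mathbf{V},\mathbf{W}^{N}}^{\ast}[d_{N}]$ is precisely $[L_{\boldsymbol{\nu}^{1}\boldsymbol{d}^{1}\cdots\boldsymbol{\nu}^{N}\boldsymbol{d}^{N}}]$, so the chain of identifications
\begin{equation*}
\mathcal{V}(\Lambda^{\bullet})\supseteq \mathcal{A}\{[L_{\boldsymbol{\nu}^{1}\boldsymbol{d}^{1}\cdots\boldsymbol{\nu}^{N}\boldsymbol{d}^{N}}]\}\cong \mathcal{V}(\Lambda^{\bullet-1},0)\cong \mathcal{V}(\Lambda^{\bullet})
\end{equation*}
shows that the spanning set on the left has the same rank in each $\mathbf{V}$-graded piece as $\mathcal{V}(\Lambda^{\bullet})$ itself, hence equality holds.

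I do not foresee a genuine obstacle: everything is parallel to the displayed $N=2$ argument, and both the preceding proposition and Proposition \ref{indformula} are already available. The only point worth checking carefully is the compatibility of the trivial bundle $\pi_{\mathbf{V},\mathbf{W}^{N}}$ with Lusztig's induction functor against the constant sheaf on the $\mathbf{W}^{N}$-factor, i.e.\ verifying the displayed isomorphism $\mathbf{Ind}(-\boxtimes L_{\boldsymbol{d}^{N}})\cong \pi_{\mathbf{V},\mathbf{W}^{N}}^{\ast}[d_{N}]$; this is a base change/diagram chase identical to the one in the $N=2$ corollary and should be recorded explicitly.
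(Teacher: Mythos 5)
Your proposal is correct and is essentially the paper's own proof: the paper handles this corollary exactly by the forgetful trivial vector bundle $\pi_{\mathbf{W}^{N}}:\mathbf{E}_{\mathbf{V},\mathbf{W}^{\bullet},\Omega^{N}}\rightarrow\mathbf{E}_{\mathbf{V},\mathbf{W}^{\bullet-1},\Omega^{N-1}}$ and then repeats the $N=2$ argument, including the identification $\mathbf{Ind}(-\boxtimes L_{\boldsymbol{d}^{N}})\cong\pi_{\mathbf{V},\mathbf{W}^{N}}^{\ast}[d_{N}]$ that you single out as the point to verify. The only stylistic improvement would be to phrase the last step not as a rank count but as the observation that $\pi_{\mathbf{V},\mathbf{W}^{N}}^{\ast}[d_{N}]$ gives a bijection on the relevant simple perverse sheaves, hence an isomorphism of Grothendieck groups carrying the spanning set $\{[L_{\boldsymbol{\nu}^{1}\boldsymbol{d}^{1}\cdots\boldsymbol{\nu}^{N}}]\}$ onto $\{[L_{\boldsymbol{\nu}^{1}\boldsymbol{d}^{1}\cdots\boldsymbol{\nu}^{N}\boldsymbol{d}^{N}}]\}$.
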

\begin{proof}
	Consider the trivial vector bundle
	\begin{equation*}
		\pi_{\mathbf{W}^{N}}: \mathbf{E}_{\mathbf{V},\mathbf{W}^{\bullet},\Omega^{N}} \rightarrow  \mathbf{E}_{\mathbf{V},\mathbf{W}^{\bullet-1},\Omega^{N-1}}
	\end{equation*}
	defined by forgetting the components ${\rm{Hom}}(\mathbf{V}_{i},\mathbf{W}_{i^{N}})$, then by similar arguments as before, we get the proof.
\end{proof}

A natural idea is to take direct sum of those $\mathbf{W}^{k}$ and only one framing as what has been done in the framework of Nakajima's quiver varieties in \cite{MR1865400} and \cite{MR3077693}. However, in the framework of perverse sheaves, if we consider the framed quiver and take  direct sum of $\mathbf{W}^{k}$ as \cite{MR3177922}, the functor $\cdot L_{\boldsymbol{d}^{N}}$ given by the right multiplication of $L_{\boldsymbol{d}^{N}}$ will not  be isomorphic to the pull back of a trivial vector bundle and it is not fully-faithful. 

Indeed, we can consider $\mathfrak{sl}_{2}$-case and the framed quiver is of type $A_{2}$ in this case, then $L_{\boldsymbol{d}^{1}}$ is the constant sheaf on $pt$ and $L_{\boldsymbol{d}^{1}\boldsymbol{d}^{2}}$ is the cohomology of a Grassmannian, which is not simple. Hence the functor $\cdot L_{\boldsymbol{d}^{2}}$ cannot be fully-faithful. This is one reason for taking $N$ framings.

\begin{proposition}\label{tonggouN}
	(1) If $\mathbf{W}^{k}=0$ for every $1\leqslant k < N$, we have an isomorphism of ${_{\mathcal{A}}\mathbf{U}}$-modules  $\mathcal{K}_{0}(0,\Lambda_{N}) \cong \mathcal{K}_{0}(\Lambda_{N})$.
	
	(2) If $\mathbf{W}^{k}=0$ for some $1\leqslant k < N$, we have an isomorphism of ${_{\mathcal{A}}\mathbf{U}}$-modules  $$\mathcal{K}_{0}(\Lambda_{1},\Lambda_{2},\cdots,\Lambda_{k-1},0,\Lambda_{k+1},\cdots,\Lambda_{N-1},\Lambda_{N}) \cong \mathcal{K}_{0}(\Lambda_{1},\Lambda_{2}\cdots,\Lambda_{k-1},\Lambda_{k+1},\cdots,\Lambda_{N-1},\Lambda_{N}).$$ 
	
	(3) If $\mathbf{W}^{N}=0$, we have an isomorphism of ${_{\mathcal{A}}\mathbf{U}}$-modules  $\mathcal{K}_{0}(\Lambda^{\bullet-1},0) \cong \mathcal{K}_{0}(\Lambda^{\bullet-1})$.
\end{proposition}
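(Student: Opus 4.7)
The three statements express a single phenomenon: when $\mathbf{W}^k = 0$, the $k$-th framing contributes nothing to either the moduli space $\mathbf{E}_{\mathbf{V},\mathbf{W}^\bullet,\Omega^N}$ (the summand $\bigoplus_i\mathbf{Hom}(\mathbf{V}_i,\mathbf{W}_{i^k})$ vanishes) or to the equivariant group $G_{\mathbf{V}\oplus\mathbf{W}^\bullet}$ (since $G_{\mathbf{W}^k}=\mathbf{GL}(0)$ is trivial). My plan is to use this canonical equality to identify the $N$-framed data with the $(N-1)$-framed data obtained by deleting the $k$-th position, and then verify that every piece of the machinery from Section~3 is transported correctly through the identification.

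First I would check that $\mathcal{Q}_{\mathbf{V},\mathbf{W}^\bullet}=\mathcal{Q}_{\mathbf{V},\widehat{\mathbf{W}}^\bullet}$, where $\widehat{\mathbf{W}}^\bullet$ denotes $\mathbf{W}^\bullet$ with the zero entry removed. The generating sequences $\boldsymbol{\nu}^1\boldsymbol{d}^1\cdots\boldsymbol{\nu}^N\boldsymbol{d}^N$ on the $N$-framed side (in which $\boldsymbol{d}^k$ is empty since $\mathbf{W}^k=0$) and $\boldsymbol{\mu}^1\boldsymbol{d}^1\cdots\boldsymbol{\mu}^{N-1}\boldsymbol{d}^{N-1}$ on the $(N-1)$-framed side produce the same set of simple perverse summands: $L_{\boldsymbol{\mu}}$ depends only on its underlying concatenated sequence, and Proposition~\ref{indformula} lets one refine or coarsen consecutive $I$-segments freely through induction, so $\mathcal{P}_{\mathbf{V},\mathbf{W}^\bullet}=\mathcal{P}_{\mathbf{V},\widehat{\mathbf{W}}^\bullet}$. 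The thick subcategory $\mathcal{N}_{\mathbf{V},i}$ is also unaffected: its definition uses the kernel of $\bigoplus_{h'=i,h\in\Omega^N_i}x_h$, and removing the zero summand $\mathbf{W}_{i^k}$ from the target does not alter this kernel, whence the stratification $\mathbf{E}^p_{\mathbf{V},\mathbf{W}^\bullet,i}$ is the same. Combined with the compatibility of Fourier--Deligne transforms from Corollary~\ref{FD2}, the global $\mathcal{N}_\mathbf{V}$ coincides with its counterpart, and the localizations $\mathcal{L}_\mathbf{V}(\Lambda^\bullet)$ and $\mathcal{L}_\mathbf{V}(\widehat{\Lambda}^\bullet)$ are the same triangulated subcategory.

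Finally, the functors $E^{(n)}_i, F^{(n)}_i, K^\pm_i$ are built from the same diagrams of moduli spaces and Grassmannians used in Section~3, and dropping the zero framing leaves all these diagrams unchanged (the integer $\tilde{\nu}_i$ also does not change, since $\dim\mathbf{W}_{i^k}=0$). Thus the functors transport through the identification and induce the claimed isomorphism of ${_{\mathcal{A}}\mathbf{U}}$-modules on Grothendieck groups, giving (2), with (3) being the special case $k=N$; part (1) then follows by iterating (2) or (3) to strip all zero entries and reduce to the $N=1$ case handled by Theorem~\ref{thm1}. The only step requiring genuine care is the matching of generators of $\mathcal{Q}_{\mathbf{V},\mathbf{W}^\bullet}$ with those of $\mathcal{Q}_{\mathbf{V},\widehat{\mathbf{W}}^\bullet}$; everything else is canonical once the ambient moduli spaces are identified.
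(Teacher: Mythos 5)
Your treatment of (1) and (2) is essentially the paper's: when the deleted zero framing sits at position $k<N$, the empty block $\boldsymbol{d}^{k}$ disappears from the middle of the flag type, the adjacent $I$-segments $\boldsymbol{\nu}^{k}\boldsymbol{\nu}^{k+1}$ concatenate to an arbitrary $I$-flag type, and the moduli space, group action, subcategories $\mathcal{N}_{\mathbf{V},i}$ and functors are literally unchanged — the paper simply records that these cases ``hold by definition.''

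The gap is in part (3), which you declare to be ``the special case $k=N$'' of the same identification. It is not. When $\mathbf{W}^{N}=0$ the generating flag types on the $N$-framed side are $\boldsymbol{\nu}^{1}\boldsymbol{d}^{1}\cdots\boldsymbol{\nu}^{N-1}\boldsymbol{d}^{N-1}\boldsymbol{\nu}^{N}$, with a trailing $I$-segment $\boldsymbol{\nu}^{N}$ \emph{after} the last framing block; when $\boldsymbol{\nu}^{N}\neq\emptyset$ this is not of the $(N-1)$-framed form, and Proposition \ref{indformula} (refining or coarsening consecutive $I$-segments) cannot absorb it, since it is not adjacent to any other $I$-segment. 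Your claimed equality $\mathcal{P}_{\mathbf{V},\mathbf{W}^{\bullet}}=\mathcal{P}_{\mathbf{V},\widehat{\mathbf{W}}^{\bullet}}$ is in fact false here: already for the $\mathfrak{sl}_{2}$ quiver with $N=2$, $\mathbf{W}^{2}=0$, the complex $L_{\boldsymbol{d}^{1}(i)^{a}}$ is the skyscraper $IC(\{0\})$ on $\mathbf{Hom}(\mathbf{V}_{i},\mathbf{W}_{i^{1}})$, which lies in $\mathcal{P}_{\mathbf{V},(\mathbf{W}^{1},0)}$ but not in $\mathcal{P}_{\mathbf{V},\mathbf{W}^{1}}$ (the latter contains only the full-support constant IC). So the categories $\mathcal{Q}_{\mathbf{V},(\mathbf{W}^{\bullet-1},0)}$ and $\mathcal{Q}_{\mathbf{V},\mathbf{W}^{\bullet-1}}$ genuinely differ before localization, and an identification of the unlocalized data cannot yield (3). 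The missing ingredient is exactly what the paper uses: by Proposition \ref{r0}, any $L_{\boldsymbol{\mu}}$ whose flag type ends in $i^{a}$ with $a>0$ lies in $\mathcal{N}_{\mathbf{V},i}$ (its support is contained in $\mathbf{E}^{\geqslant 1}_{\mathbf{V},\mathbf{W}^{\bullet},i}$), so all the extra generators with $\boldsymbol{\nu}^{N}$ nonempty vanish in the Verdier quotient; hence the two spanning sets have the same image in $\mathcal{K}_{0}(\Lambda^{\bullet-1},0)$, and since the ambient space, the subcategories $\mathcal{N}_{\mathbf{V}}$ and the functors $E^{(n)}_{i},F^{(n)}_{i},K^{\pm}_{i}$ coincide, the Grothendieck groups agree as ${_{\mathcal{A}}\mathbf{U}}$-modules. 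Without this localization step your argument for (3) — and any derivation of (1) that routes through (3) — does not go through, though (1) can instead be obtained directly by definition as in the paper.
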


\begin{proof}
	The first and second statements hold by definition. Notice that if $\boldsymbol{\nu}^{N}$ is nonempty, then the complex $L_{\boldsymbol{\nu}^{1}\boldsymbol{d}^{1}\cdots  \boldsymbol{\nu}^{N}}$ is contained in $\mathcal{N}_{\mathbf{V},i}$ for some $i$. Hence  $\mathcal{A}\{[L_{\boldsymbol{\nu}^{1}\boldsymbol{d}^{1}\cdots  \boldsymbol{\nu}^{N}}]| \boldsymbol{\nu}^{k} \in \mathcal{S},1 \leqslant k \leqslant N \}$ and $\mathcal{A}\{[L_{\boldsymbol{\nu}^{1}\boldsymbol{d}^{1}\cdots  \boldsymbol{\nu}^{N-1}\boldsymbol{d}^{N-1}}]| \boldsymbol{\nu}^{k} \in \mathcal{S},1 \leqslant k \leqslant N-1 \}$ have the same image  in $\mathcal{K}_{0}(\Lambda^{\bullet-1},0)$ and the last statement follows.
\end{proof}

Now we can study the structure of $\mathcal{K}_{0}(\Lambda_{1},\Lambda_{2})$ (and $\mathcal{K}_{0}(\Lambda^{\bullet})$). Now in this section we simplify the notations  as the following
\begin{equation*}
	\mathbf{Ind}^{\mathbf{V}}_{\mathbf{V}^{1},\mathbf{V}^{2}}=\mathbf{Ind}^{\mathbf{V}\oplus\mathbf{W}^{\bullet}}_{\mathbf{V}^{1}\oplus\mathbf{W}^{\bullet-1},\mathbf{V}^{2}\oplus\mathbf{W}^{N}},
\end{equation*}
\begin{equation*}
	\mathbf{Res}^{\mathbf{V}}_{\mathbf{V}^{1},\mathbf{V}^{2}}=\mathbf{Res}^{\mathbf{V}\oplus\mathbf{W}^{\bullet}}_{\mathbf{V}^{1}\oplus\mathbf{W}^{\bullet-1},\mathbf{V}^{2}\oplus\mathbf{W}^{N}},
\end{equation*}
\begin{equation*}
	\mathbf{Res'}^{\mathbf{V}}_{\mathbf{V}^{1},\mathbf{V}^{2}}=\mathbf{Res}^{\mathbf{V}\oplus\mathbf{W}^{\bullet}}_{\mathbf{V}^{1}\oplus\mathbf{W}^{N},\mathbf{V}^{2}\oplus\mathbf{W}^{\bullet-1}}.
\end{equation*}

Let $\tilde{\Delta}_{N},\tilde{\Delta}_{N}'$ be the linear maps induced by the functor $\bigoplus\limits_{\mathbf{V}^{1},\mathbf{V}^{2}} \mathbf{Res}^{\mathbf{V}}_{\mathbf{V}^{1},\mathbf{V}^{2}}$ and $\bigoplus\limits_{\mathbf{V}^{1},\mathbf{V}^{2}} \mathbf{Res'}^{\mathbf{V}}_{\mathbf{V}^{1},\mathbf{V}^{2}}$ respectively
\begin{equation*}
	\tilde{\Delta}_{N}:\mathcal{V}_{0}(\Lambda^{\bullet}) \rightarrow \mathcal{V}_{0}(\Lambda^{\bullet-1},0) \otimes \mathcal{V}_{0}(0,\Lambda_{N}),
\end{equation*}
\begin{equation*}
	\tilde{\Delta}'_{N}:\mathcal{V}_{0}(\Lambda^{\bullet}) \rightarrow  \mathcal{V}_{0}(0,\Lambda_{N})   \otimes \mathcal{V}_{0}(\Lambda^{\bullet-1},0) .
\end{equation*}
and let $\cdot_{N}$ be the linear map induced by the functor $\bigoplus\limits_{\mathbf{V}^{1},\mathbf{V}^{2}}\mathbf{Ind}^{\mathbf{V}}_{\mathbf{V}^{1},\mathbf{V}^{2}}$
\begin{equation*}
	\cdot_{N}:\mathcal{V}_{0}(\Lambda^{\bullet-1},0) \otimes \mathcal{V}_{0}(0,\Lambda_{N}) \rightarrow \mathcal{V}_{0}(\Lambda^{\bullet}) .
\end{equation*}
In particular, for any complex $A$, we let $[A]\cdot_{N}$ be the linear map induced by the left multiplication. Then we define $\Delta_{N}$ to be the morphisms  $\Delta_{N}:\mathcal{V}_{0}(\Lambda^{\bullet}) \rightarrow  \mathcal{V}_{0}(0,\Lambda_{N}) \otimes  \mathcal{V}_{0}(\Lambda^{\bullet-1},0)$ given by $$\Delta_{N}= P (\mathbf{D}\otimes \mathbf{D}) 	\tilde{\Delta}_{N} \mathbf{D},$$ where $\mathbf{D}$ is the linear map induced by the Verdier duality and $P$ is the linear map swapping the components. Similarly, we define $\Delta'_{N}:\mathcal{V}_{0}(\Lambda^{\bullet}) \rightarrow    \mathcal{V}_{0}(\Lambda^{\bullet-1},0) \otimes \mathcal{V}_{0}(0,\Lambda_{N}) $ by $$\Delta'_{N}= P (\mathbf{D}\otimes \mathbf{D}) 	\tilde{\Delta}'_{N} \mathbf{D}.$$

Let $\mathcal{I}=\mathcal{I}(\Lambda_{1},\Lambda_{2}) \subseteq \mathcal{V}_{0}(\Lambda_{1},\Lambda_{2})$ be the $\mathbb{Z}[v,v^{-1}]$-submodule  spanned by objects in $\mathcal{N}_{\mathbf{V}}$, then by Proposition \ref{indformula} , \ref{res formula} and \ref{r0}, we can check that
\begin{equation*}
	\mathcal{V}_{0}(\Lambda_{1},0)	\cdot_{2} (\mathcal{I}(0,\Lambda_{2})) \subseteq \mathcal{I}(\Lambda_{1},\Lambda_{2})
\end{equation*}
\begin{equation*}
	\tilde{\Delta}_{2}(\mathcal{I}(\Lambda_{1},\Lambda_{2}))\subseteq  \mathcal{I}(\Lambda_{1},0) \otimes \mathcal{V}_{0}(0,\Lambda_{2})  + \mathcal{V}_{0}(\Lambda_{1},0) \otimes \mathcal{I}(0,\Lambda_{2}).
\end{equation*}
If we regard  $\mathcal{K}_{0}(\Lambda_{1},\Lambda_{2})$ as a $\mathbb{Z}[v,v^{-1}]$-quotient of $\mathcal{V}_{0}(\Lambda_{1},\Lambda_{2})$, we obtain linear maps
\begin{equation*}
	\cdot_{2}:\mathcal{V}_{0}(\Lambda_{1},0) \otimes \mathcal{K}_{0}(0,\Lambda_{2}) \rightarrow \mathcal{K}_{0}(\Lambda_{1},\Lambda_{2}),
\end{equation*}
\begin{equation*}
	\tilde{\Delta}_{2}:\mathcal{K}_{0}(\Lambda_{1},\Lambda_{2}) \rightarrow \mathcal{K}_{0}(\Lambda_{1},0) \otimes \mathcal{K}_{0}(0,\Lambda_{2}),
\end{equation*}
and 
\begin{equation*}
	\Delta_{2}:\mathcal{K}_{0}(\Lambda_{1},\Lambda_{2}) \rightarrow \mathcal{K}_{0}(0,\Lambda_{2}) \otimes \mathcal{K}_{0}(\Lambda_{1},0).
\end{equation*}

\begin{proposition}\label{chengfaman}
	Regarding $\mathcal{K}_{0}(\Lambda_{1},0)$ as a $\mathbb{Z}[v,v^{-1}]$-submodule of $\mathcal{V}_{0}(\Lambda_{1},0)$, we can restrict $\cdot_{2}$ on $\mathcal{K}_{0}(\Lambda_{1},0) \otimes \mathcal{K}_{0}(0,\Lambda_{2})$ 
	\begin{equation*}
		\cdot_{2}:\mathcal{K}_{0}(\Lambda_{1},0) \otimes \mathcal{K}_{0}(0,\Lambda_{2}) \rightarrow \mathcal{K}_{0}(\Lambda_{1},\Lambda_{2}) 
	\end{equation*}
	then $\cdot_{2}:\mathcal{K}_{0}(\Lambda_{1},0) \otimes \mathcal{K}_{0}(0,\Lambda_{2}) \rightarrow \mathcal{K}_{0}(\Lambda_{1},\Lambda_{2}) $ is surjective.
\end{proposition}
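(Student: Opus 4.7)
The plan is to verify surjectivity on a generating set. By the corollary immediately preceding the proposition, $\mathcal{V}_0(\Lambda_1, \Lambda_2)$ is $\mathcal{A}$-spanned by the semisimple classes $[L_{\boldsymbol{\nu}^1 \boldsymbol{d}^1 \boldsymbol{\nu}^2 \boldsymbol{d}^2}]$, and Proposition \ref{indformula} rewrites each such class as $[L_{\boldsymbol{\nu}^1 \boldsymbol{d}^1}] \cdot_2 [L_{\boldsymbol{\nu}^2 \boldsymbol{d}^2}]$; their images therefore span $\mathcal{K}_0(\Lambda_1, \Lambda_2)$. Using the natural $\mathcal{A}$-linear decomposition $\mathcal{V}_0 = \mathcal{K}_0 \oplus \mathcal{I}$ afforded by the basis of simple perverse sheaves, I write $[L_{\boldsymbol{\nu}^1 \boldsymbol{d}^1}] = A_1 + B_1$ and $[L_{\boldsymbol{\nu}^2 \boldsymbol{d}^2}] = A_2 + B_2$ with $A_i \in \mathcal{K}_0$ and $B_i \in \mathcal{I}$. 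Since $A_1 \cdot_2 A_2$ already lies in the image of the restricted map, it suffices to show that the three cross-terms $A_1 \cdot_2 B_2$, $B_1 \cdot_2 A_2$ and $B_1 \cdot_2 B_2$ all lie in $\mathcal{I}(\Lambda_1, \Lambda_2)$.

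The term $A_1 \cdot_2 B_2$ is controlled by the containment $\mathcal{V}_0(\Lambda_1, 0) \cdot_2 \mathcal{I}(0, \Lambda_2) \subseteq \mathcal{I}(\Lambda_1, \Lambda_2)$ already noted immediately before the proposition. The core of the proof is thus to establish the analogous containment on the other factor,
\begin{equation*}
\mathcal{I}(\Lambda_1, 0) \cdot_2 \mathcal{V}_0(0, \Lambda_2) \subseteq \mathcal{I}(\Lambda_1, \Lambda_2),
\end{equation*}
which will handle $B_1 \cdot_2 A_2$ and $B_1 \cdot_2 B_2$ simultaneously. Concretely it suffices to prove: for each $k \in I$, each simple perverse sheaf $L_1$ with $\mathrm{supp}(L_1) \subseteq \mathbf{E}^{\geqslant 1}_{\mathbf{V}^1, \mathbf{W}^{\bullet-1}, k}$, and each semisimple $L_2$ on the second factor, the induction $\mathbf{Ind}(L_1 \boxtimes L_2)$ has support inside $\mathbf{E}^{\geqslant 1}_{\mathbf{V}, \mathbf{W}^{\bullet}, k}$.

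The main obstacle is exactly this support statement. The subtlety is that in the induction diagram $L_1$ controls the quotient $\bar{x}$ of $x$ rather than an $x$-stable subobject, so the kernel condition at $k$ on $\bar{x}$ does not automatically lift to $x$. My strategy is to reduce to the case where $k$ is a source of $Q^{(N)}$ by invoking Proposition \ref{FD0}, which ensures that induction commutes with Fourier-Deligne transforms. Once $k$ is a source the simple representation $S_k$ of $Q^{(N)}$ is projective. For any $x$ in the support of $\mathbf{Ind}(L_1 \boxtimes L_2)$, there is an $x$-stable subspace $\tilde{\mathbf{U}} \cong \mathbf{V}^2 \oplus \mathbf{W}^N$ whose quotient $\bar{x}$ lies in $\mathrm{supp}(L_1)$ and therefore contains $S_k^{\oplus m}$ as a subrepresentation for some $m \geqslant 1$. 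Pulling this subrepresentation back along $x \to \bar{x}$ produces an extension $0 \to \tilde{\mathbf{U}} \to y \to S_k^{\oplus m} \to 0$ of subrepresentations of $x$; projectivity of $S_k$ splits the sequence, exhibiting $S_k^{\oplus m}$ as a direct summand of $y \subseteq x$. In particular $x$ has nontrivial kernel at $k$, so $x \in \mathbf{E}^{\geqslant 1}_{\mathbf{V}, \mathbf{W}^{\bullet}, k}$, which finishes the key containment and hence the proposition.
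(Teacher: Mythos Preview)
Your key step is incorrect: when $k$ is a source in $Q^{(N)}$ the simple $S_k$ is \emph{injective}, not projective (compare the argument in Lemma~\ref{lemmaF}, which uses exactly this injectivity). Your splitting argument therefore does not go through: the extension $0\to\tilde{\mathbf{U}}\to y\to S_k^{\oplus m}\to 0$ need not split, and in fact the containment
\[
\mathcal{I}(\Lambda_1,0)\cdot_2\mathcal{V}_0(0,\Lambda_2)\subseteq\mathcal{I}(\Lambda_1,\Lambda_2)
\]
that you are trying to prove is \emph{false}. For a concrete counterexample take $Q$ with a single vertex $i$, $\dim\mathbf{W}_{i^1}=\dim\mathbf{W}_{i^2}=1$, $\dim\mathbf{V}^1_i=2$, $\mathbf{V}^2=0$. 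Then $\mathbf{E}^{0}_{\mathbf{V}^1,(\mathbf{W}^1,0),i}=\emptyset$, so every sheaf on $\mathbf{E}_{\mathbf{V}^1,(\mathbf{W}^1,0)}$ lies in $\mathcal{N}_{\mathbf{V}^1,i}$; but $L_{i^{(2)}\boldsymbol{d}^1}\cdot_2 L_{\boldsymbol{d}^2}=L_{i^{(2)}\boldsymbol{d}^1\boldsymbol{d}^2}$ is the constant sheaf (up to shift) on $\mathbf{E}_{\mathbf{V},\mathbf{W}^\bullet}\cong\mathbf{k}^4$, whose support is not contained in $\mathbf{E}^{\geqslant 1}_{\mathbf{V},\mathbf{W}^\bullet,i}$. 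The asymmetry with Lemma~\ref{lemmaF} is structural: there the object in $\mathcal{N}$ sits in the \emph{second} (submodule) slot of the induction, so injectivity of $S_k$ suffices; here it sits in the \emph{first} (quotient) slot, where one would need projectivity, which fails.

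The paper does not attempt this containment. Instead it shows, by increasing induction on $|\mathbf{V}|$ and descending induction on $s_i^*(A)$, that $[A]\cdot_2\mathcal{K}_0(0,\Lambda_2)\subseteq\mathcal{K}_0(\Lambda_1,0)\cdot_2\mathcal{K}_0(0,\Lambda_2)$ inside $\mathcal{K}_0(\Lambda_1,\Lambda_2)$ for each simple $A\in\mathcal{N}_{\mathbf{V}}$. The mechanism is Proposition~\ref{r0}: write $[\mathbf{Ind}(L'\boxtimes\overline{\mathbb{Q}}_l)]=[A]+\sum_{s_i^*(L'')>r}c_{L''}[L'']$ with $s_i^*(L')=0$, and then use associativity of induction to get $[A]\cdot_2\mathcal{K}_0\subseteq [L']\cdot_2\bigl([\overline{\mathbb{Q}}_l]\cdot_2\mathcal{K}_0\bigr)+\sum[L'']\cdot_2\mathcal{K}_0$, where both pieces are handled by the inductive hypotheses. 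The point is that $[A]\cdot_2\mathcal{K}_0$ is absorbed back into the image of $\mathcal{K}_0\cdot_2\mathcal{K}_0$, not that it vanishes in $\mathcal{K}_0(\Lambda_1,\Lambda_2)$.
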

\begin{proof}
	By Proposition 4.8 and Corollary 4.10, we know that $
	\cdot_{2}:\mathcal{V}_{0}(\Lambda_{1},0) \otimes \mathcal{V}_{0}(0,\Lambda_{2}) \rightarrow \mathcal{V}_{0}(\Lambda_{1},\Lambda_{2})$ is surjective. Hence $\cdot_{2}:\mathcal{V}_{0}(\Lambda_{1},0) \otimes \mathcal{K}_{0}(0,\Lambda_{2}) \rightarrow \mathcal{K}_{0}(\Lambda_{1},\Lambda_{2})$ is surjective. It suffices to show $\mathcal{I}(\Lambda_{1},0)\cdot_{2} \mathcal{K}_{0}(0,\Lambda_{2})  \subseteq  \mathcal{K}_{0}(\Lambda_{1},0) \cdot_{2} \mathcal{K}_{0}(0,\Lambda_{2})$, then $\mathcal{K}_{0}(\Lambda_{1},\Lambda_{2}) \subseteq \mathcal{V}_{0}(\Lambda_{1},0) \cdot_{2}\mathcal{K}_{0}(0,\Lambda_{2}) \subseteq  \mathcal{K}_{0}(\Lambda_{1},0) \cdot_{2} \mathcal{K}_{0}(0,\Lambda_{2})$.  
	
	Consider a simple perverse sheaf $A$ in $\mathcal{N}_{\mathbf{V}}$, we need to prove $[A] \cdot_{2} \mathcal{K}_{0}(0,\Lambda_{2}) \subseteq\mathcal{K}_{0}(\Lambda_{1},0) \cdot_{2} \mathcal{K}_{0}(0,\Lambda_{2}) $. Since $A$ belongs to $\mathcal{N}_{\mathbf{V},i}$ for some $i$, $s_{i}^{\ast}(A)=r>0$. We argue by  increasing induction on $|\mathbf{V}|$ and descending induction on $s^{\ast}_{i}(A)=r$. We assume that for any $L' \in \mathcal{I}(\Lambda_{1},0)$ such that $|\mathbf{V}'|<|\mathbf{V}|$, we have  $[L' ]\cdot_{2} \mathcal{K}_{0}(0,\Lambda_{2}) \subseteq\mathcal{K}_{0}(\Lambda_{1},0) \cdot_{2} \mathcal{K}_{0}(0,\Lambda_{2}) $. We also assume that for any $L' \in \mathcal{I}(\Lambda_{1},0)$ such that $s_{i}^{\ast}(L')>r$, we have  $[L' ]\cdot_{2} \mathcal{K}_{0}(0,\Lambda_{2}) \subseteq\mathcal{K}_{0}(\Lambda_{1},0) \cdot_{2} \mathcal{K}_{0}(0,\Lambda_{2}) $.
	
	Then by Proposition \ref{r0}, we can find a simple perverse sheaf $L'$ such that $s_{i}^{\ast}(L')=0$ and the following equation holds
	\begin{equation*}
		[\mathbf{Ind}(L' \boxtimes \overline{\mathbb{Q}}_{l} )]=[A]+\sum\limits_{s_{i}^{\ast}(L'')>r} c_{L''}[L''] .
	\end{equation*}
	Notice that by the associativity of the induction functor, we have
	\begin{equation*}
		\begin{split}
			&[A]\cdot_{2} \mathcal{K}_{0}(0,\Lambda_{2})\\
			\subseteq & [L'] \cdot_{2} [\overline{\mathbb{Q}}_{l}] \cdot_{2} \mathcal{K}_{0}(0,\Lambda_{2})+ \sum\limits_{s_{i}^{\ast}(L'')>r}[L''] \cdot_{2} \mathcal{K}_{0}(0,\Lambda_{2}) \\
			\subseteq & [L']\cdot_{2} \mathcal{K}_{0}(0,\Lambda_{2})+ \sum\limits_{s_{i}^{\ast}(L'')>r}[L'']\cdot_{2} \mathcal{K}_{0}(0,\Lambda_{2})\\
			\subseteq & \mathcal{K}_{0}(\Lambda_{1},0) \cdot_{2} \mathcal{K}_{0}(0,\Lambda_{2}).	
		\end{split}
	\end{equation*}
	We complete the proof.
\end{proof}

\begin{proposition}\label{yuchengman}
	The linear maps $\tilde{\Delta}_{2}:\mathcal{K}_{0}(\Lambda_{1},\Lambda_{2}) \rightarrow \mathcal{K}_{0}(\Lambda_{1},0) \otimes \mathcal{K}_{0}(0,\Lambda_{2}) $ and $\Delta_{2}:\mathcal{K}_{0}(\Lambda_{1},\Lambda_{2}) \rightarrow \mathcal{K}_{0}(0,\Lambda_{2}) \otimes \mathcal{K}_{0}(\Lambda_{1},0) $ are surjective. 
\end{proposition}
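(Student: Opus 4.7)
The plan is to reduce both claims to the surjectivity of $\tilde{\Delta}_2$ and then deduce this via a Mackey-type analysis of $\tilde{\Delta}_2\circ\cdot_2$, using the surjectivity of $\cdot_2$ from Proposition \ref{chengfaman}. For the reduction: Verdier duality $\mathbf{D}$ preserves supports and hence each $\mathcal{N}_{\mathbf{V},i}$, so it descends to an involution on $\mathcal{K}_0(\Lambda^{\bullet})$ and on both tensor factors. Since the swap $P$ is bijective, the factorisation $\Delta_2=P(\mathbf{D}\otimes\mathbf{D})\tilde{\Delta}_2\mathbf{D}$ shows that $\Delta_2$ and $\tilde{\Delta}_2$ are simultaneously surjective.

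For $\tilde{\Delta}_2$, by Proposition \ref{chengfaman} it suffices to prove that every basis element $[L]\otimes[K]\in\mathcal{K}_0(\Lambda_1,0)\otimes\mathcal{K}_0(0,\Lambda_2)$ lies in the image of $\tilde{\Delta}_2$, where $L\in\mathcal{P}_{\mathbf{V}^1_L,\mathbf{W}^{\bullet-1}}\setminus\mathcal{N}$ and $K\in\mathcal{P}_{\mathbf{V}^2_K,\mathbf{W}^N}\setminus\mathcal{N}$. Iterating Propositions \ref{indformula} and \ref{res formula} on the relevant diagram yields a Mackey-type decomposition of $\mathbf{Res}^{\mathbf{V}}_{\mathbf{V}^1,\mathbf{V}^2}\mathbf{Ind}^{\mathbf{V}}_{\mathbf{V}^1_L,\mathbf{V}^2_K}(L\boxtimes K)$ into a sum indexed by intersection data of the induction and restriction subspaces, with the framings $\mathbf{W}^{\bullet-1}$ and $\mathbf{W}^N$ rigidly constrained to lie in prescribed pieces. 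The \emph{diagonal} Mackey summand, in which the restriction subspace coincides with the induction subspace, lives in bidegree $(\mathbf{V}^1_L,\mathbf{V}^2_K)$ and equals $[L]\otimes[K]$ up to an invertible shift in $\mathcal{A}$; every other summand comes from a strictly nontrivial splitting of $\mathbf{V}^1_L$ and/or $\mathbf{V}^2_K$.

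I then order the basis $\{[L']\otimes[K']\}$ on each total graded piece of $\mathcal{K}_0(\Lambda_1,0)\otimes\mathcal{K}_0(0,\Lambda_2)$, a finite-rank $\mathcal{A}$-module, by refining the bidegree order (taking $|\mathbf{V}^2|$ as the outer parameter) with the $s^{\ast}_i$-invariants from Proposition \ref{r0} for intra-bidegree comparisons. Under this order, the Mackey decomposition exhibits $\tilde{\Delta}_2\circ\cdot_2$ as upper triangular with invertible diagonal, hence surjective; surjectivity of $\tilde{\Delta}_2$ follows at once.

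The hard part will be the second level of this ordering: the contributions that land in the same bidegree $(\mathbf{V}^1_L,\mathbf{V}^2_K)$ as the diagonal come from Mackey terms of the shape $\mathbf{Ind}(L_E\boxtimes K_G)\boxtimes\mathbf{Ind}(L_F\boxtimes K_H)$ with $|F|=|G|>0$ that genuinely mix pieces of $L$ and $K$, so the bidegree alone cannot separate them from the diagonal. I expect the key input to be that for any such nonzero swap the simple summands $L'$ of $\mathbf{Ind}(L_E\boxtimes K_G)$ have strictly larger $s^{\ast}_i$-invariant than $L$ for some $i$ appearing in the swapped dimension vector, by the uniqueness part of Proposition \ref{r0}; this turns the coincident-bidegree matrix block into a genuine upper-triangular one and closes the induction.
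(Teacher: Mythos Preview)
Your reduction of $\Delta_2$ to $\tilde{\Delta}_2$ matches the paper. After that, however, the paper takes a much shorter route that avoids your ``hard part'' entirely, and your proposed route has a genuine gap.

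The paper does \emph{not} go through $\cdot_2$ or any Mackey analysis of $\mathbf{Res}\circ\mathbf{Ind}$ on simple perverse sheaves. Instead it works directly with the spanning set of Lusztig sheaves $[L_{\boldsymbol{\nu}^1\boldsymbol{d}^1\boldsymbol{\nu}^2\boldsymbol{d}^2}]$ in the \emph{source} $\mathcal{K}_0(\Lambda_1,\Lambda_2)$ and applies the restriction formula (Proposition~\ref{res formula}) to these. By induction on $|\mathbf{V}^1|$: the summands of $\tilde{\Delta}_2([L_{\boldsymbol{\nu}^1\boldsymbol{d}^1\boldsymbol{\nu}^2\boldsymbol{d}^2}])$ either have first factor $L_{\boldsymbol{\nu}'\boldsymbol{d}^1\boldsymbol{\nu}''}$ with $\boldsymbol{\nu}''$ nonempty, which lies in $\mathcal{N}$ and hence vanishes in $\mathcal{K}_0(\Lambda_1,0)$; or have first factor $L_{\boldsymbol{\nu}'\boldsymbol{d}^1}$ with $|\boldsymbol{\nu}'|<|\boldsymbol{\nu}^1|$, handled by induction; or are the desired leading term $[L_{\boldsymbol{\nu}^1\boldsymbol{d}^1}]\otimes[L_{\boldsymbol{\nu}^2\boldsymbol{d}^2}]$ up to a unit. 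No second-level ordering is needed because the same-bidegree off-diagonal contributions are killed outright by the localisation.

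Your approach, by contrast, works with simple perverse sheaves $L,K$ and invokes a Mackey decomposition of $\mathbf{Res}\circ\mathbf{Ind}(L\boxtimes K)$. Two problems: first, Propositions~\ref{indformula} and~\ref{res formula} are stated only for Lusztig sheaves $L_{\boldsymbol{\mu}}$, not for arbitrary simples, so ``iterating'' them does not yield the Mackey formula you need (you would have to invoke the Green-formula machinery of \cite{MR4524567}). Second, and more seriously, the claim that the same-bidegree off-diagonal Mackey terms $\mathbf{Ind}(L_E\boxtimes K_G)$ with $|G|>0$ have all simple summands of strictly larger $s^*_i$ than $L$ is stated only as an expectation; Proposition~\ref{r0} does not give this directly, since $K_G$ is a restriction piece of $K$ and need not be a constant sheaf concentrated at a single vertex. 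Closing this gap would require substantially more work than the paper's direct argument.
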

\begin{proof}
	Since $\Delta_{2}$ is the composition of $\tilde{\Delta}_{2}$ and some linear isomorphisms, we only need to prove the statement for $\tilde{\Delta}_{2}$.
	
	Since $\mathcal{K}_{0}(\Lambda_{1},0) \otimes \mathcal{K}_{0}(0,\Lambda_{2})$ is spanned by $[L_{\boldsymbol{\nu}^{1}\boldsymbol{d}^{1}}] \otimes [L_{\boldsymbol{\nu}^{2}\boldsymbol{d}^{2}}]$, we only need to show that each element of the form $[L_{\boldsymbol{\nu}^{1}\boldsymbol{d}^{1}}] \otimes [L_{\boldsymbol{\nu}^{2}\boldsymbol{d}^{2}}] \in \mathcal{K}_{0}(\Lambda_{1},0)_{\mathbf{V}^{1} } \otimes \mathcal{K}_{0}(0,\Lambda_{2})_{\mathbf{V}^{2}} $ is contained in the image of $\tilde{\Delta}_{2}$.
	
	We argue by induction on $|\mathbf{V}^{1}|$.
	
	(1) If $|\mathbf{V}^{1}|=0$, we claim that $\tilde{\Delta}_{2}([L_{\boldsymbol{d}^{1}\boldsymbol{\nu}^{2}\boldsymbol{d}^{2}}])=L_{\boldsymbol{d}^{1}}\otimes L_{\boldsymbol{\nu}^{2}\boldsymbol{d}^{2}}$. Indeed, after applying the restriction functor, $L_{\boldsymbol{d}^{1}\boldsymbol{\nu}^{2}\boldsymbol{d}^{2}}$ is the direct sum of $L_{\boldsymbol{d}^{1}}\otimes L_{\boldsymbol{\nu}^{2}\boldsymbol{d}^{2}}$ and some shifts of the other complexes of the form $L_{\boldsymbol{d}^{1}\boldsymbol{\nu}'}\otimes L_{\boldsymbol{\nu}''\boldsymbol{d}^{2}}$. If  $\boldsymbol{\nu}'$ is a nonempty sequence, $L_{\boldsymbol{d}^{1}\boldsymbol{\nu}'}$ belongs to $\mathcal{N}_{\mathbf{V}'}$, so we get the proof.
	We have $\mathcal{K}_{0}(\Lambda_{1},0)_{\mathbf{V}^{1} } \otimes \mathcal{K}_{0}(0,\Lambda_{2})_{\mathbf{V}^{2}} \subseteq {\rm{Im}}\tilde{\Delta}_{2}$ holds for $\mathbf{V}^{1}=0$ and all $\mathbf{V}^{2}$.

	(2) Assume for $|\mathbf{V}'|<|\mathbf{V}^{1}|$ and all $\mathbf{V}^{2}$, we have $\mathcal{K}_{0}(\Lambda_{1},0)_{\mathbf{V}' } \otimes \mathcal{K}_{0}(0,\Lambda_{2})_{\mathbf{V}^{2}} \subseteq {\rm{Im}}\tilde{\Delta}_{2}$.
	Consider $[L_{\boldsymbol{\nu}^{1}\boldsymbol{d}^{1}}] \otimes [L_{\boldsymbol{\nu}^{2}\boldsymbol{d}^{2}}] \in \mathcal{K}_{0}(\Lambda_{1},0)_{\mathbf{V}^{1} } \otimes \mathcal{K}_{0}(0,\Lambda_{2})_{\mathbf{V}^{2}}$, we have 
	\begin{equation*}
		\begin{split}
		\tilde{\Delta}_{2}([L_{\boldsymbol{\nu}^{1}\boldsymbol{d}^{1}\boldsymbol{\nu}^{2}\boldsymbol{d}^{2}}] )
			= &v^{M}[L_{\boldsymbol{\nu}^{1}\boldsymbol{d}^{1}}] \otimes [L_{\boldsymbol{\nu}^{2}\boldsymbol{d}^{2}}] 
			+ \sum\limits_{\boldsymbol{\nu}',\boldsymbol{\nu}'',\boldsymbol{\nu}'\neq \boldsymbol{\nu}} v^{M'}[L_{\boldsymbol{\nu}'\boldsymbol{d}^{1}}] \otimes [L_{\boldsymbol{\nu}''\boldsymbol{d}^{2}}]  \\
			+&  \sum\limits_{\boldsymbol{\nu}',\boldsymbol{\nu}'',\boldsymbol{\nu}'''} v^{M''}[L_{\boldsymbol{\nu}'\boldsymbol{d}^{1}\boldsymbol{\nu}''}] \otimes [L_{\boldsymbol{\nu}'''\boldsymbol{d}^{2}}] . 
		\end{split}
	\end{equation*}
	For those $[L_{\boldsymbol{\nu}'\boldsymbol{d}^{1}}] \otimes [L_{\boldsymbol{\nu}''\boldsymbol{d}^{2}}]$ such that $\boldsymbol{\nu}' \neq \boldsymbol{\nu}$, by inductive hypothesis we have
	\begin{equation*}
		[L_{\boldsymbol{\nu}'\boldsymbol{d}^{1}}] \otimes [L_{\boldsymbol{\nu}''\boldsymbol{d}^{2}}] \in \mathcal{K}_{0}(\Lambda_{1},0)_{\mathbf{V}' } \otimes \mathcal{K}_{0}(0,\Lambda_{2})_{\mathbf{V}'_{2}} \subseteq {\rm{Im}}\tilde{\Delta}_{2}.
	\end{equation*}
	For those nonempty sequences $\boldsymbol{\nu}''$, we have
	\begin{equation*}
		[L_{\boldsymbol{\nu}'\boldsymbol{d}^{1}\boldsymbol{\nu}''}] \otimes [L_{\boldsymbol{\nu}'''\boldsymbol{d}^{2}}] \in \mathcal{I}(\Lambda_{1},0) \otimes \mathcal{K}_{0}(0,\Lambda_{2}).
	\end{equation*}
	Hence $\mathcal{K}_{0}(\Lambda_{1},0)_{\mathbf{V}^{1}} \otimes \mathcal{K}_{0}(0,\Lambda_{2})_{\mathbf{V}^{2}} \subseteq {\rm{Im}}\tilde{\Delta}_{2}$ and we complete the proof.
\end{proof}

\begin{proposition}
	We have the following isomorphism (for N=2)
	\begin{equation*}
		\begin{split}
			\bigoplus\limits_{|\mathbf{V}^{1}|+|\mathbf{V}^{2}|=|\mathbf{V}|}\mathbf{Res}^{\mathbf{V}}_{\mathbf{V}^{1},\mathbf{V}^{2}}\circ F_{i}& \cong (F_{i}\otimes Id)\circ (\bigoplus\limits_{|\mathbf{V}^{1}|+|\mathbf{V}^{2}|=|\mathbf{V}|-i}\mathbf{Res}^{\mathbf{V}}_{\mathbf{V}^{1},\mathbf{V}^{2}}) \\
			&\oplus  (Id\otimes F_{i})\circ(\bigoplus\limits_{|\mathbf{V}^{1}|+|\mathbf{V}^{2}|=|\mathbf{V}|-i}\mathbf{Res}^{\mathbf{V}}_{\mathbf{V}^{1},\mathbf{V}^{2}}[-(\alpha_{i},|\mathbf{V}^{1}|+|\mathbf{W}^{1}|) ]) .
		\end{split}
	\end{equation*}
\end{proposition}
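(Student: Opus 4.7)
The statement is a Mackey-type compatibility between restriction and induction, in the spirit of the coproduct formula of \cite[Section 9.2]{MR1227098} adapted to the $N$-framed setting. The strategy is a geometric base-change argument. First I would unwind the definitions: since $F_i(-)=\mathbf{Ind}^{\mathbf{V}\oplus\mathbf{W}^{\bullet}}_{\mathbf{V}',\mathbf{V}''\oplus\mathbf{W}^{\bullet}}(\overline{\mathbb{Q}}_{l}\boxtimes-)$ with $|\mathbf{V}'|=i$, the composition $\mathbf{Res}^{\mathbf{V}}_{\mathbf{V}^{1},\mathbf{V}^{2}}\circ F_{i}$ factors through a single ``Mackey'' variety $Z$ parametrizing tuples $(x,\tilde{\mathbf{U}},\mathbf{U})$ together with the standard trivializations, where $x\in\mathbf{E}_{\mathbf{V},\mathbf{W}^{\bullet},\Omega^{N}}$ has $\mathbf{U}\cong\mathbf{V}^{2}\oplus\mathbf{W}^{2}$ as an $x$-stable subspace (from $\mathbf{Res}$), and $\tilde{\mathbf{U}}$ is a codimension-$i$ $x$-stable subspace of $\mathbf{V}\oplus\mathbf{W}^{\bullet}$ with quotient $S_{i}$ (from $F_{i}$). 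By proper and smooth base change, $\mathbf{Res}\circ F_{i}$ is identified with a pushforward out of $Z$ carrying the combined Euler and internal shifts.

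I would then stratify $Z$ by the relative position of $\tilde{\mathbf{U}}$ and $\mathbf{U}$. Because $S_{i}=(\mathbf{V}\oplus\mathbf{W}^{\bullet})/\tilde{\mathbf{U}}$ is one-dimensional and concentrated at the single vertex $i$, exactly two strata appear: a closed stratum $Z^{\bullet}$ on which $\mathbf{U}\subseteq\tilde{\mathbf{U}}$ (so $\tilde{\mathbf{U}}/\mathbf{U}$ has codimension $i$ in $\mathbf{V}^{1}\oplus\mathbf{W}^{1}$ and the new $S_{i}$ sits on top of the first factor of the restriction), and an open stratum $Z^{\circ}$ on which $\mathbf{U}\not\subseteq\tilde{\mathbf{U}}$ (so $\mathbf{U}+\tilde{\mathbf{U}}=\mathbf{V}\oplus\mathbf{W}^{\bullet}$ and $\mathbf{U}=\mathbf{V}^{2}\oplus\mathbf{W}^{2}$ itself surjects onto $S_{i}$, placing the new simple on top of the second factor). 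Restricting the Mackey pushforward to $Z^{\bullet}$ and comparing the resulting diagram with the one defining $(F_{i}\otimes \mathrm{Id})\circ\mathbf{Res}$ identifies this contribution with the first summand on the right-hand side, while restricting to $Z^{\circ}$ and comparing with the defining diagram of $(\mathrm{Id}\otimes F_{i})\circ\mathbf{Res}$ gives the second summand. The open--closed triangle then splits as a direct sum because, by Propositions~\ref{indformula} and~\ref{res formula}, both sides and the contribution of each stratum lie in the semisimple category $\mathcal{Q}'$ on the target.

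The most delicate part is the bookkeeping of cohomological shifts. Three contributions arise on each stratum: the internal shift $d_{1}-d_{2}$ from $F_{i}=\mathbf{Ind}$, the Euler-form shift $-\langle|\mathbf{V}^{1}\oplus\mathbf{W}^{1}|,|\mathbf{V}^{2}\oplus\mathbf{W}^{2}|\rangle$ from $\mathbf{Res}$, and the relative dimension of the stratum inside $Z$. On $Z^{\bullet}$ these contributions cancel against the analogous shifts on the right-hand side, producing no net twist and matching $F_{i}\otimes \mathrm{Id}$ exactly. On $Z^{\circ}$ the discrepancy between $\mathbf{Res}$ applied at $(\mathbf{V}^{1},\mathbf{V}^{2})$ and at $(\mathbf{V}^{1},\mathbf{V}^{2}-i)$, combined with the codimension count on $Z^{\circ}$, rearranges via the identity $(\alpha_{i},\mu)=\langle\alpha_{i},\mu\rangle+\langle\mu,\alpha_{i}\rangle$ for the symmetric bilinear form of $Q^{(N)}$ to exactly the twist $-(\alpha_{i},|\mathbf{V}^{1}|+|\mathbf{W}^{1}|)$ appearing in the statement. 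Verifying this rearrangement and confirming that no stray signs appear along the way is the main technical hurdle; modulo it, the two strata produce precisely the two summands in the claimed decomposition.
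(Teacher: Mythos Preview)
Your approach is correct in outline and is essentially the strategy behind the cited result: the paper does not give an independent argument but simply invokes the main theorem of \cite{MR4524567}, of which this proposition is a special case (one induction factor is the simple $S_i$, so the Mackey stratification has only two pieces). Your base-change/stratification description of the two strata and your identification of each contribution with the corresponding summand on the right-hand side are accurate, and the shift bookkeeping you describe is exactly the computation one has to do.

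The one point to flag is the splitting step. You argue that the open--closed triangle splits because the total and both stratum contributions lie in the semisimple category $\mathcal{Q}'$. Knowing all three terms are semisimple does force an isomorphism $B\cong A\oplus C$ at the level of \emph{objects} (match multiplicities in the Grothendieck group), and this already suffices for everything the paper actually uses: Corollary~\ref{tongtai} only needs the identity on $\mathcal{K}_0$, and the paper itself remarks after the proof that at that level one can alternatively verify the formula on the generators $L_{\boldsymbol{\nu}^{1}\boldsymbol{d}^{1}\boldsymbol{\nu}^{2}\boldsymbol{d}^{2}}$ using Propositions~\ref{indformula} and~\ref{res formula}. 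However, semisimplicity of $A$, $B$, $C$ alone does \emph{not} force the connecting map $C\to A[1]$ to vanish, so it does not by itself yield a natural isomorphism of functors. That vanishing is precisely the content of the parity argument in \cite{MR4524567}. So your sketch proves what is needed for the paper's applications, but if you want the full functorial isomorphism you should either appeal to \cite{MR4524567} for the splitting or insert the parity/weight argument explicitly.
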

\begin{proof}
	We can use the main theorem in \cite{MR4524567} to get the proof directly. It is indeed a special case of  the main theorem in \cite{MR4524567}. 
\end{proof}

If we only  want to prove the proposition on the level of the Grothendieck group, there is anthoer way. Indeed, by using Proposition \ref{indformula} and \ref{res formula}, we can check the isomorphism for each $L_{\boldsymbol{\nu}^{1}\boldsymbol{d}^{1}\boldsymbol{\nu}^{2}\boldsymbol{d}^{2}}$. Then the proposition holds since these complexes span  $\mathcal{V}_{0}(\Lambda^{\bullet})$.

\begin{corollary} \label{tongtai}
	The morphism $\Delta_{2}:\mathcal{K}_{0}(\Lambda_{1},\Lambda_{2}) \rightarrow \mathcal{K}_{0}(0,\Lambda_{2}) \otimes \mathcal{K}_{0}(\Lambda_{1},0) $ is a $_{\mathcal{A}}\mathbf{U}^{-}$-linear morphism.
\end{corollary}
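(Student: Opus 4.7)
The plan is to verify $\Delta_2(F_i x) = F_i \cdot \Delta_2(x)$ for each $i \in I$, since the $F_i$ generate ${_{\mathcal{A}}\mathbf{U}^-}$; here the action of $F_i$ on the target $\mathcal{K}_0(0,\Lambda_2)\otimes \mathcal{K}_0(\Lambda_1,0)$ is the one induced by the standard coproduct $\Delta(F_i) = F_i \otimes K_i^{-1} + 1 \otimes F_i$.

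First I would apply the preceding proposition at the level of Grothendieck groups, which yields
\[
\tilde{\Delta}_2(F_i y) = (F_i \otimes 1)\tilde{\Delta}_2(y) + \sum_{(\mathbf{V}^1,\mathbf{V}^2)} v^{-(\alpha_i,|\mathbf{V}^1|+|\mathbf{W}^1|)}\, (1 \otimes F_i)\tilde{\Delta}_2(y)\big|_{(\mathbf{V}^1,\mathbf{V}^2)}.
\]
Then I would substitute $y = \mathbf{D}x$, use the commutation $F_i \mathbf{D} = \mathbf{D} F_i$ from the Verdier duality proposition, apply $\mathbf{D}\otimes \mathbf{D}$ (which sends $v \mapsto v^{-1}$), and then the swap $P$. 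Writing $\Delta_2(x) = \sum w_2 \otimes w_1$ with $w_1 \in \mathcal{K}_0(\Lambda_1,0)_{\mathbf{V}^1}$ and $w_2 \in \mathcal{K}_0(0,\Lambda_2)_{\mathbf{V}^2}$, the computation produces
\[
\Delta_2(F_i x) = \sum \left( w_2 \otimes F_i w_1 + v^{(\alpha_i,|\mathbf{V}^1|+|\mathbf{W}^1|)}\, F_i w_2 \otimes w_1 \right).
\]

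Next I would verify that $v^{(\alpha_i,|\mathbf{V}^1|+|\mathbf{W}^1|)}$ coincides with the eigenvalue of $K_i^{-1}$ acting on $w_1$. The key input is the bilinear form on $Q^{(N)}$: the only contribution from $I^1$ to $(\alpha_i, -)$ is the edge $i - i^1$, giving $(\alpha_i, \alpha_{i^1}) = -1$ and hence $(\alpha_i, |\mathbf{W}^1|) = -\omega_{i^1} = -(\alpha_i,\Lambda_1)$. Consequently $(\alpha_i,|\mathbf{V}^1|+|\mathbf{W}^1|) = -(\alpha_i,\Lambda_1 - |\mathbf{V}^1|) = -(\alpha_i, \mu)$ for $\mu$ the weight of $w_1$ in $\mathcal{K}_0(\Lambda_1,0) \cong {_{\mathcal{A}}L(\Lambda_1)}$. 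The right-hand side therefore equals $\sum (w_2 \otimes F_i w_1 + F_i w_2 \otimes K_i^{-1} w_1) = F_i \cdot \Delta_2(x)$, which establishes the desired ${_{\mathcal{A}}\mathbf{U}^-}$-linearity.

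The main obstacle is the bookkeeping of shifts: each of Verdier duality, the swap $P$, and the action of $K_i^{\pm 1}$ contributes sign changes to the exponent of $v$, and one must confirm that these combine into exactly the scalar dictated by the standard coproduct rather than a bar-twisted version. Provided the computation of $(\alpha_i, |\mathbf{W}^1|)$ from the $N$-framed Cartan datum is carried out correctly, the rest of the verification is a direct substitution, and the statement extends from $F_i$ to all of ${_{\mathcal{A}}\mathbf{U}^-}$ by the generation property.
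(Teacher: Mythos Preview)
Your argument is correct and follows the same route as the paper: both reduce to checking $\Delta_2 F_i = \Delta(F_i)\Delta_2$ by combining Proposition~4.13 (the compatibility of $\tilde{\Delta}_2$ with $F_i$), the commutation of $F_i$ with Verdier duality, and the effect of $\mathbf{D}\otimes\mathbf{D}$ and $P$. The paper packages the scalar $v^{-(\alpha_i,|\mathbf{V}^1|+|\mathbf{W}^1|)}$ directly as the operator $K_i$ (using $\mathbf{D}K_i = K_i^{-1}\mathbf{D}$) rather than tracking the exponent explicitly as you do, but this is only a notational difference.
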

\begin{proof}
	After extending to $\mathbb{Q}(v)$, we show that $\Delta_{2}$ is a $\mathbf{U}^{-}$-morphism, or equivalently, $\Delta_{2} F_{i} = F_{i} \Delta_{2}$ for every $i\in I$.  Notice that for $m_{1}\otimes m_{2} \in \mathcal{K}_{0}(\Lambda_{1},0)_{\mathbf{V}^{1}} \otimes \mathcal{K}_{0}(0,\Lambda_{2})_{\mathbf{V}^{2}} $ with dimension vectors $(\nu^{1},\omega^{1}),(\nu^{2},\omega^{2})$, we have
	\begin{equation*}
		\begin{split}
			& (K_{i} \otimes F_{i}) (m_{1}\otimes m_{2})\\
			=& v^{
				\langle\Lambda_{\omega^{1}}-\alpha_{\nu^{1}},\alpha_{i}^{\vee} \rangle}m_{1}\otimes F_{i}m_{2}\\
			=& v^{-(\alpha_{i}, \omega^{1}+\nu^{1})}m_{1}\otimes F_{i}m_{2}.
		\end{split}
	\end{equation*}
	By Proposition 4.13, $\tilde{\Delta}_{2} F_{i} =  (F_{i}\otimes 1 + K_{i} \otimes F_{i}) \tilde{\Delta}_{2}$ holds for $\tilde{\Delta}_{2}:\mathcal{V}_{0}(\Lambda_{1},\Lambda_{2}) \rightarrow \mathcal{V}_{0}(\Lambda_{1},0) \otimes \mathcal{V}_{0}(0,\Lambda_{2}) $,  hence it still holds for quotients $\tilde{\Delta}_{2}:\mathcal{K}_{0}(\Lambda_{1},\Lambda_{2}) \rightarrow \mathcal{K}_{0}(\Lambda_{1},0) \otimes \mathcal{K}_{0}(0,\Lambda_{2}) $.
	Then \begin{equation*}
		\begin{split}
			\Delta_{2} F_{i}=& P (\mathbf{D}\otimes \mathbf{D}) \tilde{\Delta}_{2} \mathbf{D} F_{i}\\
			=& P (\mathbf{D}\otimes \mathbf{D}) \tilde{\Delta}_{2}F_{i} \mathbf{D} \\
			=&P (\mathbf{D}\otimes \mathbf{D}) (F_{i}\otimes 1 + K_{i} \otimes F_{i})\tilde{\Delta}_{2} \mathbf{D} \\
			=&P (F_{i}\otimes 1 + K^{-1}_{i} \otimes F_{i}) (\mathbf{D}\otimes \mathbf{D}) \tilde{\Delta}_{2} \mathbf{D}\\
			=&  (F_{i}\otimes K^{-1}_{i} + 1 \otimes F_{i})P (\mathbf{D}\otimes \mathbf{D}) \tilde{\Delta}_{2} \mathbf{D}\\
			=&\Delta(F_{i}) \Delta_{2},
		\end{split}
	\end{equation*}
where $\Delta$ is the coproduct of $\mathbf{U}$. Hence $\Delta_{2}$ is a $_{\mathcal{A}}\mathbf{U}^{-}$-morphism.
\end{proof}

\begin{theorem}\label{thm2}
	For graded space $\mathbf{W}^{1},\mathbf{W}^{2}$ and dominant weight $\Lambda_{1},\Lambda_{2}$  such that $\langle \Lambda_{k}, \alpha^{\vee}_{i} \rangle ={\rm{dim}}\mathbf{W}_{i^{k}}$, the Grothendieck group $\mathcal{K}_{0}(\Lambda_{1},\Lambda_{2})$ of global localization $\mathcal{L}(\Lambda^{\bullet})$ together with the functors $E^{(n)}_{i},F^{(n)}_{i},K^{\pm}_{i},i \in I, n\in \mathbb{N}$ becomes a $_{\mathcal{A}}\mathbf{U}$-module, which is isomorphic to the tensor product ${_{\mathcal{A}}L}(\Lambda_{2})\otimes{_{\mathcal{A}}L}(\Lambda_{1})$ of integrable highest weight modules via $\Delta_{2}$. 
\end{theorem}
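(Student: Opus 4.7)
The strategy is to establish $\Delta_2$ as a $_{\mathcal{A}}\mathbf{U}$-module isomorphism by combining the $\mathbf{U}^{-}$-linearity from Corollary \ref{tongtai} and the surjectivity from Proposition \ref{yuchengman} with two further steps: extending to $\mathbf{U}^{+}$-compatibility, and establishing injectivity.

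First, I would identify the image of the highest weight vector. The constant sheaf $[L_{\boldsymbol{d}^1\boldsymbol{d}^2}]$ lies over the single-point variety $\mathbf{E}_{0,\mathbf{W}^\bullet,\Omega^2}$. Because the Euler pairing $\langle|\mathbf{W}^1|,|\mathbf{W}^2|\rangle$ in $Q^{(2)}$ vanishes (there are no arrows between $I^1$ and $I^2$), the restriction functor contributes no shift, and Verdier duality fixes the constant sheaf on a point. Unwinding the definition of $\Delta_2=P(\mathbf{D}\otimes\mathbf{D})\tilde\Delta_2\mathbf{D}$ then yields $\Delta_2([L_{\boldsymbol{d}^1\boldsymbol{d}^2}])=[L_{\boldsymbol{d}^2}]\otimes[L_{\boldsymbol{d}^1}]$, which corresponds to $v_{\Lambda_2}\otimes v_{\Lambda_1}$ under Proposition \ref{tonggouN} and Theorem \ref{thm1}.

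Next, I would upgrade $\mathbf{U}^{-}$-linearity to full $\mathbf{U}$-linearity. Compatibility with $K_i$ is immediate from the shift conventions. For $E_i$, I would mimic the restriction-induction intertwining used in the proof of Corollary \ref{tongtai} to derive a coproduct-type formula for $\tilde\Delta_2\circ E_i$, or equivalently deduce it by adjunction from the already-established $F_i$-intertwining, using that $E_i$ is (up to shifts) adjoint to $K_iF_i$ as invoked in the proof of Theorem \ref{commutative relation}. Conjugating by $P(\mathbf{D}\otimes\mathbf{D})\circ\mathbf{D}$ then yields $\Delta_2\circ E_i=\Delta(E_i)\circ\Delta_2$.

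For bijectivity, I would exploit the triangular structure observed in the proof of Proposition \ref{yuchengman}: the value of $\tilde\Delta_2$ on a spanning element $[L_{\boldsymbol{\nu}^1\boldsymbol{d}^1\boldsymbol{\nu}^2\boldsymbol{d}^2}]$ is $v^M[L_{\boldsymbol{\nu}^1\boldsymbol{d}^1}]\otimes[L_{\boldsymbol{\nu}^2\boldsymbol{d}^2}]$ plus terms indexed by strictly smaller $|\boldsymbol{\nu}'|$ and terms that die in the localization. Combined with surjectivity, this upper-triangular structure permits an inductive argument on $|\boldsymbol{\nu}^1|$ that simultaneously forces linear independence of the image spanning set and provides an inverse triangular system, giving bijectivity weight-space by weight-space. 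The principal obstacle is the $\mathbf{U}^{+}$-compatibility: although the adjunction argument is natural, the Tate twists, dimension shifts arising from Proposition \ref{res formula}, and orientation choices must be tracked carefully to confirm that the resulting formula for $\tilde\Delta_2\circ E_i$ matches Lusztig's coproduct used on $L(\Lambda_2)\otimes L(\Lambda_1)$.
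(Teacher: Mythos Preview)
Your treatment of the highest-weight vector and of $\mathbf{U}^{+}$-compatibility is reasonable and close in spirit to the paper's parenthetical remark (the paper simply checks that $\Delta_2$ commutes with $E_i$ on the generators $[L_{\boldsymbol{\nu}^{1}\boldsymbol{d}^{1}\boldsymbol{\nu}^{2}\boldsymbol{d}^{2}}]$, rather than invoking adjunction).

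The genuine gap is in your bijectivity step. The triangular structure you extract from the proof of Proposition~\ref{yuchengman} is a triangularity with respect to the \emph{spanning set} $\{[L_{\boldsymbol{\nu}^{1}\boldsymbol{d}^{1}\boldsymbol{\nu}^{2}\boldsymbol{d}^{2}}]\}$, not a basis: these elements satisfy many $\mathcal{A}$-linear relations in $\mathcal{K}_{0}(\Lambda_{1},\Lambda_{2})$, and the pure tensors $[L_{\boldsymbol{\nu}^{1}\boldsymbol{d}^{1}}]\otimes[L_{\boldsymbol{\nu}^{2}\boldsymbol{d}^{2}}]$ are likewise not linearly independent in the target. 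Consequently the ``filtration by $|\boldsymbol{\nu}^{1}|$'' you allude to is only a filtration on the spanning set, not on the module, and an element of the kernel could be expressed with different top terms in different presentations. So the usual inverse-triangular-system argument does not run, and you cannot conclude injectivity (nor ``linear independence of the image spanning set'', which is false as stated).

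The paper closes this gap by a different mechanism that you do not mention: Proposition~\ref{chengfaman} produces a surjection $\cdot_{2}P:\mathcal{K}_{0}(0,\Lambda_{2})\otimes\mathcal{K}_{0}(\Lambda_{1},0)\to\mathcal{K}_{0}(\Lambda_{1},\Lambda_{2})$ in the \emph{opposite} direction. Since each weight space on both sides is a free $\mathcal{A}$-module of finite rank and we now have surjections both ways, a rank comparison forces $\Delta_{2}$ to be an isomorphism. Adding this ingredient (or some other independent rank count) is what your argument needs.
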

\begin{proof}
	By Proposition \ref{yuchengman} and Corollary \ref{tongtai}, we have a surjective $_{\mathcal{A}}\mathbf{U}^{-}$-morphism $$\Delta_{2}:\mathcal{K}_{0}(\Lambda_{1},\Lambda_{2}) \rightarrow \mathcal{K}_{0}(0,\Lambda_{2})  \otimes \mathcal{K}_{0}(\Lambda_{1},0). $$
	On the other hand, by \ref{chengfaman}, we have a surjective linear map $\cdot_{2}P: \mathcal{K}_{0}(0,\Lambda_{2})  \otimes \mathcal{K}_{0}(\Lambda_{1},0)\rightarrow \mathcal{K}_{0}(\Lambda_{1},\Lambda_{2})$.
	
	By comparing their ranks as $\mathbb{Z}[v,v^{-1}]$-modules, we know that $\Delta_{2} $ is indeed a $_{\mathcal{A}}\mathbf{U}^{-}$-isomorphism. Since both $\mathcal{K}_{0}(\Lambda_{1},\Lambda_{2})$ and  $\mathcal{K}_{0}(0,\Lambda_{2}) \otimes \mathcal{K}_{0}(\Lambda_{1},0)$ are integrable, we know that they are isomorphic to each other as $_{\mathcal{A}}\mathbf{U}$-modules. (One can also check that $\Delta_{2}$ commutes with $E_{i}$ on each $[L_{\boldsymbol{\nu}^{1}\boldsymbol{d}^{1}\boldsymbol{\nu}^{2}\boldsymbol{d}^{2}}]$ to show that $\Delta_{2}$ is $\mathbf{U}^{+} $-linear.)  Then the theorem follows by Proposition \ref{tonggouN}.
\end{proof}

\begin{remark}\label{ltensor}
		If we identify $\mathcal{K}_{0}(\Lambda)$ with the lowest weight module as Remark \ref{lowest}, then from the proof of  Corollary \ref{tongtai} we can see that $\tilde{\Delta}_{2}$ is indeed $_{\mathcal{A}}\mathbf{U}^{+}$-linear and gives an isomorphism from $\mathcal{K}_{0}(\Lambda_{1},\Lambda_{2})$ to the tensor product of lowest weight modules.
\end{remark}

We can exchange $\mathbf{W}^{1}$ and $\mathbf{W}^{2}$, and consider the functor $\bigoplus\limits_{\mathbf{V}^{1},\mathbf{V}^{2}}\mathbf{Res'}^{\mathbf{V}}_{\mathbf{V}^{1},\mathbf{V}^{2}}$ for $N=2$. This functor induces  $_{\mathcal{A}}\mathbf{U}$-linear morphisms
\begin{equation*}
	\tilde{\Delta}'_{2}:\mathcal{K}_{0}(\Lambda_{1},\Lambda_{2}) \rightarrow \mathcal{K}_{0}(\Lambda_{2})\otimes \mathcal{K}_{0}(\Lambda_{1}),
\end{equation*}
\begin{equation*}
	\Delta'_{2}:\mathcal{K}_{0}(\Lambda_{1},\Lambda_{2}) \rightarrow \mathcal{K}_{0}(\Lambda_{1})\otimes \mathcal{K}_{0}(\Lambda_{2}),
\end{equation*}
which is a surjection (and hence an isomorphism) by a similar argument as Proposition \ref{yuchengman}. Hence $\Delta_{2}'\circ \Delta_{2}^{-1}$ defines the restriction of $\mathcal{R}$-matrix \cite{MR1359532} on $L(\Lambda_{1}) \otimes L(\Lambda_{2})$
\begin{equation*}\label{rmatrix}
	\Delta_{2}'\circ \Delta_{2}^{-1}:L(\Lambda_{2}) \otimes L(\Lambda_{1}) \rightarrow L(\Lambda_{1}) \otimes L(\Lambda_{2}).
\end{equation*}

By similar arguments as what we have done for $N=2$, we have the following proposition.
\begin{proposition}
	(1) The morphism $\cdot_{N}$ gives a $\mathbb{Z}[v,v^{-1}]$-linear surjection
	\begin{equation*}
		\cdot_{N}:\mathcal{K}_{0}(\Lambda^{\bullet-1},0) \otimes \mathcal{K}_{0}(0,\Lambda_{N}) \rightarrow \mathcal{K}_{0}(\Lambda^{\bullet}). 
	\end{equation*}
	
	(2) Both $\Delta_{N}$ and $	\Delta'_{N}$ induce surjective morphisms of $_{\mathcal{A}}\mathbf{U}$-modules:
	\begin{equation*}
		\Delta_{N}:\mathcal{K}_{0}(\Lambda^{\bullet}) \rightarrow \mathcal{K}_{0}(0,\Lambda_{N})  \otimes \mathcal{K}_{0}(\Lambda^{\bullet-1},0) ,
	\end{equation*}
	\begin{equation*}
		\Delta'_{N}:\mathcal{K}_{0}(\Lambda^{\bullet}) \rightarrow   \mathcal{K}_{0}(\Lambda^{\bullet-1},0)  \otimes \mathcal{K}_{0}(0,\Lambda_{N}) .
	\end{equation*}
\end{proposition}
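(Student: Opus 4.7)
The plan is to reduce the general $N$ case to the arguments already carried out for $N=2$ by treating the first $N-1$ framings as a single ``block,'' which is exactly how the notation $\mathbf{Res}^{\mathbf{V}}_{\mathbf{V}^{1},\mathbf{V}^{2}}=\mathbf{Res}^{\mathbf{V}\oplus\mathbf{W}^{\bullet}}_{\mathbf{V}^{1}\oplus\mathbf{W}^{\bullet-1},\mathbf{V}^{2}\oplus\mathbf{W}^{N}}$ is set up. The three ingredients we need to reproduce for general $N$ are: (i) the induction/restriction functors descend to the localizations; (ii) $\cdot_{N}$ is surjective; (iii) $\Delta_{N},\Delta'_{N}$ are surjective $_{\mathcal{A}}\mathbf{U}$-linear maps. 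For well-definedness I would first verify, using Propositions \ref{indformula}, \ref{res formula} and \ref{r0}, the analogues $\mathcal{V}_{0}(\Lambda^{\bullet-1},0)\cdot_{N}\mathcal{I}(0,\Lambda_{N})\subseteq\mathcal{I}(\Lambda^{\bullet})$ and $\tilde{\Delta}_{N}(\mathcal{I}(\Lambda^{\bullet}))\subseteq\mathcal{I}(\Lambda^{\bullet-1},0)\otimes\mathcal{V}_{0}(0,\Lambda_{N})+\mathcal{V}_{0}(\Lambda^{\bullet-1},0)\otimes\mathcal{I}(0,\Lambda_{N})$ (and symmetrically for $\tilde{\Delta}'_{N}$), so that $\cdot_{N}$, $\Delta_{N}$ and $\Delta'_{N}$ descend to the Grothendieck groups of localizations.

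For part (1), I would imitate the proof of Proposition \ref{chengfaman} verbatim. By the spanning result for $\mathcal{V}_{0}(\Lambda^{\bullet})$, the map $\cdot_{N}:\mathcal{V}_{0}(\Lambda^{\bullet-1},0)\otimes\mathcal{V}_{0}(0,\Lambda_{N})\to\mathcal{V}_{0}(\Lambda^{\bullet})$ is surjective, hence $\mathcal{V}_{0}(\Lambda^{\bullet-1},0)\cdot_{N}\mathcal{K}_{0}(0,\Lambda_{N})\to\mathcal{K}_{0}(\Lambda^{\bullet})$ is surjective. It therefore suffices to show $\mathcal{I}(\Lambda^{\bullet-1},0)\cdot_{N}\mathcal{K}_{0}(0,\Lambda_{N})\subseteq\mathcal{K}_{0}(\Lambda^{\bullet-1},0)\cdot_{N}\mathcal{K}_{0}(0,\Lambda_{N})$. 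For a simple perverse sheaf $A\in\mathcal{N}_{\mathbf{V},i}$ with $s^{\ast}_{i}(A)=r>0$, Proposition \ref{r0} gives a simple perverse sheaf $L'$ with $s^{\ast}_{i}(L')=0$ and a relation $[\mathbf{Ind}(L'\boxtimes\bar{\mathbb{Q}}_{l})]=[A]+\sum_{s^{\ast}_{i}(L'')>r}c_{L''}[L'']$; double induction on $|\mathbf{V}|$ and on $r$ together with associativity of $\mathbf{Ind}$ closes the argument exactly as in the $N=2$ case.

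For part (2), I focus on $\Delta_{N}$; the case of $\Delta'_{N}$ is identical after swapping. First I prove surjectivity of $\tilde{\Delta}_{N}:\mathcal{K}_{0}(\Lambda^{\bullet})\to\mathcal{K}_{0}(\Lambda^{\bullet-1},0)\otimes\mathcal{K}_{0}(0,\Lambda_{N})$ by imitating Proposition \ref{yuchengman}: induct on $|\mathbf{V}^{1}|$ to show that every generator $[L_{\boldsymbol{\nu}^{1}\boldsymbol{d}^{1}\cdots\boldsymbol{\nu}^{N-1}\boldsymbol{d}^{N-1}}]\otimes[L_{\boldsymbol{\nu}^{N}\boldsymbol{d}^{N}}]$ lies in the image, using Proposition \ref{res formula} to compute $\tilde{\Delta}_{N}([L_{\boldsymbol{\nu}^{1}\boldsymbol{d}^{1}\cdots\boldsymbol{\nu}^{N}\boldsymbol{d}^{N}}])$ as a leading term equal to the desired tensor product up to a power of $v$, plus error terms that are either lower in $|\mathbf{V}^{1}|$ (handled by induction) or lie in $\mathcal{I}(\Lambda^{\bullet-1},0)\otimes\mathcal{K}_{0}(0,\Lambda_{N})$ (killed in the quotient). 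Then $\Delta_{N}=P(\mathbf{D}\otimes\mathbf{D})\tilde{\Delta}_{N}\mathbf{D}$ is also surjective. For $_{\mathcal{A}}\mathbf{U}$-linearity I would reproduce Proposition 4.13 via \cite{MR4524567}, which gives the Hopf-type compatibility $\tilde{\Delta}_{N}F_{i}=(F_{i}\otimes 1+K_{i}\otimes F_{i})\tilde{\Delta}_{N}$; applying $P(\mathbf{D}\otimes\mathbf{D})(-)\mathbf{D}$ exactly as in Corollary \ref{tongtai} converts this into $\Delta_{N}F_{i}=\Delta(F_{i})\Delta_{N}$, showing $\Delta_{N}$ is $_{\mathcal{A}}\mathbf{U}^{-}$-linear; a parallel argument (or the observation that $\tilde{\Delta}_{N}$ is $_{\mathcal{A}}\mathbf{U}^{+}$-linear under the lowest-weight identification, cf.\ Remark \ref{ltensor}) yields $_{\mathcal{A}}\mathbf{U}^{+}$-linearity.

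The only step that is not a pure transcription of the $N=2$ argument is the Hopf-type compatibility between $\bigoplus\mathbf{Res}^{\mathbf{V}}_{\mathbf{V}^{1},\mathbf{V}^{2}}$ and $F_{i}$ for general $N$; this is the main obstacle, but it follows directly from the main theorem of \cite{MR4524567} applied to the decomposition $\mathbf{W}^{\bullet}=\mathbf{W}^{\bullet-1}\oplus\mathbf{W}^{N}$ because that theorem concerns the combinatorics of a single restriction and does not see the number of framings. Once this is in place, assembling the pieces as in Theorem \ref{thm2} (but without needing to invoke an inverse isomorphism, since here we only claim surjectivity) yields the proposition for both $\Delta_{N}$ and $\Delta'_{N}$.
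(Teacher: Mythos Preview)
Your proposal is correct and follows essentially the same approach as the paper: the paper's own proof is simply the one-line remark ``By similar arguments as what we have done for $N=2$, we have the following proposition,'' and your outline is precisely a faithful transcription of the $N=2$ arguments (Propositions \ref{chengfaman}, \ref{yuchengman}, Proposition 4.13, Corollary \ref{tongtai}) with $\mathbf{W}^{1}$ replaced by the block $\mathbf{W}^{\bullet-1}$. The only minor difference is that for full $_{\mathcal{A}}\mathbf{U}$-linearity the paper (in the $N=2$ case, Theorem \ref{thm2}) appeals to integrability after establishing $\mathbf{U}^{-}$-linearity, whereas you suggest invoking Remark \ref{ltensor} for $\mathbf{U}^{+}$-linearity; both routes are valid and the paper explicitly mentions the direct check on generators as an alternative.
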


Using the above propositions, we get the following theorem by induction on $N$: 
\begin{theorem}\label{thm3}
	For sequences of graded space $\mathbf{W}^{\bullet}$ and dominant weight $\Lambda^{\bullet}$  such that $\langle \Lambda_{k}, \alpha^{\vee}_{i} \rangle ={\rm{dim}} \mathbf{W}_{i^{k}}$, the Grothendieck group $\mathcal{K}_{0}(\Lambda^{\bullet})$ of global localization $\mathcal{L}(\Lambda^{\bullet})$ together with the functors $E^{(n)}_{i},F^{(n)}_{i},K^{\pm}_{i},i \in I, n\in \mathbb{N}$ becomes a $_{\mathcal{A}}\mathbf{U}$-module, which is isomorphic to tensor products of highest weight modules $_{\mathcal{A}}L(\Lambda_{N})\otimes {_{\mathcal{A}}L(\Lambda_{N-1})}\otimes \cdots \otimes{_{\mathcal{A}}L(\Lambda_{1})}$.
	Moreover, the compositions of Verdier Duality and the restriction functors induce isomorphisms of $_{\mathcal{A}} \mathbf{U}$-modules
	\begin{equation*}
	\Delta_{N}:\mathcal{K}_{0}(\Lambda^{\bullet}) \rightarrow \mathcal{K}_{0}(0,\Lambda_{N})  \otimes \mathcal{K}_{0}(\Lambda^{\bullet-1},0) ,
\end{equation*}
\begin{equation*}
	\Delta'_{N}:\mathcal{K}_{0}(\Lambda^{\bullet}) \rightarrow   \mathcal{K}_{0}(\Lambda^{\bullet-1},0)  \otimes \mathcal{K}_{0}(0,\Lambda_{N}) .
\end{equation*}
\end{theorem}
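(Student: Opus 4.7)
The plan is to prove the theorem by induction on $N$. The base cases $N=1$ and $N=2$ are Theorems \ref{thm1} and \ref{thm2}; I would then assume $N\geqslant 3$ and that the theorem holds for all strictly smaller values.

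First I would use Proposition \ref{tonggouN} to identify $\mathcal{K}_{0}(\Lambda^{\bullet-1},0)\cong\mathcal{K}_{0}(\Lambda^{\bullet-1})$ and $\mathcal{K}_{0}(0,\Lambda_{N})\cong\mathcal{K}_{0}(\Lambda_{N})$ as $_{\mathcal{A}}\mathbf{U}$-modules. By the inductive hypothesis and Theorem \ref{thm1}, these become ${_{\mathcal{A}}L(\Lambda_{N-1})}\otimes\cdots\otimes{_{\mathcal{A}}L(\Lambda_{1})}$ and ${_{\mathcal{A}}L(\Lambda_{N})}$ respectively, so the codomain of $\Delta_{N}$ is canonically ${_{\mathcal{A}}L(\Lambda_{N})}\otimes{_{\mathcal{A}}L(\Lambda_{N-1})}\otimes\cdots\otimes{_{\mathcal{A}}L(\Lambda_{1})}$. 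Next I would invoke the preceding proposition, which supplies a surjective $_{\mathcal{A}}\mathbf{U}$-linear morphism $\Delta_{N}$ and a surjective $\mathbb{Z}[v,v^{-1}]$-linear map $\cdot_{N}$ going the opposite way. On each (finitely generated, free) $\mathcal{A}$-weight space we thereby obtain mutually compatible surjections, and a rank comparison forces $\Delta_{N}$ to be a $\mathbb{Z}[v,v^{-1}]$-linear isomorphism. Combined with its $_{\mathcal{A}}\mathbf{U}$-linearity, this promotes $\Delta_{N}$ to an isomorphism of $_{\mathcal{A}}\mathbf{U}$-modules, and composing with the earlier identification of the codomain completes the induction. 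The parallel statement for $\Delta'_{N}$ is symmetric, obtained by swapping the roles of $\mathbf{W}^{\bullet-1}$ and $\mathbf{W}^{N}$ in the restriction decomposition.

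The main obstacle is not really in this theorem itself but in ensuring that the preceding proposition (i.e.\ the $N\geqslant 3$ generalizations of Propositions \ref{chengfaman} and \ref{yuchengman}) is robust: the double induction on $|\mathbf{V}|$ and on $s_{i}^{\ast}$ must invoke the $N-1$ inductive hypothesis each time a flag type $L_{\boldsymbol{\nu}^{1}\boldsymbol{d}^{1}\cdots\boldsymbol{\nu}^{N}\boldsymbol{d}^{N}}$ with $\boldsymbol{\nu}^{N}$ trivial needs to be handled, so the structure of the argument is genuinely nested in $N$. A secondary subtlety, already addressed in the remark after Theorem \ref{thm2}, is that $\Delta_{N}$ emerges naturally from the coproduct compatibility of restriction as a $_{\mathcal{A}}\mathbf{U}^{-}$-morphism, and that it is in fact $_{\mathcal{A}}\mathbf{U}$-linear can be checked on each generator $[L_{\boldsymbol{\nu}^{1}\boldsymbol{d}^{1}\cdots\boldsymbol{\nu}^{N}\boldsymbol{d}^{N}}]$ by direct computation with Proposition \ref{res formula}, or alternatively deduced from the integrability of source and target together with the already-established $\mathbf{U}^{-}$-isomorphism.
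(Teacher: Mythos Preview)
Your proposal is correct and follows essentially the same approach as the paper: the paper's proof is the single sentence ``Using the above propositions, we get the following theorem by induction on $N$,'' and you have accurately unpacked what that induction entails, including the rank-comparison argument (mirroring the proof of Theorem~\ref{thm2}) and the reduction via Proposition~\ref{tonggouN}. Your remarks on the nested induction needed for the general-$N$ analogues of Propositions~\ref{chengfaman} and~\ref{yuchengman}, and on upgrading $\mathbf{U}^{-}$-linearity to full $\mathbf{U}$-linearity via integrability, are also in line with how the paper handles the $N=2$ case.
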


\begin{corollary}
	For $1\leqslant k \leqslant N$, denote $(\Lambda_{1},\Lambda_{2},\cdots,\Lambda_{k-1},\Lambda_{k+1},\Lambda_{k+2},\cdots,\Lambda_{N})$ by $\Lambda^{\bullet,\hat{k}}$ and $(\mathbf{W}^{1},\mathbf{W}^{2},\cdots,\mathbf{W}^{k-1},\mathbf{W}^{k+1},\mathbf{W}^{k+2},\cdots,\mathbf{W}^{N})$ by $\mathbf{W}^{\bullet,\hat{k}}$.
 The functors $\bigoplus\limits_{\mathbf{V}^{1},\mathbf{V}^{2}} \mathbf{Res}^{\mathbf{V}\oplus \mathbf{W}^{\bullet}}_{\mathbf{V}^{1} \oplus \mathbf{W}^{\bullet,\hat{k}},\mathbf{V}^{2}\oplus \mathbf{W}^{k} } $  and $\bigoplus\limits_{\mathbf{V}^{1},\mathbf{V}^{2}} \mathbf{Res}^{\mathbf{V}\oplus \mathbf{W}^{\bullet}}_{\mathbf{V}^{1} \oplus \mathbf{W}^{k},\mathbf{V}^{2}\oplus \mathbf{W}^{\bullet,\hat{k}} }$ respectively induce linear maps
	$$ \tilde{\Delta}_{N,k}: \mathcal{K}_{0}(\Lambda^{\bullet})\rightarrow \mathcal{K}_{0}(\Lambda^{\bullet,\hat{k}})\otimes \mathcal{K}_{0}(\Lambda_{k}),$$
	$$ _{k}\tilde{\Delta}_{N}: \mathcal{K}_{0}(\Lambda^{\bullet})\rightarrow \mathcal{K}_{0}(\Lambda_{k})\otimes \mathcal{K}_{0}(\Lambda^{\bullet,\hat{k}}).$$
	Denote the compositions $P(\mathbf{D}\otimes \mathbf{D})\tilde{\Delta}_{N,k} \mathbf{D}$ and $P(\mathbf{D}\otimes \mathbf{D}){_{k}\tilde{\Delta}}_{N} \mathbf{D}$ by $\Delta_{N,k}$ and $_{k}\Delta_{N}$ respectively, then the linear maps
	$$\Delta_{N,k}: \mathcal{K}_{0}(\Lambda^{\bullet})\rightarrow \mathcal{K}_{0}(\Lambda_{k})\otimes \mathcal{K}_{0}(\Lambda^{\bullet,\hat{k}}),$$
	$$ _{k}\Delta_{N}: \mathcal{K}_{0}(\Lambda^{\bullet})\rightarrow \mathcal{K}_{0}(\Lambda^{\bullet,\hat{k}})\otimes \mathcal{K}_{0}(\Lambda_{k}) $$
	are isomorphisms of $_{\mathcal{A}}\mathbf{U}$-modules.
\end{corollary}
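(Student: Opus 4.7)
The plan is to reduce the corollary to Theorem \ref{thm3} by invoking the manifest permutation symmetry of the framings of $Q^{(N)}$. First I would verify that, for any $\sigma\in S_{N}$, the relabelling of framings $\mathbf{W}^{\bullet}\mapsto \sigma\cdot\mathbf{W}^{\bullet}:=(\mathbf{W}^{\sigma^{-1}(1)},\ldots,\mathbf{W}^{\sigma^{-1}(N)})$ extends to an isomorphism of moduli spaces $\mathbf{E}_{\mathbf{V},\mathbf{W}^{\bullet},\Omega^{N}}\cong\mathbf{E}_{\mathbf{V},\sigma\cdot\mathbf{W}^{\bullet},\Omega^{N}}$: by construction the framing vertex sets $I^{1},\ldots,I^{N}$ are $N$ identical copies of $I$ attached in the same way to the unframed vertices, so $\sigma$ induces an automorphism of $Q^{(N)}$ permuting them. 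This gives an equivalence of categories $\mathcal{Q}_{\mathbf{V},\mathbf{W}^{\bullet}}\simeq\mathcal{Q}_{\mathbf{V},\sigma\cdot\mathbf{W}^{\bullet}}$ that descends to the global localizations and commutes with Verdier duality, the Fourier--Deligne transforms, and the induction and restriction functors (since these are defined on the full $N$-framed quiver in a way that respects the $S_{N}$-action on framings).

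Since the functors $E_{i}^{(n)},F_{i}^{(n)},K_{i}^{\pm}$ involve only the unframed vertex $i\in I$ and ignore how the framings are labelled, the above equivalence is intertwined with the $_{\mathcal{A}}\mathbf{U}$-action, so it descends to an isomorphism of $_{\mathcal{A}}\mathbf{U}$-modules
\[
\Phi_{\sigma}:\mathcal{K}_{0}(\Lambda^{\bullet})\xrightarrow{\cong}\mathcal{K}_{0}(\sigma\cdot\Lambda^{\bullet}).
\]
Next I would choose $\sigma$ to be the unique permutation that sends $k\mapsto N$ while preserving the relative order of the remaining indices, so that $\sigma\cdot\Lambda^{\bullet}=(\Lambda_{1},\ldots,\Lambda_{k-1},\Lambda_{k+1},\ldots,\Lambda_{N},\Lambda_{k})$ satisfies $(\sigma\cdot\Lambda^{\bullet})^{\bullet-1}=\Lambda^{\bullet,\hat{k}}$ and $(\sigma\cdot\mathbf{W}^{\bullet})^{N}=\mathbf{W}^{k}$. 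Under this identification, the restriction functor defining $\tilde{\Delta}_{N,k}$ is transported by $\Phi_{\sigma}$ to the restriction functor defining $\tilde{\Delta}_{N}$ for $\sigma\cdot\Lambda^{\bullet}$; combining this with the compatibility of $\Phi_{\sigma}$ with $\mathbf{D}$ and with the natural identifications $\mathcal{K}_{0}(0,\Lambda_{k})\cong\mathcal{K}_{0}(\Lambda_{k})$ and $\mathcal{K}_{0}(\Lambda^{\bullet,\hat{k}},0)\cong\mathcal{K}_{0}(\Lambda^{\bullet,\hat{k}})$ furnished by Proposition \ref{tonggouN}, one obtains a commutative square identifying $\Delta_{N,k}$ with $\Delta_{N}$ applied to $\sigma\cdot\Lambda^{\bullet}$. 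Theorem \ref{thm3} then immediately yields that $\Delta_{N,k}$ is a $_{\mathcal{A}}\mathbf{U}$-module isomorphism, and the parallel argument with $\Delta'_{N}$ in place of $\Delta_{N}$ settles $_{k}\Delta_{N}$.

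The main obstacle, though not a substantive one, is the bookkeeping required to check that $\Phi_{\sigma}$ is compatible with all the auxiliary data entering the definitions of $\Delta_{N,k}$ and $_{k}\Delta_{N}$, namely the swap $P$ of tensor factors, the Verdier duality $\mathbf{D}$, the Fourier--Deligne transforms used to change orientations, and the trivial vector bundle projections forgetting a $\mathbf{W}^{k}$-component that underlie Proposition \ref{tonggouN}. Each of these compatibilities is formal because $\sigma$ acts purely as a relabelling of the framing vertices of $Q^{(N)}$, commuting with the underlying quiver structure and all sheaf-theoretic operations, but to state the identification of $\Delta_{N,k}$ with $\Delta_{N}\circ\Phi_{\sigma}$ cleanly one must track the order of tensor factors through the intermediate conjugations by $\mathbf{D}\otimes\mathbf{D}$ and $P$ used in defining the $\Delta$-maps from the $\tilde{\Delta}$-maps.
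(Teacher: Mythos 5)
Your reduction hinges on the claim that relabelling the framings, $\mathbf{W}^{\bullet}\mapsto\sigma\cdot\mathbf{W}^{\bullet}$, induces an equivalence $\mathcal{Q}_{\mathbf{V},\mathbf{W}^{\bullet}}\simeq\mathcal{Q}_{\mathbf{V},\sigma\cdot\mathbf{W}^{\bullet}}$ compatible with localization, Verdier duality and the restriction functors. This is exactly where the argument breaks: the category $\mathcal{Q}_{\mathbf{V},\mathbf{W}^{\bullet}}$ is defined (Definition 2.2(2)) by the flag types $\boldsymbol{\nu}^{1}\boldsymbol{d}^{1}\cdots\boldsymbol{\nu}^{N}\boldsymbol{d}^{N}$ in which the framing blocks occur in the \emph{fixed} order $1,\dots,N$, and this order is not a removable bookkeeping device. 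The relabelling automorphism of $Q^{(N)}$ carries $\mathcal{Q}_{\mathbf{V},\mathbf{W}^{\bullet}}$ to the category defined by flag types with the blocks $\boldsymbol{d}^{k}$ inserted in the permuted order, which is genuinely different from $\mathcal{Q}_{\mathbf{V},\sigma\cdot\mathbf{W}^{\bullet}}$. A minimal counterexample: take $Q$ of type $A_{1}$, $N=2$, $\dim\mathbf{W}_{1^{1}}=\dim\mathbf{W}_{1^{2}}=1$, $\dim\mathbf{V}_{1}=1$, so $\mathbf{E}_{\mathbf{V},\mathbf{W}^{\bullet},\Omega^{2}}\cong\mathbf{k}^{2}$ with coordinates $(x^{1},x^{2})$. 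The admissible flag types are $(1,1^{1},1^{2})$ and $(1^{1},1,1^{2})$, so $\mathcal{P}_{\mathbf{V},\mathbf{W}^{\bullet}}$ consists of the constant sheaf on $\mathbf{k}^{2}$ and the IC sheaf of the line $\{x^{1}=0\}$; after swapping the two framings one gets instead the constant sheaf and the IC sheaf of $\{x^{2}=0\}$. These sets do not correspond under the relabelling isomorphism, and the discrepancy persists in the localization, since here $\mathcal{N}_{\mathbf{V}}$ only kills complexes supported at the origin while the two IC sheaves already differ on the punctured plane. This order-dependence is not a defect to be symmetrized away: it is precisely why $\Delta_{2}'\circ\Delta_{2}^{-1}$ in Section 5 is a nontrivial $\mathcal{R}$-matrix rather than the flip, so a $\Phi_{\sigma}$ with all the compatibilities you assert cannot exist, and the corollary cannot be obtained by transporting Theorem \ref{thm3} along a framing permutation.

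For comparison, the paper's proof does not permute anything; it redoes the $k=N$ argument by hand for a middle framing. One first checks, exactly as in Corollary \ref{tongtai}, that $_{k}\Delta_{N}$ is $\mathbf{U}^{-}$-linear; since source and target are both isomorphic to the tensor product of the $_{\mathcal{A}}L(\Lambda_{l})$, it then suffices to prove surjectivity of ${_{k}\tilde{\Delta}}_{N}$. That surjectivity is the real content specific to a middle framing: one shows each generator $[L_{\boldsymbol{\nu}^{k}\boldsymbol{d}^{k}}]\otimes[L_{\boldsymbol{d}^{1}\cdots\boldsymbol{\nu}^{k-1}\boldsymbol{d}^{k-1}\boldsymbol{\nu}^{k+1}\boldsymbol{d}^{k+1}\cdots\boldsymbol{\nu}^{N}\boldsymbol{d}^{N}}]$ lies in the image by applying Proposition \ref{res formula} to $L_{\boldsymbol{d}^{1}\cdots\boldsymbol{\nu}^{k}\boldsymbol{d}^{k}\cdots\boldsymbol{\nu}^{N}\boldsymbol{d}^{N}}$, sorting the extra summands into those lying in $\bigoplus_{\mathbf{V}^{1}}\mathcal{N}_{\mathbf{V}^{1}}\boxtimes(\cdot)$ and those handled by a double induction (on how many of the $\boldsymbol{\nu}^{l}$ with $l<k$ are empty, and on the relevant dimension vectors). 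If you want to salvage your approach you would need, at minimum, an actual equivalence between the localized categories attached to different orderings of the framings — but constructing such an equivalence compatible with $\mathbf{Res}$ and $\mathbf{D}$ is essentially a categorical $\mathcal{R}$-matrix statement, i.e.\ harder than the corollary itself.
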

\begin{proof}
	We only prove the statement for $_{k}\Delta_{N}$, the other one can be proved similarly. By a similar argument as Corollary \ref{tongtai}, we can see that $_{k}\Delta_{N}$ is $\mathbf{U}^{-}$-linear. Since both $\mathcal{K}_{0}(\Lambda^{\bullet})$ and $\mathcal{K}_{0}(\Lambda_{k})\otimes \mathcal{K}_{0}(\Lambda^{\bullet,\hat{k}})$ are isomorphic to the tensor products of $_{\mathcal{A}}L(\Lambda_{l}), 1\leqslant l\leqslant N$, it suffices to show that $ _{k}\Delta_{N}$ is surjective. It is equivalent to show that ${_{k}\tilde{\Delta}}_{N}$ is surjective. By Corollary 4.8, it suffices to show that for any $\boldsymbol{\nu}^{l} \in \mathcal{S}, 1\leqslant l \leqslant N$, the element $[L_{\boldsymbol{\nu}^{k}\boldsymbol{d}^{k}}]\otimes [L_{\boldsymbol{\nu}^{1}\boldsymbol{d}^{1}\cdots\boldsymbol{\nu}^{k-1}\boldsymbol{d}^{k-1} \boldsymbol{\nu}^{k+1}\boldsymbol{d}^{k+1}\cdots\boldsymbol{\nu}^{N}\boldsymbol{d}^{N}}]$ belongs to ${\rm{Im}}({_{k}\tilde{\Delta}}_{N})$. 
	
	(1) If $\boldsymbol{\nu}^{l}$ is empty for $1 \leqslant l \leqslant k-1$, then by
	Proposition \ref{res formula}, the complex $$L_{\boldsymbol{\nu}^{k}\boldsymbol{d}^{k}}\boxtimes L_{\boldsymbol{d}^{1}\cdots\boldsymbol{d}^{k-1} \boldsymbol{\nu}^{k+1}\boldsymbol{d}^{k+1}\cdots\boldsymbol{\nu}^{N}\boldsymbol{d}^{N}}$$ appears as a direct summand of $\bigoplus\limits_{\mathbf{V}^{1},\mathbf{V}^{2}} \mathbf{Res}^{\mathbf{V}\oplus \mathbf{W}^{\bullet}}_{\mathbf{V}^{1} \oplus \mathbf{W}^{k},\mathbf{V}^{2}\oplus \mathbf{W}^{\bullet,\hat{k}}}(L_{\boldsymbol{d}^{1}\cdots\boldsymbol{d}^{k-1}\boldsymbol{\nu}^{k}\boldsymbol{d}^{k} \boldsymbol{\nu}^{k+1}\boldsymbol{d}^{k+1}\cdots\boldsymbol{\nu}^{N}\boldsymbol{d}^{N}})$, and the other summands are either of the form $L_{\boldsymbol{\nu}'\boldsymbol{d}^{k}\boldsymbol{\nu}''}\boxtimes L_{\boldsymbol{d}^{1}\cdots\boldsymbol{d}^{k-1}\boldsymbol{\nu}'^{k} \boldsymbol{\nu}'^{k+1}\boldsymbol{d}^{k+1}\cdots\boldsymbol{\nu}'^{N}\boldsymbol{d}^{N}}$, or of the form $L_{\boldsymbol{\nu}'\boldsymbol{d}^{k}}\boxtimes L_{\boldsymbol{d}^{1}\cdots\boldsymbol{d}^{k-1}\boldsymbol{\nu}'^{k} \boldsymbol{\nu}'^{k+1}\boldsymbol{d}^{k+1}\cdots\boldsymbol{\nu}'^{N}\boldsymbol{d}^{N}}$ such that the dimension vector of $\boldsymbol{\nu}'$ is smaller than that of $\boldsymbol{\nu}^{k}$. Notice that  $L_{\boldsymbol{\nu}'\boldsymbol{d}^{k}\boldsymbol{\nu}''}$ belongs to $\bigoplus \limits_{\mathbf{V}^{1}} \mathcal{N}_{\mathbf{V}^{1}}$. Then by induction on the dimension vector of $\boldsymbol{\nu}^{k}$, we can  show that $$[L_{\boldsymbol{\nu}^{k}\boldsymbol{d}^{k}}]\otimes [L_{\boldsymbol{d}^{1}\cdots\boldsymbol{d}^{k-1} \boldsymbol{\nu}^{k+1}\boldsymbol{d}^{k+1}\cdots\boldsymbol{\nu}^{N}\boldsymbol{d}^{N}}] \in {\rm{Im}}({_{k}\tilde{\Delta}}_{N}).$$

	(2) We assume that for any $\boldsymbol{\nu}^{l} \in \mathcal{S}, 1 \leqslant l \leqslant N$ with $\boldsymbol{\nu}^{l}, 1 \leqslant l \leqslant k-s$ empty, the element $[L_{\boldsymbol{\nu}^{k}\boldsymbol{d}^{k}}]\otimes [L_{\boldsymbol{d}^{1}\cdots\boldsymbol{d}^{k-s}\boldsymbol{\nu}^{k-s+1}\boldsymbol{d}^{k-s+1} \cdots \boldsymbol{\nu}^{k-1} \boldsymbol{d}^{k-1} \boldsymbol{\nu}^{k+1}\boldsymbol{d}^{k+1}\cdots\boldsymbol{\nu}^{N}\boldsymbol{d}^{N}}]$ belongs to ${\rm{Im}}({_{k}\tilde{\Delta}}_{N})$. Now we argue by induction on dimension vectors of those $\boldsymbol{\nu}^{l}$ with $1 \leqslant l \leqslant k-1$ to prove that  for any $\boldsymbol{\nu}^{l} \in \mathcal{S}, 1 \leqslant l \leqslant N$ with $\boldsymbol{\nu}^{l},1 \leqslant l \leqslant k-s-1$ empty, the element $[L_{\boldsymbol{\nu}^{k}\boldsymbol{d}^{k}}]\otimes [L_{\boldsymbol{d}^{1}\cdots\boldsymbol{d}^{k-s-1}\boldsymbol{\nu}^{k-s}\boldsymbol{d}^{k-s} \cdots \boldsymbol{\nu}^{k-1} \boldsymbol{d}^{k-1} \boldsymbol{\nu}^{k+1}\boldsymbol{d}^{k+1}\cdots\boldsymbol{\nu}^{N}\boldsymbol{d}^{N}}]$ belongs to ${\rm{Im}}({_{k}\tilde{\Delta}}_{N})$. Indeed, by Proposition \ref{res formula},
	$$L_{\boldsymbol{\nu}^{k}\boldsymbol{d}^{k}}\boxtimes L_{\boldsymbol{d}^{1}\cdots\boldsymbol{d}^{k-s-1}\boldsymbol{\nu}^{k-s}\boldsymbol{d}^{k-s} \cdots \boldsymbol{\nu}^{k-1} \boldsymbol{d}^{k-1} \boldsymbol{\nu}^{k+1}\boldsymbol{d}^{k+1}\cdots\boldsymbol{\nu}^{N}\boldsymbol{d}^{N}}$$ appears as a direct summand of the complex $\bigoplus\limits_{\mathbf{V}^{1},\mathbf{V}^{2}} \mathbf{Res}^{\mathbf{V}\oplus \mathbf{W}^{\bullet}}_{\mathbf{V}^{1} \oplus \mathbf{W}^{k},\mathbf{V}^{2}\oplus \mathbf{W}^{\bullet,\hat{k}}}(L_{\boldsymbol{d}^{1}\cdots\boldsymbol{d}^{k-s-1}\boldsymbol{\nu}^{k-s}\boldsymbol{d}^{k-s} \cdots\boldsymbol{\nu}^{N}\boldsymbol{d}^{N}})$, and the other direct summands are  of one of the following three forms 
	$$(a)~L_{\boldsymbol{\nu}'\boldsymbol{d}^{k}\boldsymbol{\nu}''}\boxtimes L_{\boldsymbol{d}^{1}\cdots\boldsymbol{d}^{k-s-1}\boldsymbol{\nu}'^{k-s}\boldsymbol{d}^{k-s} \cdots \boldsymbol{\nu}'^{k-1} \boldsymbol{d}^{k-1} \boldsymbol{\nu}'^{k} \boldsymbol{\nu}'^{k+1}\boldsymbol{d}^{k+1}\cdots\boldsymbol{\nu}'^{N}\boldsymbol{d}^{N}}$$
	with $\boldsymbol{\nu}''$ nonempty, which belongs to $\bigoplus \limits_{\mathbf{V}^{1}} \mathcal{N}_{\mathbf{V}^{1}} \boxtimes \bigoplus \limits_{\mathbf{V}^{2}} \mathcal{Q}_{\mathbf{V}^{2},\mathbf{W}^{\bullet,
			\hat{k}}}$,
	$$(b)~L_{\boldsymbol{\nu}'\boldsymbol{d}^{k}}\boxtimes L_{\boldsymbol{d}^{1}\cdots\boldsymbol{d}^{k-s-1}\boldsymbol{\nu}'^{k-s}\boldsymbol{d}^{k-s} \cdots \boldsymbol{\nu}'^{k-1} \boldsymbol{d}^{k-1} \boldsymbol{\nu}'^{k} \boldsymbol{\nu}'^{k+1}\boldsymbol{d}^{k+1}\cdots\boldsymbol{\nu}'^{N}\boldsymbol{d}^{N}}$$ such that the sum of dimension vectors of $\boldsymbol{\nu}'^{l}, k-s \leqslant l  \leqslant k-1$ is smaller than  the sum of dimesion vectors of $\boldsymbol{\nu}^{l}, k-s \leqslant l  \leqslant k-1$,
	$$ (c)~L_{\boldsymbol{\nu}'\boldsymbol{d}^{k}}\boxtimes L_{\boldsymbol{d}^{1}\cdots\boldsymbol{d}^{k-s-1}\boldsymbol{\nu}'^{k-s}\boldsymbol{d}^{k-s} \cdots \boldsymbol{\nu}'^{k-1} \boldsymbol{d}^{k-1} \boldsymbol{\nu}'^{k} \boldsymbol{\nu}'^{k+1}\boldsymbol{d}^{k+1}\cdots\boldsymbol{\nu}'^{N}\boldsymbol{d}^{N}}$$ such that the dimension vector of $\boldsymbol{\nu}'$ is smaller than that of $\boldsymbol{\nu}^{k}$.
	
	Similarly, by induction on dimension vetor of $\boldsymbol{\nu}^{k}$, we may assume  direct summands of type $(c)$ belong to  ${\rm{Im}}({_{k}\tilde{\Delta}}_{N})$. By induction on $s$ and the sum of dimension vectors of $ \boldsymbol{\nu}'^{l}, k-s \leqslant l \leqslant k-1$, the images of the direct summands of type $(b)$  belong to  ${\rm{Im}}({_{k}\tilde{\Delta}}_{N})$. Hence $[L_{\boldsymbol{\nu}^{k}\boldsymbol{d}^{k}}]\otimes [L_{\boldsymbol{d}^{1}\cdots\boldsymbol{d}^{k-s-1}\boldsymbol{\nu}^{k-s}\boldsymbol{d}^{k-s} \cdots \boldsymbol{\nu}^{k-1} \boldsymbol{d}^{k-1} \boldsymbol{\nu}^{k+1}\boldsymbol{d}^{k+1}\cdots\boldsymbol{\nu}^{N}\boldsymbol{d}^{N}}]\in {\rm{Im}}({_{k}\tilde{\Delta}}_{N})$.
	
	(3) Finally, by induction on $s$, we finish the proof.
\end{proof}

\section{The $\mathcal{R}$-matrix and comparison with the canonical basis}
\subsection{The operators $\Theta$ and $\Psi$}
Recall that  for a basis $\mathcal{B}=\{ b \}$ of the algebra $\mathbf{f}$ (in \cite{MR1227098}) and its dual basis $\{ b^{\ast}\}$, the $\mathcal{R}$-matrix  $$\Theta=\sum\limits_{ \nu\in \mathbb{N}I}((-1)^{tr \nu} v_{\nu}\sum\limits_{b \in \mathcal{B}_{\nu}}b^{-} \otimes b^{\ast+}) $$
is well-defined as an operator on each tensor product $M\otimes M'$ of integrable modules, and satisfies 
$$ \Delta(u) \Theta= \Theta \bar{\Delta}(u) $$
for any $u \in \mathbf{U}$, where $tr \nu =\sum\limits_{ i\in I} \nu_{i}$, $\Delta$ is the coproduct of $\mathbf{U}$  and $\bar{\Delta}(x)=\overline{\Delta(\bar{x})}$. (See details in  \cite[24.1.3]{MR1227098}.)  The $\mathbb{Q}$-linear map $\Psi:M\otimes M' \rightarrow M \otimes M'$ is defined by $$\Psi(m\otimes m')=\Theta(\bar{m}\otimes\bar{m'}),$$ then
$$ u\Psi= \Psi \bar{u}$$ for any $u \in \mathbf{U}$.  (See details in  \cite[24.3.2]{MR1227098}.) In this section, we take $M\otimes M'=\mathcal{K}_{0}(\Lambda_{2}) \otimes \mathcal{K}_{0}(\Lambda_{1})$ and give a categorical realization of  $\Theta$ and $\Psi$.

Recall that the restriction functor gives an isomorphism $\Delta_{2}:\mathcal{K}_{0}(\Lambda_{1},\Lambda_{2})\rightarrow \mathcal{K}_{0}(\Lambda_{2}) \otimes \mathcal{K}_{0}(\Lambda_{1})$, we can define 
$$\Psi'= \Delta_{2} \mathbf{D}_{1,2} \Delta_{2}^{-1}, $$ 
$$\Theta'= \Delta_{2} \mathbf{D}_{1,2} \Delta_{2}^{-1}(\mathbf{D} \otimes \mathbf{D}), $$ where $\mathbf{D}_{1,2}$ and $\mathbf{D}\otimes\mathbf{D}$ are the  $\mathbb{Z}$-linear maps induced by the Verdier duality $\mathbf{D}$ on $\mathbf{E}_{\mathbf{V},\mathbf{W}^{\bullet},\Omega^{2}}$ and $\mathbf{E}_{\mathbf{V},\mathbf{W}^{1},\Omega^{2}}\times \mathbf{E}_{\mathbf{V},\mathbf{W}^{2},\Omega^{2}}$ respectively.

\begin{proposition}
	The following equations   hold
	$$\Theta'=\Theta, \Psi'=\Psi: \mathcal{K}_{0}(\Lambda_{2}) \otimes \mathcal{K}_{0}(\Lambda_{1}) \rightarrow \mathcal{K}_{0}(\Lambda_{2}) \otimes \mathcal{K}_{0}(\Lambda_{1}).$$
\end{proposition}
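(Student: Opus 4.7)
The plan is to reduce the identification of $\Psi'$ with Lusztig's $\Psi$ to the standard uniqueness characterization of $\Psi$, namely that $\Psi$ is the unique $\mathbb{Q}$-linear involution on $L(\Lambda_2)\otimes L(\Lambda_1)$ fixing $v_{\Lambda_2}\otimes v_{\Lambda_1}$ and satisfying $\Psi(u\cdot x)=\bar u\cdot \Psi(x)$ for all $u\in\mathbf{U}$ (with $\mathbf{U}$ acting through $\Delta$); uniqueness is available because $v_{\Lambda_2}\otimes v_{\Lambda_1}$ generates the tensor product as a $\mathbf{U}$-module. Once $\Psi'=\Psi$ is established, $\Theta'=\Theta$ will follow from the definitional relation $\Theta=\Psi\circ B$, where $B=\mathrm{bar}\otimes\mathrm{bar}$, together with the fact that Verdier duality on each factor $\mathcal{K}_0(\Lambda_k)$ corresponds under $\varsigma^{\Lambda_k}$ to the bar involution on $L(\Lambda_k)$.

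The first step is to observe that Verdier duality $\mathbf{D}_{1,2}$ is an antilinear involution on $\mathcal{K}_0(\Lambda_1,\Lambda_2)$ which intertwines the $\mathbf{U}$-action with the bar involution of $\mathbf{U}$. This is immediate from the commutation relations $F_i\mathbf{D}=\mathbf{D}F_i$, $E_i\mathbf{D}=\mathbf{D}E_i$, $K_i\mathbf{D}=\mathbf{D}K_i^{-1}$ already recorded in the paper, combined with the fact that $\mathbf{D}$ inverts shifts (so acts as $v\mapsto v^{-1}$ on scalars), and extends from the generators $E_i,F_i,K_i,v^{\pm 1}$ to all of $\mathbf{U}$ by a direct computation on products. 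Since $\Delta_2$ is an isomorphism of $_{\mathcal{A}}\mathbf{U}$-modules (Theorem \ref{thm3}), the conjugate $\Psi'=\Delta_2\mathbf{D}_{1,2}\Delta_2^{-1}$ is therefore an antilinear involution of $L(\Lambda_2)\otimes L(\Lambda_1)$ satisfying $\Psi'(u\cdot x)=\bar u\cdot \Psi'(x)$ with respect to the $\Delta$-action.

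Next I would check that $\Psi'$ fixes the highest weight vector. The only object in $\mathcal{Q}_{0,\mathbf{W}^{\bullet}}$ up to shifts is the constant sheaf $[L_{\boldsymbol{d}^1\boldsymbol{d}^2}]$ on a point, which is self-dual under $\mathbf{D}_{1,2}$. By the description of $\tilde\Delta_2$ via restriction (and using Proposition \ref{res formula} at the weight-zero stratum) one sees $\tilde\Delta_2([L_{\boldsymbol{d}^1\boldsymbol{d}^2}])=[L_{\boldsymbol{d}^1}]\boxtimes [L_{\boldsymbol{d}^2}]$, so after applying $P(\mathbf{D}\otimes\mathbf{D})$ one obtains $\Delta_2([L_{\boldsymbol{d}^1\boldsymbol{d}^2}])=v_{\Lambda_2}\otimes v_{\Lambda_1}$. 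Hence $\Psi'(v_{\Lambda_2}\otimes v_{\Lambda_1})=\Delta_2\mathbf{D}_{1,2}([L_{\boldsymbol{d}^1\boldsymbol{d}^2}])=v_{\Lambda_2}\otimes v_{\Lambda_1}$. The uniqueness characterization of $\Psi$ then yields $\Psi'=\Psi$.

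Finally, for the equality of $\mathcal{R}$-matrices: under the isomorphism $\varsigma^{\Lambda_k}$ of Theorem \ref{thm1}, Verdier duality on $\mathcal{K}_0(\Lambda_k)$ corresponds to the bar involution on $L(\Lambda_k)$ (it fixes the highest weight vector and intertwines the $\mathbf{U}$-action with $\bar{(-)}$ by the same argument as above applied to the single-framing case). Consequently $\mathbf{D}\otimes\mathbf{D}$ on $\mathcal{K}_0(\Lambda_2)\otimes\mathcal{K}_0(\Lambda_1)$ is the map $B(m\otimes m')=\bar m\otimes\bar{m'}$, and so
\begin{equation*}
\Theta'=\Psi'\circ(\mathbf{D}\otimes\mathbf{D})=\Psi\circ B=\Theta.
\end{equation*}
The main technical point to handle carefully will be the uniqueness of $\Psi$: one must verify that $v_{\Lambda_2}\otimes v_{\Lambda_1}$ generates $L(\Lambda_2)\otimes L(\Lambda_1)$ under the $\Delta$-action of $\mathbf{U}$, which I would do by inducting on weight to produce all vectors of the form $F_\mu v_{\Lambda_2}\otimes F_\nu v_{\Lambda_1}$ from $\Delta(F_\bullet)(v_{\Lambda_2}\otimes v_{\Lambda_1})$ using the triangular form of $\Delta(F_i)=F_i\otimes K_i^{-1}+1\otimes F_i$.
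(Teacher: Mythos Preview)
Your reduction to checking that $\Psi'$ is a bar-intertwining antilinear involution is correct and matches the paper's first step. The gap is in the final uniqueness claim: the vector $v_{\Lambda_2}\otimes v_{\Lambda_1}$ does \emph{not} generate $L(\Lambda_2)\otimes L(\Lambda_1)$ as a $\mathbf{U}$-module in general. Already for $\mathfrak{sl}_2$ with $\Lambda_1=\Lambda_2$ the fundamental weight, one has $L(\Lambda)\otimes L(\Lambda)\cong L(2\Lambda)\oplus L(0)$, and $v_\Lambda\otimes v_\Lambda$ generates only the summand $L(2\Lambda)$. Your proposed triangularity argument using $\Delta(F_i)=F_i\otimes K_i^{-1}+1\otimes F_i$ cannot succeed: the $\mathbf{U}^-$-submodule generated by $v_{\Lambda_2}\otimes v_{\Lambda_1}$ is a cyclic $\mathbf{U}^-$-module, hence a quotient of $\mathbf{U}^-$, so its $\nu$-weight space has dimension at most $\dim\mathbf{U}^-_\nu$, whereas the corresponding weight space of the tensor product has dimension $\sum_{\nu'+\nu''=\nu}\dim L(\Lambda_2)_{\Lambda_2-\nu'}\dim L(\Lambda_1)_{\Lambda_1-\nu''}$, which is typically strictly larger.

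The paper closes this gap by verifying $\Psi=\Psi'$ on the larger family of vectors $F^{(n_1)}_{i_1}\cdots F^{(n_s)}_{i_s}v_{\Lambda_2}\otimes v_{\Lambda_1}=[L_{\boldsymbol{\nu}\boldsymbol{d}^2}]\otimes[L_{\boldsymbol{d}^1}]$, i.e.\ all of $L(\Lambda_2)\otimes v_{\Lambda_1}$. This set \emph{does} generate the tensor product under $\mathbf{U}^-$ (now the triangularity argument works: acting by $\Delta(F_i)$ on $x\otimes v_{\Lambda_1}$ gives $x\otimes F_iv_{\Lambda_1}$ modulo $L(\Lambda_2)\otimes v_{\Lambda_1}$, and one inducts). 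On the $\Psi'$ side the paper checks directly that $\Delta_2([L_{\boldsymbol{d}^1\boldsymbol{\nu}\boldsymbol{d}^2}])=[L_{\boldsymbol{\nu}\boldsymbol{d}^2}]\otimes[L_{\boldsymbol{d}^1}]$ and that $L_{\boldsymbol{d}^1\boldsymbol{\nu}\boldsymbol{d}^2}$ is $\mathbf{D}$-invariant; on the $\Psi$ side one uses that $b^+v_{\Lambda_1}=0$ for $b\in\mathcal{B}_\nu$ with $\nu\neq 0$, so $\Theta$ acts trivially on $L(\Lambda_2)\otimes v_{\Lambda_1}$, and these vectors are bar-invariant. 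Your argument can be repaired by enlarging the set of vectors on which you check the agreement in exactly this way.
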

\begin{proof}
	By \cite{fang2023lusztig}, the map $\mathbf{D}$  coincides with the bar involutions of $\mathcal{K}_{0}(\Lambda_{1})$ and $\mathcal{K}_{0}(\Lambda_{2})$, so it suffices to prove $\Psi'=\Psi$.
	
	Notice that the Verdier duality $\mathbf{D}$ on $\mathbf{E}_{\mathbf{V},\mathbf{W}^{\bullet},\Omega^{2}}$ commutes with functors $E^{(n)}_{i}$ and $F^{(n)}_{i}$ by definition, and $\mathbf{D}[1]=[-1]\mathbf{D}$, so $u\Psi'=\Psi'\bar{u}$ for any $u \in \mathbf{U}$. Hence we only need to check that $\Psi=\Psi'$ for a set of generators of $\mathcal{K}_{0}(\Lambda_{2}) \otimes \mathcal{K}_{0}(\Lambda_{1})$. 
	
	Now we consider  $[L_{\boldsymbol{\nu}\boldsymbol{d}^{2}}] \otimes [L_{\boldsymbol{d}^{1}}] =F^{(n_{1})}_{i_{1}} \cdots F^{(n_{s})}_{i_{s}} v_{\Lambda_{2}} \otimes  v_{\Lambda_{1}}$. Since $\tilde{\Delta}_{2}([L_{\boldsymbol{d}^{1}\boldsymbol{\nu}\boldsymbol{d}^{2}}])= [L_{\boldsymbol{d}^{1}}] \otimes [L_{\boldsymbol{\nu}\boldsymbol{d}^{2}}]$, and $\mathbf{D}L_{\boldsymbol{\mu}}\cong L_{\boldsymbol{\mu}} $ for any flag type $\boldsymbol{\mu}$, we can see that $\Delta_{2}([L_{\boldsymbol{d}^{1}\boldsymbol{\nu}\boldsymbol{d}^{2}}])=[L_{\boldsymbol{\nu}\boldsymbol{d}^{2}}] \otimes [L_{\boldsymbol{d}^{1}}]$, and $\Psi'([L_{\boldsymbol{\nu}\boldsymbol{d}^{2}}] \otimes [L_{\boldsymbol{d}^{1}}] )=[L_{\boldsymbol{\nu}\boldsymbol{d}^{2}}] \otimes [L_{\boldsymbol{d}^{1}}].$  For $\nu \neq 0$, $b^{+}v_{\Lambda_{1}}=0$ for $b \in \mathcal{B}_{\nu}$, hence $\Theta(F^{(n_{1})}_{i_{1}} \cdots F^{(n_{s})}_{i_{s}} v_{\Lambda_{2}} \otimes  v_{\Lambda_{1}} )=F^{(n_{1})}_{i_{1}} \cdots F^{(n_{s})}_{i_{s}} v_{\Lambda_{2}} \otimes  v_{\Lambda_{1}}$ and $\Psi(F^{(n_{1})}_{i_{1}} \cdots F^{(n_{s})}_{i_{s}} v_{\Lambda_{2}} \otimes  v_{\Lambda_{1}})=F^{(n_{1})}_{i_{1}} \cdots F^{(n_{s})}_{i_{s}} v_{\Lambda_{2}} \otimes  v_{\Lambda_{1}}$. Notice that these  $[L_{\boldsymbol{\nu}\boldsymbol{d}^{2}}] \otimes [L_{\boldsymbol{d}^{1}}] =F^{(n_{1})}_{i_{1}} \cdots F^{(n_{s})}_{i_{s}} v_{\Lambda_{2}} \otimes  v_{\Lambda_{1}}$ generates $\mathcal{K}_{0}(\Lambda_{2}) \otimes \mathcal{K}_{0}(\Lambda_{1})$ as $\mathbf{U}$-module, we finish the proof.
\end{proof}

Recall that if $P: \mathcal{K}_{0}(\Lambda_{1}) \otimes \mathcal{K}_{0}(\Lambda_{2})  \rightarrow \mathcal{K}_{0}(\Lambda_{2}) \otimes \mathcal{K}_{0}(\Lambda_{1}), m\otimes m' \mapsto m'\otimes m$ is the swapping map and $f$ is a twisting map (See details in \cite[7.3]{MR1359532}), then  the composition of twisted $\mathcal{R}$-matrix $\Theta^{f}$ and $P$  defines an isomorphism of $\mathbf{U}$-modules $ \mathcal{K}_{0}(\Lambda_{1}) \otimes \mathcal{K}_{0}(\Lambda_{2})  \rightarrow \mathcal{K}_{0}(\Lambda_{2}) \otimes \mathcal{K}_{0}(\Lambda_{1})$.The proposition above also allows us to compare the isomorphism $\Delta_{2}' \circ \Delta_{2}^{-1}:\mathcal{K}_{0}(\Lambda_{2}) \otimes \mathcal{K}_{0}(\Lambda_{1})  \rightarrow \mathcal{K}_{0}(\Lambda_{1}) \otimes \mathcal{K}_{0}(\Lambda_{2})$ with  $\Theta^{f}$. 

\begin{proposition}
	Let $\alpha= |\mathbf{V}^{1} \oplus \mathbf{W}^{1}|$,$ \beta= |\mathbf{V}^{2} \oplus \mathbf{W}^{2}|$ and $\gamma=\alpha+\beta=|\mathbf{V} \oplus \mathbf{W}^{\bullet}|$, then 
	$$\mathbf{Res}^{\gamma}_{\alpha,\beta} \mathbf{D} \cong (sw)_{!} \mathbf{D} \mathbf{Res}^{\gamma}_{\beta,\alpha}[(\alpha,\beta)], $$
	where $sw: X\times Y \rightarrow  Y \times X, (x,y) \mapsto (y,x)$ is the swapping isomorphism of varieties.
\end{proposition}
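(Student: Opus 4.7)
The plan is to unfold both sides using the definition of Lusztig's restriction functor and to track how Verdier duality commutes with the four operations $(-)_!$, $(-)_*$, $(-)^*$, $(-)^!$ along the defining diagrams of $\mathbf{Res}^\gamma_{\alpha,\beta}$ and $\mathbf{Res}^\gamma_{\beta,\alpha}$. To begin, I fix a splitting $\mathbf{V}\oplus\mathbf{W}^\bullet=\mathbf{U}_\alpha\oplus\mathbf{U}_\beta$ of $I^{(N)}$-graded spaces and write each component $\mathrm{Hom}(\mathbf{U}_{h'},\mathbf{U}_{h''})$ in block form to identify
\[
\mathbf{E}_\gamma\cong\mathbf{E}_\alpha\times\mathbf{E}_\beta\times N_{\alpha\beta}\times N_{\beta\alpha},
\]
with $N_{\alpha\beta}=\bigoplus_{h\in\Omega^N}\mathrm{Hom}(\mathbf{U}_{\alpha,h'},\mathbf{U}_{\beta,h''})$ and symmetrically for $N_{\beta\alpha}$. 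Under this identification, $F_{\alpha\beta}$ is the affine subspace obtained by zeroing the $N_{\beta\alpha}$ coordinate, so $\iota_{\alpha\beta}$ is a smooth closed embedding of codimension $\dim N_{\beta\alpha}$ and $\kappa_{\alpha\beta}$ is a trivial vector bundle of rank $\dim N_{\alpha\beta}$; symmetrically for $F_{\beta\alpha}$.

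Next, starting from $\mathbf{Res}^\gamma_{\beta,\alpha}(C)=(\kappa_{\beta\alpha})_!(\iota_{\beta\alpha})^*(C)[-\langle\beta,\alpha\rangle]$, I push $\mathbf{D}$ through using the standard identities $\mathbf{D}(\kappa)_!=(\kappa)_*\mathbf{D}$, $\mathbf{D}(\iota)^*=(\iota)^!\mathbf{D}$, together with $(\iota_{\beta\alpha})^!=(\iota_{\beta\alpha})^*[-2\dim N_{\alpha\beta}]$ for the smooth closed embedding, obtaining
\[
\mathbf{D}\mathbf{Res}^\gamma_{\beta,\alpha}(C)\cong(\kappa_{\beta\alpha})_*(\iota_{\beta\alpha})^*\mathbf{D}(C)\bigl[\,\langle\beta,\alpha\rangle-2\dim N_{\alpha\beta}\,\bigr].
\]
A base-change argument along the commutative square formed by the two closed embeddings of $F_{\alpha\beta}\cap F_{\beta\alpha}\cong\mathbf{E}_\alpha\times\mathbf{E}_\beta$ into $F_{\alpha\beta}$ and $F_{\beta\alpha}$, together with the triviality of the two $\kappa$-bundles (whose ranks equal the codimensions of the complementary closed embeddings), then converts $(\kappa_{\beta\alpha})_*(\iota_{\beta\alpha})^*$ into $(sw)^*(\kappa_{\alpha\beta})_!(\iota_{\alpha\beta})^*$ up to an explicit shift involving $\dim N_{\alpha\beta}$ and $\dim N_{\beta\alpha}$.

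Finally, I compare with the left-hand side $\mathbf{Res}^\gamma_{\alpha,\beta}\mathbf{D}(C)=(\kappa_{\alpha\beta})_!(\iota_{\alpha\beta})^*\mathbf{D}(C)[-\langle\alpha,\beta\rangle]$. Using the key identity
\[
(\alpha,\beta)=\langle\alpha,\beta\rangle+\langle\beta,\alpha\rangle
\]
and the fact that $sw$ is an isomorphism with $(sw)_!=(sw)^*$ (no shift contribution), the net shift collected on the right-hand side after applying $(sw)_!$ and the shift $[(\alpha,\beta)]$ collapses to exactly $-\langle\alpha,\beta\rangle$, matching the left-hand side.

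The main obstacle is the careful bookkeeping of shifts combined with the Tate twists that are suppressed in Lusztig's notation, and the base-change step identifying $(\kappa_{\beta\alpha})_*(\iota_{\beta\alpha})^*$ with $(sw)^*(\kappa_{\alpha\beta})_!(\iota_{\alpha\beta})^*$ for an arbitrary complex $C$ rather than just for pullbacks from $\mathbf{E}_\alpha\times\mathbf{E}_\beta$. A conceptually cleaner alternative, which I would cross-check in parallel, is to invoke the adjunction (up to shifts) between $\mathbf{Ind}^\gamma_{\alpha,\beta}$ and $\mathbf{Res}^\gamma_{\alpha,\beta}$ together with the fact that Verdier duality exchanges left and right adjoints; this forces the claimed relation between $\mathbf{D}\mathbf{Res}^\gamma_{\beta,\alpha}$ and $\mathbf{Res}^\gamma_{\alpha,\beta}\mathbf{D}$ after swap, with the shift $[(\alpha,\beta)]$ arising automatically from the difference of the two adjunction shifts.
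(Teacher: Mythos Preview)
Your overall architecture is the same as the paper's: push Verdier duality through the defining morphisms of the restriction functor and then identify the $(\kappa)_*(\iota)^!$ description of one restriction with the $(\kappa')_!(\iota')^*$ description of the opposite one. The shift bookkeeping you outline is correct.

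The gap is exactly the step you flag as ``the main obstacle.'' Converting
\[
(\kappa_{\beta\alpha})_*(\iota_{\beta\alpha})^*\quad\text{into}\quad(sw)^*(\kappa_{\alpha\beta})_!(\iota_{\alpha\beta})^*
\]
is \emph{not} a standard base-change argument, and it is simply false for an arbitrary complex $C$. What makes it true here is that $F_{\alpha\beta}$ and $F_{\beta\alpha}$ are the attracting and repelling loci of a $\mathbf{k}^*$-action on $\mathbf{E}_\gamma$ (acting with weight $1$ on the block diagonal containing $\mathbf{U}_\beta$ inside $G_\mathbf{V}$), with common fixed locus $\mathbf{E}_\alpha\times\mathbf{E}_\beta$, and the isomorphism
\[
(\pi^+)_*(g^+)^!\;\cong\;(\pi^-)_!(g^-)^*
\]
on $G_{\mathbf{V}}$-equivariant (hence weakly $\mathbf{k}^*$-equivariant) complexes is precisely Braden's hyperbolic localization theorem. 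This is what the paper invokes, citing Braden and the identification of $\mathbf{Res}$ as a hyperbolic localization functor from the authors' earlier work. Your ``commutative square of closed embeddings'' does not form a Cartesian diagram to which proper or smooth base change applies; the content really is Braden's theorem, and you should name it.

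The adjunction alternative you mention would also work in principle, but the adjoint of $\mathbf{Ind}^\gamma_{\alpha,\beta}$ is again computed via the same hyperbolic localization input, so it does not sidestep the issue.
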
 
\begin{proof}
	
	It is well known that  the restriction functor is a hyperbolic localization functor. More precisely, there is a $k^*$-action on $\mathbf{E}_{\mathbf{V},\mathbf{W}^{\bullet},\Omega^{2}}$ such that the following diagrams commute
	\[
	\xymatrix{
		(\mathbf{E}_{\mathbf{V},\mathbf{W}^{\bullet},\Omega^{2}})^{k^*} \ar[d]_{\eta} 
		&  {\mathbf{E}^{+}_{\mathbf{V},\mathbf{W}^{\bullet},\Omega^{2}}} \ar[l]_{\pi^{+}} \ar[d]^{\cong} \ar[r]^{g^+} &\mathbf{E}_{\mathbf{V},\mathbf{W}^{\bullet},\Omega^{2}} \ar@{=}[d] 
		\\	
		{\mathbf{E}_{\mathbf{V}^{1},\mathbf{W}^{1},\Omega^{1}}} \times {\mathbf{E}_{\mathbf{V}^{2},\mathbf{W}^{2},\Omega^{1}}} 
		& F \ar[l]_-{\kappa_{\Omega}} \ar[r]^-{\iota_{\Omega}} &\mathbf{E}_{\mathbf{V},\mathbf{W}^{\bullet},\Omega^{2}} ,
	}
	\]
	
	\[
	\xymatrix{
		{(\mathbf{E}_{\mathbf{V},\mathbf{W}^{\bullet},\Omega^{2}}})^{k^*}  \ar[d]_{\eta'} 
		&  {\mathbf{E}^{-}_{\mathbf{V},\mathbf{W}^{\bullet},\Omega^{2}}} \ar[l]_{\pi^{-}} \ar[d]^{\cong} \ar[r]^{g^{-}} &\mathbf{E}_{\mathbf{V},\mathbf{W}^{\bullet},\Omega^{2}} \ar@{=}[d]
		\\
		{\mathbf{E}_{\mathbf{V}^{2},\mathbf{W}^{2},\Omega^{1}}}\times{\mathbf{E}_{\mathbf{V}^{1},\mathbf{W}^{1},\Omega^{1}}} 
		& F' \ar[l]_-{\kappa'_{\Omega}} \ar[r]^-{\iota'_{\Omega}} &\mathbf{E}_{\mathbf{V},\mathbf{W}^{\bullet},\Omega^{2}},
	}
	\]
	where those $\kappa_{\Omega},\kappa'_{\Omega}$ and $\iota_{\Omega},\iota'_{\Omega}$ are the morphisms appearing in the definition of $\mathbf{Res}^{\gamma}_{\alpha,\beta}$ and $\mathbf{Res}^{\gamma}_{\beta,\alpha}$, and those $\pi^{\pm}$ and $g^{\pm}$ are the morphisms appearing in the definition of the hyperbolic localizations. One can	see \cite{Barden} and \cite[Proposition 2.10]{MR4524567} for details.
	
	Since $\eta= sw \circ \eta'$, 
	\begin{equation*}
		\begin{split}
			\mathbf{Res}^{\gamma}_{\alpha,\beta} \mathbf{D}
			\cong &(\kappa_{\Omega})_{!}(\iota_{\Omega})^{\ast} [-\langle \alpha,\beta \rangle] \mathbf{D}
			\cong \mathbf{D} (\kappa_{\Omega})_{\ast}(\iota_{\Omega})^{!} [\langle \alpha,\beta \rangle] \\
			\cong &\mathbf{D}(\eta)_{\ast}(\pi^{+})_{\ast}(g^{+})^{!}[\langle \alpha,\beta \rangle]
			\cong  \mathbf{D}(\eta)_{\ast}(\pi^{-})_{!}(g^{-})^{\ast}[\langle \alpha,\beta \rangle]\\
			\cong & (sw)_{!} \mathbf{D}(\kappa'_{\Omega})_{!}(\iota'_{\Omega})^{\ast}[-\langle \beta,\alpha \rangle][( \alpha,\beta )]\\
			\cong & (sw)_{!} \mathbf{D} \mathbf{Res}^{\gamma}_{\beta,\alpha}[(\alpha,\beta)].
		\end{split}
	\end{equation*}
\end{proof}

Notice that $(sw)_{!}$ induces $P:\mathcal{K}_{0}(\Lambda_{1}) \otimes \mathcal{K}_{0}(\Lambda_{2})  \rightarrow \mathcal{K}_{0}(\Lambda_{2}) \otimes \mathcal{K}_{0}(\Lambda_{1})$, so the Proposition above implies that 
\begin{equation*}
	\begin{split}
		(\mathbf{D}\otimes\mathbf{D})\Delta_{2}
		=&P\tilde{\Delta}_{2}\mathbf{D}_{1,2}
		=PP(\mathbf{D}\otimes\mathbf{D})\tilde{\Delta}'_{2}[(\alpha,\beta)]\\
		=&(\mathbf{D}\otimes\mathbf{D})\tilde{\Delta}'_{2}[(\alpha,\beta)]
		= P\Delta'_{2}\mathbf{D}_{1,2}[(\alpha,\beta)].
	\end{split}
\end{equation*}
Hence 
\begin{equation*}
	\begin{split}
		&P \Theta^{-1} [(\alpha,\beta)]\\
		= &P (\mathbf{D} \otimes \mathbf{D}) \Delta_{2} \mathbf{D}_{1,2} \Delta_{2}^{-1} [(\alpha,\beta)] \\
		= &P  P\Delta'_{2}\mathbf{D}_{1,2} [(\alpha,\beta)]\mathbf{D}_{1,2} \Delta_{2}^{-1}[(\alpha,\beta)]\\
		=&\Delta'_{2}\Delta_{2}^{-1}.
	\end{split}
\end{equation*}
Since $[(\alpha,\beta)]$ induced a function $\tilde{f}(\alpha,\beta)=v^{(\alpha,\beta)}$, which satisfies the condition of \cite[7.3]{MR1359532}, we can see that $\Delta_{2}(\Delta'_{2})^{-1}=(\Delta'_{2}\Delta_{2}^{-1})^{-1}$ equals to the composition of $P$ and some $\Theta^{f}$.

\subsection{The geometric bilinear form}

\begin{definition}
	Define a bilinear form $(-,-)^{\Lambda^{\bullet}}$ on $\mathcal{K}_{0}(\Lambda^{\bullet})$ by:
	\begin{equation*}
		([A],[B])^{\Lambda^{\bullet}}=\sum\limits_{n \geq 0}{\rm{dim}} {\rm{Ext}}^{n}_{\mathcal{D}^{b}_{G_{\mathbf{V}}}(\mathbf{E}_{\mathbf{V},\mathbf{W}^{\bullet},\Omega^{N}})/ \mathcal{N}_{\mathbf{V}} }(\mathbf{D}A,B)v^{-n},
	\end{equation*}
	for any  semisimple complexes $A,B$ on $\mathbf{E}_{\mathbf{V},\mathbf{W}^{\bullet},\Omega^{N}}$. Otherwise,	if $A$ is a complex on $\mathbf{E}_{\mathbf{V},\mathbf{W}^{\bullet},\Omega^{N}}$ but  $B$ is a complex on $\mathbf{E}_{\mathbf{V}',\mathbf{W}^{\bullet},\Omega^{N}}$  such that $|\mathbf{V}| \neq |\mathbf{V}'|$, define
	\begin{equation*}
		([A],[B])^{\Lambda^{\bullet}}=0.
	\end{equation*}
\end{definition}

\begin{proposition}
	For any $i \in I$, the bilinear form $(-,-)^{\Lambda^{\bullet}}$ is contravariant with respect to $F_{i}$ and $E_{i}$. More precisely, for any objects $A,B$,
	\begin{equation*}
		( [ F_{i}A],[B] )^{\Lambda^{\bullet}}=([A],v [K^{-}_{i}E_{i}B])^{\Lambda^{\bullet}}.
	\end{equation*}
	Moreover, for simple perverse sheaves $L$ and $L'$,\\
	(1)If $[L] \neq [L']$, then  $([L],[L'])^{\Lambda^{\bullet}} \in v^{-1} \mathbb{Z}[[v^{-1}]] \cap \mathbb{Q}(v)$ ;\\
	(2)If $L$ is a simple perverse sheaf which is nonzero in $\mathcal{K}_{0}(\Lambda^{\bullet})$, then $([L],[L])^{\Lambda^{\bullet}} \in 1+ v^{-1}\mathbb{Z}[[v^{-1}]]\cap \mathbb{Q}(v)$ .
\end{proposition}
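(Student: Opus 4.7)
The plan is to first establish the adjointness formula between $F_i$ and $vK_i^{-}E_i$, and then deduce the two assertions about simple perverse sheaves from standard properties of Ext groups of perverse sheaves in equivariant derived categories.

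For the adjointness, I would proceed by upgrading the classical adjunction between Lusztig's induction and restriction functors to the present localized setting. By construction, $F_i$ is a shift of $\mathbf{Ind}^{\mathbf{V}\oplus\mathbf{W}^{\bullet}}_{\mathbf{V}',\mathbf{V}''\oplus\mathbf{W}^{\bullet}}(\overline{\mathbb{Q}}_{l}\boxtimes -)$, whose right adjoint in the full equivariant derived category is a shift of the restriction $\mathbf{Res}$. This follows from the chain of adjunctions $(p_3)_{!}\dashv p_3^{!}$, the equivalence induced by the principal bundle $p_2$, and $p_1^{*}\dashv (p_1)_{*}$ with $p_1$ smooth of connected fibers, combined with base change relating the induction diagram to the restriction diagram. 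After collecting all the shifts --- the Euler-form shift in the definition of $\mathbf{Res}$, the fiber dimensions of the $p_i$'s, the convention $K_i^{-}=\mathrm{Id}[2\nu_i-\tilde{\nu}_i]$, and the Tate-twist conventions --- one should recover the stated formula. The adjunction then descends to the Verdier quotient $\mathcal{D}^b/\mathcal{N}_{\mathbf{V}}$ because both $F_i$ and $E_i$ preserve the thick subcategory $\mathcal{N}_{\mathbf{V}}$ up to isomorphism by the lemmas of Section 3.

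For parts (1) and (2), I would use the equivalence furnished by $(j_{\mathbf{V},i})^{*}$ and $(j_{\mathbf{V},i})_{!}$ in Section 3 to represent every simple object of $\mathcal{Q}_{\mathbf{V},\mathbf{W}^{\bullet}}/\mathcal{N}_{\mathbf{V},i}$ by its restriction to the open locus, and then glue over $i\in I$ to work in the global localization. On this open locus Verdier duality preserves the set of simples (up to shift). For two distinct simple perverse sheaves $L\neq L'$ that both survive in the global localization, Schur's lemma gives $\mathrm{Hom}^{0}(\mathbf{D}L,L')=0$, so only terms with $n\geqslant 1$ contribute to the bilinear form, yielding $([L],[L'])^{\Lambda^{\bullet}}\in v^{-1}\mathbb{Z}[[v^{-1}]]\cap\mathbb{Q}(v)$. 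When $L=L'$, $\mathrm{Hom}^{0}(\mathbf{D}L,L)\cong \mathrm{End}(L)$ is one-dimensional by Schur's lemma, contributing exactly $1$, while all higher Ext groups contribute $v^{-n}$-terms with $n\geqslant 1$, so $([L],[L])^{\Lambda^{\bullet}}\in 1+v^{-1}\mathbb{Z}[[v^{-1}]]\cap\mathbb{Q}(v)$.

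The main obstacle will be the precise bookkeeping of shifts in the first step, especially verifying that the power $v$ and the specific endofunctor $K_i^{-}$ appear (rather than some other power or twist), and confirming that the adjunction survives passage to the Verdier quotient without loss. The positivity of dimensions of Ext groups automatically gives integrality of the coefficients. Both of these steps are the $N$-framed analogues of arguments already carried out in \cite{fang2023lusztig} for the single-framing case, and I would adapt those arguments essentially line by line.
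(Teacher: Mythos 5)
Your second half (almost orthogonality) is essentially the paper's argument: simples in $\mathcal{Q}_{\mathbf{V},\mathbf{W}^{\bullet}}$ are self-dual, the localization carries the induced perverse $t$-structure, and Schur's lemma plus vanishing in degree $0$ for non-isomorphic simples gives (1) and (2). The real issue is your derivation of the contravariance.

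The formula you must prove involves $E_{i}$, which in this paper is \emph{not} a restriction functor: it is the localized functor $\mathcal{E}_{i}=(j_{\mathbf{V}',i})_{!}(\phi_{\mathbf{V}',i})^{\ast}(q_{2})_{!}(q_{1})^{\ast}(\phi_{\mathbf{V},i})_{\flat}(j_{\mathbf{V},i})^{\ast}[-\nu_{i}]$ built from the Grassmannian correspondence on the open stratum $\mathbf{E}^{0}_{\mathbf{V},\mathbf{W}^{\bullet},i}$, transported by Fourier--Deligne transforms. Your mechanism --- ``the right adjoint of $\mathbf{Ind}$ is a shift of $\mathbf{Res}$, and the adjunction descends to the Verdier quotient'' --- has two genuine gaps. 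First, the honest right adjoint of $(p_{3})_{!}(p_{2})_{\flat}(p_{1})^{\ast}$ is a functor of the form $(\kappa_{\Omega})_{\ast}(\iota_{\Omega})^{!}$ (the $\ast/!$-variant), not $\mathbf{Res}=(\kappa_{\Omega})_{!}(\iota_{\Omega})^{\ast}$; these agree up to shift only through hyperbolic localization on the relevant equivariant/semisimple complexes (exactly the content the paper invokes separately in Proposition 5.2), so the claimed adjunction does not hold in the full equivariant derived category as stated. Second, and more importantly, even granting that, collecting shifts cannot turn $\mathbf{Res}$ into $vK^{-}_{i}E_{i}$: you must identify, in the quotient $\mathcal{D}^{b}_{G_{\mathbf{V}}}/\mathcal{N}_{\mathbf{V},i}$ with $i$ a source, the restriction-type right adjoint with the Grassmannian functor $K^{-}_{i}\mathcal{E}_{i}[-1]$. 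This geometric identification on $\mathbf{E}^{0}_{\mathbf{V},\mathbf{W}^{\bullet},i}$ (where $q_{1},q_{2}$ are proper and smooth, so the two adjoints of $(q_{2})_{!}(q_{1})^{\ast}$ are its shifts) is the substantive step behind the adjunction $F_{i}\dashv K^{-}_{i}E_{i}[-1]$ that the paper cites from the single-framing case, and it is also what makes the descent argument legitimate, since preservation of $\mathcal{N}_{\mathbf{V},k}$ is known (Lemma \ref{lemmaE}) for $\mathcal{E}_{i}$, not for an abstract right adjoint. As written, your plan treats this as ``bookkeeping of shifts,'' which it is not; you should either prove the adjunction directly from the $q_{1},q_{2}$ correspondence as in \cite{fang2023lusztig} or supply the identification of the localized restriction with $\mathcal{E}_{i}$ before descending.
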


\begin{proof}	
	The proof is similar to the proof of \cite[Proposition 3.31]{fang2023lusztig}. Notice that simple perverse sheaves in $\mathcal{Q}$ are self-dual, the almost orthogonality follows from  the property of perverse $t$-structure. The contravariant property follows from the fact that $ F_{i}$ is left adjoint to  $K^{-}_{i}E_{i}[-1]$. (One can see details in \cite[Proposition 3.31]{fang2023lusztig}.) 
\end{proof}

Now we focus on $\mathcal{K}_{0}(\Lambda_{2}) \otimes \mathcal{K}_{0}(\Lambda_{1}),$ define a bilinear form $(-,-)^{\otimes}$ on $\mathcal{K}_{0}(\Lambda_{2}) \otimes \mathcal{K}_{0}(\Lambda_{1})$ by $$(m_{2}\otimes m_{1}, n_{2}\otimes n_{1})^{\otimes} = (m_{2},n_{2})^{\Lambda_{2}} (m_{1},n_{1})^{\Lambda_{1}}, $$
then $(-,-)^{\otimes}$ also satisfies the same contravariant property. Recall that $(-,-)^{\Lambda^{\bullet}}$ defines a contravariant bilinear form on $\mathcal{K}_{0}(\Lambda_{1},\Lambda_{2})$, the following proposition shows that $\Delta_{2}$ is not only an isomorphism of $\mathbf{U}$-modules, but also preserve contravariant biliear forms.

\begin{proposition}
	Assume that $N=2$ and $\Delta_{2}:\mathcal{K}_{0}(\Lambda_{1},\Lambda_{2}) \rightarrow \mathcal{K}_{0}(\Lambda_{2})\otimes \mathcal{K}_{0}(\Lambda_{1})$ is the isomorphism induced by the restriction functor and Verdier duality, then the following equation holds
	$$ (x,y)^{\Lambda^{\bullet}}=(\Delta_{2}(x),\Delta_{2}(y))^{\otimes}, $$ 
	for any $x,y$ in $\mathcal{K}_{0}(\Lambda_{1},\Lambda_{2})$.
\end{proposition}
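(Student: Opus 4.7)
The strategy is to interpret both sides as contravariant symmetric bilinear forms on the cyclic $\mathbf{U}$-module $L(\Lambda_2)\otimes L(\Lambda_1)$ and exploit the rigidity of such forms. First I would transport $(-,-)^{\Lambda^{\bullet}}$ to the tensor product via the $\mathbf{U}$-module isomorphism $\Delta_{2}$ of Theorem~\ref{thm2}, defining $B'(a,b):=(\Delta_{2}^{-1}(a),\Delta_{2}^{-1}(b))^{\Lambda^{\bullet}}$ on $L(\Lambda_2)\otimes L(\Lambda_1)$. Because $\Delta_{2}$ intertwines the functors $F_{i},E_{i},K_{i}$ on $\mathcal{K}_{0}(\Lambda_{1},\Lambda_{2})$ with their coproduct counterparts $\Delta(F_{i})=F_{i}\otimes K_{i}^{-1}+1\otimes F_{i}$ etc.\ (cf.\ the computation in the proof of Corollary~\ref{tongtai}), the contravariance $B'(F_{i}a,b)=B'(a,vK_{i}^{-}E_{i}b)$ on the tensor product follows immediately from the contravariance of $(-,-)^{\Lambda^{\bullet}}$. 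Both $B'$ and $(-,-)^{\otimes}$ are symmetric (via $\mathbf{D}$) and satisfy the same contravariance property, the latter being recorded right after the definition of $(-,-)^{\otimes}$.

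Next I would invoke the uniqueness of contravariant forms on a cyclic module. The tensor product $L(\Lambda_{2})\otimes L(\Lambda_{1})$ is generated as a $\mathbf{U}^{-}$-module by $v:=v_{\Lambda_{2}}\otimes v_{\Lambda_{1}}$, which spans the one-dimensional weight space of weight $\Lambda_{1}+\Lambda_{2}$ (the unique maximal weight). For any symmetric bilinear form $B$ satisfying $B(F_{i}a,b)=B(a,vK_{i}^{-}E_{i}b)$, write $a=uv,\ b=u'v$ with $u,u'\in\mathbf{U}^{-}$; iterated contravariance gives $B(a,b)=B(v,\rho(u)u'v)$, where $\rho$ is the resulting antiinvolution. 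Decomposing $\rho(u)u'v$ into weight components, only the projection onto the top weight space contributes by weight considerations, and that projection is a scalar multiple $c(u,u')v$ with $c(u,u')\in\mathbb{Q}(v)$ determined entirely by the algebraic data in $\mathbf{U}$ and the highest-weight action, independent of $B$. Consequently $B(a,b)=c(u,u')B(v,v)$ and $B$ is pinned down by the single value $B(v,v)$.

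Finally I would compute both forms on $v$. The $\Delta_{2}$-preimage of $v$ is $\xi_{0}:=[L_{\boldsymbol{d}^{1}\boldsymbol{d}^{2}}]$: by Proposition~\ref{res formula}, $\tilde{\Delta}_{2}(\xi_{0})=[L_{\boldsymbol{d}^{1}}]\otimes[L_{\boldsymbol{d}^{2}}]$, and each $[L_{\boldsymbol{d}^{k}}]$ is $\mathbf{D}$-self-dual because $\mathbf{E}_{0,\mathbf{W}^{k},\Omega^{1}}$ and $\mathcal{F}_{\boldsymbol{d}^{k},\Omega^{1}}$ are both a single point, whence $L_{\boldsymbol{d}^{k}}=\bar{\mathbb{Q}}_{l}$; applying $P(\mathbf{D}\otimes\mathbf{D})$ then yields $\Delta_{2}(\xi_{0})=v_{\Lambda_{2}}\otimes v_{\Lambda_{1}}$. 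Since $\mathbf{E}_{0,\mathbf{W}^{\bullet},\Omega^{2}}$ is a point and $\xi_{0}=\bar{\mathbb{Q}}_{l}$, the Ext-algebra computation gives $(\xi_{0},\xi_{0})^{\Lambda^{\bullet}}=1$; similarly $(v_{\Lambda_{k}},v_{\Lambda_{k}})^{\Lambda_{k}}=1$ in each factor, so $(v,v)^{\otimes}=1=B'(v,v)$. Combined with the previous paragraph, $B'=(-,-)^{\otimes}$ on all of $L(\Lambda_{2})\otimes L(\Lambda_{1})$, which pulls back via $\Delta_{2}$ to the claimed identity.

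The principal technical hurdle is the rigidity statement: justifying that the scalar $c(u,u')$ is intrinsic to $\mathbf{U}$ and not to the form $B$ requires a careful weight-space argument hinging on the one-dimensionality of the $(\Lambda_{1}+\Lambda_{2})$-weight space of the tensor product. A subsidiary bookkeeping step, verifying that $(-,-)^{\otimes}$ really satisfies the same contravariance as $(-,-)^{\Lambda^{\bullet}}$, amounts to matching the adjoint $\Delta(F_{i})^{\ast}=vK_{i}^{-}E_{i}\otimes K_{i}^{-}+1\otimes vK_{i}^{-}E_{i}$ with $\Delta(vK_{i}^{-}E_{i})$; this is straightforward once the coproduct convention fixed by Corollary~\ref{tongtai} is in hand.
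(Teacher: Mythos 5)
Your first and third paragraphs are fine (the transported form is contravariant, $\Delta_{2}([L_{\boldsymbol{d}^{1}\boldsymbol{d}^{2}}])=v_{\Lambda_{2}}\otimes v_{\Lambda_{1}}$, and both forms take value $1$ there), but the rigidity step in your second paragraph is a genuine gap, and it is not repairable as stated. The tensor product $L(\Lambda_{2})\otimes L(\Lambda_{1})$ is \emph{not} generated by $v:=v_{\Lambda_{2}}\otimes v_{\Lambda_{1}}$ as a $\mathbf{U}^{-}$-module (nor even as a $\mathbf{U}$-module at generic $v$): already for $\mathfrak{sl}_{2}$ one has $L(1)\otimes L(1)\cong L(2)\oplus L(0)$, and $\mathbf{U}^{-}(v_{1}\otimes v_{1})$ is the three-dimensional summand $L(2)$, so you cannot write arbitrary $a,b$ as $uv,u'v$ with $u,u'\in\mathbf{U}^{-}$. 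Consequently the uniqueness claim also fails: a symmetric form satisfying $B(F_{i}a,b)=B(a,vK_{i}^{-}E_{i}b)$ on a semisimple module is only determined summand by summand, so the space of such forms on $L(\Lambda_{2})\otimes L(\Lambda_{1})$ has dimension equal to (at least) the number of irreducible constituents; in the $\mathfrak{sl}_{2}$ example one may rescale the form on the $L(0)$ summand without changing $B(v,v)$. Hence the single value $B(v,v)$ does not pin down $B$, and your conclusion $B'=(-,-)^{\otimes}$ does not follow.

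The paper avoids this by reducing not to the single vector $v$ but to the much larger $\mathbf{U}^{-}$-generating set $\{[L_{\boldsymbol{d}^{1}\boldsymbol{\nu}\boldsymbol{d}^{2}}]\}$, whose image under $\Delta_{2}$ is $L(\Lambda_{2})\otimes v_{\Lambda_{1}}$ (this subspace is stable under the $E_{i}$ and $K_{i}^{\pm}$, which is what makes the contravariance reduction close up, unlike the span of $v$ alone). It then verifies the equality of the two forms on all pairs from this set by a geometric argument: $\Delta_{2}([L_{\boldsymbol{d}^{1}\boldsymbol{\nu}\boldsymbol{d}^{2}}])=[L_{\boldsymbol{\nu}\boldsymbol{d}^{2}}]\otimes[L_{\boldsymbol{d}^{1}}]$, and left multiplication by $L_{\boldsymbol{d}^{1}}$ is $(i_{\mathbf{W}^{1}})_{!}\cong(i_{\mathbf{W}^{1}})_{\ast}$ for the zero section $i_{\mathbf{W}^{1}}$ of the bundle $\pi_{\mathbf{W}^{1}}$, which gives
\begin{equation*}
{\rm Ext}^{n}\bigl(L_{\boldsymbol{\nu}\boldsymbol{d}^{2}},L_{\boldsymbol{\nu}'\boldsymbol{d}^{2}}\bigr)\cong{\rm Ext}^{n}\bigl(L_{\boldsymbol{d}^{1}\boldsymbol{\nu}\boldsymbol{d}^{2}},L_{\boldsymbol{d}^{1}\boldsymbol{\nu}'\boldsymbol{d}^{2}}\bigr),
\end{equation*}
so that $([L_{\boldsymbol{d}^{1}\boldsymbol{\nu}\boldsymbol{d}^{2}}],[L_{\boldsymbol{d}^{1}\boldsymbol{\nu}'\boldsymbol{d}^{2}}])^{\Lambda^{\bullet}}=([L_{\boldsymbol{\nu}\boldsymbol{d}^{2}}],[L_{\boldsymbol{\nu}'\boldsymbol{d}^{2}}])^{\Lambda_{2}}=(\Delta_{2}(x),\Delta_{2}(y))^{\otimes}$. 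If you want to keep your structure, you must replace your single generator $v$ by this $E_{i}$-stable generating subspace and supply such a computation of both forms on it; the weight-space argument for the top vector alone cannot work.
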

\begin{proof}
	Since $(-,-)^{\otimes}$  and $(-,-)^{\Lambda^{\bullet}}$ have the same contravariant property, it suffices to show $$(x,y)^{\Lambda^{\bullet}}=(\Delta_{2}(x),\Delta_{2}(y))^{\otimes}$$ for a set of $\mathbf{U}^{-}$-generators of $\mathcal{K}_{0}(\Lambda_{1},\Lambda_{2})$. 
	Since $\Delta_{2} ([L_{\boldsymbol{d}^{1}\boldsymbol{\nu}\boldsymbol{d}^{2}}])= [L_{\boldsymbol{\nu}\boldsymbol{d}^{2}}] \otimes [L_{\boldsymbol{d}^{1}}] $ and $([L_{\boldsymbol{d}^{1}}],[L_{\boldsymbol{d}^{1}}])^{\Lambda_{1}}=1$, 
	$$(\Delta_{2}([L_{\boldsymbol{d}^{1}\boldsymbol{\nu}\boldsymbol{d}^{2}}]),\Delta_{2}([L_{\boldsymbol{d}^{1}\boldsymbol{\nu}'\boldsymbol{d}^{2}}]))^{\otimes}=([L_{\boldsymbol{\nu}\boldsymbol{d}^{2}}],[L_{\boldsymbol{\nu}'\boldsymbol{d}^{2}}] )^{\Lambda_{2}}.$$
	
	However,  the left multiplication by $L_{\boldsymbol{d^{1}}}$ is isomorphic to $(i_{\mathbf{W}^{1}})_{!} \cong(i_{\mathbf{W}^{1}})_{\ast}$, where $i_{\mathbf{W}^{1}}:\mathbf{E}_{\mathbf{V},\mathbf{W}^{2},\Omega^{1}} \rightarrow  \mathbf{E}_{\mathbf{V},\mathbf{W}^{\bullet},\Omega^{2}}$ is the zero section of  the vector bundle $\pi_{\mathbf{W}^{1}}$, and $ (i_{\mathbf{W}^{1}})^{\ast}(i_{\mathbf{W}^{1}})_{\ast}(L_{\boldsymbol{\nu}\boldsymbol{d}^{2}}) \cong L_{\boldsymbol{\nu}\boldsymbol{d}^{2}}$, we can see that $${\rm{Ext}}^{n}_{\mathcal{D}^{b}_{G_{\mathbf{V}}}(\mathbf{E}_{\mathbf{V},\mathbf{W}^{2},\Omega^{1}})/ \mathcal{N}_{\mathbf{V}} }(L_{\boldsymbol{\nu}\boldsymbol{d}^{2}},L_{\boldsymbol{\nu}'\boldsymbol{d}^{2}}) \cong {\rm{Ext}}^{n}_{\mathcal{D}^{b}_{G_{\mathbf{V}}}(\mathbf{E}_{\mathbf{V},\mathbf{W}^{\bullet},\Omega^{2}})/ \mathcal{N}_{\mathbf{V}} }(L_{\boldsymbol{d}^{1}\boldsymbol{\nu}\boldsymbol{d}^{2}},L_{\boldsymbol{d}^{1}\boldsymbol{\nu}'\boldsymbol{d}^{2}}),$$ 
	hence $$(\Delta_{2}([L_{\boldsymbol{d}^{1}\boldsymbol{\nu}\boldsymbol{d}^{2}}]),\Delta_{2}([L_{\boldsymbol{d}^{1}\boldsymbol{\nu}'\boldsymbol{d}^{2}}]))^{\otimes}=([L_{\boldsymbol{\nu}\boldsymbol{d}^{2}}],[L_{\boldsymbol{\nu}'\boldsymbol{d}^{2}}] )^{\Lambda_{2}}=([L_{\boldsymbol{d}^{1}\boldsymbol{\nu}\boldsymbol{d}^{2}}],[L_{\boldsymbol{d}^{1}\boldsymbol{\nu}'\boldsymbol{d}^{2}}])^{\Lambda^{\bullet}},$$
	and we finish the proof.
\end{proof}

\begin{remark}
	By a similar argument, we can also prove that $\Delta_{N}$ preserves the bilinear form for general $N$.
\end{remark}

\subsection{The canonical bases in certain tensor products}

\begin{definition}
	Let $\mathcal{B}_{\otimes}$ be the set consisting of $[A] \otimes [B]$, where $A$ is a simple perverse sheaf which is nonzero in $\mathcal{K}_{0}(\Lambda_{2})$ and $B$ is a simple perverse sheaf which is nonzero in $\mathcal{K}_{0}(\Lambda_{1})$.
	We define a partial order $\leqslant$ on $\mathcal{B}_{\otimes}$ as the following: If $A,A',B,B'$ are simple perverse sheaves on $\mathbf{E}_{\mathbf{V}^{2},\mathbf{W}^{2}}$,$\mathbf{E}_{\mathbf{V}^{2'},\mathbf{W}^{2}}$,$\mathbf{E}_{\mathbf{V}^{1},\mathbf{W}^{1}}$ and $\mathbf{E}_{\mathbf{V}^{1'},\mathbf{W}^{1}}$ respectively,
	we say that $[A] \otimes [B] \leqslant [A']\otimes [B']$ if  $$tr|\mathbf{V}^{2}|+tr|\mathbf{V}^{1}|=tr|\mathbf{V}^{2'}|+tr|\mathbf{V}^{1'}|$$ and if we have either $$tr|\mathbf{V}^{2}|<tr|\mathbf{V}^{2'}|,$$ or $$A=A',B=B'. $$ 
	
\end{definition}

By the construction of $\Psi$, we can see that $\Psi([A] \otimes [B] )= \sum\limits_{ [A] \otimes [B] \leqslant [A'] \otimes [B'] } \rho_{A,B,A',B'}[A'] \otimes [B'] $ such that $\rho_{A,B,A,B}=1$ and 
$$\sum\limits_{ A',B'} \rho_{A,B,A',B'}\rho_{A',B',A'',B''}=\delta_{A,A''}\delta_{B,B''}. $$

Let $\mathbf{A}=\mathbb{Q}[[v^{-1}]] \cap \mathbb{Q}(v)$ and $\mathfrak{L}$ be the subset of $\mathbb{Q}(v) \otimes_{\mathcal{A}}\mathcal{K}_{0}(\Lambda_{2}) \otimes \mathcal{K}_{0}(\Lambda_{1})$ consisting of those $x$ such that $(x,x)^{\otimes} \in \mathbf{A}$. Then following the method of \cite[Theorem 24.3.3]{MR1227098} (or \cite{bao2016canonical} and \cite{lusztig1992canonical}), for each pair of elements $([A],[B])$ of canonical basis, there is a unique element $[A] \diamond [B]$ in $\mathcal{K}_{0}(\Lambda_{2}) \otimes\mathcal{K}_{0}(\Lambda_{1})$ such that $\Psi([A] \diamond [B]) =[A] \diamond [B]$ and $[A] \diamond [B]- [A] \otimes [B]\in  v^{-1}\mathfrak{L}$. The set $\mathcal{B}_{\diamond}=\{[A] \diamond [B] |[A]\otimes [B] \in \mathcal{B}_{\otimes} \}$ is a basis of $\mathcal{K}_{0}(\Lambda_{2}) \otimes \mathcal{K}_{0}(\Lambda_{1})$  and called the canonical basis of the tensor product in \cite{lusztig1992canonical} and \cite{bao2016canonical}. The canonical basis is upper triangular with respect to the pure tensor 
$$ [A] \diamond [B] =\sum\limits_{ [A] \otimes [B] \leqslant [A'] \otimes [B'] }\pi_{A,B,A',B'}[A'] \otimes [B'], $$
such that $\pi_{A,B,A,B}=1$ and $\pi_{A,B,A',B'} \in \mathbb{Z}[v^{-1}]$.

The following proposition shows that the canonical basis is exactly consisting of those simple perverse sheaves in $\mathcal{K}_{0}(\Lambda_{1},\Lambda_{2})$.

\begin{proposition}\label{CBT}
	Let $\mathcal{B}_{\Lambda_{1},\Lambda_{2}}$ be the set consisting of $\Delta_{2}([L])$, where $L \in \mathcal{P}_{\mathbf{V},\mathbf{W}^{\bullet},\Omega^{2}}$ for some $\mathbf{V}$ but $L \notin \mathcal{N}_{\mathbf{V}}$, then  $\mathcal{B}_{\diamond}= \mathcal{B}_{\Lambda_{1},\Lambda_{2}}$.
\end{proposition}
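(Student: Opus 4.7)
The plan is to verify that each $\Delta_{2}([L])$, for a simple perverse sheaf $L$ nonzero in $\mathcal{K}_{0}(\Lambda_{1},\Lambda_{2})$, satisfies the two defining properties of the canonical basis $\mathcal{B}_{\diamond}$ in the sense of \cite{lusztig1992canonical, bao2016canonical}: namely $\Psi$-invariance and congruence to a unique pure tensor modulo $v^{-1}\mathfrak{L}$. The first is immediate: simple perverse sheaves are Verdier self-dual, so $\mathbf{D}_{1,2}[L]=[L]$, and from the identity $\Psi=\Psi'=\Delta_{2}\mathbf{D}_{1,2}\Delta_{2}^{-1}$ proved at the start of Section 5 we get $\Psi(\Delta_{2}[L])=\Delta_{2}[L]$.

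For the second property, expand $\Delta_{2}[L]=\sum_{A,B}c_{A,B}(v)[A]\otimes[B]$ in the pure tensor basis, where $A$ (resp.\ $B$) runs over simple perverse sheaves nonzero in $\mathcal{K}_{0}(\Lambda_{2})$ (resp.\ $\mathcal{K}_{0}(\Lambda_{1})$). By the decomposition theorem and Proposition \ref{res formula}, the restriction functor $\tilde{\Delta}_{2}$ sends semisimple perverse sheaves to positive $\mathbb{N}[v,v^{-1}]$-combinations of simple tensor products, and since Verdier duality fixes each simple perverse sheaf while only inverting degree shifts, the composition with $P(\mathbf{D}\otimes\mathbf{D})$ preserves this positivity, so $c_{A,B}(v)\in\mathbb{N}[v,v^{-1}]$. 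Next, the bilinear form preservation from Subsection 5.2 yields $(\Delta_{2}[L],\Delta_{2}[L])^{\otimes}=([L],[L])^{\Lambda^{\bullet}}\in 1+v^{-1}\mathbb{Z}[[v^{-1}]]$. Since each product $c_{A,B}(v)c_{A',B'}(v)\cdot([A]\otimes[B],[A']\otimes[B'])^{\otimes}$ has nonnegative Laurent coefficients and the pairing values all lie in $\mathbb{N}[v^{-1}]$, no cancellation can eliminate a positive $v$-power in the sum; hence the absence of positive $v$-powers in the total forces each $c_{A,B}(v)\in\mathbb{N}[v^{-1}]$, arguing via the diagonal contribution $c_{A,B}(v)^{2}\cdot([A]\otimes[B],[A]\otimes[B])^{\otimes}$ whose pairing has constant term $1$. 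Thus $\Delta_{2}[L]\in\mathfrak{L}$. The constant term of $(\Delta_{2}[L],\Delta_{2}[L])^{\otimes}$ equals $1$, and by almost-orthonormality of pure tensors (inherited from the corresponding almost-orthonormality of simple perverse sheaves in each factor) this constant term also equals $\sum_{A,B}(c_{A,B}^{(0)})^{2}$, forcing a unique $(A_{L},B_{L})$ with $c_{A_{L},B_{L}}^{(0)}=1$ and all other constant terms vanishing. Consequently $\Delta_{2}[L]\equiv[A_{L}]\otimes[B_{L}]\pmod{v^{-1}\mathfrak{L}}$.

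Uniqueness of the canonical basis element, which states that a $\Psi$-invariant element of $\mathfrak{L}$ is determined by its image in $\mathfrak{L}/v^{-1}\mathfrak{L}$, now gives $\Delta_{2}[L]=[A_{L}]\diamond[B_{L}]$, and hence $\mathcal{B}_{\Lambda_{1},\Lambda_{2}}\subseteq\mathcal{B}_{\diamond}$. A rank comparison on each weight space (both sides are $\mathcal{A}$-bases of $\mathcal{K}_{0}(\Lambda_{2})\otimes\mathcal{K}_{0}(\Lambda_{1})\cong\mathcal{K}_{0}(\Lambda_{1},\Lambda_{2})$ via $\Delta_{2}$) upgrades this to equality $\mathcal{B}_{\Lambda_{1},\Lambda_{2}}=\mathcal{B}_{\diamond}$. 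The main obstacle is the combined positivity argument in the second paragraph: converting $\mathbb{N}[v,v^{-1}]$-positivity plus the $\mathbf{A}$-bound on the bilinear form into the stronger $\mathbb{N}[v^{-1}]$-positivity of the coefficients. This is the crucial step that rules out sign ambiguities and pins down the leading pure tensor $[A_{L}]\otimes[B_{L}]$ intrinsically, thereby showing that $\Delta_{2}[L]$ is an honest canonical basis element rather than merely a signed one as in \cite[Theorem 7.16]{MR3177922}.
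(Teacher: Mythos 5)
Your proposal is correct, and it runs on the same ingredients as the paper's proof (Proposition 5.1 identifying $\Psi$ with $\Delta_{2}\mathbf{D}_{1,2}\Delta_{2}^{-1}$, the almost orthogonality of Proposition 5.3, the form preservation of Proposition 5.5, positivity of the restriction functor, and the uniqueness characterization of $[A]\diamond[B]$), but it reorganizes the key middle step. The paper first invokes Lusztig's almost-orthonormality lemma \cite[Lemma 14.2.2(a)]{MR1227098} to conclude that both $\mathcal{B}_{\otimes}$ and $\mathcal{B}_{\Lambda_{1},\Lambda_{2}}$ are $\mathbf{A}$-bases of $\mathfrak{L}$, which only gives $\Delta_{2}([L])\equiv\pm[A]\otimes[B]\bmod v^{-1}\mathfrak{L}$, and then removes the sign at the very end using $\mathbb{N}[v,v^{-1}]$-positivity of the coefficients of a semisimple complex; you instead front-load that positivity and combine it with $(\Delta_{2}[L],\Delta_{2}[L])^{\otimes}\in 1+v^{-1}\mathbb{Z}[[v^{-1}]]$ to force, term by term (no cancellation among nonnegative series), that all coefficients lie in $\mathbb{N}[v^{-1}]$ and that exactly one constant term equals $1$, so the sign ambiguity never appears. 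Your coefficient computation is sound (the diagonal term $c_{A,B}^{2}\cdot([A]\otimes[B],[A]\otimes[B])^{\otimes}$ does detect any positive power of $v$, and constant terms multiply since everything lies in $\mathbb{Z}[[v^{-1}]]$), and your explicit rank comparison for the reverse inclusion is the same counting the paper leaves implicit. Two small caveats: the pairing values lie in $\mathbb{N}[[v^{-1}]]\cap\mathbb{Q}(v)$, not $\mathbb{N}[v^{-1}]$, which does not affect the argument; and to pass from "coefficients in $v^{-1}\mathbb{Z}[v^{-1}]$" to "$\equiv[A_{L}]\otimes[B_{L}]\bmod v^{-1}\mathfrak{L}$" you still need $\mathfrak{L}$ to be the $\mathbf{A}$-span (equivalently the $\mathbb{Z}[v^{-1}]$-lattice) generated by $\mathcal{B}_{\otimes}$, i.e.\ exactly the content of \cite[Lemma 14.2.2(a)]{MR1227098}, so that lemma is not fully avoided, only the signed-transition consequence of it.
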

\begin{proof}
	By definition of $(-,-)^{\otimes}$ and Proposition 5.3, $\mathcal{B}^{\otimes}$ is an almost orthogonal basis of $\mathbb{Q}(v) \otimes_{\mathcal{A}}\mathcal{K}_{0}(\Lambda_{2}) \otimes \mathcal{K}_{0}(\Lambda_{1})$. By Proposition 5.3, those $[L]$ such that $L \in \mathcal{P}_{\mathbf{V},\mathbf{W}^{\bullet},\Omega^{2}}$ for some $\mathbf{V}$ but $L \notin \mathcal{N}_{\mathbf{V}}$ form an almost orthogonal basis of $\mathbb{Q}(v) \otimes_{\mathcal{A}}\mathcal{K}_{0}(\Lambda_{1},\Lambda_{2})$, hence $\mathcal{B}_{\Lambda_{1},\Lambda_{2}}$ is an almost orthogonal basis of $\mathbb{Q}(v) \otimes_{\mathcal{A}}\mathcal{K}_{0}(\Lambda_{2}) \otimes \mathcal{K}_{0}(\Lambda_{1})$ by Proposition 5.4. $\mathcal{B}_{\Lambda_{1},\Lambda_{2}}$ is $\Psi$ invariant, since each $L$ is $\mathbf{D}$-invariant.  Then by \cite[Lemma 14.2.2(a)]{MR1227098}, both  $\mathcal{B}^{\otimes}$ and $\mathcal{B}_{\Lambda_{1},\Lambda_{2}}$ are $\mathbf{A}$-bases of $\mathfrak{L}$. Moreover, for any $L$, there exists $[A]\otimes[B]$ in $\mathcal{B}_{\otimes}$ such that $ \Delta_{2}([L]) = \pm [A]\otimes[B] \mod v^{-1}\mathfrak{L}$. By the uniqueness of $[A] \diamond [B]$, we have $[A]\diamond [B] =\pm \Delta_{2} ([L])$. However, since $\Delta_{2}(L)$ is the image of a semisimple complex in the Grothendieck group, the coefficients lie in $\mathbb{N}[v,v^{-1}]$, hence $[A]\diamond [B] =\Delta_{2} ([L])$.
\end{proof}

We also obtain the following positive property of the canonical basis as a corollary.
\begin{corollary}\label{tmp}
	For any nonzero $A$ and $B$, $$[A] \diamond [B]- [A]\otimes [B] \in \sum\limits_{ [A] \otimes [B] \leqslant [A'] \otimes [B'] }\mathbb{N}[v^{-1}]([A']\otimes [B']) .$$
\end{corollary}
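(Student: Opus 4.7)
The plan is to leverage Proposition \ref{CBT} to reinterpret $[A] \diamond [B]$ as the image under $\Delta_{2}$ of a single simple perverse sheaf, and then to track the positivity through the definition $\Delta_{2} = P(\mathbf{D} \otimes \mathbf{D}) \tilde{\Delta}_{2}\mathbf{D}$. First, by Proposition \ref{CBT}, there is a simple perverse sheaf $L \in \mathcal{P}_{\mathbf{V},\mathbf{W}^{\bullet},\Omega^{2}}$ (nonzero in $\mathcal{K}_{0}(\Lambda_{1},\Lambda_{2})$) with $[A] \diamond [B] = \Delta_{2}([L])$. Since $L$ is a simple perverse sheaf in Lusztig's category, it is Verdier self-dual, so $\mathbf{D}[L] = [L]$.

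Next, by Proposition \ref{res formula} (or its extension to semisimple complexes in the Remark following it), the restriction $\mathbf{Res}^{\mathbf{V}}_{\mathbf{V}^{1},\mathbf{V}^{2}}(L)$ is a semisimple complex. Hence on the level of Grothendieck groups we can write
\begin{equation*}
\tilde{\Delta}_{2}([L]) = \sum_{A',B'} c_{A',B'}(v)\, [A'] \otimes [B'], \qquad c_{A',B'}(v) \in \mathbb{N}[v,v^{-1}],
\end{equation*}
where the sum runs over simple perverse sheaves $A' \in \mathcal{P}_{\mathbf{V}^{1},\mathbf{W}^{1}}$ and $B' \in \mathcal{P}_{\mathbf{V}^{2},\mathbf{W}^{2}}$, and each $c_{A',B'}(v)$ records the (non-negative) graded multiplicity of the simple summand $A' \boxtimes B'$ up to shifts. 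Since each simple $A',B'$ is Verdier self-dual, applying $\mathbf{D} \otimes \mathbf{D}$ amounts to the bar involution $v \mapsto v^{-1}$ on these coefficients, and then $P$ swaps the two factors. Combining,
\begin{equation*}
\Delta_{2}([L]) = \sum_{A',B'} c_{A',B'}(v^{-1})\, [B'] \otimes [A'].
\end{equation*}
Reindexing $A'' = B'$, $B'' = A'$, we obtain $\pi_{A,B,A'',B''} = c_{B'',A''}(v^{-1}) \in \mathbb{N}[v,v^{-1}]$.

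Finally, the general upper-triangularity of the canonical basis of the tensor product (recalled just before Proposition \ref{CBT}) yields $\pi_{A,B,A',B'} \in \mathbb{Z}[v^{-1}]$ with $\pi_{A,B,A,B} = 1$ and vanishing unless $[A] \otimes [B] \leqslant [A'] \otimes [B']$. Intersecting $\mathbb{N}[v,v^{-1}]$ with $\mathbb{Z}[v^{-1}]$ gives $\pi_{A,B,A',B'} \in \mathbb{N}[v^{-1}]$, and subtracting the diagonal term $[A]\otimes[B]$ produces exactly the desired positivity.

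The main obstacle is not positivity itself (which flows directly from the semisimplicity of the restriction functor together with Verdier self-duality of simple perverse sheaves), but keeping the bookkeeping correct: one must carefully check the bar-involution behavior of the coefficients under $\mathbf{D} \otimes \mathbf{D}$ and verify that the swap $P$ together with the re-indexing is compatible with the partial order on $\mathcal{B}_{\otimes}$ defined via the trace of the $\mathbf{V}^{2}$-component. Once this is unraveled, the corollary is essentially immediate from Proposition \ref{CBT}.
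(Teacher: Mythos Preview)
Your proof is correct and follows essentially the same approach as the paper: the corollary is stated without proof because the key positivity observation --- that $\Delta_{2}([L])$ arises from a semisimple complex and hence has coefficients in $\mathbb{N}[v,v^{-1}]$ --- is already made at the end of the proof of Proposition~\ref{CBT}, and intersecting with the a priori upper-triangularity $\pi_{A,B,A',B'}\in\mathbb{Z}[v^{-1}]$ yields the claim. Your write-up simply unpacks this reasoning in more detail, tracking the effect of $P(\mathbf{D}\otimes\mathbf{D})$ explicitly.
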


If the quiver $Q$ is of finite type $ADE$, then an irreducible integrable highest weight module is also a lowest weight module. Since the $\mathbf{U}^{-}$ action is given by the induction functor, we can see that $$b^{-}([A]\diamond [B]) \in \sum\limits_{ [A'], [B'] } \mathbb{N}[v,v^{-1}]([A']\diamond [B']),$$ where $b^{-}$ is an element in the canonical basis of $\mathbf{U}^{-}$. Take $\Lambda_{1},\Lambda_{2} \rightarrow \infty$, we can see that $$b^{-}(b_{1}\diamond_{\zeta} b_{2}) \in \sum\limits_{ b'_{1}, b'_{2} } \mathbb{N}[v,v^{-1}](b'_{1}\diamond_{\zeta} b'_{2}),$$
where those $b_{1}\diamond_{\zeta} b_{2}$ are elements of the canonical basis of modified quantum group $\dot{\mathbf{U}}1_{\zeta}$. See details in \cite[25.2.1]{MR1227098} Recall that the functor $\rho_{!}$ in \cite[10.3.4]{MR1227098} realizes the anti-automorphism $\sigma$ and it sends $\mathbf{U}^{-} \subseteq\dot{\mathbf{U}}$ to $\mathbf{U}^{+,op} \subseteq\dot{\mathbf{U}}^{op}$ and induces a $\mathbf{U}^{op}$-module structure on $\mathcal{K}_{0}(\Lambda^{\bullet})$. By a similar argument,  we can see that
$$(b_{1}\diamond_{\zeta} b_{2})b^{+} \in \sum\limits_{ b'_{1}, b'_{2} } \mathbb{N}[v,v^{-1}](b'_{1}\diamond_{\zeta'} b'_{2}),$$
where $\zeta'$ is uniquely determined by $\zeta$ and the degree of $b$. If $v_{-\Lambda}$ is the lowest weight vector of an irreducible integrable lowest weight module, then  $(b'_{1}\diamond_{\zeta'} b'_{2})v_{-\Lambda}=b'^{+}_{1}v_{-\Lambda}$ for $b'_{2}=1$ and $b'^{+}_{1}v_{-\Lambda} \neq 0$. Otherwise, $(b'_{1}\diamond_{\zeta'} b'_{2})v_{-\Lambda}=0$.  It implies that the canonical basis of $\dot{\mathbf{U}}$ positively acts on irreducible integrable lowest weight modules with respect to the canonical bases of these modules.
\begin{corollary}
	If the quiver is of finite type $ADE$, then the structure coefficients of irreducible integrable lowest (and highest) modules under the action of elements of canonical basis of $\dot{\mathbf{U}}$  with respect to the canonical basis of irreducible modules are in $\mathbb{N}[v,v^{-1}]$,
	$$(b_{1}\diamond_{\zeta} b_{2})(b^{+}v_{-\Lambda}) \in \sum\limits_{ b' } \mathbb{N}[v,v^{-1}](b'^{+}v_{-\Lambda}).$$ 
\end{corollary}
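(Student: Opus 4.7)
The plan is to assemble the corollary directly from the two positivity facts established immediately above it, namely
\[
(b_{1}\diamond_{\zeta} b_{2})\, b^{+} \;\in\; \sum_{b'_{1},\,b'_{2}} \mathbb{N}[v,v^{-1}]\, (b'_{1}\diamond_{\zeta'} b'_{2}),
\]
together with the recalled evaluation formula describing how the basis elements $b'_{1}\diamond_{\zeta'} b'_{2}$ act on the lowest weight vector $v_{-\Lambda}$. Both inputs have already been obtained in the paragraph preceding the corollary, the first by combining the $\mathbf{U}^-$-positivity coming from Corollary \ref{tmp} (plus the identification of Lusztig's canonical basis of $\mathbf{U}^-$ in ADE type with simple perverse sheaves, so that multiplication via $\mathcal{F}^{(n)}_j$ really is positive) with stabilization along $\Lambda_1,\Lambda_2 \to \infty$ as in \cite[25.2.1]{MR1227098}, and then transported from left multiplication by $\mathbf{U}^-$ to right multiplication by $\mathbf{U}^+$ via the geometric realization $\rho_!$ of the anti-automorphism $\sigma$.

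First I would use associativity of the $\dot{\mathbf{U}}$-action to rewrite
\[
(b_{1}\diamond_{\zeta} b_{2})(b^{+} v_{-\Lambda}) \;=\; \bigl((b_{1}\diamond_{\zeta} b_{2}) b^{+}\bigr) v_{-\Lambda},
\]
and then substitute the above positivity expansion, which gives an $\mathbb{N}[v,v^{-1}]$-linear combination of vectors of the form $(b'_{1}\diamond_{\zeta'} b'_{2}) v_{-\Lambda}$. By the evaluation formula recalled just before the corollary, the right hand side collapses sharply: the summand corresponding to $(b'_1, b'_2)$ vanishes unless $b'_{2} = 1$, and in each surviving case it equals $b'^{+}_{1} v_{-\Lambda}$. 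The resulting linear combination therefore lies in $\sum_{b'}\mathbb{N}[v,v^{-1}]\,(b'^{+} v_{-\Lambda})$, which is exactly the claimed inclusion for the lowest weight case. The highest weight statement follows mutatis mutandis, either by repeating the argument with $\Delta'_{N}$ in place of $\Delta_{N}$ and appealing to Remark \ref{ltensor} (which reinterprets $\mathcal{K}_0(\Lambda)$ as a lowest weight module so that the roles of $E_i, F_i, K_i$ swap), or equivalently by transporting through the bar involution.

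The main obstacle is not in this final bookkeeping, which is only a few lines, but rather in the two positivity ingredients on which it rests. The deepest of these is the positivity of the $\mathbf{U}^{-}$-action on the tensor product canonical basis $\{[A]\diamond[B]\}$ at finite $\Lambda_1,\Lambda_2$: it requires that multiplication in $\mathbf{U}^-$ be realized by the induction functor acting on simple perverse sheaves, so that summands decompose with $\mathbb{N}[v,v^{-1}]$-coefficients in $\{[A']\diamond[B']\}$ by Proposition \ref{CBT}. This is exactly where the ADE hypothesis is used, since only there does the canonical basis of $\mathbf{U}^-$ coincide with Lusztig's geometric basis of simple perverse sheaves on unframed quiver moduli. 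Once this is in hand, the stabilization $\Lambda_1,\Lambda_2 \to \infty$ is a direct application of Lusztig's construction of the canonical basis of $\dot{\mathbf{U}}$ as a limit of tensor-product canonical bases, and the passage from $\mathbf{U}^-$-positivity to $\mathbf{U}^+$-positivity via $\rho_!$ is formal since $\rho_!$ sends semisimple complexes to semisimple complexes and permutes the canonical basis up to the indexing shift $\zeta \mapsto \zeta'$.
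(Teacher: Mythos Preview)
Your overall strategy is correct and matches the paper's own argument: use the already-established right-multiplication positivity $(b_{1}\diamond_{\zeta} b_{2})b^{+}\in\sum\mathbb{N}[v,v^{-1}](b'_{1}\diamond_{\zeta'} b'_{2})$, then apply it to $v_{-\Lambda}$ using the evaluation rule $(b'_{1}\diamond_{\zeta'} b'_{2})v_{-\Lambda}=b'^{+}_{1}v_{-\Lambda}$ or $0$ according as $b'_{2}=1$ or not. The bookkeeping you wrote is exactly what the paper does in the paragraph preceding the corollary.

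There is, however, one genuine misstatement in your justification of the ingredients. You write that the ADE hypothesis is needed because ``only there does the canonical basis of $\mathbf{U}^-$ coincide with Lusztig's geometric basis of simple perverse sheaves.'' This is false: for any symmetric Cartan datum, Lusztig's canonical basis of $\mathbf{U}^-$ is by construction the set of simple perverse sheaves on quiver moduli, so the positivity $b^{-}([A]\diamond[B])\in\sum\mathbb{N}[v,v^{-1}]([A']\diamond[B'])$ holds in all symmetric types. The paper uses the ADE hypothesis for a different reason, stated in the first sentence of that paragraph: in finite type, an irreducible integrable highest weight module is finite-dimensional and hence also a lowest weight module. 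This is what allows the tensor product of two \emph{highest} weight modules $\mathcal{K}_0(\Lambda_1,\Lambda_2)$ to be matched with Lusztig's construction of $\dot{\mathbf{U}}1_{\zeta}$ (which in \cite[25.2.1]{MR1227098} is built from a tensor product of a lowest weight module with a highest weight module), so that the limit $\Lambda_1,\Lambda_2\to\infty$ really recovers the canonical basis $\{b_1\diamond_{\zeta}b_2\}$ of $\dot{\mathbf{U}}$. Without the ADE assumption your stabilization step would not land in $\dot{\mathbf{U}}$ at all.

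A minor secondary point: the $\mathbf{U}^-$-positivity you invoke comes from Proposition~\ref{CBT} (identifying the canonical basis of the tensor product with simple perverse sheaves) together with the induction-functor description of the $\mathbf{U}^-$-action, not from Corollary~\ref{tmp}, which concerns the transition matrix to pure tensors.
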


\subsection{Yang-Baxter equation for $N=3$}
Now we assume $N=3$ and let $R_{ij}$ be the following isomorphism of $\mathbf{U}$-modules
\begin{equation*}
	\Delta_{2}\circ (\Delta'_{2})^{-1}:L(\Lambda_{i}) \otimes L(\Lambda_{j}) \rightarrow L(\Lambda_{j}) \otimes L(\Lambda_{i}).
\end{equation*}
where 
\begin{equation*}
	\Delta_{2}:\mathcal{K}_{0}(\Lambda_{i},\Lambda_{j}) \rightarrow \mathcal{K}_{0}(\Lambda_{j})\otimes \mathcal{K}_{0}(\Lambda_{i})
\end{equation*}
\begin{equation*}
	\Delta_{2}':\mathcal{K}_{0}(\Lambda_{i},\Lambda_{j}) \rightarrow \mathcal{K}_{0}(\Lambda_{i})\otimes \mathcal{K}_{0}(\Lambda_{j})
\end{equation*}
are isomorphisms defined by the restriction functors as Proposition \ref{tongtai} and equations in \ref{rmatrix}. Then we can give a solution of the Yang-Baxter equation via restriction functors.
\begin{proposition}\label{YB}
	Regarding $\mathcal{K}_{0}(\Lambda_{i})$ as the integrable lowest weight module $_{\mathcal{A}}L^(\Lambda_{i})$, we have the following Yang-Baxter equation of isomorphisms beween $_{\mathcal{A}}\mathbf{U}$-modules
	\begin{equation*}
		R_{23}R_{13}R_{12}=R_{12}R_{13}R_{23}:  L(\Lambda_{1}) \otimes L(\Lambda_{2}) \otimes L(\Lambda_{3}) \xrightarrow{\cong} L(\Lambda_{3}) \otimes L(\Lambda_{2}) \otimes L(\Lambda_{1}).
	\end{equation*}
\end{proposition}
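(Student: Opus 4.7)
The plan is to derive the Yang--Baxter equation from the coassociativity of Lusztig's restriction functor $\mathbf{Res}$ for $3$-framed quivers. For each permutation $\sigma \in S_{3}$, I will construct an isomorphism of ${_{\mathcal{A}}\mathbf{U}}$-modules
\begin{equation*}
\psi_{\sigma}:\mathcal{K}_{0}(\Lambda_{1},\Lambda_{2},\Lambda_{3})\xrightarrow{\cong} L(\Lambda_{\sigma(1)})\otimes L(\Lambda_{\sigma(2)})\otimes L(\Lambda_{\sigma(3)})
\end{equation*}
by iteratively splitting off the three framings in the order $\sigma(1),\sigma(2),\sigma(3)$, using the restriction-plus-Verdier-duality isomorphisms $\Delta_{N,k}$ and ${_{k}\Delta_{N}}$ from Corollary 4.17. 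The coassociativity of $\mathbf{Res}$ guarantees that different orderings of these partial splittings yield the same linear map on the Grothendieck group, so $\psi_{\sigma}$ is well defined.

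\textbf{Key lemma (adjacent-swap formula).} If $\tau$ is obtained from $\sigma$ by transposing positions $p$ and $p+1$, then with $a=\sigma(p)$ and $b=\sigma(p+1)$,
\begin{equation*}
\psi_{\tau}=\bigl(1^{\otimes(p-1)}\otimes R_{a,b}\otimes 1^{\otimes(2-p)}\bigr)\circ\psi_{\sigma}.
\end{equation*}
To prove this, I would choose a common intermediate splitting
\begin{equation*}
\phi:\mathcal{K}_{0}(\Lambda_{1},\Lambda_{2},\Lambda_{3})\xrightarrow{\cong} L(\Lambda_{\sigma(1)})\otimes\cdots\otimes\mathcal{K}_{0}(\Lambda_{a},\Lambda_{b})\otimes\cdots\otimes L(\Lambda_{\sigma(3)}),
\end{equation*}
with the unsplit factor $\mathcal{K}_{0}(\Lambda_{a},\Lambda_{b})$ occupying positions $p,p+1$, obtained by peeling off only the framings outside the middle block. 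By coassociativity, $\psi_{\sigma}$ equals $\phi$ followed by $\Delta'_{2}$ on the middle factor (producing $L(\Lambda_{a})\otimes L(\Lambda_{b})$), while $\psi_{\tau}$ equals $\phi$ followed by $\Delta_{2}$ on the middle factor (producing $L(\Lambda_{b})\otimes L(\Lambda_{a})$). Since $R_{a,b}=\Delta_{2}\circ(\Delta'_{2})^{-1}$ by definition, the formula follows at once.

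\textbf{Conclusion via the braid relation.} The longest element $w_{0}\in S_{3}$ admits the two reduced expressions $s_{1}s_{2}s_{1}=s_{2}s_{1}s_{2}$, corresponding to two distinct paths from $(1,2,3)$ to $(3,2,1)$ in the Cayley graph of $S_{3}$ (the path $(1,2,3)\!\to\!(2,1,3)\!\to\!(2,3,1)\!\to\!(3,2,1)$ versus $(1,2,3)\!\to\!(1,3,2)\!\to\!(3,1,2)\!\to\!(3,2,1)$). Applying the key lemma along each path and tracking which weights occupy which positions at every intermediate stage, one finds that both compositions $R_{23}R_{13}R_{12}$ and $R_{12}R_{13}R_{23}$ coincide with $\psi_{(3,2,1)}\circ\psi_{(1,2,3)}^{-1}$, from which the Yang--Baxter equation follows.

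\textbf{Main obstacle.} The principal technical point is the key lemma: one must verify that coassociativity of $\mathbf{Res}$ on the bounded equivariant derived categories descends through the Verdier quotients by the thick subcategories $\mathcal{N}_{\mathbf{V}}$, and that it interacts compatibly with the Verdier dualities built into $\Delta_{N,k}$ and ${_{k}\Delta_{N}}$. The interaction between $\mathbf{Res}$ and $\mathbf{D}$ requires a base-change computation on the hyperbolic-localization diagrams, analogous to the one carried out in the proof of Proposition 5.2. Once this compatibility is in place, the hexagon argument via $s_{1}s_{2}s_{1}=s_{2}s_{1}s_{2}$ becomes a purely formal deduction.
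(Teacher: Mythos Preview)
Your proposal is correct and follows essentially the same approach as the paper: both arguments place $\mathcal{K}_{0}(\Lambda_{1},\Lambda_{2},\Lambda_{3})$ at the center, use coassociativity of $\mathbf{Res}$ together with the $\mathbf{Res}$--$\mathbf{D}$ compatibility of Proposition~5.2 to show that the various two-step splittings agree, and then read off the Yang--Baxter equation from the two reduced expressions for $w_{0}\in S_{3}$. The paper does this by drawing an explicit hexagonal diagram and checking that its six parallelograms commute, whereas you package the same content as the well-definedness of $\psi_{\sigma}$ plus an adjacent-swap lemma; these are reformulations of one another.
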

\begin{proof}
	For $i,j,k \in \{ 1,2,3\}$, we simply denote $\mathcal{K}_{0}(\Lambda_{i},\Lambda_{j}) \otimes \mathcal{K}_{0}(\Lambda_{k})$ by $(ij)k$, $\mathcal{K}_{0}(\Lambda_{i})\otimes \mathcal{K}_{0}(\Lambda_{j},\Lambda_{k})$ by $i(jk)$, $\mathcal{K}_{0}(\Lambda_{i})\otimes \mathcal{K}_{0}(\Lambda_{j})\otimes \mathcal{K}_{0}(\Lambda_{k})$ by $ijk$ respectively. Let $\Delta_{ij}$ and $\Delta'_{ij}$ respectively be the isomorphisms induced by the following functors $$(sw)_{!}(\mathbf{D}\boxtimes \mathbf{D})\bigoplus\limits_{\mathbf{V}^{1},\mathbf{V}^{2}} \mathbf{Res}^{\mathbf{V}}_{\mathbf{V}^{1},\mathbf{V}^{2}}\mathbf{D}:\coprod\limits_{\mathbf{V}}\mathcal{Q}_{\mathbf{V},\mathbf{W}^{i}\oplus\mathbf{W}^{j}} \rightarrow \coprod\limits_{\mathbf{V}^{1}}\mathcal{Q}_{\mathbf{V}^{1},\mathbf{W}^{j}}\boxtimes \coprod \limits_{\mathbf{V}^{2}} \mathcal{Q}_{\mathbf{V}^{2},\mathbf{W}^{i}},$$   $$(sw)_{!}(\mathbf{D}\boxtimes \mathbf{D})\bigoplus\limits_{\mathbf{V}^{1},\mathbf{V}^{2}} \mathbf{Res'}^{\mathbf{V}}_{\mathbf{V}^{1},\mathbf{V}^{2}}\mathbf{D}:\coprod\limits_{\mathbf{V}}\mathcal{Q}_{\mathbf{V},\mathbf{W}^{i}\oplus\mathbf{W}^{j}} \rightarrow\coprod\limits_{\mathbf{V}^{1}}\mathcal{Q}_{\mathbf{V}^{1},\mathbf{W}^{i}}\boxtimes \coprod \limits_{\mathbf{V}^{2}} \mathcal{Q}_{\mathbf{V}^{2},\mathbf{W}^{j}}, $$  and let $_{i}\Delta$ and $\Delta_{i}$ respectively be the isomorphisms induced by the following functors  $$(sw)_{!}(\mathbf{D}\boxtimes \mathbf{D})\bigoplus\limits_{\mathbf{V}^{1},\mathbf{V}^{2}} \mathbf{Res}^{\mathbf{V}}_{\mathbf{V}^{1},\mathbf{V}^{2}}\mathbf{D}:\coprod\limits_{\mathbf{V}}\mathcal{Q}_{\mathbf{V},\mathbf{W}^{\bullet}} \rightarrow \coprod\limits_{\mathbf{V}^{1}}\mathcal{Q}_{\mathbf{V}^{1},\mathbf{W}^{i}}\boxtimes \coprod \limits_{\mathbf{V}^{2}} \mathcal{Q}_{\mathbf{V}^{2},\mathbf{W}^{j}\oplus\mathbf{W}^{k}} ,$$  $$(sw)_{!}(\mathbf{D}\boxtimes \mathbf{D})\bigoplus\limits_{\mathbf{V}^{1},\mathbf{V}^{2}} \mathbf{Res'}^{\mathbf{V}}_{\mathbf{V}^{1},\mathbf{V}^{2}}\mathbf{D}:\coprod\limits_{\mathbf{V}}\mathcal{Q}_{\mathbf{V},\mathbf{W}^{\bullet}} \rightarrow \coprod\limits_{\mathbf{V}^{1}}\mathcal{Q}_{\mathbf{V}^{1},\mathbf{W}^{j}\oplus\mathbf{W}^{k}}\boxtimes \coprod \limits_{\mathbf{V}^{2}} \mathcal{Q}_{\mathbf{V}^{2},\mathbf{W}^{i}} .$$
	Then we have the following diagram:
	\[
	\xymatrix{
		&        & 123 \ar[dll]_{R_{12}} \ar[drr]^{R_{23}}    &         &    \\
		213 \ar[dd]_{R_{13}}	&  (12)3 \ar[l]^{\Delta_{12} \otimes id} \ar[ur]_{\Delta'_{12}\otimes id} &   &  1(23) \ar[ul]^{id \otimes \Delta'_{23}} \ar[r]_{id \otimes \Delta_{23}} &132 \ar[dd]^{R_{13}}  \\
		&  2(13) \ar[ul]^{id \otimes \Delta'_{13}}  \ar[dl]_{id \otimes \Delta_{13}} & (123)\ar[ul]_{\Delta_{3}} \ar[l]\ar[dl]^{\Delta_{1}} \ar[ur]^{_{1}\Delta} \ar[r] \ar[dr]_{_{3}\Delta}  &  (13)2 \ar[ur]\ar[dr]  &    \\
		231 \ar[drr]_{R_{23}}	&  (23)1 \ar[l]_{\Delta'_{23}\otimes id} \ar[dr]^{\Delta_{23}\otimes id}&        &  3(12) \ar[dl]_{id \otimes \Delta_{12} } \ar[r]^{id \otimes \Delta'_{12} } & 312 \ar[dll]^{R_{12}} \\
		&        & 321    &         &
	}
	\]
	By coassociativity of the restriction functor, we have the following equation of functors  $$\coprod\limits_{\mathbf{V}}\mathcal{Q}_{\mathbf{V},\mathbf{W}^{\bullet}} \rightarrow \coprod\limits_{\mathbf{V}^{1}}\mathcal{Q}_{\mathbf{V}^{1},\mathbf{W}^{1}} \boxtimes \coprod\limits_{\mathbf{V}^{2}}\mathcal{Q}_{\mathbf{V}^{2},\mathbf{W}^{2}} \boxtimes \coprod\limits_{\mathbf{V}^{3}}\mathcal{Q}_{\mathbf{V}^{3},\mathbf{W}^{3}}$$
	\begin{equation*}
		(\bigoplus\limits_{\mathbf{V}^{1},\mathbf{V}^{2}} \mathbf{Res'}^{\mathbf{V}'}_{\mathbf{V}^{1},\mathbf{V}^{2}} \otimes Id ) \circ \bigoplus\limits_{\mathbf{V}',\mathbf{V}^{3}} \mathbf{Res'}^{\mathbf{V}}_{\mathbf{V}',\mathbf{V}^{3}} =  (Id \otimes \bigoplus\limits_{\mathbf{V}^{2},\mathbf{V}^{3}} \mathbf{Res'}^{\mathbf{V}'}_{\mathbf{V}^{2},\mathbf{V}^{3}} ) \circ \bigoplus\limits_{\mathbf{V}^{1},\mathbf{V}'} \mathbf{Res}^{\mathbf{V}}_{\mathbf{V}^{1},\mathbf{V}'}.
	\end{equation*}
After applying the Verdier duality and $(sw)_{!}$ and using Proposition 5.2, one can deduce the following equation in the Grothendieck group of localizations:
	\begin{equation*}
		(\Delta'_{12} \otimes id) \Delta_{3} =(id \otimes \Delta'_{23}) {_{1}\Delta}.
	\end{equation*}
	Similarly, we can check that the six parallelograms in the above diagram commute, so
	\begin{equation*}
		\begin{split}
			R_{23}R_{13}R_{12}= & (\Delta_{23}\otimes id)  (\Delta'_{23}\otimes id)^{-1}(id \otimes \Delta_{13})	(id \otimes \Delta'_{13})^{-1} 	(\Delta_{12} \otimes id) 	(\Delta'_{12} \otimes id)^{-1}\\
			= &  (\Delta_{23}\otimes id)\Delta_{1}(\Delta_{3})^{-1}(\Delta'_{12} \otimes id)^{-1}
		\end{split}
	\end{equation*}
	The first equality holds by definition, the second equality holds by the commutativity of the parallelograms. Similarly, we have
	\begin{equation*}
		\begin{split}
			R_{12}R_{13}R_{23}=(id \otimes\Delta_{12})({_{3}\Delta})({_{1}\Delta})^{-1}(id \otimes \Delta'_{23} )^{-1}
		\end{split}
	\end{equation*}
	Then we have the following equation by the commutativity of the middle parallelograms:
	$$	R_{23}R_{13}R_{12}=R_{12}R_{13}R_{23}: L(\Lambda_{1}) \otimes L(\Lambda_{2}) \otimes L(\Lambda_{3}) \xrightarrow{\cong} L(\Lambda_{3}) \otimes L(\Lambda_{2}) \otimes L(\Lambda_{1}).$$
\end{proof}

\begin{example}
	Now we consider the quiver $A_{m}$, which is associated to $\mathbf{U}_{q}(\mathfrak{sl}_{m+1})$. Take $\Lambda_{1}=\Lambda_{2}=\cdots=\Lambda_{N}=\Lambda$ such that $\langle \Lambda,\alpha^{\vee}_{i}\rangle= \delta_{1,i}, 1\leqslant i \leqslant m$.
	\[
	\xymatrix{
		1^{1} &           &		&                  & \\
		\vdots	 &	1 \ar[r] \ar[lu] \ar[ld] \ar[l]   & 2    \ar[r] & \cdots  \ar[r]& m \\
		1^{N} &            &         &          &	 
	}
	\]
	
	Let $v_{0}=[L_{\boldsymbol{d}}]$ be the highest weight vector of $\mathcal{K}_{0}(\Lambda)$ and $v_{k}$ be the image of the complex $L_{\boldsymbol{\nu}^{k}\boldsymbol{d}}$ of flag type $\boldsymbol{\nu}^{k}=(k,k-1,\cdots,1)$. By Theorem \ref{thm1}, we have $F_{k}v_{k-1}=v_{k}$ and $E_{k}v_{k}=v_{k-1}, 1\leqslant k \leqslant m$. In particular, $\mathcal{K}_{0}(\Lambda)$ is isomorphic to the natural representation of $\mathbf{U}_{q}(\mathfrak{sl}_{m+1})$. For each $1\leqslant i \leqslant N-1$, we consider the following diagram
	$$(\mathcal{K}_{0}(\Lambda))^{\otimes N}    \xleftarrow{\Delta}  (\mathcal{K}_{0}(\Lambda))^{\otimes i-1}\otimes \mathcal{K}_{0}(\Lambda,\Lambda) \otimes(\mathcal{K}_{0}(\Lambda))^{\otimes N-i-1} \xrightarrow{\Delta'}   (\mathcal{K}_{0}(\Lambda))^{\otimes N}$$
	and let $s_{i}$ be the composition of $\Delta' \circ (\Delta)^{-1}$. Then Proposition \ref{YB} shows that those $s_{i}$ satisfy the Braid relation. In particular, if we identify $(\mathcal{K}_{0}(\Lambda))^{\otimes N}  $ with $\mathcal{K}_{0}(\Lambda^{\bullet})$, then after extending the action of $\mathbf{U}_{q}(\mathfrak{sl}_{m+1})$ to that of $\mathbf{U}_{q}(\mathfrak{gl}_{m+1})$, $\mathcal{K}_{0}(\Lambda^{\bullet})$ should provide a geometric model to realize the Schur-Weyl duality for type $A$.
\end{example}

\end{spacing}


\end{document}